\newtheorem{theorem}{Theorem}[section]
\newtheorem{lemma}[theorem]{Lemma}
\newtheorem{proposition}[theorem]{Proposition}
\newtheorem{corollary}[theorem]{Corollary}
\theoremstyle{definition}
\newtheorem{definition}[theorem]{Definition}
\newtheorem{example}[theorem]{Example}
\newtheorem{notation}[theorem]{Notation}
\theoremstyle{remark}
\newtheorem{remark}[theorem]{Remark}
\newcommand{\Ad}{\mathop{\mathrm{Ad}}\nolimits}
\newcommand{\ad}{\mathop{\mathrm{ad}}\nolimits}
\newcommand{\spin}{\mathop{\mathrm{spin}}\nolimits}
\newcommand{\Spin}{\mathop{\mathrm{Spin}}\nolimits}
\newcommand{\SO}{\mathop{\mathrm{SO}}\nolimits}
\newcommand{\so}{\mathop{\mathrm{so}}\nolimits}
\newcommand{\Kil}{\mathop{\mathrm{Kil}}\nolimits}
\newcommand{\grad}{\mathop{\mathrm{grad}}\nolimits}
\newcommand{\cA}{{\mathcal A}}
\newcommand{\cB}{{\mathcal B}}
\newcommand{\cC}{{\mathcal C}}
\newcommand{\cD}{{\mathcal D}}
\newcommand{\cE}{{\mathcal E}}
\newcommand{\cH}{{\mathcal H}}
\newcommand{\cK}{{\mathcal K}}
\newcommand{\cL}{{\mathcal L}}
\newcommand{\cM}{{\mathcal M}}
\newcommand{\cS}{{\mathcal S}}
\newcommand{\cT}{{\mathcal T}}
\newcommand{\cU}{{\mathcal U}}
\newcommand{\cW}{{\mathcal W}}
\newcommand{\ciA}{{\mathcal A^\infty}}
\newcommand{\ciB}{{\mathcal B^\infty}}
\newcommand{\bC}{{\mathbb C}}
\newcommand{\bR}{{\mathbb R}}
\newcommand{\fg}{{\mathfrak g}}
\newcommand{\fh}{{\mathfrak h}}
\newcommand{\fk}{{\mathfrak k}}
\newcommand{\ft}{{\mathfrak t}}
\newcommand{\fm}{{\mathfrak m}}
\newcommand{\fn}{{\mathfrak n}}
\renewcommand{\a}{\alpha}
\renewcommand{\b}{\beta}
\renewcommand{\d}{\delta}
\newcommand{\D}{\Delta}
\newcommand{\e}{\epsilon}
\newcommand{\g}{\gamma}
\renewcommand{\k}{\kappa}
\renewcommand{\l}{\lambda}
\newcommand{\om}{\omega}
\renewcommand{\O}{\Omega}
\newcommand{\s}{\sigma}
\renewcommand{\th}{\theta}
\newcommand{\cfg}{{\mathfrak g^\bC}}
\newcommand{\cfh}{{\mathfrak h^\bC}}
\newcommand{\cfk}{{\mathfrak k^\bC}}
\newcommand{\cfm}{{\mathfrak m^\bC}}
\newcommand{\rd}{{\mathrm{d}}}
\newcommand{\bg}{{\boldsymbol{\g}}}
\newcommand{\<}{\langle}
\renewcommand{\>}{\rangle}
\newcommand{\dia}{\diamond}
\newcommand{\md}{\mu_\diamond}
\newcommand{\cad}{\mathrm{cad}}  
\newcommand{\Cad}{\mathrm{Cad}}   
\newcommand{\fmc}{\fm'^\bC}
\newcommand{\clfm}{\bC\ell(\fm')}
\newcommand{\clf}{\bC\ell(}
\newcommand{\dom}{\mathrm{dom}}
\newcommand{\sfD}{{\mathsf{D}}}
\newcommand{\tn}{{\mathsf{T}}{\mathsf{N}}}
\newcommand{\ppq}{propinquity{ }}
\newcommand{\Lat}{{Latr\'emoli\`ere}}
\numberwithin{equation}{section}
\begin{document}

\title[Dirac operators for matrix algebras converging ]
{Dirac operators for matrix algebras converging \\ to coadjoint orbits}

\author{Marc A. Rieffel}
\address{Department of Mathematics\\
University of California\\
Berkeley, CA\ \ 94720-3840}
\email{rieffel@math.berkeley.edu}

   
\subjclass[2010]{
Primary 46L87; Secondary 53C30, 81R60, 81R15, 81R05}
\keywords{Dirac operator, matrix algebra, coadjoint orbit, 
C*-metric space, quantum Gromov-Hausdorff distance}


\begin{abstract}
In the high-energy physics literature one finds 
statements such as ``matrix algebras converge to the sphere''.
Earlier I provided a general precise setting for understanding such
statements, in which the matrix algebras are viewed as
quantum metric spaces, and convergence is with respect
to a quantum Gromov-Hausdorff-type distance. 

But physicists want even more to treat structures on spheres 
(and other spaces), such as vector bundles, Yang-Mills functionals, 
Dirac operators, etc., and they want to approximate these by corresponding 
structures on matrix algebras. In the present paper
we provide a somewhat unified construction of Dirac operators on coadjoint 
orbits and on the matrix algebras that converge to them. This enables us 
to prove our main theorem, whose content is that, for the quantum 
metric-space structures determined by the 
Dirac operators that we construct,
the matrix algebras do indeed converge to the coadjoint orbits, 
for a quite strong version of quantum Gromov-Hausdorff distance. 

\end{abstract}

\maketitle

\tableofcontents

\section*{Introduction}

In the literature of theoretical high-energy physics one finds statements along the lines 
of ``this sequence of matrix algebras converges to the sphere'' 
and ``here are the Dirac operators on the 
matrix algebras that correspond to the Dirac operator on the sphere''.  But one also finds 
that at least three inequivalent definitions of Dirac operators have been proposed in this context.  
See, for example, \cite{AIM,BIm,BKV,CWW97,GP2,HQT,Yd1,Ydr} and the references 
they contain.  
In \cite{R6,R7,R21} I provided definitions and theorems that give a precise meaning to 
the idea of
the convergence of matrix algebras to spheres. This involved equipping the matrix 
algebras in a natural way with the structure of a non-commutative (or ``quantum'') 
compact metric space (motivated by the ``spectral triples'' 
of Connes \cite{Cn7, Cn5, Cn3, CnMr}), 
and developing non-commutative (or ``quantum'') versions 
of the usual Gromov-Hausdorff distance between 
compact metric spaces. 
These results were developed in the general context of coadjoint orbits of compact Lie groups, 
which is the appropriate context for this topic, as is clear from the physics literature. (In \cite{Ln2} it is shown that the matrix algebras form a 
strict quantization of the coadjoint orbits. A much-simplified proof of 
much of that
fact, in the more general context of compact quantum groups, is given
in proposition 4.13 of \cite{Sai}.) 

The purpose of the present paper is to provide a somewhat unified construction 
of Dirac operators (always $G$-invariant) that handles both the 
matrix algebras and the corresponding coadjoint orbits.
This enables us to prove the main theorem of this paper, which states that
for the quantum metrics determined by the Dirac operators 
on the matrix algebras and on the coadjoint orbits, 
the matrix algebras do indeed converge to the coadjoint orbits, in fact for a stronger
version of quantum Gromov-Hausdorff distance than I had used earlier.

Roughly speaking, physicists who try to develop quantum field 
theory on spaces like the sphere, 
have found attractive the idea of approximating the spaces by 
means of matrix algebras because 
if they try  ``lattice approximations'' by a collections of 
points, they lose the action of the symmetry group, 
whereas the matrix algebras can be viewed as 
quantum finite sets, on which the symmetry group still acts.
It is my hope that when results of the kind contained in this paper are extended to further 
types of structure of interest to quantum physicists, such as Yang-Mills functionals \cite{CnR, R34} 
and other action functionals, then the fact that we have quantified the idea of distance 
between matrix algebras and spaces will help in quantifying the size of the error made 
by approximating 
quantities in quantum field theory on the spaces by corresponding quantities obtained 
for quantum field theory on the approximating matrix algebras. (Another type of structure 
for which this kind of approximation theory has already been worked out consists of vector 
bundles \cite{R32, Ltr6}.)

Here is a brief imprecise sketch of what our construction of Dirac operators looks like
for the case of the matrix algebras 
that converge to the (2-dimensional) sphere, 
for which the symmetry group is $ G = SU(2) $. 
For each positive integer $n $ let $( \cH^n, U^n )$ be
the irrep (irreducible unitary representation) of highest weight $n $ of $ G $ 
(so $\dim(\cH^n) =n +1 $).
(Labeling by highest weights is especially convenient in the general case 
of coadjoint orbits.)
Let $\cB^n = \cB(\cH^n) $ be the algebra of all linear operators on $\cH^n $ --
our full matrix algebra. Then let $\a $ be the action of $G $ on $\cB^n $ by 
conjugation by the representation $ U^n $. We then somewhat follow the steps that 
are used in constructing
Dirac operators on compact Riemannian manifolds, especially 
Riemannian homogeneous spaces. We think of the Lie algebra, 
$\fg =su(2)$, as the tangent 
vector space at some nonexistent point, so that 
the (complexified) tangent bundle is $\cB^n \otimes_\bR \fg $. 
Let $\fg'$ be the vector space dual of $\fg $, so that the cotangent bundle is 
$\O^n =\cB^n \otimes_\bR \fg' $, viewed as a right module over $\cB^n $. 
We let $\a $ also denote the corresponding 
infinitesimal action of $\fg $ on $\cB^n $. Then
for each $ T\in \cB^n $ its total differential, $dT $, is the element 
of $\O^n $ defined by $dT(X) =\a_ X(T)$ for all $X \in \fg$.
As inner product on $\fg $ we use the negative of the 
Killing form on $\fg $, and as inner product on $\fg'$ we use the dual
of the inner product on $\fg $. Then the Riemannian metric on 
$\cB^n $, viewed as a $\cB^n$-valued inner product
on the cotangent bundle $\O^n $, is defined on 
elementary tensors by
\[
\< S\otimes \nu, T\otimes \mu\> = S^*T\<\nu,\mu\>
\]
for $ S, T\in \cB^n $ and $\nu,\mu \in \fg'$. We then let $\bC\ell(\fg')$
be the complex Clifford algebra corresponding to the inner product
on $\fg'$. The Clifford bundle over $\cB^n $ is then $\cB^n \otimes \bC\ell(\fg') $.
As spinors we can choose the Hilbert space $\cS $ of one of the two
irreducible $*$-representations of $\bC\ell(\fg')$ (necessarily 2-dimensional).
The spinor bundle is then $\cB^n \otimes \cS$. The Dirac operator, $ D $,
is then an operator on the spinor bundle. It is defined as follows. 
Let $\{E_j\}_{j=1}^3 $ be an orthonormal basis for $\fg $, 
and let $\{\e_j\}_{j=1}^3$ be the 
dual orthonormal basis for $\fg'$. Let $\k $ denote the representation of
$\bC\ell(\fg')$ on $\cS$.
Then for any elementary tensor $ T\otimes \psi $ in the 
spinor bundle (where $ T\in \cB^n $ and
$\psi \in \cS$) we set
\begin{equation}
\label{eqdefdir1}
D(T \otimes \psi) = \sum \a_{E_j}(T) \otimes \k_{\e_j}(\psi).
\end{equation}
This Dirac operator has many attractive properties, which 
we will describe later. In particular, we will see that it is closely 
related to the Casimir element for the chosen inner product
on $ \fg $, and that this gives an attractive way of calculating 
the spectrum of $ D $. When the main theorem of this paper,
Theorem \ref{lengthlim},
is applied to the case of the sphere, it tells us that when the sphere
is equipped with its usual metric (which corresponds to its Dirac operator), 
and the matrix algebras $\cB^n $
are equipped with the quantum metrics (defined in Section \ref{cmet}) 
determined by the 
Dirac operators defined just above,
then the matrix algebras  $\cB^n $ converge to the sphere for a 
quite strong form of quantum Gromov-Hausdorff distance 
(namely the propinquity
of \Lat \ \cite{Ltr2}, which we will describe later). The proof of our main 
theorem makes essential use of coherent states and Berezin symbols.

Our construction of Dirac operators on matrix algebras sketched above 
is closely related to some of the proposals in the physics literature.
At the end of Section \ref{fuzzy} we give a substantial discussion of the 
relationship between our construction and various proposals, 
for the case of the sphere. 
In section \ref{compare} we further discuss how our construction relates
to proposals in the physics literature for Dirac operators on 
other coadjoint orbits, 
such as projective spaces.

Our general construction has some deficiencies. We discuss these deficiencies
in some detail in Section \ref{secdefit}. But briefly,
Dirac operators on Riemannian manifolds are usually constructed using the 
the Levi--Civita connection, which is the unique torsion-free connection 
compatible with the Riemannian metric. But our construction 
in essence uses the ``canonical connection", and as is well-known, 
and explained in section 6 of \cite{R22}, for homogeneous spaces
the Levi--Civita connection agrees with the canonical connection only for 
symmetric spaces. 
The consequence is that our construction is very satisfactory for the case in which
the coadjoint orbit is a symmetric space, 
such as the sphere or a projective space, 
but somewhat less satisfactory otherwise, though we will see that 
the metric from the canonical connection agrees with the metric
from the Levi--Civita connection, so our
sequence of matrix algebras does converge to the coadjoint orbit
for the Dirac operator for its Levi--Civita connection. 

A related deficiency, discussed further in Section \ref{secdefit}, comes from 
the fact that each highest weight determines 
a $G$-invariant K\"ahler structure on its coadjoint orbit, 
which includes not just a Riemannian structure,
but also a closely related complex structure and symplectic structure. 
But these Riemannian 
metrics are not related to the Killing form unless 
the coadjoint orbit is a symmetric space.
When that is not the case, our sequence of matrix algebras will not converge
to the coadjoint orbit equipped with the Riemannian metric from the K\"ahler structure,
because it converges
to the coadjoint orbit equipped with the Riemannian metric related to the Killing form.

This paper concludes with a final section that discusses how
the main results of this paper are related to the very interesting
``spectral propinquity" of \Lat \ \cite{Ltr7}, which is a metric on 
equivalent classes of spectral triples. The conclusion that is reached is that the spectral propinquity is too strong, in the sense that the spectral triples
that we construct for matrix algebras do not converge to the spectral triples of coadjoint orbits for the spectral propinquity. On the other hand, the present paper says nothing about convergence of these spectral triples. It only 
shows that for the C*-metrics coming from the spectral triples, the matrix 
algebras do converge to the coadjoint orbits for \Lat's Gromov-Hausdorff
propinquity \cite{Ltr2}. It would be very desirable to 
have a weaker form of convergence for spectral triples than the spectral
propinquity, for which our spectral triples for the matrix 
algebras do converge to the spectral triples for the coadjoint orbits.
Perhaps there might be something roughly along the lines that Lott used for ordinary Dirac operators \cite{Lt1, Lt2}.

There is a substantial relationship between the Dirac operators on matrix algebras
that we construct here and the more general ``matrix models" that are being
intensively explored in the literature of theoretical high-energy physics. 
See \cite{Stn21} (some of whose examples are ``fuzzy'' spaces of the kind 
studied in the present paper) and its many references. It
would be very interesting to explore how some of the ideas in the present paper
might be extended to various matrix models, such as those 
in \cite{AoNT, PSn, PSn2, PSn3,  Stn21, StnZ, Ydr17}.


\section{Ergodic actions and Dirac operators}
\label{sec1}

In this section we construct  a rough approximation to the Dirac 
operators that we seek. We do this in the general setting of ergodic
 actions of compact Lie groups on unital C*-algebras. This rough approximation 
 will be crucial for the proofs of some of our main results.

Let $G$ be a connected compact Lie group. In this paper we are concerned 
with two types of actions of $G$ on unital C*-algebras. One type consists of 
actions by translation on the C*-algebras $C(G/K)$ of continuous
complex-valued functions on homogenous spaces $G/K$, where $K$ is a 
closed subgroup of $G$. The other type consists of actions on the C*-algebra
$\cB(\cH)$ consisting of all the operators on the Hilbert space $\cH$ of 
an irrep of $G$, where the action of $ G$ on $\cB(\cH)$ 
is by conjugation by the representation. In both cases the action is 
ergodic in the sense that the only elements of the algebra that are 
invariant under the action of $G$ are scaler multiples of the identity element. 
Consequently it is natural to begin by considering general ergodic actions 
on unital C*-algebras.

Thus let $\a$ be an ergodic action of $G$ on a unital C*-algebra $\cA$. 
Using the fact that $ G$ is a Lie group, we can define the subalgebra
$\ciA$ of smooth elements of $\cA$, consisting of those elements 
$a \in \cA$ such that the function $x \mapsto \a_x(a)$ from $G$
to $\cA$ is infinitely differentiable for the norm on $\cA$.
This means that for any $X \in \fg$, where $\fg$ is the Lie algebra of $G$,
there is an element, $\a_X(a) \in \cA$, such that 
$D^t_0(\a_{\mathrm{exp}(tX)}(a)) = \a_X(a)$,
(where $D_0^t$ means ``derivative in $t$ at $t = 0$''),
and similarly for higher derivatives. 
It is a standard fact that 
$X \mapsto \a_X$ is a Lie algebra homomorphism from $\fg$
into the Lie algebra of $*$-derivations of $\ciA$, and that,
using the
G\"arding smoothing argument, $\ciA$ is dense in $\cA$.
(See Sections 3 and 4 of chapter III of \cite{Knp}, whose discussion
for unitary representations on Hilbert spaces adapts very easily
to isometric actions on any Banach space.)
It is easily verified that $\ciA$ is a unital $*$-subalgebra of $\cA$.

Let $\fg'$ denote the dual vector space of $\fg$, and 
let $\O_o = \ciA \otimes \fg'$ (where $\otimes$ is necessarily over $\bR$).
Here and throughout the paper such tensor products are algebraic 
tensor products with one factor finite dimensional.
View $\O_o$ as an $\ciA$-bimodule by letting $\ciA$ act on itself by 
left and right multiplication. For any $a \in \ciA$ let $\rd a:\fg \to \ciA$ be
defined by $\rd a(X) = \a_X(a)$. Then $\rd a$ can be viewed as an element of
$\O_o$, and it is easily verified that $\rd$ is then a derivation from $\ciA$
into $\O_o$. (Thus $(\O_o, d)$ can be viewed as a ``first-order differential 
calculus" for $\ciA$ \cite{BggM, GVF}.) 
We can view $\O_o$ as a cotangent bundle
for $\cA$. (But later we will want to do better.)

We remark that here, and in later paragraphs of this section, 
we do not need the more general structure of ``connections", 
because the $\ciA$-modules 
we consider in this section are finitely generated free $\ciA$-modules.
Connections will be needed in Section \ref{dhom} and later.

We want $\rd: \ciA \to \O_o$ to be equivariant for an action of $G$.
This will usually not be true if we take the action on $\O_o$ to be
$\a \otimes I^{\O_o}$, where $I^{\O_o}$ denotes the identity
operator on $\O_o$. Let $\Cad$ denote
the coadjoint representation of $G$ on $\fg'$, dual to
the adjoint action $\Ad$ on $\fg $ (so
$ \<X, \Cad_x(\mu)\>=\<\Ad^{-1}_x(X), \mu\> $ for $ X\in\fg $, $x \in G$, 
and $\mu\in \fg'$, where here $\<\cdot, \cdot\>$ denotes the pairing between
$\fg$ and $\fg'$).

\begin{notation}
\label{act}
We let $\g$ be the diagonal action of $G$ on $\O_o$ defined by
\[
\g_x = \a_x \otimes \Cad_x     
\]
for any $x \in G$.
\end{notation}

\begin{proposition}
For any $a \in \ciA$ and $x \in G$ we have
\[
\g_x(\rd a) = \rd(\a_x(a)),
\]
that is, $\rd$ is equivariant for the actions $\a$ and $\g$.
\end{proposition}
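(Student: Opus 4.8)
The plan is to verify the identity by evaluating both sides of $\g_x(\rd a) = \rd(\a_x(a))$ on an arbitrary $X \in \fg$ and checking they agree as elements of $\ciA$. Since $\O_o = \ciA \otimes \fg'$ with the diagonal action $\g_x = \a_x \otimes \Cad_x$, and since $\rd a$ is the element of $\O_o$ corresponding to the linear map $X \mapsto \a_X(a)$, I need to understand how $\g_x$ transforms such a map. Writing $\rd a = \sum_i b_i \otimes \mu_i$ with $b_i \in \ciA$, $\mu_i \in \fg'$, one has $(\rd a)(X) = \sum_i b_i \<X, \mu_i\>$, and $\g_x(\rd a) = \sum_i \a_x(b_i) \otimes \Cad_x(\mu_i)$, so $(\g_x(\rd a))(X) = \sum_i \a_x(b_i)\<X, \Cad_x(\mu_i)\>$. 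Using the defining relation $\<X, \Cad_x(\mu)\> = \<\Ad_x^{-1}(X), \mu\>$ recalled just before the statement, this equals $\sum_i \a_x(b_i)\<\Ad_x^{-1}(X), \mu_i\> = \a_x\bigl((\rd a)(\Ad_x^{-1}(X))\bigr) = \a_x\bigl(\a_{\Ad_x^{-1}(X)}(a)\bigr)$.

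So the computation reduces to the identity $\a_x \circ \a_{\Ad_x^{-1}(X)} = \a_{X} \circ \a_x$ as derivations applied to $a$, i.e. $\a_x\bigl(\a_{\Ad_x^{-1}(X)}(a)\bigr) = \a_X(\a_x(a))$, which is exactly the statement that $\rd(\a_x(a))(X) = \a_X(\a_x(a))$. This is the infinitesimal form of the equivariance of the $G$-action $\a$ with respect to conjugation, and it follows by differentiating the group identity $\a_x \circ \a_{\exp(t\,\Ad_x^{-1}(X))} = \a_{x \exp(t\,\Ad_x^{-1}(X))} = \a_{\exp(tX) x} = \a_{\exp(tX)} \circ \a_x$ in $t$ at $t = 0$; here I use $x\exp(Y)x^{-1} = \exp(\Ad_x(Y))$, so $x\exp(t\,\Ad_x^{-1}(X)) = \exp(tX)\,x$. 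Since $\a$ is an isometric action on a Banach space, differentiation commutes with the bounded operator $\a_x$, so $D^t_0\bigl(\a_x(\a_{\exp(t\,\Ad_x^{-1}(X))}(a))\bigr) = \a_x\bigl(\a_{\Ad_x^{-1}(X)}(a)\bigr)$, giving the needed equality.

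Putting the pieces together, $(\g_x(\rd a))(X) = \a_X(\a_x(a)) = (\rd(\a_x(a)))(X)$ for all $X \in \fg$, hence $\g_x(\rd a) = \rd(\a_x(a))$ as elements of $\O_o$. There is essentially no serious obstacle here: the only mild subtlety is making sure smoothness is preserved (so that $\a_x(a) \in \ciA$ whenever $a \in \ciA$, which is clear since $\a$ commutes with itself and $\a_x$ is an isometry), and bookkeeping the inverse and the duality pairing correctly so that the $\Ad_x^{-1}$ appearing in the coadjoint action matches the one produced by the chain rule. The heart of the matter is simply the standard fact that the infinitesimal action satisfies $\a_x \circ \a_Y \circ \a_x^{-1} = \a_{\Ad_x(Y)}$, and the proposition is its repackaging through the identification of $\rd a$ with a $\fg'$-valued element.
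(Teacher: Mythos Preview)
Your proof is correct and follows essentially the same route as the paper's: both evaluate $\g_x(\rd a)$ and $\rd(\a_x(a))$ on an arbitrary $X \in \fg$, use the duality $\<X,\Cad_x(\mu)\> = \<\Ad_x^{-1}(X),\mu\>$ to reduce to $\a_x(\a_{\Ad_x^{-1}(X)}(a))$, and then invoke the conjugation identity $\a_x \circ \a_{\Ad_x^{-1}(X)} = \a_X \circ \a_x$. Your version is simply more explicit, writing out the tensor decomposition and supplying the one-line differentiation argument for the conjugation identity that the paper leaves implicit.
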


\begin{proof}
Let $X \in \fg$. Then
\begin{align*}
(\g_x(\rd a))(X) &= ((I^\cA \otimes \Cad_x)(\a_x \otimes I^{\fg'})\rd a)(X)   \\
&= ((\a_x \otimes I^{\fg'})\rd a)(\Ad_x^{-1}(X)) = \a_x(\a_{Ad^{-1}_x(X)}(a))   \\
&= \a_X(\a_x(a)) =\rd (\a_x(a))(X)    .
\end{align*}
\end{proof}

In section 4 of \cite{R4} we briefly constructed some Dirac-type operators
for $\ciA$, and proved that for the corresponding Leibniz semi-norms 
$\cA$ becomes a C*-metric space. More recently, in \cite{GaG} the authors
obtained important basic properties of these Dirac operators, such as their
essential self-adjointness and having compact resolvant. We now
recall the construction, slightly reformulated in a way that will be more 
convenient later, and influenced by \cite{GaG}. The usual Dirac operators
on a manifold
are defined in terms of a Riemannian metric on the tangent bundle of 
the manifold, or equivalently, on the cotangent bundle. In non-commutative 
geometry ``cotangent bundles" are more commonly available 
then ``tangent bundles", where now the analogs of vector bundles are 
modules, usually finitely generated projective, corresponding to 
the modules of smooth cross-sections of ordinary vector bundles. 
To aid the reader's intuition, in this 
paper it seems best to still refer to these modules as ``bundles''.
Note that since $\fg'$ is finite-dimensional, $\O_o$ is a 
finitely-generated free (right or left) module over $\ciA$.

To construct a Dirac operator in our general framework, we 
need a ``Riemannian metric"
on $\O_o$. For this purpose we choose an inner product 
on $\fg'$ that is $\Cad$-invariant. (Because $ G $ is compact,
these always exist, as seen by averaging any inner product on $\fg'$ using the 
action $\Cad$
and the Haar measure of $G$.)
We fix such an inner product, and denote it just by $\<\cdot, \cdot\>$. We view 
$\O_o$ as a right $\ciA$-module, and our Riemannian metric on $\O_o$ is the 
$\ciA$-valued inner product on $\O_o$
that is given on elementary tensors (necessarily over $\bR$) by
\[
\<a\otimes \mu, b\otimes \nu\>_\cA = a^*b\<\mu, \nu\>    
\]
(so we define our inner product to be linear in the second variable, 
as done in \cite{GVF, Hll, R27, R33}).
This ``bundle metric" is easily checked to respect the action $\g$
in the sense that
\[
\<\g_x(\om_1), \g_x(\om_2)\>_\cA = \a_x(\<\om_1, \om_2\>_\cA)   
\]
for all $\om_1, \om_2 \in \O_o$ and all $x \in G$.

To construct a Dirac-type operator we must first define the Clifford bundle. 
For our chosen inner product on $\fg'$ we form its complex Clifford algebra.
Much as in \cite{LwM,GVF}, we denote it by $\bC\ell(\fg')$.  
It is the complexification 
of the real Clifford algebra for $\fg'$ with our chosen inner product.  
We follow the convention that the defining relation is
\[
\mu\nu + \nu\mu = -2\<\mu,\nu\>1
\]
for $\mu, \nu \in \fg'$.
We include the minus sign for consistency with \cite{LwM,R22}.  
Thus if one wants to apply the results of the first pages of 
chapter~5 of \cite{GVF} one must let the $g$ there to be the 
negative of our inner product.  After exercise 5.6 of \cite{GVF} it 
is assumed that the inner product is positive, so small changes 
are needed when one uses the later results in \cite{GVF} but 
with our different convention.  The consequence of including 
the minus sign is that in the representations which we will 
construct the elements of $\fg'$ will act as skew-adjoint operators, 
just as they do for $\fg$ for orthogonal or unitary representations of $G$, 
rather than as self-adjoint operators as happens when 
the minus sign is omitted. (The involution on $ \bC\ell(\fg')$ takes $\mu$
to $-\mu$ for $\mu \in \fg$.)
The corresponding Clifford bundle is then the C*-algebra
$\cA \otimes \bC\ell(\fg')$, or its smooth version $\ciA \otimes \bC\ell(\fg')$.

We need a $*$-representation, $\k$, of $ \bC\ell(\fg')$
on a finite-dimensional Hilbert space $\cS$ (for ``spinors"). For the moment 
we do not require that it be irreducible or faithful.
We then form $\ciA \otimes \cS$. It is a right module over
$\ciA $, and is the analog of the bundle of ``spinor fields''
for a Riemannian manifold.

We are now in position to define a Dirac-type operator, $D_o$, on
$\ciA \otimes \cS$. It is simply the composition of the following 
three operators. The first is the operator $\rd \otimes I^\cS$ from 
$\ciA \otimes \cS$ into
$\O_o \otimes \cS = \ciA \otimes \fg' \otimes \cS$, where $I^\cS$
denotes the identity operator on $\cS$. The second is the operator 
$I^\ciA   \otimes i \otimes I^\cS$ from $\ciA \otimes \fg' \otimes \cS$
into $\ciA \otimes  \bC\ell(\fg') \otimes \cS$, where $i$ is the 
inclusion of $\fg'$ into $ \bC\ell(\fg')$. The third is the operator
$I^\ciA \otimes \k$ from $\ciA \otimes\bC\ell(\fg') \otimes \cS$
into $\ciA \otimes \cS$ coming from the ``Clifford 
multiplication'' $\k$. Briefly:
\[
 \ciA \otimes \cS
\overset \rd \longrightarrow
\ciA \otimes \fg' \otimes \cS
\overset i \longrightarrow 
\ciA \otimes  \bC\ell(\fg') \otimes \cS
\overset \k \longrightarrow 
 \ciA \otimes \cS
\ . 
\]
This can be expressed in the somewhat cryptic form 
\[
 D_o\Psi =\k(\rd \Psi) 
\]
for all $\Psi\in \ciA \otimes \cS$.
We can obtain a more explicit form for $D_o$ by choosing a 
basis, $\{ E_j\}$,
for $\fg$ and letting $\{\e_j\}$ be the dual basis for $\fg'$. 
Then for any $X \in \fg$ we have $X = \sum \<X, \e_j\>E_j$, 
and so for any $a \in \ciA$ we have 
$\rd a(X) = \sum \<X, \e_j\>\a_{E_j}(a)$, so that
$\rd a = \sum \a_{E_j}(a) \otimes \e_j$. Then for
$\psi \in \cS$ we have
\begin{equation}
\label{eqdefdir}
D_o(a \otimes \psi) = \sum \a_{E_j}(a) \otimes \k_{\e_j}(\psi).
\end{equation}
Notice that $D_o$ only depends on the action of $\fg$ on $\ciA$,
not on the action of $G$ itself. Thus we can replace $ G$ by its
simply-connected covering group whenever convenient. 
This observation will be useful later. It is also important to 
notice that the expression for $D_o$ is independent of the 
choice of the basis $ \{E_j\}$.

To view $D_o$ as an (unbounded) operator on a Hilbert space, we let
$\tau$ be the (unique by \cite{HLS}) $\a$-invariant tracial state
on $\cA$ \cite{HLS},
and we define an inner product on $\cA \otimes \cS$ by
\[
\<a_1\otimes \psi_1, a_2\otimes \psi_2\>  = \tau(a^*_1 a_2) \<\psi_1, \psi_2\>
\]
on elementary tensors. (Much as before, 
we choose our inner product on $\cS$ 
to be linear in the second variable, )
On completing, we obtain a Hilbert space, 
$L^2(\cA, \tau) \otimes \cS$ where $L^2(\cA, \tau)$
is the GNS Hilbert space for $\tau$,
with $D_o$ 
defined on the dense subspace $\ciA \otimes \cS$.

\begin{definition}
\label{gendir}
We will call the operator $ D_o $ ( $= D_o^\cA $) constructed above the 
\emph{general Dirac-type operator} for the ergodic action
$\a $ of $ G $ on $\cA $ and the given inner product on $\fg'$.
\end{definition}

A simple calculation (given in the proof of proposition 2.12 of \cite{GaG}) 
shows that $ D_o $ is symmetric. Because $\tau$ is $\a$-invariant, the action 
$\a$ is unitary with respect to the inner product that $\tau$ determines on
$\ciA$, and so it extends to a unitary representation on $L^2(\cA, \tau)$.
Because $G$ is compact, the irreducible $\a$-invariant subspaces of
$\cA$ are finite-dimensional, and so are contained in $\ciA$, and the 
span of all of them together is dense in $\cA$. It follows that $L^2(\cA, \tau)$ 
decomposes into a (Hilbert space)-direct sum of orthogonal 
finite-dimensional $\a$-invariant subspaces of $\ciA$. Consequently, since
$\cS$ is finite-dimensional,
$L^2(\cA, \tau) \otimes \cS$ decomposes into a direct sum of orthogonal 
finite-dimensional subspaces of $\ciA \otimes \cS$ that are each invariant 
under the action $\a \otimes I^\cS$ of $G$. From the definition of
$D_o$ it is evident that $D_o$ carries each of these finite-dimensional 
subspaces into itself. From this and the fact that $D_o$ is symmetric,
it is easy to obtain:

\begin{proposition}
\label{selfad}
As an operator on $L^2(\cA, \tau) \otimes \cS$ with dense domain
$\ciA \otimes \cS$, the operator $D_o$ is essentially self-adjoint, 
and there is an orthonormal basis for  $L^2(\cA, \tau) \otimes \cS$ 
consisting of elements of
$\ciA \otimes \cS$ that are eigenvectors of $D_o$.
\end{proposition}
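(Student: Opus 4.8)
The plan is to exploit the orthogonal decomposition of $L^2(\cA,\tau)\otimes\cS$ into finite-dimensional $D_o$-invariant subspaces that was established in the paragraph preceding the statement, reducing everything to finite-dimensional linear algebra together with a standard criterion for essential self-adjointness. First I would fix the decomposition $L^2(\cA,\tau)=\widehat{\bigoplus}_k V_k$ into finite-dimensional $\a$-invariant subspaces $V_k\subseteq\ciA$, set $W_k=V_k\otimes\cS$, so that $L^2(\cA,\tau)\otimes\cS=\widehat{\bigoplus}_k W_k$ with each $W_k$ finite-dimensional, contained in $\ciA\otimes\cS$, and carried into itself by $D_o$ (as already noted). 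On each $W_k$ the restriction $D_o|_{W_k}$ is an operator on a finite-dimensional inner-product space which is symmetric for the restricted inner product (because $D_o$ is symmetric), hence self-adjoint on $W_k$; therefore it has real eigenvalues and admits an orthonormal basis of $W_k$ consisting of its eigenvectors. Collecting these bases over all $k$ and reindexing yields an orthonormal basis $\{\xi_\ell\}$ of $L^2(\cA,\tau)\otimes\cS$ with $\xi_\ell\in\ciA\otimes\cS$ and $D_o\xi_\ell=\lambda_\ell\xi_\ell$ for real $\lambda_\ell$, which is the second assertion.

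For essential self-adjointness I would invoke the basic criterion that a symmetric operator $T$ is essentially self-adjoint precisely when $\mathrm{ran}(T\pm i)$ is dense. Since $D_o|_{W_k}$ is self-adjoint on the finite-dimensional space $W_k$ with real spectrum, $D_o|_{W_k}\pm i$ is a bijection of $W_k$; hence $\mathrm{ran}(D_o\pm i)$ contains every $W_k$, hence contains their algebraic span, which is dense in $L^2(\cA,\tau)\otimes\cS$. Thus both deficiency indices vanish and $D_o$ is essentially self-adjoint. Equivalently, one may let $\hat D$ be the self-adjoint operator diagonalized by $\{\xi_\ell\}$ with eigenvalues $\{\lambda_\ell\}$; it extends $D_o$, the algebraic span $\bigoplus_k W_k$ is a core for $\hat D$ and sits inside $\dom(D_o)=\ciA\otimes\cS\subseteq\dom(\hat D)$, so $\overline{D_o}=\hat D$.

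I do not expect a serious obstacle: the substance is all in the preceding material (symmetry of $D_o$, the finite-dimensional $\a$-invariant decomposition of $L^2(\cA,\tau)$, and the invariance of the blocks under $D_o$), and what remains is the routine passage from ``symmetric and block-diagonal with finite-dimensional blocks'' to ``essentially self-adjoint with an orthonormal eigenbasis.'' The one point deserving a line of care is the inclusion $\ciA\otimes\cS\subseteq\dom(\hat D)$, i.e.\ that the concrete differential operator $D_o$ and the abstract spectral operator $\hat D$ agree on all of $\ciA\otimes\cS$ and not merely on $\bigoplus_k W_k$; but this follows immediately from the fact that $D_o$ respects the orthogonal decomposition, so that $\|D_o\Psi\|^2=\sum_k\|D_o(\Psi_k)\|^2=\sum_\ell|\lambda_\ell|^2|\langle\Psi,\xi_\ell\rangle|^2<\infty$ for every $\Psi\in\ciA\otimes\cS$, where $\Psi_k$ denotes the component of $\Psi$ in $W_k$.
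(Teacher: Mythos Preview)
Your proposal is correct and is exactly the argument the paper has in mind: the paper does not actually write out a proof but simply says, after establishing symmetry of $D_o$ and the decomposition of $L^2(\cA,\tau)\otimes\cS$ into finite-dimensional $D_o$-invariant blocks $W_k\subseteq\ciA\otimes\cS$, that ``from this and the fact that $D_o$ is symmetric, it is easy to obtain'' the proposition. Your two paragraphs are the natural way to make that precise, and the deficiency-index argument in your second paragraph already suffices for essential self-adjointness without the extra care in your third paragraph (which is correct but not needed for the statement as given).
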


What is not so easy to see is that when $\cA$ is infinite-dimensional 
the eigenvalues of $D_o$, counted with multiplicity, 
converge in absolute value to $\infty$ (i.e. $D_o$
has ``compact resolvent''). A somewhat indirect 
proof of this fact is given in theorem 5.5 of \cite{GaG}. 

For $a \in \ciA$ let $M_a$ denote the operator on
$\ciA \otimes \cS$ corresponding to the left regular 
representation of $\ciA$ on itself. As seen in \cite{R4},
a simple calculation shows that 
for any $b \in \ciA$ and $\psi \in \cS$ we have
\[
[D_o, M_a](b \otimes \psi) = \sum (M_{\a_{E_j}(a)} \otimes \k_{\e_j})(b \otimes \psi)  .
\]
To simplify notation we will usually write $\a_{E_j}(a) $ instead of 
$ M_{\a_{E_j}(a)} $ from now on.    Then we see that we obtain:
\begin{proposition}
\label{dcom}
With notation as above, for any $a \in \ciA $ we have
\begin{equation*}
[D_o, M_a] = \sum \a_{E_j}(a) \otimes \k_{\e_j}  ,
\end{equation*}
acting on $\ciA \otimes \cS$. 
\end{proposition}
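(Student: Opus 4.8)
The plan is to compute $[D_o, M_a]$ directly from the explicit formula \eqref{eqdefdir} for $D_o$, by unwinding its action on an arbitrary elementary tensor $b \otimes \psi \in \ciA \otimes \cS$ and using the Leibniz rule for the derivations $\a_{E_j}$. First I would write
\[
D_o M_a (b \otimes \psi) = D_o(ab \otimes \psi) = \sum_j \a_{E_j}(ab) \otimes \k_{\e_j}(\psi),
\]
and then apply the Leibniz identity $\a_{E_j}(ab) = \a_{E_j}(a) b + a \a_{E_j}(b)$, which holds because each $\a_{E_j}$ is a $*$-derivation of $\ciA$ (a fact already recalled in Section \ref{sec1}). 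This splits the sum into two pieces.

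Next I would observe that the second piece, $\sum_j a \a_{E_j}(b) \otimes \k_{\e_j}(\psi)$, is exactly $M_a D_o (b \otimes \psi)$, since $M_a$ acts only on the $\ciA$-factor and commutes with each $\k_{\e_j}$ acting on $\cS$. Subtracting, we get
\[
[D_o, M_a](b \otimes \psi) = \sum_j \a_{E_j}(a) b \otimes \k_{\e_j}(\psi) = \sum_j \big(M_{\a_{E_j}(a)} \otimes \k_{\e_j}\big)(b \otimes \psi),
\]
which is the stated formula once we adopt the notational convention of writing $\a_{E_j}(a)$ for $M_{\a_{E_j}(a)}$. Since elementary tensors span $\ciA \otimes \cS$ and all operators in sight are linear, this suffices. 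As noted before the statement, the expression is independent of the choice of basis $\{E_j\}$ (and its dual $\{\e_j\}$), so there is no ambiguity in the right-hand side.

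There is really no main obstacle here: the proof is the short Leibniz-rule computation sketched above, and it was already indicated in the text (``a simple calculation shows that \dots''); the only point requiring a word of care is the bookkeeping of which tensor factor each operator acts on, so that the cancellation of the $a\,\a_{E_j}(b)$ terms against $M_a D_o$ is transparent. One should also remark that, because $\ciA$ is not norm-closed, $M_a$ and $D_o$ are here understood as operators on the dense domain $\ciA \otimes \cS$, so the identity is an identity of operators on that common domain rather than of bounded operators; this matches the way the commutator will be used later in defining the associated Leibniz seminorms.
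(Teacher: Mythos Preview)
Your proof is correct and is exactly the ``simple calculation'' the paper alludes to just before stating the proposition: compute $D_o M_a$ on an elementary tensor via formula~\eqref{eqdefdir}, apply the Leibniz rule for each derivation $\a_{E_j}$, and cancel the resulting $a\,\a_{E_j}(b)$ terms against $M_a D_o$. There is nothing to add.
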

From this it is clear that $[D_o, M_a]$
is a bounded operator. Because of this, we can define a seminorm on
$\ciA$ that will play a central role in this paper.

\begin{notation}
\label{dnorm}
For notation as above, define 
$L^{D_o}$ on $\ciA$ by
\[
L^{D_o}(a) = \|[D_o, M_a]\|   .
\]
\end{notation}
We will develop the properties of $L^{D_o}$ in later sections.

The results described in this section are crucial for some of the proofs 
of our main results. But they are often not entirely adequate for the 
following reason. The usual definition of a first-order 
differential calculus requires that the bimodule
$\O$ be generated by the range of $\rd$.
As we will see, for many examples the bimodule $\O_o$
that we have used above does not have this property. 
But before turning to those examples, we will soon consider two 
classes of example for which $\O_o$ essentially 
does have this property. 
For now we will make 
a general observation about $ \O $ for our situation. Notice that its 
elements will be  finite sums of the form
$ \sum_k a_k \rd b_k $ for $a_k, b_k \in \ciA $.
(One uses the derivation property to show that right multiplication
of such sums by elements of $\ciA $ are again of this form.)
Let $\cK $ be a Hilbert space, and let $\cB(\cK) $ be the algebra of bounded 
operators on $\cK $. Let $\cA $ be a $* $-subalgebra of $\cB(\cK) $, and let 
$ D $ be a (possibly unbounded) self-adjoint operator on $\cK $ having the 
property that $[D,a] $ extends to a bounded operator on $\cK $ for any 
$a \in \ciA $ (so that if a few more axioms are satisfied, 
$(\cA, \cK, D) $ is a spectral 
triple as defined by Connes \cite{Cn7, Cn5, Cn3, CnMr}). 
We can view $\cB(\cK) $ as an 
$\cA $-$\cA $-bimodule in the evident way. Then the map
$\d: a \to [D,a] $ is a derivation from $\ciA $ into $\cB(\cK) $. Connes sets
$\O_ D $ to be the $\ciA $-$\ciA $-subbimodule of $\cB(\cK) $ generated by 
the range of $\d $ (see section 7.2 of \cite{Lndi}.), so that its elements are finite sums of the form
$ \sum_k a_k[D, b_k] $ for $a_k, b_k \in \ciA $. Connes views $\O_D$
as a space of first-order differential forms on $\cA$.

Let us see what $\O_{D_o} $ is for $D_o $ as defined earlier in this section.
Thus $\cK =\ciA\otimes \cS $, completed. For $ M $ denoting the left regular 
representation of $\cA $ on itself, we see that $ M\otimes \k $ is a 
representation of $\cA\otimes \bC\ell(\fg')$ on $\cK $. Its restriction to
$\ciA\otimes \fg'$ is an $\ciA $-$\ciA $-bimodule map of $\O_o $
into $\cB(\cK) $. For an element $ \sum_k a_k \rd b_k $ of
$\O $ we have
\[
(M\otimes \k)( \sum_k a_k \rd b_k ) =  \sum_k M_{a_k} \sum_j M_{E_j(b_k)} \otimes \k_{\e_j}
= \sum_k M_{a_k} [D_o, M_{b_k}].
\]
We thus obtain:

\begin{proposition}
\label{cond}
For notation as above, $\O_ {D_o} $ is the image of $\O_o $ under the map $ M\otimes \k $.
\end{proposition}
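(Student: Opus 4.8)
The plan is to show the two chain of inclusions: that every element of $\O_{D_o}$ lies in the image of $\O_o$ under $M \otimes \k$, and conversely that every element of that image is an element of $\O_{D_o}$. Since $\O_{D_o}$ is by definition the $\ciA$-$\ciA$-subbimodule of $\cB(\cK)$ generated by the range of $\d: a \mapsto [D_o, M_a]$, it suffices for the first inclusion to check that each $[D_o, M_b]$ lies in the image of $\O_o$, and that this image is a sub-bimodule; then it will automatically contain the bimodule generated by the $[D_o,M_b]$. For the converse, I would use the fact (noted in the excerpt just before the statement) that a general element of $\O$ — here meaning the bimodule generated by the range of $\rd$ — is a finite sum $\sum_k a_k\,\rd b_k$, and that $\O_o$ coincides with this bimodule (this is the ``essentially does have this property'' remark, which for $\O_o = \ciA \otimes \fg'$ free of finite rank is immediate: any $a \otimes \e_j$ equals $a\,\rd b$ for suitable $b$, or more carefully $\O_o$ is spanned by the $\rd$-images together with left and right $\ciA$-multiplication, and being finitely generated free it is exactly the $\ciA$-bimodule so generated).

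The computational heart is the identity already displayed in the excerpt, namely
\[
(M\otimes \k)\Bigl( \sum_k a_k\, \rd b_k \Bigr) = \sum_k M_{a_k}\,[D_o, M_{b_k}],
\]
which follows from Proposition \ref{dcom} (giving $[D_o,M_{b_k}] = \sum_j \a_{E_j}(b_k) \otimes \k_{\e_j}$) together with the explicit formula $\rd b_k = \sum_j \a_{E_j}(b_k)\otimes\e_j$ and the definition of $M\otimes\k$ on elementary tensors $a_k \otimes \e_j$. So first I would record that $M\otimes\k$, restricted to $\O_o = \ciA \otimes \fg' \subset \ciA \otimes \bC\ell(\fg')$, is an $\ciA$-$\ciA$-bimodule homomorphism into $\cB(\cK)$ — this is because $M$ is an algebra homomorphism and $\k$ is linear, so conjugation-free left/right multiplication is respected. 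Being a bimodule map, its image is a sub-bimodule of $\cB(\cK)$. Then the displayed identity, specialized to a single term $\rd b$, shows $[D_o, M_b] = (M\otimes\k)(\rd b)$ lies in that image; hence the image contains the range of $\d$, and being a sub-bimodule it contains $\O_{D_o}$. Conversely, the general element $\sum_k M_{a_k}[D_o,M_{b_k}]$ of $\O_{D_o}$ equals $(M\otimes\k)(\sum_k a_k\,\rd b_k)$ by the same identity, exhibiting it in the image of $\O_o$; and since $\O_o$ is generated as a bimodule by the $\rd$-range, every element of the image of $\O_o$ arises this way. This gives equality.

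I do not expect a serious obstacle here — the statement is essentially a bookkeeping consequence of Proposition \ref{dcom} and the bimodule-map property of $M\otimes\k$. The one point that deserves a careful sentence is the claim that $\O_o$ is generated as an $\ciA$-bimodule by the range of $\rd$ (so that ``the image of $\O_o$'' and ``the bimodule generated by $(M\otimes\k)(\rd\,\ciA)$'' coincide); this is where the finitely-generated-free structure of $\O_o = \ciA\otimes\fg'$ is used, and where one should be slightly careful that the action $\a$ is ergodic enough that $\rd$ has large enough range — concretely, for each basis element $\e_j$ one needs some $b$ with $\rd b$ having nonzero $\e_j$-component, which holds because $\a$ is nontrivial in each direction $E_j$ (ergodicity guarantees the derivations $\a_{E_j}$ are jointly nonzero on $\ciA$). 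Since the proposition as stated only asserts that $\O_{D_o}$ is the image of $\O_o$ under $M\otimes\k$ — not that this image is generated by the $\d$-range within itself — even this subtlety can be sidestepped: one inclusion is the bimodule-closure argument above, the other is the direct substitution $\sum_k M_{a_k}[D_o,M_{b_k}] = (M\otimes\k)(\sum_k a_k\,\rd b_k)$ combined with the observation that $M\otimes\k$ is surjective onto its image by definition, so the argument is genuinely short.
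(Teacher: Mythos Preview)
Your approach is the same as the paper's: the entire argument is the identity
\[
(M\otimes\k)\Bigl(\sum_k a_k\,\rd b_k\Bigr) \;=\; \sum_k M_{a_k}\,[D_o, M_{b_k}],
\]
which the paper displays and immediately follows with ``We thus obtain'' before stating the proposition.

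That said, you have correctly put your finger on a subtlety the paper passes over, but your resolution of it is not right. The identity above shows exactly that $(M\otimes\k)(\O) = \O_{D_o}$, where $\O \subseteq \O_o$ is the sub-bimodule generated by the range of $\rd$. It does \emph{not} by itself give the inclusion $(M\otimes\k)(\O_o) \subseteq \O_{D_o}$. Your attempt to close this gap by arguing that ergodicity forces $\O = \O_o$ is incorrect: the paper says explicitly, in the paragraph immediately preceding this discussion, that ``for many examples the bimodule $\O_o$ that we have used above does not have this property,'' and Sections~\ref{sec3} and~\ref{dhom} give concrete instances (matrix algebras for non-faithful representations, $C(G/K)$ with $K$ nontrivial) where $\O \subsetneq \O_o$. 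Your final ``sidestep'' does not work either: both the bimodule-closure argument and the direct-substitution argument establish the \emph{same} inclusion $\O_{D_o} \subseteq (M\otimes\k)(\O_o)$; you have given two proofs of one direction and none of the other.

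In fact the paper's one-line computation proves precisely $(M\otimes\k)(\O) = \O_{D_o}$, and the appearance of $\O_o$ rather than $\O$ in the statement appears to be a slip on the paper's part. Your write-up becomes correct and complete if you state the conclusion as $\O_{D_o} = (M\otimes\k)(\O)$.
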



\section{Invariance under the group actions}
\label{gpact}

Since the action $\Cad$ of $G$ on $\fg'$ is by orthogonal operators 
for our chosen inner product, it extends to $\bC\ell(\fg')$ as an 
action by $*$-algebra automorphisms 
(Bogoliubov automorphisms), which we still denote by $\Cad$. 
Since $G$ acts on both $\cA$ and $\bC\ell(\fg')$, we let $\g$ denote the 
corresponding diagonal action $\a \otimes\Cad$  on $\cA \otimes \bC\ell(\fg')$.
It is just an extension of our earlier action $\g$ of $G$ 
on $\O_o$ defined in Notation \ref{act}. 
It carries the smooth version into itself. 

We want 
to have a representation of G by unitary operators on the Hilbert-space
completion of
$\ciA \otimes \cS$ that is compatible with the action $\a$ on $\cA$, and 
that commutes with $D_o$. As discussed in \cite{GaG}, the action 
of $G$ on  $\cS$ corresponding to the action of $G$ 
on $ \bC\ell(\fg')$ can, in general, at best be implemented by a 
projective representation on $\cS$. In \cite{GaG} it is shown 
how to handle the case of projective representations, but it is 
also remarked there that if $G$ is a simply connected semisimple 
compact Lie group then every projective representation of $G$ 
is equivalent to an ordinary (unitary) representation of $G$, so
that for these groups there is a unitary representations on $\cS$ 
corresponding to the action of $G$ on $ \bC\ell(\fg')$. 

So for the moment, we will simply assume that there is a  
unitary representation, $\s$, of $G$ on $\cS$ that is 
compatible with the action $\Cad$ of $G$ on $ \bC\ell(\fg')$ and 
the representation $\k$ of  $ \bC\ell(\fg')$ on $\cS$ in the 
sense that  
\begin{equation}
\label{compt}
\k_{\Cad_x(q)} = \s_x \k_q \s^*_x
\end{equation}
for all $q \in \bC\ell(\fg')$ and $x \in G$. We then define a 
representation, $\tilde \s$, of $G$ on $\ciA \otimes \cS$
by $\tilde \s = \a \otimes  \s$. 

\begin{proposition}
\label{commut1}
With assumptions and notation as above, 
the operator $D_o $ on $\ciA \otimes \cS$ 
commutes with the representation $\tilde \s$ of $G $ 
on $\ciA \otimes \cS$ .
\end{proposition}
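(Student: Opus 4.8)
The plan is to verify the commutation on elementary tensors using the explicit formula \eqref{eqdefdir} for $D_o$, namely $D_o(a\otimes\psi)=\sum_j\a_{E_j}(a)\otimes\k_{\e_j}(\psi)$. First I would fix $x\in G$ and apply $\tilde\s_x = \a_x\otimes\s_x$ to $D_o(a\otimes\psi)$, obtaining $\sum_j \a_x(\a_{E_j}(a))\otimes\s_x(\k_{\e_j}(\psi))$. The key move is to rewrite each factor: on the first factor use the standard identity $\a_x\circ\a_X = \a_{\Ad_x(X)}\circ\a_x$ for the infinitesimal action (differentiating $\a_x\a_{\exp(tX)}\a_x^{-1}=\a_{\exp(t\Ad_x(X))}$), and on the second factor use the compatibility hypothesis \eqref{compt}, $\s_x\k_q\s_x^* = \k_{\Cad_x(q)}$, applied with $q=\e_j$, to get $\s_x(\k_{\e_j}(\psi)) = \k_{\Cad_x(\e_j)}(\s_x(\psi))$. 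This turns the expression into $\sum_j \a_{\Ad_x(E_j)}(\a_x(a))\otimes\k_{\Cad_x(\e_j)}(\s_x(\psi))$.

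Next I would observe that since $\{E_j\}$ is a basis of $\fg$ with dual basis $\{\e_j\}$ of $\fg'$, and $\Cad_x$ is precisely the inverse-transpose of $\Ad_x$ by definition (the relation $\<X,\Cad_x(\mu)\>=\<\Ad_x^{-1}(X),\mu\>$ recalled before Notation \ref{act}), the family $\{\Ad_x(E_j)\}$ is again a basis of $\fg$ whose dual basis is exactly $\{\Cad_x(\e_j)\}$. Since the excerpt already notes that the expression \eqref{eqdefdir} for $D_o$ is independent of the choice of basis, the sum $\sum_j \a_{\Ad_x(E_j)}(\,\cdot\,)\otimes\k_{\Cad_x(\e_j)}(\,\cdot\,)$ equals $\sum_j\a_{E_j}(\,\cdot\,)\otimes\k_{\e_j}(\,\cdot\,)$; concretely one can also just expand $\Ad_x(E_j)=\sum_k c_{jk}E_k$ and $\Cad_x(\e_j)=\sum_\ell d_{j\ell}\e_\ell$ and check that $\sum_j c_{jk}d_{j\ell}=\d_{k\ell}$ because the matrices $(c_{jk})$ and $(d_{j\ell})$ are mutually inverse transposes. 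Hence the displayed expression equals $\sum_j \a_{E_j}(\a_x(a))\otimes\k_{\e_j}(\s_x(\psi)) = D_o(\a_x(a)\otimes\s_x(\psi)) = D_o(\tilde\s_x(a\otimes\psi))$, which is the desired identity $\tilde\s_x D_o = D_o\tilde\s_x$ on elementary tensors.

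Finally I would extend from elementary tensors to all of $\ciA\otimes\cS$ by linearity (the tensor product is algebraic with $\cS$ finite-dimensional, so this is immediate), and note that both $D_o$ and $\tilde\s_x$ preserve the dense domain $\ciA\otimes\cS$, so the commutation is literally an identity of operators on that common invariant domain. I do not expect a serious obstacle here: the only point requiring a little care is the bookkeeping with $\Ad$ versus $\Cad$ and making sure the basis-independence of $D_o$ is invoked correctly rather than mishandling a transpose; everything else is a direct computation. One could alternatively phrase the argument without choosing a basis by writing $D_o = (I^\ciA\otimes\k)\circ(I^\ciA\otimes i)\circ(\rd\otimes I^\cS)$ and checking equivariance of each of the three maps $\rd$, $i$, $\k$ separately — $\rd$ being equivariant for $\a$ and $\g$ by the Proposition proved just after Notation \ref{act}, the inclusion $i\colon\fg'\hookrightarrow\bC\ell(\fg')$ being equivariant by the very definition of the Bogoliubov extension $\Cad$, and $\k$ being equivariant by \eqref{compt} — but the basis computation is the most transparent route.
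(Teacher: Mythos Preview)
Your proof is correct and takes essentially the same approach as the paper's: both compute on elementary tensors using formula \eqref{eqdefdir}, invoke the covariance identity for $\a$ together with the compatibility condition \eqref{compt}, and then appeal to the basis-independence of $D_o$ via the fact that $\{\Ad_x(E_j)\}$ and $\{\Cad_x(\e_j)\}$ (respectively $\{\Ad_x^{-1}(E_j)\}$ and $\{\Cad_x^{-1}(\e_j)\}$ in the paper) are a dual pair. The only cosmetic difference is that you compute $\tilde\s_x D_o$ and reduce it to $D_o\tilde\s_x$, whereas the paper computes $D_o\tilde\s_x$ and reduces it to $\tilde\s_x D_o$; your additional remarks on the matrix bookkeeping and the basis-free alternative are sound and go a bit beyond what the paper spells out.
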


\begin{proof}
Let $a \in \ciA$ and $\psi \in \cS$ and $x \in G $. Then
\begin{align*}
D_o(\tilde \s_x(a \otimes \psi)) &= D_o(\a_x(a) \otimes \s_x(\psi))
= \sum \a_{E_j}(\a_x(a))) \otimes \k_{\e_j}(\s_x(\psi))    \\
    &= \sum \a_x(\a_{\Ad_x^{-1}(E_j)}(a)) 
    \otimes \s_x(\k_{\Cad_x^{-1}(\e_j)}(\s_x^{-1}(\s_x(\psi))))   \\
&= (\a_x \otimes \s_x)
(\sum (\a_{\Ad_x^{-1}(E_j)}(a)) \otimes (\k_{\Cad_x^{-1}(\e_j)}(\psi))   \\
&= \tilde \s_x(D_o(a \otimes \psi)),
\end{align*}
where for the third equality we have used the compatibility condition, 
and for the fifth equality we have used the fact that $\{\Ad_x^{-1}(E_j)\} $
is equally well a basis for $\fg$, with dual basis $\{\Cad_x^{-1}(\e_j)\} $.
\end{proof}
The representation $\tilde \s$ is unitary, so it extends to a unitary
representation on the Hilbert space completion 
of $\ciA \otimes \cS$. We will discuss further the possible existence 
of the representation $\s $ in Section \ref{secsp}.

\begin{proposition}
\label{invar}
With assumptions and notation as above, 
the seminorm $L^{D_o} $ defined in Notation \ref{dnorm}
is invariant under the action $\a $ in the sense that
\[
 L^{D_o}(\a_x(a)) =  L^{D_o}(a) 
\]
for all  $a \in \ciA $ and $x \in G$. 
\end{proposition}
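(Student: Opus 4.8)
The plan is to realize $[D_o, M_{\a_x(a)}]$ as a unitary conjugate of $[D_o, M_a]$; since conjugation by a unitary preserves the operator norm, the invariance of $L^{D_o}$ is then immediate from its definition in Notation \ref{dnorm}.

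The first step is to record the covariance of the left regular representation under the group action: for all $a \in \ciA$ and $x \in G$,
\[
M_{\a_x(a)} = \tilde\s_x \, M_a \, \tilde\s_x^{-1}
\]
as operators on $\ciA \otimes \cS$, where $\tilde\s = \a \otimes \s$ is the representation introduced just before Proposition \ref{commut1}. This is a one-line check on an elementary tensor $b \otimes \psi$, using only that each $\a_x$ is an algebra homomorphism: $\tilde\s_x M_a \tilde\s_x^{-1}(b\otimes\psi) = \tilde\s_x\bigl(a\,\a_x^{-1}(b)\otimes\s_x^{-1}(\psi)\bigr) = \a_x(a)\, b \otimes\psi$. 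Note that $\tilde\s_x$ and $M_a$ both preserve the dense subspace $\ciA\otimes\cS$ on which $D_o$ acts.

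Next I would invoke Proposition \ref{commut1}: $D_o$ commutes with every $\tilde\s_y$, hence in particular with both $\tilde\s_x$ and $\tilde\s_x^{-1} = \tilde\s_{x^{-1}}$. Combining this with the covariance formula,
\[
[D_o, M_{\a_x(a)}] = [D_o,\ \tilde\s_x M_a \tilde\s_x^{-1}] = \tilde\s_x\,[D_o, M_a]\,\tilde\s_x^{-1}.
\]
Because $\tilde\s$ is a unitary representation, $\tilde\s_x$ extends to a unitary on the Hilbert-space completion $L^2(\cA,\tau)\otimes\cS$, and $[D_o, M_a]$ is already known to be bounded (Proposition \ref{dcom}); so the displayed identity, read on the completion, exhibits $[D_o, M_{\a_x(a)}]$ as a unitary conjugate of $[D_o, M_a]$. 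Taking norms gives $\|[D_o, M_{\a_x(a)}]\| = \|[D_o, M_a]\|$, i.e. $L^{D_o}(\a_x(a)) = L^{D_o}(a)$.

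I do not expect a genuine obstacle here; the only points requiring a little care are keeping track of the direction of the conjugation in the covariance formula and checking that all the manipulations stay on the invariant dense subspace $\ciA\otimes\cS$. It is worth noting that the argument can be made without assuming the existence of the genuine representation $\s$: starting instead from Proposition \ref{dcom} and changing to the basis $\{\Ad_x(E_j)\}$ of $\fg$ (with dual basis $\{\Cad_x(\e_j)\}$), together with the identity $\a_{\Ad_x(E_j)}(\a_x(a)) = \a_x(\a_{E_j}(a))$, one rewrites $[D_o, M_{\a_x(a)}] = \sum_j M_{\a_x(\a_{E_j}(a))}\otimes\k_{\Cad_x(\e_j)}$; here $M_{\a_x(\,\cdot\,)}$ is conjugation by the unitary implementing $\a_x$ on $L^2(\cA,\tau)$, while $\k\circ\Cad_x$ is unitarily equivalent to $\k$ for each fixed $x$ (the Bogoliubov automorphism $\Cad_x$ being implemented by a unitary on $\cS$, since $\Cad_x$ lies in the identity component of the orthogonal group of $\fg'$), and the same norm-preservation argument then applies.
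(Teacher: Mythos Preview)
Your proof is correct and follows essentially the same approach as the paper: establish the covariance $M_{\a_x(a)} = \tilde\s_x M_a \tilde\s_x^{-1}$ by direct computation on elementary tensors, combine with $D_o\tilde\s_x = \tilde\s_x D_o$ from Proposition~\ref{commut1} to get $[D_o, M_{\a_x(a)}] = \tilde\s_x [D_o, M_a]\tilde\s_x^{-1}$, and conclude by unitarity of $\tilde\s_x$. Your closing remark sketching an alternative argument via the basis change $\{\Ad_x(E_j)\}$ and unitary implementation of the Bogoliubov automorphism is a worthwhile observation not present in the paper, since it decouples the invariance from the standing assumption that a compatible $\s$ exists.
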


\begin{proof}
Note that for any $a, b \in \ciA $, $\psi \in \cS$ and $x \in G$ we have
\begin{align*}
 (\tilde \s_x M_a  \tilde \s_x^{-1})(b\otimes \psi)  
& =  (\a_x \otimes\s_x)(M_a \otimes I^\cS) (\a_x^{-1} \otimes \s_x^{-1} )(b\otimes \psi)  \\
&= (\a_x (a (\a_x^{-1} (b)) \otimes \psi = M_{\a_x(a)} ( b\otimes \psi) ,
\end{align*}
where we have used $M$ to denote the left action of $\cA$ on both
$\cA$ and $\cA \otimes \cS$. Thus
\[
(\tilde \s_x M_a  \tilde \s_x^{-1}) = M_{\a_x(a)} .
\]
Since $\tilde \s_x $ commutes with $D_o$, it follows that
\[
[D_o, M_{\a_x(a)} ] = D_o \tilde \s_x M_a  \tilde \s_x^{-1}  
- \tilde \s_x M_a  \tilde \s_x^{-1} D_o               
=  \tilde \s_x [D_o, M_a]  \tilde \s_x^{-1}  .
\]
Since $ \tilde \s_x $ is a unitary operator for the inner product on
$\cA \otimes \cS$, we obtain the desired equality.
\end{proof}


\section{Charge Conjugation}
\label{charge}

In \cite{GaG} it is shown that a Dirac operator constructed in the way described above possesses an  
important structure on its domain, namely, a ``real structure'',
related to a charge conjugation operator, as first used by Connes \cite{Cn3}
for non-commutative geometry. 
We do not need to use this structure later,
so we will give here only a brief description of it, with references to 
the literature for further details.

Since $ \bC\ell(\fg')$ is the complexification of the Clifford algebra 
$C\ell(\fg')$ over $\bR $, it has the standard complex conjugation operator,
which is a conjugate-linear algebra automorphism. We denote it by
$q \mapsto \bar q$ for $q \in \bC\ell(\fg')$. Let $\chi $ denote
the usual grading automorphism on $ \bC\ell(\fg')$ determined by
$ \chi(\mu) =-\mu $ for $\mu\in \fg'$. By definition, the charge conjugation
at the level of the Clifford algebra is the conjugate-linear automorphism,
$c $, obtained by composing complex conjugation with $\chi $. Note that on
the even subalgebra
$ \bC\ell^e(\fg')$ it is just complex conjugation.

But the operator that we need is a conjugate-linear operator 
$\mathsf{C}_\cS$ on $\cS $ that implements
$c $ for the representation $\k $, that is, such that
\[
\k_{c(q)} = \mathsf{C}_\cS \k_q \mathsf{C}_\cS^{-1}
\]
for all $q \in \bC\ell(\fg')$. Furthermore, the operator $\mathsf{C}_\cS $ is required 
to respect the inner product on $\cS $ in the sense that
$\< \mathsf{C}_\cS (\psi), \mathsf{C}_\cS\phi \> =\<\phi, \psi\> $ 
for all $\psi, \phi \in \cS $.
Then $\mathsf{C}_\cS $ is unique up to a scaler multiple of modulus 1.
It is normalized to satisfy  $\mathsf{C}_\cS^2=\pm I^\cS $, where the sign 
depends on $\dim(\fg') $.

Then the charge-conjugation operator, $ \mathsf{C} $, on 
$\cA \otimes \cS$ is defined on elementary tensors by 
\[
 \mathsf{C}(a \otimes \psi) = a^* \otimes \mathsf{C}_\cS(\psi) .
\] 
It is conjugate-linear,
and respects the $\cA $-valued inner product on $\cA \otimes \cS$.
Its most important property is that
\[
[a, \mathsf{C}b^* \mathsf{C}^{-1}] = 0
\]
for all $a,b\in \cA $, so that $b\mapsto \mathsf{C}b^* \mathsf{C}^{-1} $ 
gives a right action of
$\cA $ on $\cA \otimes \cS$ such that $\cA \otimes \cS$ is an
$\cA $-$\cA $-bimodule. When $\dim(\fg') $ is even we also have
$ \mathsf{C}\bg =\pm \bg \mathsf{C} $, 
where the sign depends on $\dim(\fg') $.
All this is summarized by saying that the charge conjugation operator
$ \mathsf{C} $ provides a ``real structure'' on $\cA \otimes \cS$.

For the Dirac operator $D_o $ constructed above we then have the very 
important condition that
\[
[[D_o,a] , \mathsf{C}b^* \mathsf{C}^{-1}] = 0
\]
for all $a,b\in \ciA $. It is called the ``first-order condition'', and reflects 
the fact that $ D_o $ is like a differential operator of order one. Furthermore,
we have $ \mathsf{C}D_o =\pm D_o\mathsf{C} $, where again the sign 
depends on $\dim(\fg') $.  When $\cH $ is the Hilbert-space completion of
$\cA \otimes \cS$, the triple $(\cA, \cH, D_o) $ is an example of the notion of
``spectral triple'' introduced by Connes, and $(\cA, \cH, D_o, C) $
is an example of a ``real spectral triples". For details about all of this see
\cite{Brt, Cn3, Cn5, CnMr, DbD, GVF, Vrl, vSj}. 

\begin{remark}
\label{ques1}
The results obtained in these first sections suggest the following questions:

1. For a given compact connected Lie group $G$, how can one characterize which of its ergodic 
actions $(\cA, \a)$ have the property that the sub-bimodule, $\O$, of $\O_o = \ciA \otimes \fg'$
generated by the range of the derivation $\rd $ is (finitely generated) projective as a
right $\ciA $-module?

2. Among those actions for which $\O $ is a projective module, how does one characterize those
such that $\O $ also admits a ``real structure'' of the kind sketched above? 

I have not investigated these questions.

Note that much concerning the classification of ergodic actions
of connected compact Lie groups is a mystery.
Good answers are known only for $G$ commutative \cite{OPT},
or for $ G= SU(2)$ \cite{Wss} (and subsequent papers) as far as I know.
\end{remark} 


\section{Casimir operators}
\label{cas}

Dirac found his famous equation that predicted the existence of the positron 
because he was looking for a first-order 
differential operator that is a square root of the Klein-Gordan operator, 
which is the appropriate
Laplace-type operator for flat space-time. So it is of interest to examine 
the square of the
Dirac operator that we have defined above to see whether it is related to a 
Laplace operator. For a Lie group, $G$, whose Lie algebra 
$\fg$ has a chosen $Ad$-invariant
(non-degenerate) inner product, the appropriate Laplace operators 
come from the degree-2 Casimir element in the universal enveloping 
algebra $\mathcal{U} (\fg)$ of $\fg$.
The Casimir element depends on the choice of inner product. Let
$\{E_j \} $ be a basis for $\fg$ that is orthonormal for the chosen inner product.
Then the Casimir element, $ C $, is defined by $ C = \sum_j (E_j)^2$.
(We do not include the minus sign which is often used, so the image
of $C$ by the infinitesimal version of a 
unitary representations of $ G $ will be a non-positive operator.).
Then $C$ is in the center of  $\mathcal{U} (\fg)$, 
and so for any irrep $(U, \cH)$
of $G $, with corresponding representation of $\fg $ and $\mathcal{U} (\fg)$, 
the operator
$U_ C $ will be a scalar multiple of the identity operator on $\cH $. 
We will find this useful in Section \ref{fuzzy} for determining the spectrum 
of the Dirac operator when $ G =SU(2)$. Anyway, for $D_o $ acting on
$\ciA \otimes \cS$ as $D_o = \sum \a_{E_j} \otimes \k_{e_j} $ we have
(as in the first displayed equation in the proof of 
theorem 6.3 of \cite{GaG}):

\begin{proposition}
\label{square}
Let $\{\e_j\}$ be the basis for $\fg'$ dual to the orthonormal basis $\{E_j\}$
for $\fg$. Then
\[
D_o^2 = -\a_C \otimes I^\cS + \sum_{j<k} \a_{[E_j,E_k]} \otimes \k_{\e_j\e_k}
\]
\end{proposition}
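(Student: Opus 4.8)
The plan is to expand $D_o^2$ directly from the explicit formula $D_o = \sum_j \a_{E_j}\otimes \k_{\e_j}$ of \eqref{eqdefdir}, and then sort the resulting double sum into a ``diagonal'' part ($j=k$), which will produce the Casimir term, and an ``off-diagonal'' part ($j\neq k$), which will produce the commutator term. The two ingredients that make this work are the Clifford relation applied to the $\k_{\e_j}$ factors and the fact, recalled in Section \ref{sec1}, that $X\mapsto\a_X$ is a Lie algebra homomorphism from $\fg$ into the derivations of $\ciA$.

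First I would compute, on the dense domain $\ciA\otimes\cS$,
\[
D_o^2 = \Bigl(\sum_k \a_{E_k}\otimes\k_{\e_k}\Bigr)\Bigl(\sum_j \a_{E_j}\otimes\k_{\e_j}\Bigr) = \sum_{j,k}\a_{E_j}\a_{E_k}\otimes\k_{\e_j}\k_{\e_k},
\]
where $\a_{E_j}\a_{E_k}$ denotes composition of derivations on $\ciA$ and $\k_{\e_j}\k_{\e_k}$ is the operator product (equivalently $\k_{\e_j\e_k}$, since $\k$ is an algebra homomorphism). Because $\{E_j\}$ is orthonormal for the chosen inner product on $\fg$, the dual basis $\{\e_j\}$ is orthonormal for the dual inner product on $\fg'$, so $\<\e_j,\e_k\> = \d_{jk}$, and the defining Clifford relation $\mu\nu+\nu\mu = -2\<\mu,\nu\>1$ gives $\k_{\e_j}^2 = -I^\cS$ together with $\k_{\e_j}\k_{\e_k} = -\k_{\e_k}\k_{\e_j}$ for $j\neq k$.

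Next I would split off the $j=k$ terms, which contribute $\sum_j \a_{E_j}^2\otimes(-I^\cS) = -\a_C\otimes I^\cS$, using the definition $C=\sum_j (E_j)^2$ of the Casimir element (so that $\a_C = \sum_j \a_{E_j}^2$ as an operator on $\ciA$). For the $j\neq k$ terms I would pair the index pairs $(j,k)$ and $(k,j)$ and use the skew-commutativity just noted:
\[
\sum_{j\neq k}\a_{E_j}\a_{E_k}\otimes\k_{\e_j}\k_{\e_k} = \sum_{j<k}\bigl(\a_{E_j}\a_{E_k}-\a_{E_k}\a_{E_j}\bigr)\otimes\k_{\e_j}\k_{\e_k} = \sum_{j<k}\a_{[E_j,E_k]}\otimes\k_{\e_j\e_k},
\]
the last equality invoking once more that $\a$ is a Lie algebra homomorphism. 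Adding the two contributions yields the asserted identity.

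This argument is essentially bookkeeping, so I do not anticipate a serious obstacle; the only points that require care are the sign conventions — verifying that the minus sign in the Clifford relation is exactly what turns the diagonal sum into $-\a_C$ (i.e.\ that $\k_{\e_j}^2=-I^\cS$, not $+I^\cS$) — and the elementary observation that the dual basis of an orthonormal basis is again orthonormal. I would also note explicitly that the whole computation takes place algebraically on $\ciA\otimes\cS$, so no domain or self-adjointness considerations from Proposition \ref{selfad} are needed to justify the manipulations.
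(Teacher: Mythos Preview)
Your proposal is correct and follows essentially the same approach as the paper's own proof: expand the double sum, split into diagonal and off-diagonal pieces, apply the Clifford relations $\e_j^2=-1$ and $\e_j\e_k=-\e_k\e_j$ for $j\neq k$, and identify the resulting commutator $\a_{E_j}\a_{E_k}-\a_{E_k}\a_{E_j}$ with $\a_{[E_j,E_k]}$. Your added remarks on the orthonormality of the dual basis and on working purely algebraically on $\ciA\otimes\cS$ are appropriate but do not depart from the paper's argument.
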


\begin{proof}
Since in $\bC\ell(\fg')$ we have $\e_j \e_k = - \e_k \e_j$ and $\e_j^2 = -1$, we have
\begin{align*}
D_o^2 &= \sum_{j,k} \a_{E_j}\a_{E_k} \otimes \k_{\e_j}\k_{\e_k} \\
&= \ \sum_j (\a_{E_j})^2 \otimes (\k_{\e_j})^2  + \sum_{j<k}  \a_{E_j}\a_{E_k} \otimes \k_{\e_j \e_k} 
+  \sum_{j>k}  \a_{E_j}\a_{E_k} \otimes \k_{\e_j \e_k}   \\
&= -\a_C\otimes I^\cS + \sum_{j<k} ( \a_{E_j}\a_{E_k} -  \a_{E_k}\a_{E_j})  \otimes \k_{\e_j \e_k}  \\
&= -\a_C\otimes I^\cS + \sum_{j<k} \a_{[E_j,E_k]} \otimes \k_{\e_j \e_k} \  ,
\end{align*}
as desired.
\end{proof}

It is appropriate to view $\a_C$ as the Laplace operator on $\ciA$, and so the term
$ -\a_C\otimes I^\cS$ is very analogous to what one obtains for the square of the Dirac operator
on flat $\bR^d$. The second term can be viewed as some kind of curvature term
analogous to the curvature term in the Lichnerowicz formula \cite{BGV, Frd, LwM},
but I do not know how to define a general version of curvature that
would make this precise.

For later use in Section \ref{fuzzy} 
we need the following result about the image of $ C $ under the 
representation
$\tilde \s = \a \otimes  \s$ defined just before Proposition \ref{commut1}.
We let $\s $ also denote the corresponding representation of $\fg $.
Then $\tilde \s $, as a representation of $ \fg $, is given by
$\tilde \s_X =\a_X \otimes I^\cS + I^\cA \otimes \s_X $ for any $ X\in \fg $,
where $I^\cS$ and $I^\cA$ denote the identity operators on $\cS $ 
and $\cA $ respectively.
 
\begin{proposition}
\label{bigcas}
For notation as above, we have
\[
\tilde \s_C = \a_C \otimes I^\cS +  2 \sum_j (\a_{E_j} \otimes \s_{E_j})  + I^\cA \otimes \s_C  ,
\]

\end{proposition}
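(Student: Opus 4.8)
The plan is to expand $\tilde\s_C=\sum_j(\tilde\s_{E_j})^2$ using the fact, recalled just before the statement, that $\tilde\s_X=\a_X\otimes I^\cS+I^\cA\otimes\s_X$ as a representation of $\fg$. Since $\{E_j\}$ is orthonormal for the chosen $\Ad$-invariant inner product on $\fg$, the Casimir element is $C=\sum_j(E_j)^2$, and because $\tilde\s$ is a Lie algebra representation it extends to the universal enveloping algebra, so $\tilde\s_C=\sum_j(\tilde\s_{E_j})^2$ with each $\tilde\s_{E_j}$ interpreted as the operator $\a_{E_j}\otimes I^\cS+I^\cA\otimes\s_{E_j}$ on $\ciA\otimes\cS$.

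First I would write, for each fixed $j$,
\[
(\tilde\s_{E_j})^2=(\a_{E_j}\otimes I^\cS)^2+(\a_{E_j}\otimes I^\cS)(I^\cA\otimes\s_{E_j})+(I^\cA\otimes\s_{E_j})(\a_{E_j}\otimes I^\cS)+(I^\cA\otimes\s_{E_j})^2.
\]
The two cross terms are equal: both equal $\a_{E_j}\otimes\s_{E_j}$, since operators of the form $X\otimes I$ and $I\otimes Y$ commute on a tensor product. Hence $(\tilde\s_{E_j})^2=(\a_{E_j})^2\otimes I^\cS+2(\a_{E_j}\otimes\s_{E_j})+I^\cA\otimes(\s_{E_j})^2$. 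Then I would sum over $j$ and identify $\sum_j(\a_{E_j})^2\otimes I^\cS=\a_C\otimes I^\cS$ (by definition of $\a_C$ as the image of $C$ under the representation $\a$ of $\fg$ extended to $\mathcal U(\fg)$) and likewise $\sum_j I^\cA\otimes(\s_{E_j})^2=I^\cA\otimes\s_C$, which gives exactly the claimed identity
\[
\tilde\s_C=\a_C\otimes I^\cS+2\sum_j(\a_{E_j}\otimes\s_{E_j})+I^\cA\otimes\s_C.
\]

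There is essentially no obstacle here: this is a routine binomial-expansion computation on a tensor product, entirely parallel to the expansion carried out in the proof of Proposition \ref{square}, except that here the two factors genuinely commute so no Clifford relations or bracket terms appear. The one point worth stating carefully is that $\tilde\s_C$ means the image of the degree-2 Casimir element under the enveloping-algebra representation induced by $\tilde\s$, and that this is computed by literally substituting $\tilde\s_{E_j}$ into $\sum_j(E_j)^2$ — a fact that holds for any Lie algebra representation and does not depend on $\{E_j\}$ being orthonormal for anything beyond what makes $C=\sum_j(E_j)^2$ the Casimir. I would make that substitution explicit in the first line of the proof and then let the commuting-tensor-factor bookkeeping finish it.
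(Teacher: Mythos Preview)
Your proof is correct and is essentially identical to the paper's own proof: both simply expand $\tilde\s_C=\sum_j(\a_{E_j}\otimes I^\cS+I^\cA\otimes\s_{E_j})^2$, use that the two tensor-factor operators commute to collect the cross terms as $2\sum_j(\a_{E_j}\otimes\s_{E_j})$, and then identify the remaining sums as $\a_C\otimes I^\cS$ and $I^\cA\otimes\s_C$.
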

\begin{proof}
\begin{align*}
\tilde \s_C &= \sum_j (\a_{E_j} \otimes I^\cS + I^\cA \otimes \s_{E_j})^2    \\
&=  \sum_j (\a_{E_j})^2 \otimes I^\cS + 2 \sum_j (\a_{E_j} \otimes \s_{E_j}) + \sum_j  I^\cA \otimes (\s_{E_j})^2  \\
&=  \a_C \otimes I^\cS + 2 \sum_j (\a_{E_j} \otimes \s_{E_j}) +  I^\cA \otimes \s_C .
\end{align*}
\end{proof} 
The operator $ \sum_j (\a_{E_j} \otimes \s_{E_j}) $ from the middle term above, looks somewhat
like our expression for the Dirac operator. We will see in Section \ref{fuzzy} that for the case of
$G = SU(2) $ it does in fact essentially coincide with the Dirac operator. This gives us
a strong tool for computing the spectrum of the Dirac operator.

The results above have some resonance with results in the neighborhood of equation 2.7, 
theorem 2.13 and theorem 2.21 of
\cite{Kst} relating Dirac operators and Casimir elements, but that paper is aimed only at
homogeneous spaces $G/K $ where $G $ is a compact semisimple group and
$K $ is a connected subgroup of $ G $ whose rank is the same as that of $ G $.
That setting is very pertinent to the coadjoint orbits that we will consider later.


\section{First facts about spinors}
\label{fspin}

Let $\fm$ be a finite-dimensional Hilbert space over $\bR$, and
let $\bC\ell(\fm)$ denote the complex 
Clifford algebra for $\fm$, 
much as discussed above for $\fg'$. 
For its standard involution $\bC\ell(\fm)$ is a C*-algebra.

There is a special 
element, $\bg$, of $ \bC\ell(\fm)$, called the ``chirality element" \cite{GVF},
that is a suitably normalized 
product of all the elements of a basis for 
$\fm \subset \bC\ell(\fm)$, and that has the properties that
$\bg^2 = 1$, $\bg \neq 1$ and $\bg^* = \bg$. 
Let $n = \dim(\fm)$. 
If $n$ is even then $ \bC\ell(\fm)$ is
isomorphic to a full matrix algebra. For this case $\bg$
splits an irreducible representation of $\bC\ell(\fm)$ 
into two subspaces of equal dimension that are carried 
into themselves by the subalgebra $\bC\ell^e(\fm)$ of even elements of 
$\bC\ell(\fm)$. There is a standard way of explicitly constructing 
an irreducible representation of $\bC\ell(\fm)$ on a Fock-space.
We will need to use some of the main steps of that construction 
in Section~\ref{mspin}.

When $n$ is odd, $\bg$ is in the center of $\bC\ell(\fm)$
and it splits $\bC\ell(\fm)$ into the direct sum of 2
full matrix algebras. Thus up to equivalence $\bC\ell(\fm)$
has two inequivalent irreducible representations, neither of which is faithful. 
The subalgebra,  $\bC\ell^e(\fm)$, of even element of  $\bC\ell(\fm)$ 
is itself a Clifford algebra 
on a vector space of even dimension, and so is a full matrix algebra,
which has a unique irreducible representation (up to isomorphism). 
We will view the two irreducible representations of $\bC\ell(\fm)$
as being the irreducible representation of 
$\bC\ell^e(\fm)$, extended to  $\bC\ell(\fm)$ by sending  $\bg $
to $ +I $ for one of them, and to $-I $ for the other. (Notice that the restriction to
$\bC\ell^e(\fm)$ of any (unital) representation of  $\bC\ell(\fm)$ will be a faithful 
representation of $\bC\ell^e(\fm)$.)

Now let $\k $ be a representation of $\bC\ell(\fm)$ on 
a finite-dimensional Hilbert space $\cS $. 
To deal with the fact that $\k $ need not be a faithful representation
of $\bC\ell(\fm)$, 
we need the following technical result.

\begin{lemma}
\label{oddsp}
Let notation be as above, and let $\e_1, \cdots, \e_p $ be elements of $\fm $.
Let $\cD $ be any unital C*-algebra, and let $(\cK, M) $ be a faithful 
representation of $\cD $. Let
$d_1, \cdots, d_p $ be elements 
of $\cD $, and let $ t =\sum d_j \otimes \e_j $, viewed as an 
element of the C*-algebra $\cD \otimes \bC\ell(\fm)$. Then 
\[
\|(M \otimes \k )(t)\| =\|t\|   .
\]
\end{lemma}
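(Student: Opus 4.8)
The plan is to reduce everything to the \emph{even} part of the Clifford algebra, where $\k$ is automatically faithful. Since $M\otimes\k$ is a $*$-homomorphism from $\cD\otimes\bC\ell(\fm)$ into $\cB(\cK\otimes\cS)$, we have
\[
\|(M\otimes\k)(t)\|^2 = \|(M\otimes\k)(t)^*(M\otimes\k)(t)\| = \|(M\otimes\k)(t^*t)\|,
\]
and likewise $\|t\|^2 = \|t^*t\|$ by the C*-identity. So it suffices to show that $M\otimes\k$ is isometric on the single element $t^*t$.

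First I would compute $t^*t = \sum_{j,k}(d_j^*d_k)\otimes(\e_j\e_k)$. The crucial observation is that every product $\e_j\e_k$ is a product of \emph{two} generators, hence lies in the even subalgebra $\bC\ell^e(\fm)$; consequently $t^*t$ lies in the C*-subalgebra $\cD\otimes\bC\ell^e(\fm)$ of $\cD\otimes\bC\ell(\fm)$. (This is exactly why one cannot argue the same way for $t$ itself: $t$ lives in $\cD\otimes\fm\subset\cD\otimes\bC\ell(\fm)$, which involves only odd elements, on which a non-faithful $\k$ can genuinely shrink norms.)

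Next I would invoke the structure of $\bC\ell^e(\fm)$ recalled in Section \ref{fspin}: whatever the parity of $n=\dim(\fm)$, the restriction $\k|_{\bC\ell^e(\fm)}$ is a faithful representation of $\bC\ell^e(\fm)$. (When $n$ is even, $\bC\ell(\fm)$ is a full matrix algebra, hence simple, so $\k$ itself is already faithful; when $n$ is odd, faithfulness on the even part is precisely the remark made there after the description of the two inequivalent irreducibles.) Since $M$ is faithful by hypothesis and $\bC\ell^e(\fm)$ is finite-dimensional — so that $\cD\otimes\bC\ell^e(\fm)$ carries a unique C*-norm and is already complete — the tensor-product representation $M\otimes(\k|_{\bC\ell^e(\fm)})$ of $\cD\otimes\bC\ell^e(\fm)$ is faithful, and a faithful representation of a C*-algebra is isometric. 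Applying this to $t^*t$ gives $\|(M\otimes\k)(t^*t)\| = \|t^*t\|$, and taking square roots finishes the proof.

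I do not foresee a serious obstacle. The only points needing a little care are the standard fact that a tensor product of faithful representations is faithful on the (here unique) C*-tensor product, and the key recognition that $t^*t$ — unlike $t$ — lands in the even subalgebra, on which the possibly non-faithful $\k$ nonetheless restricts faithfully.
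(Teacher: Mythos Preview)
Your argument is correct and takes a genuinely different route from the paper. The paper splits on the parity of $n=\dim(\fm)$: for $n$ even, $\bC\ell(\fm)$ is simple so $\k$ is already faithful and $M\otimes\k$ is isometric on all of $\cD\otimes\bC\ell(\fm)$; for $n$ odd, it uses that the chirality element $\bg$ is an odd central unitary with $\k_\bg=\pm I^\cS$, so left multiplication by $1_\cD\otimes\bg$ is a norm-preserving map (on both the algebra side and the represented side) that carries $t$ into $\cD\otimes\bC\ell^e(\fm)$, where faithfulness of $\k|_{\bC\ell^e(\fm)}$ finishes the job. Your trick of passing to $t^*t$ via the C*-identity accomplishes the same reduction to the even subalgebra without invoking $\bg$ and without a case split, since $\e_j^*\e_k\in\bC\ell^e(\fm)$ regardless of parity. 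Both proofs ultimately rest on the same fact---faithfulness of $\k$ on $\bC\ell^e(\fm)$---but yours is a bit more streamlined. One cosmetic remark: with the paper's convention $\e_j^*=-\e_j$, the expansion reads $t^*t=\sum_{j,k}d_j^*d_k\otimes\e_j^*\e_k=-\sum_{j,k}d_j^*d_k\otimes\e_j\e_k$; the sign is of course irrelevant to your argument.
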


\begin{proof}
Since $ \bC\ell(\fm)$ is finite dimensional, there is a unique
C*-algebra norm on $\cD \otimes \bC\ell(\fm)$.
If the dimension $n $ of $\fm $ is even, then the representation
$\k$ of $\bC\ell(\fm)$ must be faithful, and so the homomorphism
$M \otimes \k $ between C*-algebras must be faithful and so 
isometric. This gives the desired result.

If $n $ is odd, then
because $\bg $ is an odd unitary element of $\bC\ell(\fm)$, left
multiplication by $\bg $ is an isometry from the odd subspace of
$ \bC\ell(\fm)$ onto $ \bC\ell^e(\fm)$. Thus multiplication by
$ 1_\cD \otimes \bg $ is an isometry that carries $t $ into
$\cD \otimes  \bC\ell^e(\fm)$. 
But the restriction of $\k $ to 
$ \bC\ell^e(\fm)$ is faithful, and so $M \otimes \k $ is faithful
and so an isometry from 
$\cD \otimes \bC\ell^e(\fm)$ into $\cB(\cK) \otimes \cB(\cS) $ .
Consequently
\[
\|t\| = \|(1_\cD \otimes \bg )t\| =      \|(M \otimes \k)( 1_\cD \otimes \bg)  t\| 
=\|(M \otimes \k)(t)\|,
\]
since $\k_{\bg} =\pm I^\cS $.
 \end{proof}

The following proposition, which will be of use later, 
is a typical way in which we will apply the above Lemma.

\begin{proposition}
\label{prodnorm}
Let $\cA $ be a unital C*-algebra, and let $\a $ be an ergodic
action of the Lie group $G $ on $\cA $. Fix a choice of a 
finite-dimensional spinor 
space $\cS $, and use it to construct the Dirac operators $ D_o^\cA $ 
for $\cA $ as in equation \ref{eqdefdir}. Then
\[
 L^{D_o^\cA}(a) = \|[D_o, M_a]\| =  \|\sum \a_{E_j}(a) \otimes {\e_j}\|,
\]
where the norm on the right is that on the C*-algebra $\cA \otimes \bC\ell(\fg')$.
Thus $ L^{D_o^\cA} $  is independent of the choice of the spinor 
space $ \cS $.
\end{proposition}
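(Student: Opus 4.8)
The plan is to combine Proposition~\ref{dcom}, which identifies $[D_o, M_a]$ with the operator $\sum_j M_{\a_{E_j}(a)} \otimes \k_{\e_j}$ acting on $\ciA \otimes \cS$ (completed), with Lemma~\ref{oddsp} applied to a well-chosen faithful representation. First I would note that by Proposition~\ref{dcom} the operator $[D_o, M_a]$ is exactly $(M \otimes \k)(t)$, where $t = \sum_j \a_{E_j}(a) \otimes \e_j \in \cA \otimes \bC\ell(\fg')$ and $M$ is the left regular representation of $\cA$ on the Hilbert space $L^2(\cA,\tau)$. The one point needing care is that $L^{D_o}(a)$ is defined as the operator norm of $[D_o, M_a]$ acting on $L^2(\cA,\tau)\otimes\cS$, so I want $M$ to be a \emph{faithful} representation of $\cA$ for Lemma~\ref{oddsp} to apply. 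This holds because $\tau$ is a faithful tracial state (it is the unique $\a$-invariant tracial state, and $\a$-invariant states on a C*-algebra with ergodic action of a compact group are faithful, a standard fact that underlies the GNS construction used to define $D_o$ as an unbounded operator).

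With $M$ faithful, I would apply Lemma~\ref{oddsp} with $\cD = \cA$, $\cK = L^2(\cA,\tau)$, $\fm = \fg'$, $p = \dim\fg$, $d_j = \a_{E_j}(a)$, and $\e_j$ the dual basis vectors. The lemma gives $\|(M\otimes\k)(t)\| = \|t\|$, i.e.
\[
L^{D_o^\cA}(a) = \|[D_o, M_a]\| = \Big\|\sum_j \a_{E_j}(a) \otimes \e_j\Big\|,
\]
the norm on the right being computed in the C*-algebra $\cA \otimes \bC\ell(\fg')$ (which carries a unique C*-norm since $\bC\ell(\fg')$ is finite-dimensional). The right-hand side makes no reference to $\cS$ or $\k$, which immediately yields the claimed independence of the choice of spinor space.

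I do not anticipate a serious obstacle here; this proposition is essentially a packaging of earlier results. The only substantive point is justifying that the GNS representation $M$ of $\cA$ on $L^2(\cA,\tau)$ is faithful, which is needed to invoke Lemma~\ref{oddsp} — but this follows from the faithfulness of $\tau$, which was already implicitly used when defining $D_o$ as an operator with dense domain $\ciA\otimes\cS$ in $L^2(\cA,\tau)\otimes\cS$. One should also remark that the basis $\{E_j\}$ may be taken orthonormal (so that $\{\e_j\}$ is the dual orthonormal basis), consistent with how $D_o$ was written in equation~\eqref{eqdefdir}; but as noted after that equation, the expression for $D_o$, and hence the norm above, is independent of the choice of basis, so no generality is lost.
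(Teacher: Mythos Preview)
Your proposal is correct and follows essentially the same route as the paper: invoke Proposition~\ref{dcom} to write $[D_o,M_a]=(M\otimes\k)(t)$ with $t=\sum_j\a_{E_j}(a)\otimes\e_j$, then apply Lemma~\ref{oddsp} with $\cD=\cA$ and $M$ the GNS representation on $L^2(\cA,\tau)$ to conclude $\|(M\otimes\k)(t)\|=\|t\|$. Your extra care in noting that $M$ is faithful because $\tau$ is faithful is a welcome clarification that the paper leaves implicit.
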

\begin{proof}
We use Lemma \ref{oddsp} for the last equality in
\begin{align*}
\|[D_o, M_a]\| &= \|\sum M_{\a_{E_j}(a)} \otimes \k_{\e_j}\|    \\
&=   \|(M \otimes \k)(\sum \a_{E_j}(a) \otimes {\e_j})\|   
= \|\sum \a_{E_j}(a) \otimes {\e_j}\| .
\end{align*}
\end{proof}

The following proposition will be important for the proof of our main theorem.
In that proof the $\theta $ here will be a Berezin symbol map.

\begin{proposition}
\label{ucplip}
Let $\cA $ and $\cB $ be unital C*-algebras, and let $\a $ and $\b $ be ergodic
actions of the Lie group $G $ on $\cA $ and $\cB $. Fix a choice of a 
finite-dimensional spinor 
space $\cS $, and use it to construct Dirac operators $ D_o^\cA $ and 
$ D_o^\cB $ for $\cA $ and $\cB $ as above. 
Let $L^{D_o^\cA} $ and $ L^{D_o^\cB} $ be the seminorms determined 
by $ D_o^\cA $ and $ D_o^\cB $ as
in Notation \ref{dnorm}. 
Let $\theta $ be a 
unital completely positive operator from
$\cA $ to $\cB $ that intertwines the actions $\a $ and $\b $,
so that it carries $\ciA $ into $\ciB $.
Then for any $a\in\ciA $ we have
\[
 L^{D_o^\cB} (\theta(a))  \leq L^{D_o^\cA}(a)  .
\]
\end{proposition}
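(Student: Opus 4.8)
The plan is to reduce the inequality to a statement about norms in Clifford algebras and then exploit complete positivity. By Proposition \ref{prodnorm}, we have $L^{D_o^\cA}(a) = \|\sum_j \a_{E_j}(a) \otimes \e_j\|$ computed in $\cA \otimes \bC\ell(\fg')$, and similarly $L^{D_o^\cB}(\theta(a)) = \|\sum_j \b_{E_j}(\theta(a)) \otimes \e_j\|$ in $\cB \otimes \bC\ell(\fg')$. So it suffices to show that
\[
\Big\|\sum_j \b_{E_j}(\theta(a)) \otimes \e_j\Big\| \leq \Big\|\sum_j \a_{E_j}(a) \otimes \e_j\Big\|.
\]
The key observation is that because $\theta$ intertwines the actions, $\b_{E_j}(\theta(a)) = \theta(\a_{E_j}(a))$ for each $j$ (differentiate the intertwining identity $\theta(\a_x(a)) = \b_x(\theta(a))$ at the identity). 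Thus the left-hand side equals $\|(\theta \otimes I)(\sum_j \a_{E_j}(a) \otimes \e_j)\|$, where $\theta \otimes I$ maps $\cA \otimes \bC\ell(\fg')$ to $\cB \otimes \bC\ell(\fg')$.

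First I would verify that $\theta \otimes \mathrm{id}_{\bC\ell(\fg')}$ is completely positive (immediate, since $\theta$ is completely positive and $\bC\ell(\fg')$ is finite-dimensional, so there is a unique C*-norm on each algebraic tensor product and the map extends), and unital. Then I would invoke the standard fact that a unital completely positive map between C*-algebras is a contraction, i.e. $\|(\theta \otimes I)(t)\| \leq \|t\|$ for all $t$. Applying this with $t = \sum_j \a_{E_j}(a) \otimes \e_j$ gives exactly the desired inequality. Combining with the two applications of Proposition \ref{prodnorm} completes the proof.

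The only step requiring a little care is the very first one: checking that $\theta$ carries $\ciA$ into $\ciB$ and that $\b_{E_j} \circ \theta = \theta \circ \a_{E_j}$ on $\ciA$. This follows because for $a \in \ciA$ the function $x \mapsto \b_x(\theta(a)) = \theta(\a_x(a))$ is smooth (being $\theta$ composed with the smooth $G$-valued function $x \mapsto \a_x(a)$ and $\theta$ bounded hence continuous), so $\theta(a) \in \ciB$, and differentiating at $x = e$ in the direction $E_j$ yields the intertwining of the infinitesimal generators. I do not anticipate a genuine obstacle here; the content of the proposition is essentially the contractivity of unital completely positive maps, packaged through the Clifford-algebra reformulation of $L^{D_o}$ provided by Proposition \ref{prodnorm}. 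The one thing to state explicitly is that $\theta \otimes I$ being unital and completely positive on these particular C*-algebras is what licenses the norm bound; everything else is routine.
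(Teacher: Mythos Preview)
Your proof is correct and follows essentially the same route as the paper's own argument: apply Proposition \ref{prodnorm} on both sides, use the intertwining to rewrite $\b_{E_j}(\theta(a))$ as $\theta(\a_{E_j}(a))$, and then invoke the contractivity of the unital completely positive map $\theta\otimes I^{\bC\ell(\fg')}$. The paper's proof is slightly terser (it does not spell out the smoothness and infinitesimal-intertwining verification you included), but the logical structure is identical.
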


\begin{proof}
According to Proposition \ref{prodnorm}
\begin{align*}
L^{D_o^\cB}({\theta(a)}) &= \|\sum \b_{E_j}(\theta(a)) \otimes \e_j\| =
\|\sum \theta(\a_{E_j}(a)) \otimes \e_j \|  \\
&= \|(\theta\otimes I^{\bC\ell(\fg')})\sum \a_{E_j}(a) \otimes \e_j \| 
\leq \|\sum \a_{E_j}(a) \otimes \e_j \|  .
\end{align*}
because $\| (\theta\otimes I^{\bC\ell(\fg')}) \| =1$ since
$\theta$ is a unital completely positive operator.
(See section II.6.9 of \cite{Blk2}.)
\end{proof}

\begin{proposition}
\label{ucp}
With the assumptions of Proposition \ref{ucplip},
let $\hat \theta =\theta\otimes I^\cS $, viewed as a 
map from $\ciA \otimes \cS $ to 
$\ciB \otimes \cS $.
Then
\[
\hat \theta D_o^\cA =D_o^\cB \hat \theta
\] 
as operators from $\ciA \otimes \cS $ to 
$\ciB \otimes \cS $. Consequently
 if $\Psi $ is an eigenvector for $D_o^\cA $ with eigenvalue $\l $ then 
$\hat \theta \Psi $ is an eigenvector for $D_o^\cB $ with eigenvalue $\l $ if
$\hat \theta \Psi \neq 0$.
\end{proposition}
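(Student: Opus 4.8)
The plan is to verify the intertwining relation $\hat\theta D_o^\cA = D_o^\cB \hat\theta$ directly on elementary tensors, using the explicit formula \eqref{eqdefdir} for the Dirac operators and the hypothesis that $\theta$ intertwines $\a$ and $\b$ (hence intertwines the infinitesimal actions $\a_X$ and $\b_X$ on smooth elements). The consequence for eigenvectors is then immediate algebra.

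First I would fix a basis $\{E_j\}$ for $\fg$ and the dual basis $\{\e_j\}$ for $\fg'$, so that by \eqref{eqdefdir},
\[
D_o^\cA(a\otimes\psi) = \sum_j \a_{E_j}(a)\otimes\k_{\e_j}(\psi)
\]
for $a\in\ciA$, $\psi\in\cS$, and similarly for $D_o^\cB$ with $\b$ in place of $\a$. Since $\theta$ intertwines the $G$-actions $\a$ and $\b$, i.e. $\theta\circ\a_x = \b_x\circ\theta$ for all $x\in G$, differentiating in $x$ at the identity along $\exp(tX)$ gives $\theta\circ\a_X = \b_X\circ\theta$ on $\ciA$ for every $X\in\fg$; in particular $\theta(\a_{E_j}(a)) = \b_{E_j}(\theta(a))$. (One should note here that $\theta$ carries $\ciA$ into $\ciB$, which is part of the hypothesis and also follows from equivariance plus continuity of $\theta$.) Then for $a\otimes\psi\in\ciA\otimes\cS$,
\[
\hat\theta\bigl(D_o^\cA(a\otimes\psi)\bigr)
= \sum_j \theta(\a_{E_j}(a))\otimes\k_{\e_j}(\psi)
= \sum_j \b_{E_j}(\theta(a))\otimes\k_{\e_j}(\psi)
= D_o^\cB(\theta(a)\otimes\psi)
= D_o^\cB\bigl(\hat\theta(a\otimes\psi)\bigr),
\]
and since both sides are linear in $a\otimes\psi$, the identity holds on all of $\ciA\otimes\cS$. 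For the eigenvector statement: if $D_o^\cA\Psi = \l\Psi$ with $\Psi\in\ciA\otimes\cS$, then $D_o^\cB(\hat\theta\Psi) = \hat\theta(D_o^\cA\Psi) = \hat\theta(\l\Psi) = \l\,\hat\theta\Psi$, so $\hat\theta\Psi$ is an eigenvector for $D_o^\cB$ with eigenvalue $\l$ provided $\hat\theta\Psi\neq 0$.

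There is no serious obstacle here; the only point requiring a word of care is the passage from the $G$-equivariance of $\theta$ to the $\fg$-equivariance on smooth elements, which is justified because $\theta$ is norm-continuous (indeed contractive, being unital completely positive) and linear, so it commutes with the norm-limits defining the derivatives $\a_X$, $\b_X$. Everything else is the formal manipulation above, independent of the choice of basis $\{E_j\}$ as already observed after \eqref{eqdefdir}.
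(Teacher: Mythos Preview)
Your proof is correct and follows essentially the same approach as the paper's own proof: both compute on elementary tensors using the formula \eqref{eqdefdir}, invoke $\theta\circ\a_{E_j}=\b_{E_j}\circ\theta$, and conclude. Your version is slightly more detailed in justifying the passage from $G$-equivariance to $\fg$-equivariance and in writing out the eigenvector consequence, but the argument is the same.
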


\begin{proof}
For $a\in\ciA $ and $\psi\in\cS $ we have
\begin{align*}
\hat \theta (D_o^\cA(a \otimes \psi)) &=\hat \theta(\sum \a_{E_j}(a)\otimes \k_{\e_j}(\psi))
= \sum \theta(\a_{E_j}(a))\otimes \k_{\e_j}(\psi)  \\
& =  \sum \b_{E_j}(\theta(a))\otimes \k_{\e_j}(\psi)  = D_o^\cB (\hat \theta(a\otimes \psi)),
\end{align*}
as needed.
\end{proof}


\section{The C*-metrics}
\label{cmet}
In this section we examine the C*-metrics that are determined 
by the Dirac operators constructed in Section \ref{sec1}.
In the literature there are small variations in the definition of a ``C*-metric".
The following definition is appropriate for this paper. For this purpose 
a ``C*-normed $* $-algebra'' means a normed $* $-algebra whose 
norm satisfies the C*-algebra identity $\|a^*a\| =\|a \|^2 $, so that its 
completion is a C*-algebra. For us the main class of examples 
is $\ciA$ as used earlier.

\begin{definition}
\label{defcmt}
Let $\cA $ be a unital C*-normed $* $-algebra. By a C*-metric on 
$\cA $ we mean a seminorm $ L $ on $\cA $ having the following properties.
For any $a, b\in \cA $:
\begin{enumerate}
\item $ L(a) = 0 $ if and only if $a\in \bC 1_\cA $.
\item $ L(a^*) = L(a) $.
\item $ L $ is lower semi-continuous with respect to the norm of $\cA $.
\item $ L $ satisfies the Leibnitz inequality
\[
L(ab) \leq L(a)\|b\| + \|a\| L(b).
\] 
\item Let $ S(\cA) $ be the state space of $\cA $. Define a metric, $\rho_L$, 
on $ S(\cA) $ by
\begin{equation}
\label{stmet}
\rho^L(\mu, \nu) = \sup\{|\mu(a)-\nu(a)|: a^* = a \ \mathrm{and} \ L(a) \leq 1\}.
\end{equation} 
Without further hypotheses this metric can take value $+\infty$. We require 
that the topology on $ S(\cA) $
from this metric coincide with the weak-$*$ topology on $S(\cA) $. Then,
in particular,
$\rho^L$ will never take value $+\infty$. (The condition $a^* = a$ in the
definition of $\rho^L(\mu, \nu)$ can be omitted without changing
$\rho^L(\mu, \nu)$, as explained just before definition 2.1 of \cite{R6}.)
\end{enumerate}    
If $\cA $ is actually a C*-algebra, and if $ L $ is a seminorm on $\cA $
that is permitted to take the value $+\infty$, but is semi-finite in the sense that
$\cA_f =\{a: L(a) < \infty\}$ is dense in $\cA $, and if the restriction of
$L $ to $\cA_f $ satisfies the 5 properties above, then we will also call
$ L $ a C*-metric (on $\cA$).    \newline
\indent
A pair $(\cA, L)$ consisting of a unital C*-normed $*$-algebra $\cA$ and a C*-metric
$L$ on $\cA$ is called a \emph{compact C*-metric space}.
\end{definition}    

Here is the motivating example. Let $ (X,\rho) $ be a compact metric space, 
and let $ A $ be the C*-algebra $ C (X) $ of all continuous complex-valued 
functions on $X $. Let $L^\rho $ assign to each function its Lipschitz constant,
that is,
\[
L^\rho (f) =\sup\{|f(x)-f(y|/\rho(x,y): x,y \in X \ \mathrm{and} \ x \neq y\}.
\]
Then $ L^\rho$ is a C*-metric. Furthermore, one can recover $\rho $ from
$L^\rho$. To see this, notice that the state space $ S (A) $ is just the 
set of probability measures on $X $. Let $\rho^{L^\rho}$ be the metric on
$ S (A) $ defined by equation \eqref{stmet} for $ L^\rho$. Then
$\rho(x,y) = \rho^{L^\rho}(\d_x,\d_y)$, where $\d_z$ denotes
the delta-measure at $z$ for any $z \in X$.

We remark that property 5 is often the most difficult to verify for examples, 
but having $S(\cA)$ compact for the $\rho^L$-topology (property 
5) is crucial for the definitions of quantum Gromov-Hausdorff
distance which we will discuss later. 

We will not explicitly need all of the next few remarks, 
but they give important context to the definition of C*-metrics.
Suppose that $ L $ is a C*-metric on a 
unital C*-normed $* $-algebra $\cA $, and let
\begin{equation}
\label{ball}
\cL_\cA^1 = \{a \in \cA:  L(a) \leq 1\}   .
\end{equation}
Let ${\bar \cA}$ be the completion of $\cA $, 
let ${\bar \cL}_\cA^1$ be the closure of $\cL_\cA^1$ in the C*-algebra 
${\bar \cA}$, and let
${\bar L}$ denote the corresponding ``Minkowski functional'' 
on ${\bar \cA}$, defined by setting, for $c \in {\bar \cA}$,
\[
{\bar L}(c) = \inf\{r \in \bR^+: c \in r{\bar \cL}_\cA^1\},
\]
with value $+\infty$ if there is no such $r$.  
Then ${\bar L}$ is a seminorm on
${\bar \cA}$ (often taking value $+\infty$), 
and the proof of proposition~$4.4$ of \cite{R5} tells us
that because $L$ is lower semicontinuous, ${\bar L}$ is an extension
of $L$.  We call ${\bar L}$ the {\em closure} of $L$.  We see
that the set $\{c \in {\bar \cA}: {\bar L}(c) \le 1\}$ is closed in
${\bar \cA}$.  We say that the original seminorm $L$ on $A$ is {\em
closed} if $\cL_\cA^1$ is already closed in ${\bar A}$, or, equivalently, is
complete for the norm on $A$.  Clearly if $L$ is closed, then it is
lower semicontinuous.  If $L$ is closed and is not defined on all of
${\bar A}$, then ${\bar L}$ is obtained simply by giving it value
$+\infty$ on all the elements of ${\bar A}$ that are not in $A$.  It is
clear that if $L$ is semifinite then so is ${\bar L}$. We recall that
a unital subalgebra $B$ of a unital algebra $A$ is said to be spectrally
stable in $A$ if for any $b \in B$ the spectrum of $b$ as an element
of $B$ is the same as its spectrum as an element of $A$, or equivalently,
that any $b$ that is invertible in $A$ is invertible in $B$. From
proposition 3.1 of \cite{R21} one easily obtains:

\begin{proposition}
\label{cleib}
Let $L$ be a C*-metric on a 
unital C*-normed $* $-algebra $\cA $.
Then the closure of $L$ is a C*-metric.
If $L $ is a closed C*-metric, then
 $A^f$ is a spectrally-stable subalgebra of
$\bar A$ that is carried into itself under the 
holomorphic functional calculus of $\bar A$.
\end{proposition}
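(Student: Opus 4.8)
The plan is to reduce Proposition~\ref{cleib} to the two cited results, \cite[Proposition~4.4]{R5} and \cite[Proposition~3.1]{R21}, by checking that the hypotheses of each apply to our situation and then combining them. First I would verify that the closure $\bar L$ is again a C*-metric by checking properties (1)--(5) of Definition~\ref{defcmt} for $\bar L$ on $\bar\cA$. Property (3), lower semicontinuity, is immediate from the construction, since $\{c:\bar L(c)\le r\}=r\bar\cL_\cA^1$ is closed by definition. Properties (1) and (2) pass to the closure because $\bar L$ extends $L$ (by \cite[Proposition~4.4]{R5}, using lower semicontinuity of $L$) and because the conditions $a\in\bC 1$ and $L(a^*)=L(a)$ are closed conditions; more precisely, if $\bar L(c)=0$ then $c$ lies in the closure of $\bigcap_{n}\, n^{-1}\cL_\cA^1\subseteq\bC 1_\cA$, hence in $\bC 1_{\bar\cA}$, and the $*$-invariance of $\cL_\cA^1$ is preserved under closure since $*$ is an isometry. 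For the Leibniz property (4), the inequality $\bar L(cd)\le\bar L(c)\|d\|+\|c\|\bar L(d)$ follows by approximating $c,d$ by elements of $\cA$ in norm with controlled $L$-values and using lower semicontinuity of $\bar L$ together with continuity of the norm and multiplication; this is essentially the content of the second displayed conclusion of \cite[Proposition~3.1]{R21}. Finally, property (5) is the crucial one: the metric $\rho^{\bar L}$ on $S(\bar\cA)$ must induce the weak-$*$ topology. Here one uses that $S(\bar\cA)=S(\cA)$ (states on a dense $*$-subalgebra extend uniquely), and that since $\bar L$ extends $L$ and the closed unit $\bar L$-ball is the norm-closure of $\cL_\cA^1$, the supremum defining $\rho^{\bar L}(\mu,\nu)$ over self-adjoint $c$ with $\bar L(c)\le 1$ equals the supremum over self-adjoint $a\in\cA$ with $L(a)\le 1$ (by density and norm-continuity of $\mu-\nu$); hence $\rho^{\bar L}=\rho^L$ as metrics, and property (5) for $L$ gives it for $\bar L$.

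Next I would address the statement about spectral stability and the holomorphic functional calculus, assuming now that $L$ is already closed. The claim is that $\cA^f=\{a\in\bar\cA:\bar L(a)<\infty\}$ (which, for closed $L$ not defined on all of $\bar\cA$, is just $\cA$ itself, or more generally $\cA_f$ in the semifinite case) is spectrally stable in $\bar\cA$ and is stable under the holomorphic functional calculus. This is precisely \cite[Proposition~3.1]{R21}, so the work is to confirm its hypotheses: one needs $L$ to be a closed Leibniz seminorm satisfying $L(a^*)=L(a)$ on a dense $*$-subalgebra of a C*-algebra, with $L(1)=0$ (which follows from the Leibniz inequality applied to $1=1\cdot 1$, forcing $L(1)\le 2L(1)\|1\|$ only trivially, but combined with property (1) gives $L(1)=0$ directly). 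Given closedness of $L$, the unit $L$-ball $\cL^1_\cA$ is complete in norm, which is exactly the completeness hypothesis needed to run the standard argument: for $a\in\cA^f$ invertible in $\bar\cA$, one writes $a^{-1}$ as a norm-limit of a Neumann-type series or uses the resolvent, and the Leibniz inequality plus completeness of the $L$-ball shows the partial sums stay in a bounded $L$-ball and converge in the Banach space $(\cA^f, \|\cdot\|+L(\cdot))$, placing $a^{-1}\in\cA^f$. The holomorphic functional calculus statement then follows by the usual contour-integral representation $f(a)=\frac1{2\pi i}\oint f(\lambda)(\lambda-a)^{-1}\,d\lambda$, with the integral converging in the Banach-algebra topology of $\cA^f$.

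The main obstacle I anticipate is property (5) for $\bar L$ --- specifically, the identification $\rho^{\bar L}=\rho^L$ and hence the transfer of the metric-topology-equals-weak-$*$-topology condition. The subtlety is that enlarging the algebra from $\cA$ to $\bar\cA$ could in principle enlarge the supremum defining the state-space metric, since there might be self-adjoint elements $c\in\bar\cA$ with $\bar L(c)\le 1$ that are not norm-limits (with controlled $L$-value) of self-adjoint elements of $\cA$. The resolution is that $\{c:\bar L(c)\le 1\}$ is by construction the norm-closure of $\cL^1_\cA$, so every such $c$ \emph{is} a norm-limit of elements $a_k\in\cL^1_\cA$; one then needs to take self-adjoint parts, noting $\bar L((a_k+a_k^*)/2)\le 1$ by property (2) and the convexity/linearity of $\bar L$ on its finite-value domain, and $(a_k+a_k^*)/2\to c$ in norm since $c=c^*$. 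Then $|\mu(c)-\nu(c)|=\lim_k|\mu(a_k)-\nu(a_k)|\le\rho^L(\mu,\nu)$, giving $\rho^{\bar L}\le\rho^L$; the reverse inequality is trivial since $\cA\subseteq\bar\cA$ and $\bar L|_\cA=L$. Once this is in hand, everything else is routine, and the two cited propositions do the remaining heavy lifting.
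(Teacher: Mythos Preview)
Your proposal is correct and aligns with the paper's approach: the paper gives no proof at all, merely stating that the proposition ``easily'' follows from \cite[Proposition~3.1]{R21} (with the extension fact from \cite[Proposition~4.4]{R5} already invoked in the preceding paragraph). You have supplied exactly the details the paper omits, correctly identifying that the spectral-stability and holomorphic-calculus statements are the direct content of \cite[Proposition~3.1]{R21}, and that the verification of properties (1)--(5) for $\bar L$ is the routine part, with property~(5) the only point needing care --- your argument that $\rho^{\bar L}=\rho^L$ via norm-approximation of self-adjoint elements in the closed $L$-ball by self-adjoint elements of $\cL^1_\cA$ is sound.
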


We now continue our discussion of Dirac operators.
Our discussion is very close to that given in section 4 of 
\cite{R4}, but part of it will be 
general enough to also apply in later sections.

We continue with the notation of Section \ref{sec1}.
Thus, for $a\in\ciA $,
\[
[D_o, M_a] = \sum \a_{E_j}(a) \otimes \k_{\\e_j}   ,
\]
acting on $\ciA \otimes \cS$, and we define a seminorm, $L^{D_o}$, on $\ciA$ by
\[
L^{D_o}(a) = \|[D_o, M_a]\|.
\] 
It is shown in theorem 4.2 of \cite{R4} that 
$L^{D_o}$ satisfies property 5 of Definition \ref{defcmt}.
This will also follow from the considerations below.

For the proof of our main theorem we need certain 
bounds on $ L^{D_o} $. For this purpose
we define a different seminorm, $L_d $, on $\ciA$ by
$
L_d(a) = \|\rd a\|,
$
where we view $\rd a$ as a linear transformation from $\fg$
to $\ciA$, each of which is a normed space, with the norm on $\fg$
coming from its inner product dual to the 
inner product on $\fg'$. Thus
\begin{equation}
\label{dif}
L_d(a) = \sup\{\|\a_X(a)\| : X \in \fg\ \ \mathrm{and} \ \|X\| \leq 1\}.
\end{equation}
It is shown in theorem 3.1 of \cite{R4} that 
$L_d$ satisfies property 5 of Definition \ref{defcmt}.
It is easy to check the other conditions of Definition \ref{defcmt}, 
and thus we conclude that $L_d$ is in fact a C*-metric.

We will obtain bounds on $ L^{D_o} $ in terms of $L_d$. For this purpose 
we can for convenience choose the basis vectors $\e_j$
for $\fg'$ to be orthonormal. Then as elements of $ \bC\ell(\fg')$
they will satisfy the relations $\e_j^* = -\e_j$, $\e_j^2 = -1$ and
$\e_j\e_k = -\e_k\e_j$ if $j \neq k$. The following lemma is probably 
well known, but I have not seen it in the literature.

\begin{lemma}
\label{cliff}
Let $\cA$ and $\cC$ be unital C*-algebras, and let
$\e_1, \dots, \e_m$ be elements of $\cC$ that satisfy
the relations $\e_j^* = -\e_j$, $\e_j^2 = -1$ and
$\\e_j\e_k = -\e_k\e_j$ if $j \neq k$. Let $a_1, \dots, a_m$
be elements of $\cA $, and let $t = \sum a_j \otimes \e_j$, 
an element of $A\otimes C$ (for any C*-norm). 
Then
\[
\sup\{\|a_j\|: 1\leq j \leq m\} \ \leq \|t\| \leq \sum\{\|a_j\|: 1\leq j \leq m\} .
\]
\end{lemma}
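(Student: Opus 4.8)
The statement is purely about norms in C*-algebras with a set of anticommuting skew-adjoint unitaries, so I would prove it by two separate estimates: the upper bound is the triangle inequality, and the lower bound requires extracting the individual $a_j$ from the sum $t=\sum a_j\otimes\e_j$ by a suitable averaging/compression. The upper bound $\|t\|\le\sum\|a_j\otimes\e_j\|=\sum\|a_j\|\|\e_j\|$ uses that $\|\e_j\|=1$ (since $\e_j^*=-\e_j$ and $\e_j^2=-1$ give $\e_j^*\e_j=1$, so $\e_j$ is unitary), which is immediate; I would dispose of it in one line.

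\textbf{The lower bound.} The plan for $\|a_j\|\le\|t\|$ is to exhibit, for a fixed index $j$, an isometry-conjugation that isolates the $j$-th term. Because the $\e_k$ anticommute and square to $-1$, left multiplication by $\e_j^{-1}=-\e_j$ and right multiplication by $\e_j$ send $\e_j\mapsto\e_j$ and $\e_k\mapsto -\e_k$ for $k\ne j$ (indeed $(-\e_j)\e_k\e_j = \e_j\e_k\e_j = -\e_k\e_j\e_j = \e_k$... one must be careful with signs, but the upshot is a sign flip on the off-diagonal terms). Thus conjugating $t$ by the unitary $1\otimes\e_j$ flips the sign of every term except the $j$-th. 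Averaging $t$ with this conjugate, i.e. forming $\tfrac12\bigl(t + (1\otimes\e_j^*)\,t\,(1\otimes\e_j)\bigr)$, kills all terms but $a_j\otimes\e_j$, and since conjugation by a unitary is isometric this average has norm $\le\|t\|$; hence $\|a_j\otimes\e_j\|=\|a_j\|\le\|t\|$. Iterating this idea over a subset of indices (or doing all of them at once with a product of such conjugations, with appropriate sign bookkeeping) gives the bound for each $j$, hence for the supremum. Alternatively one can phrase it representation-theoretically: pass to a faithful representation in which the $\e_j$ act as honest Clifford generators, use the projection onto a spinor subspace on which $\e_j$ acts as a scalar, and read off $\|a_j\|$ as a compression of $t$; but the averaging argument is cleaner and avoids choosing a representation.

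\textbf{Main obstacle.} The only real subtlety is the sign bookkeeping in the conjugation step: one must check that $(1\otimes(-\e_j))\,(a_k\otimes\e_k)\,(1\otimes\e_j)$ really does equal $\pm a_k\otimes\e_k$ with the sign being $+1$ when $k=j$ and $-1$ when $k\ne j$, using only $\e_j^2=-1$ and the anticommutation relations. This is routine but must be done correctly, and it is the one place where the precise sign conventions $\e_j^*=-\e_j$, $\e_j^2=-1$ matter. Once that is verified, the averaging trick and the triangle inequality finish the proof with no further difficulty; no completeness, no spectral theory, and no appeal to the structure of $\bC\ell(\fm)$ is needed — just the abstract anticommutation relations, which is presumably why the author expects the lemma might be folklore.
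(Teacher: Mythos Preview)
Your proposal is correct. The upper bound is indeed immediate, and your averaging argument for the lower bound works: since $\e_j$ is unitary with $\e_j^*\e_k\e_j=\e_k$ for $k=j$ and $\e_j^*\e_k\e_j=-\e_k$ for $k\ne j$, the element $\tfrac12\bigl(t+(1\otimes\e_j^*)t(1\otimes\e_j)\bigr)$ equals $a_j\otimes\e_j$ and has norm at most $\|t\|$. (The remark about ``iterating over a subset of indices'' is unnecessary --- a single conjugation-and-average already isolates $a_j$.)

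The paper takes your \emph{alternative} route instead: it lets $p_j$ be the spectral projection of $\e_j$ for the eigenvalue $+i$, observes from anticommutation that $p_j\e_kp_j=0$ for $k\ne j$, and compresses to get $(1\otimes p_j)t(1\otimes p_j)=i(a_j\otimes p_j)$, whence $\|a_j\|\le\|t\|$. So your secondary suggestion is essentially the paper's proof, while your primary averaging trick is a genuinely different (and arguably cleaner) argument: it uses only that the $\e_j$ are unitary and anticommute, never invoking spectral projections or eigenvalues. Both proofs are short; yours has the mild advantage of staying entirely within the algebra of the $\e_j$'s without passing to functional calculus.
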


\begin{proof}
For each $ j $ let $p_j$ be the spectral projection of $\e_j$ for the 
eigenvalue $+ i$.  Note that $p_j \neq 0$. From the third 
relation above one quickly sees that $p_j \e_k p_j = 0$ if $j \neq k$.
It follow that
$(1\otimes p_j)t(1\otimes p_j) = i(a_j \otimes p_j)$,
so that $\|a_j\|\leq \|t\|$, and the first inequality holds,
The second inequality holds immediately from the definition of $t$.
\end{proof}

\begin{proposition}
\label{ineq}
In terms of the notation used before the lemma, for
any 
$a \in \ciA$ we have 
\[
L_d(a) \leq L^{D_o}(a) \leq nL_d(a)  ,
\]
where $n = \dim(\fg)$.
\end{proposition}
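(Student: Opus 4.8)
The plan is to reduce everything to Lemma~\ref{cliff}. First I would use Proposition~\ref{prodnorm} to rewrite
\[
L^{D_o}(a) = \Bigl\| \sum_j \a_{E_j}(a) \otimes \e_j \Bigr\|,
\]
the norm being taken in the C*-algebra $\cA \otimes \bC\ell(\fg')$. I would observe that this element is just the image of $\rd a \in \ciA \otimes \fg'$ under $I^\ciA \otimes i$, hence does not depend on the choice of orthonormal basis $\{E_j\}$ of $\fg$ (with dual orthonormal basis $\{\e_j\}$ of $\fg'$). Since the $\e_j$, regarded inside $\bC\ell(\fg')$, satisfy $\e_j^* = -\e_j$, $\e_j^2 = -1$ and $\e_j\e_k = -\e_k\e_j$ for $j \neq k$, Lemma~\ref{cliff}, applied with $\cC = \bC\ell(\fg')$ and $a_j = \a_{E_j}(a)$, gives at once
\[
\sup_j \|\a_{E_j}(a)\| \ \leq \ L^{D_o}(a) \ \leq \ \sum_j \|\a_{E_j}(a)\|.
\]

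For the upper bound I would note that each $E_j$ is a unit vector of $\fg$, so $\|\a_{E_j}(a)\| \leq L_d(a)$ by the definition \eqref{dif} of $L_d$; summing the $n = \dim(\fg)$ terms yields $L^{D_o}(a) \leq n L_d(a)$.

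For the lower bound $L_d(a) \leq L^{D_o}(a)$, the key point --- and the one step that needs a little thought --- is to upgrade the left-hand inequality above, which only controls the basis directions, to control of \emph{all} unit vectors of $\fg$. Given any $X \in \fg$ with $\|X\| = 1$, I would complete $X$ to an orthonormal basis $E_1 = X, E_2, \dots, E_n$ of $\fg$. By the basis-independence noted above, $L^{D_o}(a) = \|\sum_j \a_{E_j}(a) \otimes \e_j\|$ for this basis as well, so the first inequality of Lemma~\ref{cliff} gives $\|\a_X(a)\| = \|\a_{E_1}(a)\| \leq L^{D_o}(a)$. Taking the supremum over all such $X$ and invoking \eqref{dif} then gives $L_d(a) \leq L^{D_o}(a)$. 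I do not anticipate any real obstacle here: the argument is essentially just Lemma~\ref{cliff} together with the observation that $D_o$, and hence $L^{D_o}$, is insensitive to the choice of orthonormal basis.
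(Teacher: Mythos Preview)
Your proof is correct and is essentially identical to the paper's own proof: both invoke Proposition~\ref{prodnorm} to pass to the norm in $\cA \otimes \bC\ell(\fg')$, apply Lemma~\ref{cliff} with $\cC = \bC\ell(\fg')$, and handle the lower bound by noting that any unit vector $X \in \fg$ can be taken as the first element of an orthonormal basis. Your write-up is slightly more explicit about the basis-independence of $L^{D_o}$, but the argument is the same.
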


\begin{proof}
According to Proposition \ref{prodnorm} we have 
$L^{D_o}(a) = \| \sum \a_{E_j}(a) \otimes {\e_j}\| $,
where the norm is that on $\cA \otimes \bC\ell(\fg')$.
So we can apply the above Lemma with with $\cC = \bC\ell(\fg') $. 
Any $X \in \fg$ with $\|X\| = 1$ can serve as one element, say $E_1$, of an 
orthonormal basis for $\fg$. Thus for any such $X$ we conclude 
from the above Lemma that
$\|\a_X(a)\| \leq \|\sum \a_{E_j}(a) \otimes \e_j \| \leq \sum \|\a_{E_j}(a)\|$.
\end{proof}

From the inequalities in Proposition \ref{ineq} it is easily seen that the 
metric on $S(\cA) $ from $ L^ {D_o} $ is equivalent (not necessarily equal) 
to the metric from $ L_d$. 
Consequently we conclude, as in theorem 4.2 of \cite{R4}, that $ L^ {D_o} $ itself
satisfies property 5 of Definition \ref{defcmt}. 
Furthermore, $L^{D_o}$ is lower semi-continuous, as seen in example 2.4
of \cite{R21}. It is easily verified that $ L^{D_o} $ satisfies the Leibniz inequality, and even 
is strongly Leibniz in the sense defined in definition 1.1 of \cite{R21}. Thus 
we have obtained:

\begin{proposition}
\label{dirmet}
With notation as above, 
$ L^{D_o} $ is a C*-metric on $\cA$.
\end{proposition}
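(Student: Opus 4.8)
The plan is to verify directly that $L^{D_o}$ satisfies the five defining properties of Definition \ref{defcmt}, leaning on the results already established in Section \ref{sec1} and on Proposition \ref{ineq}. Property 5 is the one that requires real work, and it has effectively been handled: Proposition \ref{ineq} gives $L_d(a) \leq L^{D_o}(a) \leq n L_d(a)$, and $L_d$ is already known (theorem 3.1 of \cite{R4}) to satisfy property 5, i.e. the metric $\rho^{L_d}$ on $S(\cA)$ induces the weak-$*$ topology. Since the two seminorms are comparable up to the constant $n$, the induced metrics $\rho^{L^{D_o}}$ and $\rho^{L_d}$ on $S(\cA)$ are Lipschitz-equivalent, hence induce the same topology; so $\rho^{L^{D_o}}$ also induces the weak-$*$ topology, giving property 5 for $L^{D_o}$. (This is exactly the argument alluded to, following theorem 4.2 of \cite{R4}.)

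First I would dispose of properties 1, 2, 3, 4. For property 3, lower semicontinuity of $L^{D_o}$, I would cite example 2.4 of \cite{R21} as the text already indicates, or alternatively note that $a \mapsto [D_o, M_a]$ is norm-to-norm continuous on $\ciA$ into $\cB(L^2(\cA,\tau)\otimes\cS)$ so the seminorm is in fact continuous on $\ciA$ (and on the completion one takes the closure). For property 2, I would use that $L^{D_o}(a) = \|\sum \a_{E_j}(a)\otimes\e_j\|$ in $\cA\otimes\bC\ell(\fg')$ by Proposition \ref{prodnorm}, that each $\a_{E_j}$ is a $*$-derivation so $\a_{E_j}(a^*) = \a_{E_j}(a)^*$, and that $\e_j^* = -\e_j$; then $(\sum \a_{E_j}(a^*)\otimes\e_j)^* = -\sum\a_{E_j}(a)\otimes\e_j$ has the same norm, giving $L^{D_o}(a^*) = L^{D_o}(a)$. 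For property 4, the Leibniz inequality, I would use that $\d = [D_o, M_{(\cdot)}]$ is a derivation into the bounded operators: $[D_o, M_{ab}] = [D_o,M_a]M_b + M_a[D_o,M_b]$, and $\|M_b\| = \|b\|$, $\|M_a\| = \|a\|$, giving $L^{D_o}(ab) \leq L^{D_o}(a)\|b\| + \|a\|L^{D_o}(b)$; in fact the same computation with the module structure shows it is strongly Leibniz in the sense of \cite{R21}, as the text notes.

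For property 1, I would argue that $L^{D_o}(a) = 0$ iff $\sum\a_{E_j}(a)\otimes\e_j = 0$ in $\cA\otimes\bC\ell(\fg')$, and by Lemma \ref{cliff} (with $\cC = \bC\ell(\fg')$ and the orthonormal $\e_j$) this forces $\a_{E_j}(a) = 0$ for all $j$, hence $\a_X(a) = 0$ for all $X\in\fg$; since $G$ is connected this means $\a_x(a) = a$ for all $x\in G$, and ergodicity of $\a$ then gives $a\in\bC 1_\cA$. Conversely $\a_X(1) = 0$ so $L^{D_o}(1) = 0$, and $L^{D_o}$ is linear-ish enough (a seminorm) that it vanishes on $\bC 1_\cA$. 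Assembling the five verified properties gives that $L^{D_o}$ is a C*-metric on $\cA$ (interpreted, when $\cA$ is the full C*-algebra, in the semifinite sense with $\cA_f \supseteq \ciA$ dense), which is the assertion of Proposition \ref{dirmet}. The only genuine obstacle is property 5, and that obstacle has already been removed by Proposition \ref{ineq} combined with the corresponding fact for $L_d$; everything else is a short routine check.
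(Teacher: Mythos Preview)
Your proposal is correct and follows essentially the same approach as the paper: the paper's argument is the paragraph immediately preceding the proposition, which handles property~5 via Proposition~\ref{ineq} and the comparison with $L_d$, cites example~2.4 of \cite{R21} for lower semicontinuity, and dismisses the Leibniz inequality as easily verified. You have simply filled in explicit details for properties~1 and~2 that the paper leaves implicit (note that property~1 also follows immediately from $L_d \leq L^{D_o} \leq nL_d$ together with property~1 for $L_d$).
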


But there is an even more 
basic general C*-metric on $\cA$, 
which plays the principal role in \cite{R4, R7, R29},
and which will be crucial for the proof of our main theorem.
From the chosen inner product on $\fg'$, and its
dual on $\fg $, we obtain a 
Riemannian metric on $G $, which is both right and left-invariant
because the inner product is  $\Cad $-invariant.
From that Riemannian metric we obtain  
a corresponding continuous length-function, 
$\ell $, on $ G $ (coming from path lengths determined 
by the Riemannian metric), which is constant on conjugacy classes. 
For any ergodic action $\a $ of $ G $
on a unital C*-algebra $\cA $ we define a seminorm $ L_\ell $ on $\ciA $ by
\begin{equation}
\label{defell}
L_\ell(a) =  \sup\{\|\a_x(a)- a\|/\ell(x) : x \in G\ \ \mathrm{and} \ x  \neq 0\}.
\end{equation}
(where the value $ +\infty $ is permitted).
Notice that this seminorm is well-defined for any continuous length-function
on $ G $ (of which there are many), and that $G $ need not be a Lie group. 
In fact, for any compact group $G $, any continuous length-function on $ G $
(which for our definition is constant on conjugacy classes),
and any ergodic action of $ G $ on any unital C*-algebra $A $,
theorem 2.3 of  \cite{R4} tells us that
$L_\ell $ satisfies property 5 of Definition \ref{defcmt}.
It is easy to check the other conditions of Definition \ref{defcmt}, 
and thus we conclude that $L_\ell $ is in fact a C*-metric. 

We then have the following inequality. The short proof is contained in
the proof of theorem 3.1 of  \cite{R4}, but we include the 
proof here for the reader's convenience.

\begin{proposition}
\label{metineq}
With notation as above, for any $a \in \ciA$ we have 
\[
L_\ell(a) \leq L_d(a)  .
\]
\end{proposition}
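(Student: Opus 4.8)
The plan is to compare, for a fixed $a \in \ciA$, the ``infinitesimal'' quantity $L_d(a) = \sup\{\|\a_X(a)\| : X \in \fg, \|X\| \le 1\}$ with the ``finite-difference'' quantity $L_\ell(a) = \sup\{\|\a_x(a) - a\|/\ell(x) : x \in G, x \ne e\}$, and to show the latter is no larger. The basic principle is that $\a_X(a)$ is the derivative of the path $t \mapsto \a_{\exp(tX)}(a)$, while $\ell(\exp(tX)) \le |t|\,\|X\|$ for small $t$ (since $\ell$ comes from the Riemannian metric for which $X \mapsto \exp(X)$ is, infinitesimally, an isometry at the identity), so the difference quotient over the one-parameter subgroup through a given group element cannot exceed the supremum of the norms of the derivatives along the way.

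The key steps, in order, are as follows. First, fix $x \in G$ with $x \ne e$, and choose a minimal-length geodesic in $G$ from $e$ to $x$; since the Riemannian metric is bi-invariant, this geodesic is a one-parameter subgroup, i.e. of the form $t \mapsto \exp(tX)$ for some $X \in \fg$, parametrized so that $\exp(0) = e$ and $\exp(1) = x$, with $\ell(x) = \|X\|$ (the Riemannian length of the segment equals $\|X\|$ because the metric on $\fg$ at the identity is the given inner product). Second, consider the smooth $\cA$-valued function $f(t) = \a_{\exp(tX)}(a)$ on $[0,1]$. By the chain rule and the definition of the infinitesimal action, $f'(t) = \a_X(\a_{\exp(tX)}(a))$, and hence, since $\a$ is isometric and $L_d$ is $\a$-invariant, $\|f'(t)\| = \|\a_X(\a_{\exp(tX)}(a))\| \le \|X\|\, L_d(\a_{\exp(tX)}(a)) = \|X\|\, L_d(a)$, where in the first inequality we have used the definition of $L_d$ applied to $\a_{\exp(tX)}(a)$ together with the unit vector $X/\|X\|$. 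Third, integrate: $\|\a_x(a) - a\| = \|f(1) - f(0)\| = \|\int_0^1 f'(t)\,dt\| \le \int_0^1 \|f'(t)\|\,dt \le \|X\|\, L_d(a) = \ell(x)\, L_d(a)$. Dividing by $\ell(x)$ and taking the supremum over $x \ne e$ gives $L_\ell(a) \le L_d(a)$, as desired.

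The main obstacle — really the only subtle point — is the geometric claim that a minimizing geodesic from $e$ in a compact Lie group with bi-invariant metric is a one-parameter subgroup with length equal to the norm of the corresponding Lie algebra element. This is a standard fact (geodesics of a bi-invariant metric through the identity are exactly the one-parameter subgroups, and the length of $t \mapsto \exp(tX)$ over $[0,1]$ is $\|X\|$ since $\exp$ has differential the identity at $0$ and the metric is translation-invariant), so I would simply invoke it, perhaps with a pointer to a standard reference or to the fact, already used in the paper, that $\ell$ is the length function of this bi-invariant Riemannian metric. Everything else is the elementary fundamental-theorem-of-calculus estimate together with the already-established $\a$-invariance of $L_d$ (which follows from the argument of Proposition \ref{invar}, or directly since $\a$ is isometric and commutes appropriately with the infinitesimal action). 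A mild alternative that sidesteps choosing a minimizing geodesic globally: for each $x$ and each $\e > 0$ one can write $x$ as a product of finitely many $\exp(X_i)$ with $\sum \|X_i\|$ close to $\ell(x)$, apply the one-parameter estimate to each factor and the triangle inequality; but using the minimizing geodesic directly is cleaner.
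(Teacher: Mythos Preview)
Your proof is correct and follows essentially the same idea as the paper's: integrate the derivative of $t \mapsto \a_{c(t)}(a)$ along a path from $e$ to $x$ and bound the integrand by $\|\rd a\|$ times the speed of the path. The only difference is that you pick the path to be a minimizing geodesic (hence a one-parameter subgroup, by bi-invariance), so that its length equals $\ell(x)$ exactly, whereas the paper takes an arbitrary smooth path $c$, obtains $\|\a_x(a)-a\| \le \|\rd a\|\cdot(\text{length of }c)$, and then takes the infimum over all paths to recover $\ell(x)$. The paper's version is marginally more elementary in that it avoids invoking the structure of geodesics for bi-invariant metrics; your ``mild alternative'' at the end is in fact exactly the paper's argument.
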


\begin{proof}
Let $a\in \ciA$, and let $c$ be a smooth path in $G$ 
from $e_ G$ to $x\in G$. Then $\phi$, defined
by $\phi(t)=\a_{c(t)}(a)$, is smooth, and so we have 
\begin{align*}
\|\a_x(a)-a\| &=\|\int\phi'(t)dt\| \leq
\int \|\a_{c(t)} (\a_{c'(t)}a)\|dt    \\
&= \int \|\a_{c(t)} (\rd a(c'(t))\|dt \leq \|da\| \int \|c'(t)\|dt \ .
\end{align*}
But the last integral is just the length of $c$. Thus from the definition
of the ordinary metric on $G$ as in infimum over all smooth paths, 
with its length function $\ell$ using paths from $e_ G$ to $x\in G$, 
we obtain $\|\a_x(a)-a\|\leq \|da\|\ell(x) $. Thus for all $x\in G$
\[
\|\a_x(a)-a\|/\ell(x)        \leq \|da\| \ ,
\]
from which the desired result follows immediately.
\end{proof}

On combining Propositions \ref{ineq} and \ref{metineq}, we obtain:

\begin{corollary}
\label{metcor}
With notation as above, for any $a \in \ciA$ we have 
\[
L_\ell(a) \leq L^{D_o}(a)  .
\]
\end{corollary}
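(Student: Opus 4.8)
The plan is to simply chain the two inequalities that have just been established. The statement to be proved, Corollary \ref{metcor}, asserts that $L_\ell(a) \leq L^{D_o}(a)$ for all $a \in \ciA$, and this is a one-line deduction from Propositions \ref{ineq} and \ref{metineq}.

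First I would invoke Proposition \ref{metineq}, which gives $L_\ell(a) \leq L_d(a)$ for every $a \in \ciA$. Next I would invoke the left-hand inequality of Proposition \ref{ineq}, namely $L_d(a) \leq L^{D_o}(a)$. Combining these two, for any $a \in \ciA$ we obtain
\[
L_\ell(a) \leq L_d(a) \leq L^{D_o}(a),
\]
which is exactly the claimed inequality.

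There is essentially no obstacle here: the corollary is a formal consequence of the preceding two propositions, both of which have already been proved in the excerpt. The only thing to be careful about is that all three seminorms $L_\ell$, $L_d$, and $L^{D_o}$ are defined on the same domain $\ciA$ and with respect to the same fixed $\Cad$-invariant inner product on $\fg'$ (and its dual on $\fg$), so that the hypotheses of Propositions \ref{ineq} and \ref{metineq} line up; but this consistency of notation has been maintained throughout Sections \ref{sec1} and \ref{cmet}, so no further remark is needed. Hence the proof is just the displayed chain of inequalities.
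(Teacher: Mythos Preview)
Your proposal is correct and matches the paper's own argument exactly: the paper simply says ``On combining Propositions~\ref{ineq} and~\ref{metineq}, we obtain'' the corollary, which is precisely the chain $L_\ell(a) \leq L_d(a) \leq L^{D_o}(a)$ you wrote down.
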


By means of this key corollary we will in Section \ref{qgh} 
be be able to apply the important bounds on $ L_ \ell $ obtained in \cite{R7, R29} 
to prove that matrix algebras converge to coadjoint orbits for the
C*-metrics corresponding to Dirac operators. 


\section{More about spinors}
\label{secsp}

It is best if as our spinor bundle we can 
use a representation of the Clifford algebra on $\cS$ that is irreducible. 
For some compact 
Riemannian manifolds this can not be done. In this section we 
collect further algebraic facts and establish the
conventions and notation that we need in order to understand when this
can be done. Proofs of the 
assertions made below for which no proof is given here
can be found in \cite{BGV, Frd}. Also useful are \cite{GVF, LwM}, but
they use slightly different conventions.

Let $\fm$ be a finite dimensional Hilbert space over $\bR$
of dimension at least 3.
Let $\bC\ell(\fm)$ denote the complex 
Clifford algebra for $\fm$, 
much as discussed in Section \ref{sec1} for $\fg'$. 
Let $\Spin(\fm)$ be the subgroup of the group of invertible 
elements of $\bC\ell(\fm)$ generated by products of two 
elements of $\fm$ of length 1. Conjugation of $\bC\ell(\fm)$ by 
elements of $\Spin(\fm)$ carries $\fm$ into itself, and this gives a 
group homomorphism of $\Spin(\fm)$ onto $\SO(\fm)$
whose kernel is $\{1, -1\}$. In this way $\Spin(\fm)$ is the 
simply-connected covering group of $\SO(\fm)$, and these two 
groups have naturally isomorphic Lie algebras. The $\bR$-linear span 
of products of two orthogonal elements of $\fm$ is a Lie 
$\bR$-sub-algebra, $\spin(\fm)$, of $\bC\ell(\fm)$ with its additive 
commutator as Lie bracket. Exponentiation in $\bC\ell(\fm)$
carries $\spin(\fm)$ onto $\Spin(\fm)$, and one finds in this way 
that $\spin(\fm)$  is the Lie algebra of $\Spin(\fm)$.
It follows, in particular, that $\spin(\fm) \cong \so(\fm)$ naturally.

Suppose now that $\cS$ is a finite-dimensional Hilbert space over $\bC$, 
and that $\k$
is a $*$-representation of the C*-algebra $\bC\ell(\fm)$ on
$\cS$. Then the restrictions of $\k$ to $\Spin(\fm)$ and $\spin(\fm)$
give a unitary representation of that group, and a corresponding 
representation of its Lie algebra, on $\cS$. Let $\b$ 
denote the action of $\Spin(\fm)$ on $\bC\ell(\fm)$ by conjugation. 
Then the representation $\k$ of 
$\Spin(\fm)$ on $\cS$ is manifestly compatible with the the 
action $\b$ and the action $\k$
of $\bC\ell(\fm)$ on $\cS$ in the sense much as given above in 
equation \eqref{compt}, that is,
\[
\k_{\b_y(q)} = \k_y \k_q \k^*_y
\]
for $q \in \bC\ell(\fm)$ and $y \in \Spin(\fm)$.

Suppose now that $G$ is a connected Lie group with Lie 
algebra $\fg$, and that $\pi$ is a representation of $G$ on $\fm$
by orthogonal transformations. 
That is, $\pi $ is a homomorphism from $G$ into $\SO(\fm)$.
Let $\pi$ also denote the corresponding homomorphism from $\fg$
into $\so(\fm)$. From our natural identification of $\so(\fm)$ with $\spin(\fm)$ 
we can view $\pi$ as a homomorphism from $\fg$ to  $\spin(\fm)$. 
(A formula for that homomorphism can be obtained by applying 
formula 5.12 of \cite{GVF} or
formula 3.4 of \cite{BGV}.)
From a fundamental theorem for Lie groups (theorem 5.6 of \cite{Hll}) 
it follows that 
there is a homomorphism from the simply connected covering 
group, $\hat G$, of $G$ into $\Spin(\fm)$ whose corresponding Lie 
algebra homomorphism is $\pi$. Let $\s $ denote the composition 
of this homomorphism 
with the action $\k$ of $\Spin(\fm)$ on $\cS$, so that $\s$ 
is a unitary representation of $\hat G$ on $\cS$.
When we combine this with the earlier observations, we obtain:

\begin{proposition}
\label{compat}
Let $G$ be a connected simply-connected 
compact Lie group, and let $\pi$
be a representation of $G$ by orthogonal transformations on 
a finite-dimensional Hilbert space  $\fm$ over $\bR$,
 and so by 
automorphisms of $ \bC\ell(\fm)$.
Let $\tilde \pi$ denote the corresponding homomorphism from 
$G$ into $\Spin(\fm) \subset \bC\ell(\fm)$. Let $\k$ be a 
$*$-representation of $\bC\ell(\fm)$ on a finite-dimensional 
Hilbert space $\cS$, and let 
$\s$ be the composition of $\tilde \pi$ with $\k$, so that $\s$ is 
a unitary representation of $G$ on $\cS$.
Notice that the action $\pi$ of $G$ on $\bC\ell(\fm)$ is obtained by
composing $\tilde \pi$ with the action of $\Spin(\fm)$ on $\bC\ell(\fm)$
by conjugation. Then $\s$ satisfies the compatibility condition
\[
\k_{\pi_x(q)} = \s_x \k_q \s^*_x
\]
for $q \in \bC\ell(\fm)$ and $x \in G$.
\end{proposition}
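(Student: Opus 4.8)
The plan is to reduce everything to the compatibility identity $\k_{\b_y(q)} = \k_y \k_q \k_y^*$ for $y\in\Spin(\fm)$ that was recorded just above (for $\b$ the conjugation action of $\Spin(\fm)$ on $\bC\ell(\fm)$), by identifying the automorphism $\pi_x$ of $\bC\ell(\fm)$ with $\b_{\tilde\pi(x)}$.

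First I would pin down that identification. Since $G$ is connected, the homomorphism $\tilde\pi\colon G\to\Spin(\fm)$ supplied by the fundamental theorem invoked above is uniquely determined by its differential $\pi\colon\fg\to\spin(\fm)\cong\so(\fm)$; and composing $\tilde\pi$ with the covering homomorphism $\Spin(\fm)\to\SO(\fm)$ yields a homomorphism $G\to\SO(\fm)$ whose differential is again $\pi\colon\fg\to\so(\fm)$, so by that same uniqueness the composite is the original $\pi$. Thus for every $x\in G$ the image of $\tilde\pi(x)$ in $\SO(\fm)$ is exactly $\pi_x|_{\fm}$. Now conjugation by $\tilde\pi(x)$ inside $\bC\ell(\fm)$, restricted to $\fm$, is precisely that image, and $\fm$ generates $\bC\ell(\fm)$ as an algebra; hence the two algebra automorphisms $\b_{\tilde\pi(x)}$ and $\pi_x$ of $\bC\ell(\fm)$ agree on a generating set and therefore agree, i.e.\ $\pi_x(q)=\tilde\pi(x)\,q\,\tilde\pi(x)^{-1}$ for all $q\in\bC\ell(\fm)$.

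Next, using that $\k$ is a unital $*$-homomorphism of the C*-algebra $\bC\ell(\fm)$ and that $\tilde\pi(x)\in\Spin(\fm)$ is invertible with $\tilde\pi(x)^{-1}=\tilde\pi(x^{-1})$, I would write $\s_x=\k_{\tilde\pi(x)}$ and compute
\[
\k_{\pi_x(q)}=\k_{\tilde\pi(x)\,q\,\tilde\pi(x)^{-1}}=\k_{\tilde\pi(x)}\,\k_q\,\k_{\tilde\pi(x)}^{-1}=\s_x\,\k_q\,\s_x^{-1}.
\]
Finally, since the generators of $\Spin(\fm)$ are products of two unit vectors $\mu\in\fm$, each of which satisfies $\mu^*=-\mu$ and $\mu^2=-1$ in $\bC\ell(\fm)$ (with the paper's sign convention), each $\k_\mu$ is unitary, so $\s_x=\k_{\tilde\pi(x)}$ is a product of unitaries and $\s_x^{-1}=\s_x^*$ — which is also the content of calling $\s$ a unitary representation in the statement. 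Substituting gives $\k_{\pi_x(q)}=\s_x\k_q\s_x^*$, as desired.

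I do not expect a serious obstacle; the only point needing care is the identification $\pi_x=\b_{\tilde\pi(x)}$ on all of $\bC\ell(\fm)$, which rests on uniqueness of Lie-group homomorphisms out of the connected group $G$ with prescribed differential together with the fact that $\fm$ generates $\bC\ell(\fm)$. Everything after that is a direct manipulation of the homomorphism $\k$ and the already-established $\Spin(\fm)$-compatibility.
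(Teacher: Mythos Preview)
Your argument is correct and follows the same route as the paper: the paper does not isolate a separate proof but records just before the proposition that $\k_{\b_y(q)}=\k_y\k_q\k_y^*$ for $y\in\Spin(\fm)$ is manifest, constructs $\tilde\pi$ via the fundamental theorem for simply-connected Lie groups, and notes that $\pi$ on $\bC\ell(\fm)$ is $\tilde\pi$ followed by conjugation, then states the proposition as a direct combination of these facts. You have simply spelled out those steps in more detail, including the uniqueness argument pinning down $\pi_x=\b_{\tilde\pi(x)}$ and the unitarity of $\s_x$.
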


\begin{example}
\label{ex1}
Let $G= \SO(3)$ and let $\pi$ be its standard representation on
$\fm = \bR^3$ with its standard inner product. Then $\bC\ell(\fm)$ 
is of dimension 8, and is isomorphic to the direct sum of 2 copies
of $M_2(\bC)$, so its irreducible representations are of dimension 2.
But $G$ does not have any irreducible representation of dimension 2
so it can not act on 
the spinor spaces for $\bC\ell(\fm)$.
However, the simply connected covering group of $G$ is $SU(2)$,
and it has irreducible representations of dimension 2. It will act on 
the spinor spaces for $\bC\ell(\fm)$, compatibly with its action on
 $\bC\ell(\fm)$ via $G$.
\end{example}


\section{Dirac operators for matrix algebras}
\label{sec3}

In this section we consider the action $\a$ of $G$ 
on $\cA = \cB(\cH)$ where $\cH$ is the Hilbert space of an irrep 
$U$ of $G$, and the action $\a$ on $\cB(\cH)$ 
is by conjugation by this representation. Since $\cA$ is finite 
dimensional, we have $\ciA = \cA$. For any $ X\in\fg $ and 
$ T\in\cA$ we have $\a_ X(T) =[U_ X, T] $.

Since the center of $G$ will act 
trivially on $\cA$, we can factor by the connected component of the 
center. The resulting group will be semisimple. Thus for the remainder 
of this section we will assume that $G$ is a compact connected 
semisimple Lie group.

In general the representation $(\cH, U)$ need not be faithful. Its kernel
at the Lie-algebra level is an ideal of $\fg$. But $\fg$, as a semisimple
Lie algebra, is the direct sum of its minimal ideals, each of which
is a simple Lie algebra. 
Denote the Lie-algebra-kernel of $U$ by $\fg_0$. It must be the direct sum of
some of these minimal ideals. Denote the direct sum of the remaining
minimal ideals by $\fg_U$, so that $\fg = \fg_U \oplus \fg_0$. Clearly
$U$ is faithful on $\fg_U$ and trivial on $\fg_ 0 $. 
We identify $\fg_U'$ with the subspace
of $\fg'$ consisting of linear functionals on $\fg$ that take value 0
on $\fg_0$, and similarly for $\fg_0'$. Clearly  $\fg_U$ and $\fg_ 0 $ are
$\Ad $-invariant, and so $\fg_U'$ and $\fg_ 0' $ are
$\Cad $-invariant.

For any $ T\in \cA $ it is clear that $\rd T(X)=\a_ X(T) = 0 $  for any $X$ in $\fg_0$,
so that $\rd T\in \cA \otimes\fg_ U' $. Thus the range of $\rd $ 
is in $ \cA \otimes\fg_ U' $. In \cite{R33} it is shown that
for the quotient of $G$ by the kernel of $U$, the sub-bimodule
of its $\O_o$ that is generated by the range of $\rd$ is 
exactly that $\O_o$ itself. But $\fg_ U $ is exactly
the Lie algebra of the quotient of $G$ by the kernel of $U$. We see in this 
way that the cotangent bundle, $\O(\cA) $, of $\cA $ is exactly
\[
\O(\cA)  =  \cA \otimes\fg_ U' . 
\] 
Notice that for the original $\O_o $ we have 
$\O_o = \cA  \otimes\fg' = \O(\cA) \oplus (\cA  \otimes\fg_ 0')  $.

We assume that, as before, we have chosen a $\Cad $-invariant inner 
product on $\fg'$. It restricts to a $\Cad $-invariant inner 
product on $\fg_ U'$. 
We then form the complex Clifford algebra $\bC\ell(\fg_ U')$. We note
that it is a  $\Cad $-invariant unital C*-subalgebra of $\bC\ell(\fg')$.
Since $\O(\cA)  =  \cA \otimes\fg_ U'  $, the Clifford
bundle for $\cA$ is exactly
\[
\bC\ell(\cA) = \cA \otimes \bC\ell(\fg_ U')   .
\]

Let $(\cS, \k) $ be a choice of an irreducible $* $-representation
of $ \bC\ell(\fg_ U') $. 
We then form the spinor bundle $\cA \otimes \cS$. 
It is a Hilbert space with its inner product coming from using 
the (unique) tracial state on $\cA$. We then form the corresponding
Dirac operator, $ D $, on  $\cA \otimes \cS$ much as in Section \ref{sec1},
so that
\[
D(T) = (I^\cA \otimes \k) \circ (\rd\otimes  I^\cS)(T)  
= \sum \a_{E_j}(T) \otimes \k_{\e_j}   
\]
for $T \in \cA$, where now $\{E_j\} $ is a basis for $\fg_ U $.

\begin{definition}
\label{matdir}
With notation as above,  the operator $D $ defined just above is the Dirac operator
for $\cA = \cB(\cH) $ for the given inner product on $\fg' $. (Thus if the representation
$(\cH, U)$ is faithful on $\fg$ then $D$ coincides with $D_o$.)
\end{definition}

Let $\hat G$ be the simply-connected covering group of $G$.
Then Proposition \ref{compat} tells us that there is a representation $\s$
of $\hat G$ on $\cS$ that is compatible with 
the action of $\hat G$ (through $G$)
on $\bC\ell(\fg'_U)$ in the 
sense that  
\[
\k_{\Cad_x(q)} = \s_x \k_q \s^*_x
\]
for all $q \in \bC\ell(\fg'_U)$ and $x \in \hat G$. We then define a 
representation, $\tilde \s$, of $\hat G$ on $\ciA \otimes \cS$
by $\tilde \s = \a \otimes  \s$. 
It follows from a very slight modification of Proposition \ref{commut1} 
to account for our using $\fg'_U$ instead of $\fg'$,
that $D$ commutes with 
this action. Then, a very slight modification of Proposition 
\ref{invar} tells us that
the corresponding  C*-metric $ L^D $ is invariant under the action 
$\a$ of $\hat G$ in the sense that for
any $a \in \cA$ we have
\[
L^D(\a_x(a)) = L^D(a)
\]
for all $x \in \hat G$.

According to Proposition \ref{prodnorm}, for any $T\in\cA $ 
\[
L^{D_o}(T) =
\|[D_o, M_a]\| =  \|\sum \a_{E_j}(T) \otimes {\e_j}\|
\]
where the norm on the right is that on $\cA \otimes \bC\ell(\fg')$.
If we choose a basis, $\{ E_j: j \in {I_U}\}$, for $\fg_U $ and adjoin to it a basis for
$\fg_ 0 $, then from the fact that $\a_X(a) = 0 $ for any $ X\in\fg_ 0 $ we see 
that 
\[
L^{D_o}(T) = \|\sum_{ j \in I_U} \a_{E_j}(T) \otimes {\e_j}\|  .
\]
But $\sum_{ j \in I_U} \a_{E_j}(a) \otimes {\e_j} $ is in 
$\cA\otimes \bC\ell(\fg_ U') = \bC\ell(\cA) $. Then regardless of the choice of
spinors for $\bC\ell(\fg_ U') $, it follows from  Proposition \ref{prodnorm}
that for the corresponding Dirac operator $D $ we have
\[
L^D(T) =  \|\sum_{ j \in I_U} \a_{E_j}(T) \otimes {\e_j}\|   .
\]
Since the norm on $ \bC\ell(\fg_ U') $ is just the the restriction to 
$ \bC\ell(\fg_ U') $ of the norm on $ \bC\ell(\fg') $, the norm on the right
can be taken to be the norm of $\cA \otimes \bC\ell(\fg')$.
We thus obtain the following proposition, which will be important 
for the proof of our main theorem.

\begin{proposition}
\label{deq}
For any compact connected Lie group $G $ and any irrep
$(\cH, U) $ and corresponding action $\a $ of $G$ on $\cA =\cB(\cH) $, 
and for notation as above, we have
\[
 L^D =  L^{D_o}   .
\]
\end{proposition}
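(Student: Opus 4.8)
The plan is to reduce both $L^{D_o}(T)$ and $L^D(T)$ to the norm of one and the same element of a Clifford-algebra-valued tensor product, and then to invoke the fact that a unital C*-subalgebra is isometrically embedded. First I would fix an orthonormal basis $\{E_j\}$ for $\fg$ adapted to the orthogonal decomposition $\fg = \fg_U \oplus \fg_0$, with $\{E_j : j \in I_U\}$ a basis of $\fg_U$ and the rest a basis of $\fg_0$; let $\{\e_j\}$ be the dual orthonormal basis of $\fg'$, so that $\{\e_j : j \in I_U\}$ is an orthonormal basis of $\fg_U'$. Applying Proposition \ref{prodnorm} to the action $\a$ of $G$ on $\cA = \cB(\cH)$ and to $D_o$, we get $L^{D_o}(T) = \|\sum_j \a_{E_j}(T) \otimes \e_j\|$ for all $T \in \cA$, the norm being that of the C*-algebra $\cA \otimes \bC\ell(\fg')$ (which carries a unique C*-norm, since $\bC\ell(\fg')$ is finite-dimensional). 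Because $\a_X$ vanishes identically for $X \in \fg_0$, the summands with $j \notin I_U$ are zero, so $L^{D_o}(T) = \|t\|$ with $t := \sum_{j \in I_U} \a_{E_j}(T) \otimes \e_j$, and $t$ lies in the unital C*-subalgebra $\cA \otimes \bC\ell(\fg_U') = \bC\ell(\cA)$ of $\cA \otimes \bC\ell(\fg')$.

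Next I would apply Proposition \ref{prodnorm} a second time, now to $D$. The operator $D$ is built exactly as in Section \ref{sec1}, only with $\fg_U'$ and its chosen spinor space $(\cS, \k)$ replacing $\fg'$ and its spinors, so that $[D, M_T] = \sum_{j \in I_U} \a_{E_j}(T) \otimes \k_{\e_j}$; the proof of Proposition \ref{prodnorm} (via Lemma \ref{oddsp} and this explicit commutator formula) goes through verbatim in that setting. It therefore gives $L^D(T) = \|t\|$, this time with the norm computed inside the C*-algebra $\cA \otimes \bC\ell(\fg_U')$, which again carries a unique C*-norm.

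Finally I would close the argument by noting that the inclusion $\cA \otimes \bC\ell(\fg_U') \hookrightarrow \cA \otimes \bC\ell(\fg')$ is a unital $*$-homomorphism of C*-algebras and hence isometric, so the two norms of $t$ agree; combining the two expressions yields $L^D(T) = L^{D_o}(T)$ for every $T \in \cA$. I do not anticipate a real obstacle: the only points needing a moment's care are checking that Proposition \ref{prodnorm} applies with $\fg_U'$ in place of $\fg'$ --- which it does, since its proof used only Lemma \ref{oddsp} and the explicit form of the commutator, both available here --- and the harmless remark that finite-dimensionality of the Clifford factor removes any ambiguity in ``the'' C*-norm on either tensor product.
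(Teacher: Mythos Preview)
Your proposal is correct and follows essentially the same approach as the paper: both apply Proposition~\ref{prodnorm} twice (once for $D_o$ with $\fg'$, once for $D$ with $\fg_U'$), drop the $\fg_0$-summands via $\a_X=0$ for $X\in\fg_0$, and then use that the inclusion $\cA\otimes\bC\ell(\fg_U')\hookrightarrow\cA\otimes\bC\ell(\fg')$ is isometric. Your version is slightly more explicit in justifying that Proposition~\ref{prodnorm} transfers to the $\fg_U'$-setting and in citing the C*-algebraic reason for the isometry, but the substance is identical.
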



\section{The fuzzy sphere}
\label{fuzzy}

We now illustrate our general results obtained so far by working them 
out explicitly for the case that has received the most attention in 
the quantum-physics literature, namely the case of $ G = SU(2) $ 
and its irreps. This will permit us to compare our 
Dirac operator with those proposed in the physics literature. 

We now establish our conventions and notation. As a basis for 
$\fg =su(2) $ we take the product by $i $ of the Pauli matrices \cite{Tlm}, 
so
\[
E_1 = \begin{pmatrix}
0 & i \\
i & 0
\end{pmatrix}
\quad \quad
E_2 = \begin{pmatrix}
0 & -1 \\
1 & 0
\end{pmatrix}
\quad  \quad
E_3 = \begin{pmatrix}
i & 0 \\
0 & -i
\end{pmatrix}  .
\]
Then $ E_1 E_2 = E_3 $, and cyclic permutations of the indices. 
Consequently $[E_ 1, E_ 2] = 2 E_ 3 $ and cyclic permutations 
of this, which are Lie algebra relations. Furthermore $(E_j)^2 =-I_ 2 $ 
for $j = 1, 2, 3 $. We see that $\mathrm{trace}((E_j)^2) =-2 $. 
This leads us to define the $Ad$-invariant inner product on
$\fg $ by $\< X, Y\> =-(1/ 2)\mathrm{trace}(XY) $, so that the 
$ E_j $'s form an orthonormal basis for $\fg $.
As Cartan subalgebra for $\fg^\bC =sl(2 , \bC)$ we choose 
the $\bC$-span of $ E_ 3 $, 

Let $(\cH^n, U^n) $ be an irrep of $ G $, 
where the label $n $ is the highest weight of the representation.
Thus $\dim(\cH^n) =n + 1 $. We let $ U^n $ also denote the 
corresponding ``infinitesimal" representation of $\fg^\bC $ on $\cH^n $.
As done in the introduction, 
we set $\cB^n =\cB(\cH^n) $, and we let $\a $
be the action of $ G $ on $\cB^n $ by conjugation by $ U^n $.
Its infinitesimal version is given by $\a_ X(T) = [U^n_ X, T] $ 
for all $ T\in \cB^n $ and $ X\in \fg^\bC $.

According to Theorem 4.1 of \cite{R33}, the cotangent bundle for $\cB^n $ is 
$\O_n =\cB^n \otimes_\bR \fg' = \cB^n \otimes_\bC {\fg'^\bC}$.
We let $\{\e_j\}_{j=1}^3 $ be the basis for $\fg'$ that is dual to the
basis $\{ E_j\} $ for $ \fg $ chosen above. 
It will be orthonormal for the inner product 
on $\fg'$ that is dual to our chosen inner product on $\fg $. Any other 
$\Cad $-invariant inner product on $\fg'$ will be a scalar multiple
of this one. If needed, that scalar can be pulled along through the 
calculations we do below.

The Clifford algebra $\bC\ell(\fg')$ is generated by $\{\e_j\}_{j=1}^3 $
with the relations
\[
\e_j\e_k + \e_k\e_j = -2\<\e_j, \e_k\> = -2\d_{jk}  .
\]
Since $\fg $ is 3-dimensional,  $\bC\ell(\fg')$ is isomorphic to 
$M_2(\bC) \oplus M_2(\bC) $, while $\bC\ell^e(\fg')$ is isomorphic to 
$M_2(\bC)$ . Let $\k $ be an irreducible representation of
$\bC\ell^e(\fg')$ on a Hilbert space $\cS$ (necessarily of dimension 2, 
and unique up to unitary equivalence).
The chirality element for $\bC (\fg')$ is $\bg =\e_1\e_2\e_3 $. We extend
$ \k $ to $\bC (\fg')$ by setting either $\k_\bg = I^\cS $ or $\k_\bg = -I^\cS $
(as discussed in Section \ref{fspin})
to obtain the two inequivalent irreducible representations of $\bC (\fg')$.
The Clifford bundle for $\cB^n $ is $\bC\ell(\cB^n ) = \cB^n\otimes \bC\ell(\fg') $, 
and the spinor bundle is $\cS(\cB^n ) = \cB^n \otimes \cS $.
The Dirac operator on  $\cS(\cB^n ) $ is defined on elementary tensors by
\[
D(T\otimes \psi ) = \sum \a_{E_j}(T) \otimes \k_{\e_j}(\psi) = \sum [U^n_{E_j}, T] \otimes  \k_{\e_j}(\psi)
\]
for $ T\in \cB^n $ and $\psi \in \cS $,
that is,
\[
D =  \sum_j \a_{E_j} \otimes  \k_{\e_j} .
\]

Since $ SU(2) $ is simply connected, Proposition \ref{compat} tells us that
corresponding to its action $\Cad$ on $\fg'$ 
there is a homomorphism of it into the subgroup $\Spin(\fg')$ 
of the group of invertible elements
of the even subalgebra $ \bC\ell^e(\fg') $ of $ \bC\ell(\fg') $. 
This homomorphism, which we will denote by $\widehat \Cad $, 
plays the role of the $\tilde\pi $ of Proposition \ref{compat}. It
is easy to describe, and we will need a precise description of it below. 
The $E_j $'s happen to also be elements of $SU(2) $.
To try to avoid confusion, we denote them by $x_j = E_j $ when we view 
them as elements of $ SU(2) $. 
We note that $(x_j)^{-1} = -x_j$. It is easily checked that
$\Ad_{x_j}(E_k) = E_k$ if $k=j$ and $= -E_k$ if $k \neq j$. 
Consequently $\Cad_{x_j}(\e_k) = \e_k$ if $k=j$ and $= -\e_k$ if $k \neq j$.
Set $\hat\e_1 = \e_2\e_3$ and cyclic permutations of the indices. 
The $\hat \e_j $'s are elements of $\Spin(\fg')$.
It is easily checked that conjugation by $\hat \e_j $ takes
$\e_k$  to $ \e_k$ if $k=j$ and to $= -\e_k$ if $k \neq j$.
Consequently $\widehat\Cad_{x_j} = \hat \e_j$ for each $j$.

Furthermore, if we set $x_0 = I_ 2 $, then every element of $ SU(2) $
can be expressed (see proposition VII.5.5 of \cite{Smn}) uniquely as 
$\sum_{j=0}^3 r_j x_j $ where the vector $r =(r_0,r_1,r_2,r_3) $ in $\bR^4 $ satisfies
$\|r\| =1 $.
(So here we view $ SU(2) $ as the unit sphere in the quaternions.)
Set $\hat \e_ 0 = 1_{ \bC\ell} $ where $ 1_{ \bC\ell}$ is the identity 
element of $ \bC\ell(\fg') $.
Thus we obtain the first statement of:

\begin{proposition}
\label{wcad}
With notation as above, $\widehat\Cad $ is the mapping that takes 
$\sum_{j=0}^3 r_j x_j $ to $\sum_{j=0}^3 r_j \hat \e_j $ for $\|r\| =1 $. 
It is an isomorphism from $ SU(2) $ onto $\mathrm{Spin}(\fg')$.
The corresponding isomorphism, $ \widehat\cad $, from $su(2) $ 
to $\spin(\fg') $ is the linear map that sends $ E_j $ to $\hat \e_j $
for $j =1, 2, 3 $ .
\end{proposition}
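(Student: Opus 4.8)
The plan is to reduce everything to the computation of the Lie-algebra homomorphism $\widehat\cad$, from which the displayed formula and the isomorphism statement both follow quickly. Recall that $\widehat\Cad$ is the homomorphism produced by Proposition \ref{compat} for $\pi=\Cad$, so $\widehat\cad$ is the lift, through the covering differential $\spin(\fg')\to\so(\fg')$, of the infinitesimal coadjoint action $\cad\colon su(2)\to\so(\fg')$. That covering differential is the Lie-algebra isomorphism sending $\omega\in\spin(\fg')$ to the operator $[\omega,\,\cdot\,]|_{\fg'}$, the commutator in $\bC\ell(\fg')$ restricted to $\fg'$. Hence it suffices to show $[\hat\e_j,\,\cdot\,]|_{\fg'}=\cad_{E_j}$ for $j=1,2,3$; this gives $\widehat\cad(E_j)=\hat\e_j$, which is the last assertion of the proposition.

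To identify $\cad_{E_j}$, differentiate $\<X,\Cad_x(\mu)\>=\<\Ad^{-1}_x(X),\mu\>$ at $x=\mathrm{exp}(tE_j)$; using that our inner product on $\fg$ is $\Ad$-invariant and that $\{E_k\}$ is orthonormal with dual basis $\{\e_k\}$, one gets that $\cad_{E_j}$, transported to $\fg$ via $\e_k\leftrightarrow E_k$, is just $\ad_{E_j}$. So from $[E_1,E_2]=2E_3$ and its cyclic permutations, $\cad_{E_1}$ is $\e_1\mapsto 0,\ \e_2\mapsto 2\e_3,\ \e_3\mapsto -2\e_2$, and cyclically for $\cad_{E_2},\cad_{E_3}$. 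On the other side, the Clifford relations $\e_k^2=-1$ and $\e_k\e_l=-\e_l\e_k$ ($k\neq l$) give directly $[\hat\e_1,\e_1]=[\e_2\e_3,\e_1]=0$, $[\e_2\e_3,\e_2]=2\e_3$ and $[\e_2\e_3,\e_3]=-2\e_2$, which match $\cad_{E_1}$ term by term; similarly for $j=2,3$. (Equivalently, one may quote formula 5.12 of \cite{GVF}.) Thus $\widehat\cad(E_j)=\hat\e_j$.

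From $\widehat\cad(E_j)=\hat\e_j$ both remaining claims follow. Using $(E_j)^2=-I_2$ and $\hat\e_j^2=-1$ — the latter, together with $\hat\e_1\hat\e_2=\hat\e_3$ and its cyclic permutations, coming from an easy Clifford computation — we have $\mathrm{exp}(tE_j)=\cos t\,x_0+\sin t\,x_j$ and $\widehat\Cad(\mathrm{exp}(tE_j))=\mathrm{exp}(t\,\widehat\cad(E_j))=\cos t\,\hat\e_0+\sin t\,\hat\e_j$. So $\widehat\Cad$ agrees on the three one-parameter subgroups $\{\mathrm{exp}(tE_j)\}$ with the map $m\colon\sum_j r_j x_j\mapsto\sum_j r_j\hat\e_j$. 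Since the $\hat\e_j$ satisfy exactly the quaternion relations obeyed by the $x_j$ (which come from $E_1E_2=E_3$ and $(E_j)^2=-I_2$), the assignment $x_j\mapsto\hat\e_j$ extends to an $\bR$-algebra isomorphism between the quaternion subalgebras $\mathrm{span}_\bR\{x_j\}_{j=0}^3$ and $\mathrm{span}_\bR\{\hat\e_j\}_{j=0}^3$, whose restriction to the unit quaternions $SU(2)$ is $m$, in particular a group homomorphism. Two homomorphisms agreeing on a generating set of $SU(2)$ coincide, so $\widehat\Cad=m$, which is the asserted formula. Finally $\widehat\cad$, carrying the basis $\{E_j\}$ to the linearly independent set $\{\hat\e_j\}$ with $\dim su(2)=\dim\spin(\fg')=3$, is a Lie-algebra isomorphism; since $SU(2)$ is simply connected, $\Spin(\fg')$ connected, and the kernel of $\widehat\Cad$ — a discrete normal subgroup of $SU(2)$ — lies in the center $\{\pm I_2\}$ while $\widehat\Cad(-I_2)=\widehat\Cad(\mathrm{exp}(\pi E_1))=\mathrm{exp}(\pi\hat\e_1)=-\hat\e_0\neq\hat\e_0$, that kernel is trivial and $\widehat\Cad$ is an isomorphism onto $\Spin(\fg')$.

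The fussiest point will be the normalization bookkeeping in the second paragraph: making sure the factor $2$ in $\cad_{E_j}$ (forced by the convention $[E_1,E_2]=2E_3$) is exactly matched by, and not doubled or halved against, the factor $2$ coming out of the Clifford commutator $[\hat\e_i\hat\e_j,\,\cdot\,]$, and confirming that the covering differential $\spin(\fg')\to\so(\fg')$ really is $\omega\mapsto[\omega,\,\cdot\,]$ with no spurious $\frac12$ under this paper's sign convention $\mu\nu+\nu\mu=-2\<\mu,\nu\>1$. Once those are pinned down, the rest is routine.
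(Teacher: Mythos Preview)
Your proof is correct and more thorough than the paper's, but it proceeds in the opposite order. The paper first establishes the group-level formula: in the text preceding the proposition it checks that $\widehat\Cad_{x_j}$ and $\hat\e_j$ induce the same conjugation on $\fg'$, concludes $\widehat\Cad_{x_j}=\hat\e_j$, and then asserts that the linear extension to $\sum r_j x_j\mapsto\sum r_j\hat\e_j$ follows. The proof in the paper then only treats the Lie-algebra statement, by differentiating the already-established group formula along the curves $t\mapsto\cos(t)x_0+\sin(t)x_j$.

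You reverse this: you compute $\widehat\cad$ directly by matching the Clifford commutators $[\hat\e_j,\,\cdot\,]$ against $\cad_{E_j}$, then exponentiate to get $\widehat\Cad$ on the one-parameter subgroups, and finally identify $\widehat\Cad$ with the linear map $m$ by exhibiting $m$ as the restriction of a quaternion-algebra isomorphism (hence a group homomorphism) and invoking that two homomorphisms from the connected group $SU(2)$ with the same differential coincide. Your route has the advantage of resolving cleanly two points the paper leaves implicit: the sign in $\widehat\Cad_{x_j}=\hat\e_j$ (the conjugation argument alone only gives $\pm\hat\e_j$, since $\ker(\Spin\to\SO)=\{\pm1\}$), and the reason the group homomorphism $\widehat\Cad$ should agree with the $\bR$-linear map $m$. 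The paper's route is shorter if one is willing to accept those steps as evident.
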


\begin{proof}
The proof of the statement concerning $\widehat \cad $ is easily 
obtained by examining, for each $j $, the derivative at 0 of the curve
$t \mapsto \sin(t) x_j + \cos(t) x_0 $ in $ SU(2) $ and of the image
of this curve under $\widehat \Cad $. 
\end{proof}

Let $\s $ be the representation of $ SU(2) $ on $\cS $ 
obtained by composing $\widehat\Cad $ with $\k $,
much as done in Proposition \ref{compat}. Then $\s $ and $\k $ manifestly 
satisfy the compatibility condition \ref{compt}. 
We let $\tilde \s$ be the action of $ G $ on the spinor bundle 
$\cA \otimes \cS $ defined by $\tilde \s =\a\otimes \s $.
It follows from Proposition \ref{commut1} that $ D $ commutes with this action.

But from Proposition \ref{wcad} we see that, in turn, we can
let $\cS =\bC^2 $ and let $\k $ be the inverse of the
 isomorphism $ \widehat\cad $, so sending  $\hat \e_j $
 to $ E_j $, and then extending this to $ \bC\ell^e(\fg') $ by
 sending $1_{ \bC\ell}$ to $I_2$. This then also gives the isomorphism
 from $\mathrm{Spin}(\fg')$ onto $ SU(2) $.
 We can then extend this $\k$ to $ \bC\ell(\fg') $ by sending
 $\bg $ to $\pm I_2$.

The charge conjugation operator, $C^\cS$, on $\bC^2 $
is then defined by  $C^\cS(v) = \s_2 \bar v$, where $\bar v$
is the standard complex conjugation on $\bC^2$ applied
to $v \in \bC^2$, and $\s_2$ is the standard Pauli spin matrix.
See the proof of Proposition 3.5 of \cite{GVF}, or for
the general setting see section 2.3 of \cite{DbD}.
The charge-conjugation operator on 
$\cA \otimes \cS$ is then defined exactly as in Section \ref{charge}.

We now compute the spectrum of $ D $, by relating it to the Casimir
element, $ C $, somewhat as done in \cite{DLV}. The following 
relation appears to be related to equation 99 of \cite{Brt}.

\begin{theorem}
\label{dcas}
For notation as above, 
\[
D = \pm (1/2) (\tilde \s_C - \a_C \otimes I^\cS - I^\cA \otimes \s_ C),
\]
where the sign depends on the choice of the spinor 
representation on $\cS = \bC^2 $.
\end{theorem}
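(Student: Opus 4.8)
The plan is to reduce the identity entirely to Proposition~\ref{bigcas} together with the explicit description of $\widehat\Cad$ (hence of $\s$) supplied by Proposition~\ref{wcad}. Rearranging the formula of Proposition~\ref{bigcas} gives
\[
\frac{1}{2}\bigl(\tilde\s_C - \a_C\otimes I^\cS - I^\cA\otimes\s_C\bigr) = \sum_j \a_{E_j}\otimes\s_{E_j},
\]
so the theorem becomes the claim that $D = \sum_j \a_{E_j}\otimes\k_{\e_j}$ equals $\pm\sum_j\a_{E_j}\otimes\s_{E_j}$; since the $\a_{E_j}$ are linearly independent operators, this in turn amounts to the pointwise assertion $\k_{\e_j} = \pm\s_{E_j}$ for $j=1,2,3$, with a single sign depending only on the choice of spinor representation.

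Next I would identify $\s_{E_j}$ at the Lie-algebra level. Because $\s$ is obtained by composing $\widehat\Cad$ with $\k$, its infinitesimal version sends $X\in\fg$ to $\k_{\widehat\cad(X)}$, and Proposition~\ref{wcad} gives $\widehat\cad(E_j) = \hat\e_j$, so $\s_{E_j} = \k_{\hat\e_j}$, where $\hat\e_1 = \e_2\e_3$ and cyclic permutations. The key computation is the Clifford-algebra identity $\e_j\bg = -\hat\e_j$ for $j=1,2,3$, where $\bg = \e_1\e_2\e_3$; this is a three-line check using only $\e_j^2 = -1$ and $\e_j\e_k = -\e_k\e_j$ for $j\neq k$ (for instance $\e_1\bg = \e_1\e_1\e_2\e_3 = -\e_2\e_3 = -\hat\e_1$). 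Hence $\hat\e_j = -\e_j\bg$, and since $\k_\bg = \pm I^\cS$ according to which of the two irreducible extensions of $\k$ from $\bC\ell^e(\fg')$ to $\bC\ell(\fg')$ we have chosen, we get $\s_{E_j} = \k_{\hat\e_j} = \mp\k_{\e_j}$, with the sign uniform in $j$.

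Substituting back yields $D = \sum_j\a_{E_j}\otimes\k_{\e_j} = \mp\sum_j\a_{E_j}\otimes\s_{E_j} = \mp\frac{1}{2}(\tilde\s_C - \a_C\otimes I^\cS - I^\cA\otimes\s_C)$, which is exactly the asserted formula, the two global signs being interchanged under the two choices of the spinor representation on $\cS=\bC^2$. I do not anticipate any genuine obstacle: the content is a direct substitution into Proposition~\ref{bigcas}. The only thing demanding care is sign bookkeeping --- verifying $\e_j\bg = -\hat\e_j$ consistently for all three indices, and tracking which choice of $\k_\bg$ produces which sign in the final statement --- and this is routine once the orientation convention $\bg = \e_1\e_2\e_3$ and the cyclic convention $\hat\e_1 = \e_2\e_3$ are fixed.
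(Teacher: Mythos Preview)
Your proposal is correct and follows essentially the same route as the paper: both proofs invoke Proposition~\ref{bigcas} to isolate the cross term $\sum_j \a_{E_j}\otimes\s_{E_j}$, use Proposition~\ref{wcad} to identify $\s_{E_j}=\k_{\hat\e_j}$, and then appeal to the Clifford identity $\bg\e_j=-\hat\e_j$ together with $\k_\bg=\pm I^\cS$ to conclude $\k_{\e_j}=\pm\s_{E_j}$. The only cosmetic difference is that you pass through the linear independence of the $\a_{E_j}$ to reduce to a pointwise statement, whereas the paper computes the sum directly; this is harmless (and the linear independence does hold, since $U^n$ is Lie-algebra faithful for $n\geq 1$).
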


\begin{proof}
We examine, 
for our case of $ G = SU(2) $, the 
term $\sum_j (\a_{E_j} \otimes \s_{E_j}) $ that appears in the formula for
$\tilde \s_C$ in Proposition \ref{bigcas}. From our discussion just above, 
$\s_{E_j} =\k(\widehat\cad_{E_j}) $,
where $\k$ also denotes here
the corresponding homomorphism of Lie algebras. 
Then from Proposition \ref{wcad} we see that $\s_{E_j} =\k_{\hat \e_j} $.
But it is easily checked that $\bg \e_j =-\hat \e_j $, and so
if we choose the spinor representation for which $\k(\bg) =-I_2 $,
we find that $ \k_{\e_j}  =\k(\hat \e_j) =\s_{E_j}$. Consequently
\[
\sum_j \a_{E_j} \otimes \s_{E_j} = \sum_j \a_{E_j} \otimes   \k_{\e_j} = D   .
\]
On the other hand, 
if we choose the spinor representation for which $\k(\bg) =+I_2 $,
then we will obtain
\[
\sum_j (\a_{E_j} \otimes \s_{E_j}) =-D.
\]
When this is combined with  the formula for
$\tilde \s_C$ in Proposition \ref{bigcas}, 
we obtain the formula in the statement of the theorem.
\end{proof}

To use the formula of the above theorem in order to 
determine the spectrum of $D $, we need to recall the 
well-known facts about the action of the Casimir
element $C $ on irreducible representations of $ SU(2) $.
We define the following elements of $\fg^\bC $ . 
\[
H =-i E_3, \quad E =-(E_2 +i E_1)/ 2  , \quad  F =(E_2 -i E_1)/ 2  .
\]
They satisfy the familiar relations
\[
[H, E] = 2 E, \quad [H, F] =-2F, \quad [E, F] = H.
\]
Then in $\cU(\fg^\bC) $ we find (as in equation 1.3.8 of \cite{HwT})
\[
C = \sum_{j = 1}^3 E_j^2 =-(H^2 + 2(EF + F E)).
\]
For our purposes it is useful to change this using
$ EF -FE = H $ to obtain
\[
C = -(H^ 2 + 2 H + 4 F E).
\]
This permits us to easily determine, as follows,
the scaler multiple of the identity operator to which $C $ is taken 
by any irrep of $ SU(2) $.
Let $(\cH^n, U^n) $ be the irrep with 
highest weight $n $.
This means that $\cH^n $ contains an eigenvector, 
$\xi_n$, for $ H $ of eigenvalue $n $ such that $ E\xi_n = 0 $.
Consequently, $ U_C^n \xi_n =-(H^2+2H)\xi_n  = -n(n + 2)\xi_n $. Thus:
\begin{proposition}
\label{ceig}
For the irrep $(\cH^n, U^n) $ of
$ SU(2) $ of highest weight $n$ we have $ U_ C^n =-n(n + 2) I^{\cH^n} $.
\end{proposition}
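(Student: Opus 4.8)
The plan is to combine the centrality of the Casimir element with an evaluation on the highest weight vector. First I would invoke the observation from Section \ref{cas} that $C = \sum_j E_j^2$ lies in the center of $\cU(\fg^\bC)$; since $(\cH^n, U^n)$ is an irreducible representation, Schur's lemma forces $U_C^n$ to be a scalar multiple of the identity operator $I^{\cH^n}$. It therefore suffices to compute the action of $C$ on a single nonzero vector, and the natural choice is the highest weight vector $\xi_n$.

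Next I would use the rewritten form $C = -(H^2 + 2H + 4FE)$ obtained just above the statement from the relations $[H,E]=2E$, $[H,F]=-2F$, $[E,F]=H$, together with the defining properties of $\xi_n$, namely $H\xi_n = n\xi_n$ and $E\xi_n = 0$. From $E\xi_n = 0$ we get $FE\xi_n = 0$, and $H^2\xi_n = n^2\xi_n$, so $U_C^n \xi_n = -(n^2 + 2n)\xi_n = -n(n+2)\xi_n$. Comparing this with the scalar produced by Schur's lemma gives $U_C^n = -n(n+2)I^{\cH^n}$, as claimed.

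There is essentially no real obstacle here; the only step that deserves a moment's care is the passage from $C = \sum_j E_j^2$, expressed in the real orthonormal basis $\{E_j\}$ of $\fg$, to the complexified form in terms of $H$, $E$, $F$. That is precisely the algebraic identity $C = -(H^2 + 2(EF+FE)) = -(H^2 + 2H + 4FE)$ recorded before the statement, using $EF - FE = H$. Once that identity is in hand, the computation on $\xi_n$ is immediate, so I expect the proof to be short.
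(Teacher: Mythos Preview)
Your proposal is correct and matches the paper's argument essentially verbatim: the paper also uses centrality of $C$ to reduce to a scalar, then applies the rewritten form $C = -(H^2 + 2H + 4FE)$ to the highest weight vector $\xi_n$ (with $H\xi_n = n\xi_n$ and $E\xi_n = 0$) to obtain $-n(n+2)$.
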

 
We now determine the spectrum of $ D $.
We choose the spinor representation for which $\k(\bg) =-I_2 $.
The description of the spectrum of $D$ is most attractive if 
we state it in terms of $D' = D + 2$.

\begin{theorem}
The spectrum of $D'$ consists of the even numbers 
$\pm 2k $ for  $1 \leq k \leq n$ together with $2(n+1)$. For each $k$ with $1 \leq k \leq n$ 
the multiplicity of the eigenvalue $\pm 2k$ is $2k =\dim(\cH^{2k-1})$, while the  multiplicity of 
$2(n+1)$ is $2(n+1) = \dim(\cH^{2n+1})$. The spectrum of $D$ is just the spectrum of
$D' $ shifted by $-2$.
\end{theorem}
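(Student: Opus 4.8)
The plan is to read the spectrum off the Casimir identity of Theorem~\ref{dcas}, which writes $D$ as $\pm(1/2)\bigl(\tilde\sigma_C-\alpha_C\otimes I^\cS-I^\cA\otimes\sigma_C\bigr)$, a fixed linear combination of the three operators $\tilde\sigma_C$, $\alpha_C\otimes I^\cS$ and $I^\cA\otimes\sigma_C$ on $\cB^n\otimes\cS$. These pairwise commute (the first two because $\alpha_C$ is central in $\cU(\fg)$, the third because it is a scalar), and their joint spectral decomposition is elementary $SU(2)$ representation theory, so the proof amounts to a Clebsch--Gordan computation together with Proposition~\ref{ceig}.

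First I would decompose $\cB^n\otimes\cS$ under $\hat G=SU(2)$. Since $\cB^n=\End(\cH^n)\cong\cH^n\otimes(\cH^n)^{*}$ and finite-dimensional $SU(2)$-representations are self-dual, the conjugation action $\alpha$ gives $\cB^n\cong\bigoplus_{l=0}^{n}\cH^{2l}$, and by Proposition~\ref{ceig} (with highest weight $2l$) the Laplacian $\alpha_C$ acts on the summand $\cH^{2l}$ as the scalar $-2l(2l+2)$. The spinor representation $\sigma$ on $\cS=\bC^{2}$ is an irreducible $2$-dimensional representation of $SU(2)$, i.e.\ $\cH^{1}$, so $I^\cA\otimes\sigma_C=-3\,I^\cS$. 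Tensoring, $\cH^{2l}\otimes\cS\cong\cH^{2l+1}\oplus\cH^{2l-1}$ for $1\le l\le n$ while $\cH^{0}\otimes\cS=\cH^{1}$, and on the copy of $\cH^{m}$ inside $\cH^{2l}\otimes\cS$ the Casimir $\tilde\sigma_C$ of $\tilde\sigma=\alpha\otimes\sigma$ (Proposition~\ref{bigcas}) acts as $-m(m+2)$. Since the eigenvalues of $\alpha_C\otimes I^\cS$ separate the index $l$, and within the block $\cH^{2l}\otimes\cS$ those of $\tilde\sigma_C$ separate $m=2l+1$ from $m=2l-1$, the common eigenspaces of the three operators are precisely these copies of $\cH^{m}$, each of dimension $m+1$.

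Next I would substitute into Theorem~\ref{dcas}. On the joint eigenspace indexed by $(l,m)$ the operator $D$ is the scalar $\pm(1/2)\bigl(-m(m+2)+2l(2l+2)+3\bigr)$; setting $m=2l+1$ makes this $-2l$ and $m=2l-1$ makes it $2l+2$, each up to the overall sign. With the sign fixed by the chosen spinor representation ($\k(\bg)=-I_2$), $D$ acts as $2l$ on the copy of $\cH^{2l+1}$ ($0\le l\le n$) and as $-(2l+2)$ on the copy of $\cH^{2l-1}$ ($1\le l\le n$). Hence for $D'=D+2$ the eigenvalue on $\cH^{2l+1}$ is $2(l+1)$ with multiplicity $\dim\cH^{2l+1}=2(l+1)$, and on $\cH^{2l-1}$ it is $-2l$ with multiplicity $\dim\cH^{2l-1}=2l$. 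Reindexing the first family by $k=l+1\in\{1,\dots,n+1\}$ and the second by $k=l\in\{1,\dots,n\}$, and noting that the positive eigenvalues of the first family never meet the negative ones of the second, the spectrum of $D'$ is $\{2k:1\le k\le n+1\}\cup\{-2k:1\le k\le n\}$, with $\pm2k$ of multiplicity $2k=\dim\cH^{2k-1}$ for $1\le k\le n$ and the unpaired eigenvalue $2(n+1)$ of multiplicity $2(n+1)=\dim\cH^{2n+1}$ (coming from the top summand $\cH^{2n+1}$). The assertion for $D$ is then immediate since $D=D'-2$. As a check, the multiplicities add to $2\sum_{k=1}^{n}2k+2(n+1)=2(n+1)^{2}=\dim(\cB^n\otimes\cS)$.

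The work is purely bookkeeping, but two points need care. First, $\cH^{2k-1}$ occurs in $\cB^n\otimes\cS$ with multiplicity two --- once inside $\cH^{2(k-1)}\otimes\cS$ and once inside $\cH^{2k}\otimes\cS$ --- so decomposing only under $\tilde\sigma$ does not diagonalize $D$; one genuinely needs the finer decomposition by $\alpha_C\otimes I^\cS$, which is legitimate precisely because $D$, being the combination in Theorem~\ref{dcas}, commutes with it. Second, the overall sign in Theorem~\ref{dcas} must be tracked correctly (the choice $\k(\bg)=-I_2$ is essential): the other sign would not produce the stated pattern of $\pm2k$ together with a single leftover eigenvalue, since the two arithmetic progressions would no longer interleave.
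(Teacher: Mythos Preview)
Your proof is correct and follows essentially the same approach as the paper's: both decompose $\cB^n\otimes\cS$ via the Clebsch--Gordan rules into blocks $\cH^{2l}\otimes\cH^1$, evaluate the three Casimir operators on the resulting $\tilde\sigma$-isotypic pieces using Proposition~\ref{ceig}, and read off $D$ from the identity of Theorem~\ref{dcas}. Your remark that $\alpha_C\otimes I^\cS$ is needed to separate the two copies of each $\cH^{2k-1}$ is exactly the role played by the paper's block decomposition into the $W_k$.
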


\begin{proof}
It suffices to determine the spectrum of 
\[
 \tilde \s_C - \a_C \otimes I^\cS - I^\cA \otimes \s_ C,
\]
which is an operator on $\cB^n \otimes \cS $. This operator involves 
three different representations of $SU(2) $. Now for any compact group
$ G $ and unitary representation $(\cH, U) $ the representation $\a $ on
$\cB(\cH) $ by conjugation by $ U $ is unitarily equivalent to the representation
$ U\otimes \overline{ U }$ on $ \cH\otimes \overline{ \cH }$, where
$(  \overline{ \cH }, \overline{ U })$ is the representation dual to
$(\cH, U) $. But for $ G = SU(2) $ all representations are self-dual \cite{Smn}.
Thus for an irrep $(\cH^n, U^n) $ of
$ SU(2) $ the corresponding representation $\a $ on $\cB^n $ is equivalent to 
the representation $ U^n \otimes U^n $ on $ \cH^n \otimes \cH^n $.
But this representation is known (see section 8-3 of \cite{Tlm}) to decompose in such a way that
$(\a, \cB^n) $ is equivalent to
$\bigoplus_{k =0}^n (\cH^{2k} , U^{2k}).$ Thus
$\cB^n \otimes \cS $  is equivalent to
$\bigoplus_{k =0}^n (\cH^{2k} \otimes \cH^1)$ with corresponding 
representations $\a  \otimes I^\cS $,  $I^\cA \otimes \s $, and $\tilde \s =\a\otimes \s $. 
For each $k $ set $ W_k = \cH^{2k} \otimes \cH^1 $.

Since $U^1_ C = -3 I^{\cH^1}$, we see that 

\quad $I^\cA \otimes \s_ C $ on $ W_k $ is $ -3 I^{W_k} $.

We also see that

\quad $\a_ C \otimes I^\cS $ on $ W_k $ is $ -2k(2k+2) I^{W_k} 
= -4(k^2 + k) I^{W_k}$.

Furthermore, $ W_k = \cH^{2k} \otimes \cH^1$ decomposes for $\tilde \s$ 
as $\cH^{2k+1} \oplus \cH^{2k-1} $ for $k\geq 1 $, while $ W_0 = \cH^1 $. 
Thus we see that for $k\geq 1 $ we have

\quad $\tilde \s _ C $ on $\cH^{2k+1} \subset W_k $ is $-(2k+1) (2k+3)I^{\cH^{2k+1}}$, 
while

\quad $\tilde \s _ C $ on $\cH^{2k-1} \subset W_k $ is $-(2k-1) (2k+1)I^{\cH^{2k-1}}$,
while

\quad $\tilde \s _ C $ on $ W_0 = \cH^1$ is $-3I^{\cH^1}$   .

It follows that for $k\geq 1 $ we have

\quad $\pm 2D $ on $\cH^{2k+1} \subset W_k $ is 

\quad $-((2k+1) (2k+3) - 4(k^2 + k) - 3)I^{\cH^{2k+1}} = -4k I^{\cH^{2k+1}} $,
while

\quad $\pm 2D $ on $\cH^{2k-1} \subset W_k $ is 

\quad $-((2k-1) (2k+1) - 4(k^2 + k) - 3)I^{\cH^{2k-1}} = +4(k+1) I^{\cH^{2k-1}} $,
while

\quad $\pm 2D $ on $ W_0 = \cH^1$ is $((-3) - (0) - (-3))I^{\cH^1} = 0I^{\cH^1}$.

We can conveniently assemble all of this in the following way.
Let us keep the minus sign, and divide by 2. We obtain  for $k\geq 1 $

\quad $ D $ on $\cH^{2k+1} \subset W_k $ is $+2k I^{\cH^{2k+1}} $  ,

while

\quad $D $ on $\cH^{2k-1} \subset W_k $ is $-2(k+1) I^{\cH^{2k-1}} $  ,

while

\quad $D $ on $ W_0 = \cH^1$ is $ 0I^{\cH^1}$.

\noindent
Then let $D' = D + 2$. For each $k$ with $1 \leq k \leq n$ we see that on the 
copy of 

\quad $\cH^{2k-1}$  in $W_{k-1}$ we have  $D' = 2k I^{\cH^{2k-1}}$ , 

while on the copy of 

\quad $\cH^{2k-1}$ in $ W_k$
we have $ D'=-2k I^{\cH^{2k-1}}$.

Finally, for 

\quad $\cH^{2n+1}$  in $W_n$ we have  $D' = 2(n +1) I^{\cH^{2n+1}}$ .
\end{proof}

The asymmetry of the spectrum of $D$ and $D'$ precludes their
having a grading (chirality) operator, so they do not give ``even" spectral
triples. 

Let $D_{DLV}$ denote the Dirac operator obtained in 
proposition 3.5 of \cite{DLV}. Then part (iii) of that proposition
makes clear that our $D'$ is equal to $2D_{DLV}$.
Thus we see that our $D$ corresponds to $D_{DLV}$ 
once differences in our conventions are taken into account.
In \cite{DLV} the formula used to define $D_{DLV}$
is presumably guided in part by the formulas used in \cite{Kst},
but, as discussed in \cite{DLV}, their formula is closely related
to the formula obtained in \cite{GK2}.

In section VI of \cite{Brt} Barrett discusses various versions
of fuzzy spheres, beginning with a Dirac operator that is essentially
our Dirac operator $D'$ above. He points out that not only
is there no chirality operator, 
but this Dirac operator also does not have
the desired KO-dimension (defined there). He shows that
both of these problems are solved by forming the direct sum
of $D $ with its negative, so that the space of spinors is
$\bC^4 $ instead of $\bC^2 $. He also proposes a corresponding modified 
Dirac operator for the 2-sphere itself and makes comments about why this 
may be a reasonable thing to do. It would be interesting
to explore whether these ideas can be usefully extended
to all coadjoint orbits to which matrix algebras converge.

In \cite{DLV} there is a substantial discussion of how their Dirac 
operator compares to others in the literature. That discussion
applies equally well to the Dirac operator we have constructed.
We will not
repeat here most of that discussion, but we now summarize
some of it. 

The Dirac operator constructed in section IIIB of \cite{BlIm}
by using the Ginsparg-Wilson relations,
which also appears in section 8.3.2 of \cite{BKV}, and is
the spin-1/2 case in \cite{BlPd, DiPa}, is essentially the same
as the Dirac operator that we have constructed above.
However, the ``chirality operator" defined there is not
a true grading.

For the Dirac operator constructed in \cite{CWW97} and \cite{CWW98} 
and used in \cite{CWW00} there 
is a true chirality operator, but the Dirac operator is of ``second-order"
and has spectrum very different for the spectrum we described above.
In section 3.7.2 of \cite{Yd1} there is a detailed discussion of the relations
between the Dirac operator in \cite{CWW98} and that in \cite{GK2}.

The Dirac operators on matrix algebras defined in \cite{GKP} 
are not mentioned in \cite{DLV}.
They are constructed by a quite different method, which gives a quite different
formula for the Dirac operators. 
A useful discussion of the role of supersymmetry in the construction is
given in the introduction to \cite{Klm}.
A good discussion of the contrast between 
that method and the methods used in the papers mentioned in the
paragraphs above (and so in the 
present paper) is given in the introduction to \cite{DHMO7}, which lays the 
foundation for applying that method to complex projective spaces in
\cite{DHMO}. Section 2 of \cite{HQT} is devoted to an exposition
of the construction in preparation for extending it to the q-deformed sphere
(continued in \cite{HQT2, Hrk, Lft, Lft2}). 
 In \cite{BMS} the same method is
used to treat the case in which $G = SU(2)\times SU(2)$
acting on $S^2\times S^2$, a 4-dimensional space, so of
special interest to physics. 
In the papers using that method there is no mention of 
charge conjugation.
Thus it would be interesting to know whether the 
finite-dimensional
spectral triples constructed by that method can be equipped with a
``real'' structure. It would also be interesting to know whether the method
can be generalized to apply to all coadjoint orbits of integral weights
of all compact semisimple Lie groups, and whether it even has a primitive 
version for ergodic actions of the kind discussed in the early sections of the 
present paper. It is not clear to me how to make a
precise comparison between the Dirac operators obtained by that method 
and the ones constructed in the present paper.


\section{Homogeneous spaces and the Dirac operator for G}
\label{homg}

In this and the next sections we reformulate the (usual) construction of
Dirac operators on coadjoint orbits in terms just of algebras and modules, 
without any mention of coordinate patches or charts, refining the approach in 
Section \ref{sec1}. This is essential for the proof of our main theorem. 
We follow fairly closely parts of \cite{R22, R27}, but here we emphasize
cotangent bundles rather than tangent bundles.

We begin working in the following generality.
We let $G $ be any compact connected Lie group. We let $ K $
be any closed subgroup of $G $, and we form the corresponding 
homogeneous space $ G/K$. Many of the results in this section are 
adaptations to our context of some of the results in \cite{R22}.
We let $\cA = C(G/K)$ and we let $\a $ be the action of $G $ on $\cA $
by left translation. Then $\ciA $, formed using $\a $, is also the smooth 
structure on $ G/K $ coming from that on $ G $. 

As in Section \ref{sec1}, we set $\O_o = \ciA \otimes \fg'$, we chose 
a spinor space $\cS$ for $\bC\ell(\fg')$ (which need not be irreducible),
and we define $D_o $ on $\ciA\otimes \cS$. Since now 
$\ciA = C^\infty(G/K)$, we can
express $D_o $ in terms of functions 
as follows. Let $f,g \in \ciA$ and $\psi \in \cS$. Then 
$[D_o, M_f](g \otimes \psi)$ is in $\ciA \otimes \cS $, so we can evaluate 
it at any $y \in G/K$, to obtain an element of
$\cS$, as follows:
\begin{align*}
[D_o, M_f](g \otimes \psi)(y) &= \sum\a_{E_j}(f)(y)g(y) \otimes \k_{\e_j}\psi   \\
&= g(y)\k_{\sum \a_{E_j}(f)(y)\e_j}\psi   .
\end{align*}
But $\rd f_y$ is the element $\sum \a_{E_j}(f)(y)\e_j $ of $\fg' \subset \bC\ell(\fg')$, so we 
see that $[D_o, M_f] $ is given by the operator-valued function
\[
[D_o, M_f]_y = 1_\cA \otimes \k_{\rd f_y}   .
\]
(Compare this with proposition 9.11 of \cite{GVF}.) Consequently 
$\|[D_o, M_f]\| = \|\k_{\rd f_y}\| $. But for any $\mu \in \fg'$ we have
$\|\k_{\mu}\|^2 = \|\<\mu, \mu\>1_\cA\| = \|\mu\|^2$, and
it is easy to check that this extends to $\mu \in \fg'^\bC$. Thus we obtain
the following result, which will be of importance for the proof 
of our main theorem.
\begin{proposition}
\label{bigb}
With notation as above we have
\[
L_{D_o}(f) = \|[D_o, M_f]\| = \sup\{\|\rd f_y\|: y \in G/K\}   ,
\]
where the norm comes from the inner product on $\fg'$.
\end{proposition}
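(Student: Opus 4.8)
The plan is to compute the operator norm of $[D_o, M_f]$ directly from the pointwise formula just derived, reducing the problem to a supremum over $y \in G/K$ of Clifford-algebra norms, and then to identify each such norm with a length in $\fg'$.

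First I would record the pointwise identity $[D_o, M_f]_y = 1_\cA \otimes \k_{\rd f_y}$ already established above, and note that since $\ciA \otimes \cS$ is dense in the Hilbert-space completion $L^2(\cA,\tau) \otimes \cS$ and multiplication operators act fiberwise over $G/K$, the operator norm of $[D_o, M_f]$ on this Hilbert space is exactly $\sup\{\|\k_{\rd f_y}\| : y \in G/K\}$. (Here one uses that the function $y \mapsto \rd f_y$ is continuous, so the supremum is attained and the essential supremum coincides with the genuine supremum; this is the one place where a small argument is needed, but it is routine given smoothness of $f$ and compactness of $G/K$.) Thus $L_{D_o}(f) = \sup_y \|\k_{\rd f_y}\|$.

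Next I would compute $\|\k_\mu\|$ for $\mu \in \fg'$. Since $\k$ is a $*$-representation and the Clifford relation gives $\mu^*\mu = (-\mu)\mu = -\mu^2 = \<\mu,\mu\>1$ (using $\mu^* = -\mu$ for $\mu \in \fg'$ and $\mu^2 = -\<\mu,\mu\>1$ from our sign convention), we get $\|\k_\mu\|^2 = \|\k_\mu^*\k_\mu\| = \|\k_{\<\mu,\mu\>1}\| = \<\mu,\mu\> = \|\mu\|^2$, where the second-to-last equality uses that $\k$ is unital. Hence $\|\k_\mu\| = \|\mu\|$ for all $\mu \in \fg'$. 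To pass to $\mu \in \fg'^\bC$ one checks the analogous computation: writing $\mu = \mu_1 + i\mu_2$ with $\mu_j$ real, the Clifford involution and the Hermitian extension of the inner product again give $\|\k_\mu\|^2 = \|\<\mu,\mu\>1\| = \|\mu\|^2$; alternatively, since $\rd f_y$ actually lies in the real $\fg'$ when $f$ is real-valued and the seminorm is determined by its values on self-adjoint elements, one may not even need the complex case, but it is harmless to include it.

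Combining these two steps yields $L_{D_o}(f) = \sup\{\|\k_{\rd f_y}\| : y \in G/K\} = \sup\{\|\rd f_y\| : y \in G/K\}$, which is the claimed formula. I do not expect any genuine obstacle here; the only point requiring a touch of care is the first step — justifying that the operator norm of a fiberwise multiplication operator on the GNS completion equals the supremum (not just essential supremum) of the fiberwise norms — and this follows from continuity of $y \mapsto \rd f_y$ together with compactness of $G/K$ and the fact that $\tau$ is given by integration against Haar measure on $G/K$, so that every nonempty open set has positive measure.
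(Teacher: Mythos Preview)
Your proof is correct and follows essentially the same route as the paper: both start from the pointwise identity $[D_o, M_f]_y = 1_\cA \otimes \k_{\rd f_y}$, reduce the operator norm to $\sup_y \|\k_{\rd f_y}\|$, and then use the Clifford relation $\mu^*\mu = \<\mu,\mu\>1$ to identify $\|\k_\mu\|$ with $\|\mu\|$. You are in fact more careful than the paper about justifying the passage from operator norm to pointwise supremum (the paper simply writes ``Consequently'' at that step).
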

We can put this in a more familiar form in terms of the dual inner product
on $\fg$. For all $y$ there will exist an element, $\mathrm{grad}_y f$,
of $\fg$ such that $\rd f_y(X) = \<X, \mathrm{grad}_y f\>$ for all $X\in \fg$.
Thus 
\[
L_{D_o}(f) =  \sup\{\| \mathrm{grad}_y f\|: y \in G/K\}.
\]

All of the above applies to the case in which $K =\{e_G\}$, that is,
to the algebra $\cA = C(G) $.
Because any Lie group is parallelizable, the cotangent bundle for $ G$ actually is
$\ciA \otimes \fg'$. So this case is also one to which all of the results 
of Section \ref{sec1} immediately apply. Once one applies 
Proposition \ref{compat} concerning 
the possibility of choosing	the space $\cS$ to be irreducible,
one obtains:

\begin{proposition}
Let $G$ be a connected simply-connected 
compact Lie group, and let a $\Cad$-invariant inner product be chosen 
for $\fg'$. Then an irreducible spinor space $\cS $ with 
compatible action of $ G$ can be found, and the 
operator $D_o $ on $\ciA \otimes \cS$ constructed as in Section \ref{sec1} 
is the correct Dirac
operator for $G $, invariant under the action of $ G$ on itself 
by left translation. We denote it by $D^G$.
\end{proposition}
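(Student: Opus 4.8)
The plan is to handle the two assertions separately: the existence of an irreducible spinor space $\cS$ carrying a compatible $G$-action, and the identification of the operator $D_o$ of Section~\ref{sec1} — built for $\cA = C(G)$ with $\a$ the action by left translation — with the Dirac operator of $G$. For the existence I would apply Proposition~\ref{compat} with $\fm = \fg'$ and $\pi = \Cad$, the coadjoint representation. The chosen inner product on $\fg'$ is $\Cad$-invariant, so $\Cad$ is a homomorphism $G \to \SO(\fg')$, and since $G$ is simply connected it lifts to $\widehat\Cad\colon G \to \Spin(\fg') \subset \bC\ell(\fg')$. One then takes $\cS$ to be an irreducible $*$-representation space $(\cS,\k)$ of $\bC\ell(\fg')$ — when $\dim\fg'$ is odd there are two inequivalent choices, but Proposition~\ref{prodnorm} shows that $L^{D_o}$, and hence the C*-metric, is the same for both — and sets $\s = \k\circ\widehat\Cad$. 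Proposition~\ref{compat} then yields the compatibility \eqref{compt}, so Propositions~\ref{commut1} and~\ref{invar} apply and give that $D_o$ commutes with $\tilde\s = \a\otimes\s$; since $\a$ is left translation, this is the asserted $G$-invariance.

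For the identification I would use that $G$ is parallelizable, so the module-theoretic data of Section~\ref{sec1} coincide with the classical Riemannian data for the bi-invariant metric on $G$ determined by the chosen inner product. Concretely I would check that $f\otimes\mu \mapsto f\,\mu^R$, using the right-invariant coframe, is a $G$-equivariant isomorphism of $\O_o = \ciA\otimes\fg'$ (with its action $\g = \a\otimes\Cad$) onto the module of smooth one-forms on $G$; that under it the $\ciA$-valued inner product becomes the bi-invariant metric, $\rd$ becomes the de~Rham differential (up to a conventional sign), $\ciA\otimes\bC\ell(\fg')$ becomes the Clifford bundle, and $\ciA\otimes\cS$ becomes the spinor bundle of the spin structure on $G$ presented by $\widehat\Cad$. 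With these identifications the formula $D_o = \sum_j \a_{E_j}\otimes\k_{\e_j}$ of \eqref{eqdefdir} is exactly the standard local expression $\sum_j \k_{\e_j}\nabla_{E_j}$ for a Dirac operator in an orthonormal coframe, $\nabla$ being the canonical (left-invariant, hence flat) connection on $G$, whose connection coefficients vanish in this coframe — so no curvature or Christoffel term appears.

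The step I expect to be the genuine obstacle is justifying the word ``correct''. The canonical connection just used is the one attached to the presentation $G = G/\{e\}$; it has nonzero torsion, so it is not the Levi--Civita connection, and $D_o$ is not literally the Riemannian Dirac operator of $G$ for its bi-invariant metric. The reconciliation I would give is that the two differ only by a sign and by a fixed order-zero operator of the form $1_{\cA}\otimes w$, with $w$ a cubic element of $\k(\bC\ell(\fg'))$; since such a term commutes with every $M_f$, one gets $\|[D_o,M_f]\| = \|[D^{\mathrm{LC}},M_f]\|$ and hence $L^{D_o} = L^{D^{\mathrm{LC}}}$, so $D_o$ is the correct Dirac operator for every purpose that enters this paper — consistent with the discussion of the canonical versus the Levi--Civita connection in Section~\ref{secdefit}. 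One then records the name $D^G$ for this operator; the residual verifications (equivariance of the coframe identification, that $\widehat\Cad$ presents the spin structure, and the sign bookkeeping) are routine and I would only sketch them.
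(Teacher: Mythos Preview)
Your approach is correct and aligns with the paper's, though you supply considerably more detail than the paper does. In the paper this proposition is not given a separate proof at all: it is stated as an immediate consequence of the sentence preceding it, namely that for $K=\{e_G\}$ the parallelizability of $G$ makes the cotangent bundle literally equal to $\ciA\otimes\fg'$, so the Section~\ref{sec1} construction applies verbatim, and Proposition~\ref{compat} supplies the irreducible spinor space with compatible $G$-action.

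Your third paragraph --- the discussion of canonical versus Levi--Civita connections and the order-zero cubic discrepancy --- is a legitimate point, and your resolution (that the commutators $[D,M_f]$ coincide, hence so do the C*-metrics) is exactly right. But the paper does not engage with this here; it simply declares $D_o$ to be ``the correct Dirac operator'' and moves on, deferring all discussion of the canonical-versus-Levi--Civita issue to Section~\ref{secdefit}. So you are being more scrupulous about the word ``correct'' than the paper itself is at this point; the paper's usage is informal, meaning roughly ``the Dirac operator appropriate for this paper's purposes,'' and the justification you give is essentially what Section~\ref{secdefit} later supplies for the general homogeneous-space case.
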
	

We return to the general situation with $\cA = C(G/K)$. 
Since for any $f \in \ciA$ and $ X\in \fg$ the expression $\a_X(f)$ gives 
the same function regardless of whether $f $ is viewed as an element
of $\ciA $ or of $C^\infty(G) $, 
it is clear from the formulas above that $D_o $
is simply the restriction of $ D^G $ to $\ciA \otimes \cS$.

A standard argument (see the text 
around proposition 9.12 of \cite{GVF})
shows that $L_{D^G}(f) $ is the Lipschitz constant 
for any $f \in  C^\infty(G) $ for the 
ordinary metric on $G$ coming from the Riemannian metric determined 
by our chosen inner product on $\fg'$. 


\section{Dirac operators for homogeneous spaces}
\label{dhom}

We continue to work here in the following generality.
We let $G $ be any compact connected Lie group. We let $ K $
be any closed subgroup of $G $, and we form the corresponding 
homogeneous space $ G/K$. Again, many of the results in this section are 
adaptations to our context of results in \cite{R22, R27}.

We let $\fk $ be the Lie algebra of $K $. Let $ f\in \ciA = C^\infty(G/K)$,
and let $ X\in\fg $. As before, we view $ f $ as a function on $ G$
when convenient.
Then for any $x\in G$ we have
\[
\rd f_x(X) = D^t_o f((\exp(-tX))x) = D^t_o f(x\exp(-t\Ad_x^{-1}(X)) ) .
\] 
Since $f(xs) = f(x)$ for all $x\in G$ and $ s \in K$, we see from this 
that $\rd f_{xs}(X) = df_x(X)$ for all $x\in G$ and $ s \in K$, and 
that if $\Ad_x^{-1}(X) \in \fk$ for some $x \in G$ then $df_x(X) = 0 $.
Let 
\[
\fm' =\{\mu \in \fg': \mu(X) = 0 \ \mathrm{for \ all} \ X \in \fk\} ,
\]
and let $\fmc$ be its complexification. 
Notice that $\fmc$ is carried into itself by the restriction of $\Cad $ to $ K $. 
If $ X\in\fk $
then $\Ad_x^{-1}(\Ad_x(X)) \in \fk $, so $\rd f_x(\Ad_x(X)) = 0 $.
This means that $\Cad_x^{-1}(df_x) \in \fm'^\bC$ for all $x\in G$.
Let 
\[
\O(G/K)  =  \{\om \in C^\infty(G/K, \fg'^\bC):  
\om(x) \in \Cad_x(\fmc) \ \mathrm{for \ all} \  x \in G \} .
\]
The calculations above show that $\rd f \in \O(G/K) $. 
For pointwise operations, $\O(G/K) $ is an $\ciA$-module.
It is easy to check that $\O(G/K) $ is carried into itself by the action $\g $ of $G $
on $\O_o(G)$.
By very minor adjustments to the discussion in section 6 of \cite{R33}  one sees that 
$\O(G/K) $ is indeed the complexified cotangent bundle for $ G/K $ (as is 
basically well-known \cite{KbN}).

However, the fact that the range spaces of the $\om$'s depend on $x$ complicates 
calculations. But by taking advantage of the fact that we are working 
on a homogeneous space, we can make a small change that makes the situation considerably 
more transparent
(much as done in connection with notation 4.2 of \cite{R22}).
For each $\om \in \O(G/K)$ define $\hat \om $ on 
$G$ by $\hat\om(x) =\Cad_x^{-1}(\om(x))$.
Then it is easily checked that $\hat \om $ is in the $\ciA$-module
\[
\hat\O(G/K) = \{\om \in C^\infty(G, \fmc): \om(xs) = 
\Cad^{-1}_s(\om(x)) \ \mathrm{for} \ x \in G, s \in K  \}.
\]
This is exactly the complexification of the space given in notation 6.1 
of \cite{R33}, where a proof is indicated for the well-known fact 
that it too can be viewed as the space of smooth cross sections of 
the cotangent bundle of $G/K $. 
(It is an ``induced representation'' \cite{R22}. There are many more 
induced representations in the next pages.)

We then define $\hat \rd$ 
by $\hat \rd f =  \widehat{\rd f}$ for all $f \in \ciA$. Then for $x \in G$
and $X \in \fg$ we have
\begin{align*}
\<X, \hat\rd f_x\> &= \<X, \Cad_x^{-1}(\rd f_x)\> = \<\Ad_x(X), \rd f_x\>    \\
&= D^t_o(f(\exp(-t \Ad_x(X)x)) = D^t_o(f(x\exp(-t X)) , 
\end{align*}
so that $\hat \rd$ is defined in terms of the \emph{right} action of $G $ on 
itself.
Clearly  $\hat \rd$ is again a derivation,
and the first-order differential calculi $( \hat\O(G/K), \hat\rd)$ and
$(\O(G/K), \rd)$ are easily seen to be isomorphic. Thus we can use the latter.
It is easily checked that 
$\widehat{\g_x\om} = \a_x(\hat \om) $, so that the left-translation action $\a$
of $G$ on itself is the appropriate one to use on $\hat\O(G/K) $.

We now assume, as usual, that a $\Cad $-invariant inner product on $\fg'$
has been chosen.
We can then give a simple description of the Clifford bundle for $\cA$,
much as done in section 7 of \cite{R22}.
We first let $\clfm$ be the complex Clifford algebra for $\fm'$ for 
the restriction to $\fm'$ of the given 
inner product on $\fg'$, much as in Section \ref{sec1}. Since the restriction to $K $ of the
action $\Cad$ on $\fm'$ is by
isometries, it extends (as Bogoliubov automorphisms) 
to an action on
$\clfm $, which we denote again by $\Cad $. We then set
\[
\clf G/K) = \{\cW \in C^\infty(G, \clfm): \cW(xs) = 
\Cad^{-1}_s(\cW(x)) \ \mathrm{for} \ x \in G, s \in K  \}.
\]
Then $\clf G/K)$ with pointwise product is easily 
seen to be a C*-algebra that effectively contains 
$\ciA$ in its center.
Since $\fm' \subset \clfm $ and $\fm'$ generates $\clfm $ as a unital algebra,
it is also easily seen that $ \hat\O(G/K) \subset \clf G/K) $ and that
 $ \hat\O(G/K) $ together with $\ciA $ generate $ \clf G/K) $ as a unital algebra. 
For our situation $ \clf G/K) $  is the appropriate Clifford-algebra
bundle.

Much as in Section \ref{secsp}, we choose 
a spinor space $\cS$ for $\bC\ell(\fm')$ (which need not be irreducible
or faithful).
We need to assume that there is a 
unitary representation, $\rho$, of $K$ on $\cS$ that is 
compatible with the restricted action $\Cad$ of $K$ on $ \bC\ell(\fm')$ and 
the representation $\k$ of  $ \bC\ell(\fm')$ on $\cS$, in the 
sense that  
\begin{equation}
\label{comeq}
\k_{\Cad_s(q)} = \rho_s \k_q \rho^{-1}_s
\end{equation}
for all $q \in \bC\ell(\fg')$ and $s \in K$. 
Since $K$ is never semisimple in the situation in which
we are most interested, namely that of coadjoint orbits, 
we can not in general 
apply the arguments used 
for Proposition \ref{compat} to obtain a spinor space that is irreducible.
This is a serious issue, reflecting the fact that not 
all homogeneous spaces are $\Spin^c$.
Much of the next section will be devoted to dealing with this issue
for coadjoint orbits.
But we can always choose $\cS =  \bC\ell(\fm') $ 
(with inner product from its canonical trace)
with $\rho = \Cad $
restricted to $K $. This leads to the Dirac-Hodge
operator, as discussed in section 8 of \cite{R22}.

Whatever way $\cS$ and $\rho $ are chosen, we then set
\[
\cS(G/K)  =  \{\Psi \in C^\infty(G, \cS):  
\Psi(xs)  = \rho_s^{-1}\Psi(x)  \ \mathrm{for \ all} \  s \in K \} .
\]

\begin{proposition}
\label{mulsp}
The evident pointwise 
product of elements of $\cS(G/K)$ by elements of 
 $ \clf G/K) $, using the representation $\k $ of $ \bC\ell(\fm')$ on $\cS$,
carries $\cS(G/K)$ into itself. 
\end{proposition}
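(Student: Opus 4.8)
The plan is to show directly that the pointwise Clifford-module product of a spinor field by a section of the Clifford bundle again satisfies the defining equivariance condition of $\cS(G/K)$; smoothness will be immediate. Concretely, for $\cW \in \clf G/K)$ and $\Psi \in \cS(G/K)$ define $\Phi$ on $G$ by $\Phi(x) = \k_{\cW(x)}\bigl(\Psi(x)\bigr)$. Since $\k$ is a fixed representation of the finite-dimensional algebra $\clfm$ on the finite-dimensional space $\cS$, the map $(q,\psi)\mapsto \k_q(\psi)$ is bilinear, hence smooth; composing it with the smooth maps $x\mapsto \cW(x)$ and $x\mapsto \Psi(x)$ shows that $\Phi \in C^\infty(G,\cS)$. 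It therefore remains only to verify that $\Phi(xs) = \rho_s^{-1}\Phi(x)$ for all $x \in G$ and $s \in K$.

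First I would rewrite the compatibility condition \eqref{comeq} in the form needed here: substituting $\Cad_s^{-1}(q)$ for $q$, which is legitimate because $\Cad_s$ is an automorphism of $\clfm$, turns $\k_{\Cad_s(q)} = \rho_s \k_q \rho_s^{-1}$ into $\k_q = \rho_s\,\k_{\Cad_s^{-1}(q)}\,\rho_s^{-1}$, that is,
\[
\k_{\Cad_s^{-1}(q)} = \rho_s^{-1}\,\k_q\,\rho_s \qquad \text{for all } q \in \clfm,\ s \in K .
\]
Then the verification is a one-line substitution. Using the defining properties $\cW(xs) = \Cad_s^{-1}(\cW(x))$ and $\Psi(xs) = \rho_s^{-1}\Psi(x)$,
\[
\Phi(xs) = \k_{\Cad_s^{-1}(\cW(x))}\bigl(\rho_s^{-1}\Psi(x)\bigr)
= \rho_s^{-1}\,\k_{\cW(x)}\,\rho_s\,\rho_s^{-1}\,\Psi(x)
= \rho_s^{-1}\,\k_{\cW(x)}\bigl(\Psi(x)\bigr) = \rho_s^{-1}\Phi(x),
\]
so $\Phi \in \cS(G/K)$, as claimed.

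I do not anticipate any genuine obstacle here: the statement is essentially a bookkeeping consequence of the compatibility relation between $\k$, $\rho$ and the restricted action $\Cad$ of $K$. The only points that merit a word of care are the smoothness remark (so that one stays within $C^\infty(G,\cS)$ rather than merely continuous functions) and the rewriting of \eqref{comeq} into its $\Cad_s^{-1}$ form, which in turn relies on $\rho$ and the restriction of $\Cad$ to $K$ being honest representations of $K$. If desired, one could also note that the product is $\ciA$-bilinear (with $\ciA$ sitting in the centre of $\clf G/K)$ via constant-on-$K$-cosets functions), which makes $\cS(G/K)$ a module over $\clf G/K)$, but that is not needed for the statement as phrased.
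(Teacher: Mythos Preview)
Your proof is correct and follows essentially the same approach as the paper: both verify the equivariance condition $\Phi(xs)=\rho_s^{-1}\Phi(x)$ by substituting the defining properties of $\cW$ and $\Psi$ and applying the compatibility relation \eqref{comeq} in its $\Cad_s^{-1}$ form. Your version is slightly more explicit in rewriting \eqref{comeq} and in addressing smoothness, but the argument is the same.
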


\begin{proof}
This uses the
 compatibility relation \eqref{comeq}. Let $\cW \in   \clf G/K) $ and $\Psi \in \cS(G/K)$.
 Then for $x \in G $ and $s \in K $ we have
 \begin{align*}
 (\cW\Psi)(xs) &=\k_{\cW(xs)}\Psi(xs)  
 =  \k_{\Cad_s^{-1}(\cW(x))}(\rho_s^{-1}(\Psi(x))  \\
 &= \rho_s^{-1}(\k_{\cW(x)}\Psi(x)) = \rho_s^{-1}(\cW\Psi)(x) 
 \end{align*}
 as desired.
 \end{proof}
 
 There is an evident pointwise $\ciA$-valued inner product on $\cS(G/K)$,
 using the inner product on $\cS$ and the fact that $\rho$ is unitary.
 We denote it again by $\<\cdot, \cdot \>_\cA$.
 
 Our Dirac operator $ D $ will be an operator on $\cS(G/K) $. 
 Because of the way $\cS(G/K) $ is defined, 
 we cannot simply define $D $ on elementary tensors as 
 done in previous sections. We need to use connections, as done in \cite{R22, R27}.
 We follow somewhat closely the development in section 5 of \cite{R27}.

Let $\fm $ be the orthogonal complement of $ \fk $ for the inner product on
$\fg $ that is dual to our chosen inner product on $\fg'$.
Then $\fm'$ can be viewed as the dual vector space to $\fm $.
The tangent bundle, $\cT(G/K) $, of $G/K $ is given by
\[
\cT(G/K) = \{V \in C^{\infty}(G,\fm): V(xs) = \Ad_s^{-1}(V(x)) \text{ for } x \in G, s \in K\}.
\]
It is an $\ciA$-module for the pointwise product, and $G$ acts on it by translation.  
We denote this translation action again by $\a$.  Each $V \in \cT(G/K)$ determines a derivation, $\d_V$, of $\ciA$ by
\[
(\d_Vf)(x) = D_0^t(f(x \exp(tV(x))).
\]
We denote the complexifcation of  $\cT(G/K) $ by  $\cT^\bC(G/K) $.
For $ V \in  \cT^\bC(G/K) $ we define $\d_V$ in terms of the real 
and imaginary parts of $V $. 

There is an evident $\ciA$-linear pairing of  $\cT^\bC(G/K) $
with $\hat\O(G/K)$, coming from the pairing of $\fm $ with $\fm'$.
We denote it by $\<V, \om\>_\cA$.
Both $\cT^\bC(G/K) $ and $\hat\O(G/K)$ are finitely-generated projective
$\ciA$-modules (in accordance with Swan's theorem) because
they are induced modules. See proposition 2.2 of \cite{R22}. 
It is then easily verified that $\hat\O(G/K)$
is the $\ciA $-module dual of $\cT^\bC(G/K) $
via the pairing mentioned above.

We define a connection, $\nabla^c $, on  $\hat\O(G/K)$ by
\begin{equation}
\label{eq2.1}
(\nabla_V^c(\om))(x) = D_0^t(\om(x \exp(tV(x)))
\end{equation}
for $V \in \cT(G/K)$ and $\om\in \hat\O(G/K)$, extended to
$\cT^\bC(G/K) $ by linearity.  It is often called ``the canonical connection". 
(One can not expect that it is always a Levi-Civita connection
in the sense of definition 8.8 of \cite{GVF}). 

We then extend $\nabla^c $ to  $ \clf G/K) $, and
define a connection, $\nabla^\cS $, on
$\cS(G/K)$, by the same formula, setting
\begin{equation}
\label{eq2.2}
(\nabla_V^c(\cW))(x) = D_0^t(\cW(x \exp(tV(x)))),
\end{equation}
and
\begin{equation}
\label{eq2.3}
(\nabla_V^\cS(\Psi))(x) = D_0^t(\Psi(x \exp(tV(x)))),
\end{equation}
for $V \in \cT(G/K)$, $\cW\in \clf G/K)$, and $\Psi \in \cS(G/K)$, extended to
$\cT^\bC(G/K) $ by linearity. 
These are all evidently $\ciA $-linear in $V$.
We will need shortly the evident Leibniz rule
\begin{equation}
\label{eq6L}
\nabla_V^\cS(f\Psi) = \d_V(f)\Psi + f\nabla_V^\cS(\Psi).
\end{equation}
We can view $\k $ as a bilinear form on
 $\clf G/K) \times  \cS(G/K) $ with values in $\cS(G/K)$, and so we
 have the Leibniz rule
\begin{equation}
\label{eq6.4}
\nabla_V^c(\k_\cW\Psi) = \k_{\nabla_V^c\cW}\Psi + \k_\cW(\nabla_V^c\Psi)
\end{equation}
for any $V \in \cT(G/K)^\bC$, $\cW \in \bC\ell(G/K)$ and $\Psi \in \cS(G/K)$.  
One can check that the connection 
on $\cS(G/K)$ is compatible with the $\ciA$-valued inner product 
in the sense of the Leibniz rule   
\begin{equation*}
\d_V(\<\Psi,\Phi\>_\cA) = \<\nabla_V^c\Psi,\Phi\>_\cA +
 \<\Psi,\nabla_V^c\Phi\>_\cA
\end{equation*}
for any $V \in \cT^\bC(G/K)$ and $\Psi,\Phi \in \cS(G/K)$,
but we do not need this fact later. 
 
We can now define the Dirac operator, $D $, on $ \cS(G/K)$ as follows.  
Let  $\Psi \in \cS(G/K)$. Define $\rd \Psi$ on $\cT^\bC(G/K) $ by
$(\rd \Psi)(V) = \nabla^\cS_V\Psi$. Then $\rd \Psi $ is an $\ciA$-linear
map from $\cT^\bC(G/K) $ into $\cS(G/K)$, and so $\rd \Psi$
can be viewed as an element of $\hat\O(G/K) \otimes  \cS(G/K) $,
since  $\hat\O(G/K)$ is the $\ciA $-dual of  $\cT^\bC(G/K) $.
By means of the inclusion of $\hat\O(G/K)$ into $\bC\ell(G/K)$
we can view $\rd \Psi$ as an element of 
$\bC\ell(G/K) \otimes  \cS(G/K)$. We can then apply $\k $ to obtain 
an element of $\cS(G/K) $. That is, with the above understanding,
we define $D$ on $\cS(G/K) $ by
\[
D(\Psi) = \k(\rd \Psi)
\]
for $\Psi \in  \cS(G/K) $.
We can express $D $ in a more familiar and concrete form by using 
a biframe as follows (much as done in proposition 2.9 of \cite{R27}). 
Let $\{\e_j \} $ be a basis for $\fg'$, 
and let $\{E_j\} $ be the dual basis for $\fg $. Let $ P $ be the 
orthogonal projection from $\fg $ onto $\fm$, and let $ P'$ be the
orthogonal projection, from $\fg'$ onto $\fm'$, 
extended to the complexifications. 
Notice that $ P $ commutes with the restriction to $ K$
of $\Ad $, and similarly for $ P'$. For each $j $ define an element, 
$\mathsf{E}_j$, of $\cT^\bC(G/K)$ by
\[
\mathsf{E}_j(x) = P \Ad^{-1}_x(E_j),
\]
and an element, 
$\eta_j$, of $\O(G/K)$ by
\[
\eta_j(x) = P' \Cad^{-1}_x(\e_j).
\]
Then the pair $(\{\mathsf{E}_j\}, \{\eta_j\})$ is a biframe for $\cT(G/K)$
in the sense that it has the reproducing property
\[
V = \sum  \<V, \eta_j\>_\cA \mathsf{E}_j
\]
for all $V \in \cT^\bC(G/K)$. Then
\[
(\rd \Psi)(V) \ = \ \nabla^\cS_V\Psi \ = \  \nabla^\cS_{ \sum \<V, \eta_j\>_\cA \mathsf{E}_j }\Psi  
 \ =  \sum  \<V, \eta_j\>_\cA \nabla^\cS_{ \mathsf{E}_j}\Psi ,  
\]
so that
\[
\rd \Psi =  \sum  \eta_j \otimes \nabla^\cS_{ \mathsf{E}_j}\Psi  ,
\]
where $\eta_j $ can be viewed as an element of  $\bC\ell(G/K)$
via the inclusion $\O(G/K) \subset \bC\ell(G/K)$.
Then 
\[
D\Psi = \sum \k_{\eta_j}(\nabla^\cS_{ \mathsf{E}_j}\Psi )  .
\]

When we combine the $\ciA$-valued
inner product on  $\cS(G/K) $ with integration
by the left-invariant probability 
measure on $ G/K $, we obtain an ordinary inner product
on  $\cS(G/K) $. On completing $\cS(G/K) $ for this inner product, we obtain a
Hilbert space. In this way we can view $ D $ as a densely defined operator on a
Hilbert space. 

\begin{definition}
\label{defdir2}
The Dirac operator, $D $,  for the given $\Cad$-invariant inner product on $\fg'$, 
and the given spinor bundle, is the operator $ D $ defined above.
\end{definition}

As explained in theorem 1.7i of \cite{Ply} and proposition 9.4 of \cite{GVF} and later pages, 
spinor bundles for Clifford bundles are not in general unique. The tensor product of a 
spinor bundle by a line bundle will be another spinor bundle, and all the irreducible 
spinor bundles are related in this way. Within our setting of equivariant bundles, 
we need to tensor with $G$-equivariant line bundles. These correspond exactly to 
the characters, that is, one-dimensional representations, of $K$. This is all relevant 
to coadjoint orbits because in that case $K $ always has non-trivial characters. All of our discussion of 
this following equation 5.2 of \cite{R27}
carries over to the present situation with only very minor changes. 
So we will not discuss this further in this paper, and refer the interested reader to \cite{R27}.

Because essentially all of the operations defined above commute with the 
actions of G by left translation, it is easily checked that:

\begin{proposition}
\label{dircom}
The Dirac operator $D$ commutes with the action of G on $\cS(G/K)$ by left translation.
\end{proposition}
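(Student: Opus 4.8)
The plan is to exhibit $D$ as a composite of structure maps each of which is manifestly equivariant for the left-translation action of $G$, and then to assemble. Recall that $D\Psi = \k(\rd\Psi)$, where $\rd\Psi \in \hat\O(G/K)\otimes_\ciA\cS(G/K)$ is the element determined by $(\rd\Psi)(V)=\nabla^\cS_V\Psi$ for $V\in\cT^\bC(G/K)$, where we use that $\hat\O(G/K)$ is the $\ciA$-module dual of the finitely-generated projective module $\cT^\bC(G/K)$, and then that $\hat\O(G/K)\subset\bC\ell(G/K)$ so that $\k$ may be applied. So it suffices to check equivariance of (i) the connection $\nabla^\cS$ (and, underlying it, $\nabla^c$ on $\hat\O(G/K)$ and on $\bC\ell(G/K)$), (ii) the $\ciA$-valued duality pairing between $\cT^\bC(G/K)$ and $\hat\O(G/K)$, together with the inclusion $\hat\O(G/K)\hookrightarrow\bC\ell(G/K)$, and (iii) the Clifford multiplication $\k$.

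First I would set up the actions uniformly. Each of $\cT^\bC(G/K)$, $\hat\O(G/K)$, $\bC\ell(G/K)$ and $\cS(G/K)$ is a space of $\cS$- or fibre-valued functions on $G$ subject to a $K$-equivariance condition imposed on the \emph{right}, and on each we let $\a$ be left translation, $(\a_g\xi)(x)=\xi(g^{-1}x)$. Since each defining condition concerns only right multiplication by $K$, it is preserved by $\a_g$; moreover $\a_g(f\xi)=\a_g(f)\a_g(\xi)$ for $f\in\ciA$, and every pointwise operation in sight intertwines $\a$: evaluated at $x$, both sides of $\langle\a_gV,\a_g\om\rangle_\cA=\a_g(\langle V,\om\rangle_\cA)$, of $\k_{\a_g\cW}(\a_g\Psi)=\a_g(\k_\cW\Psi)$, and of the inclusion $\hat\O(G/K)\hookrightarrow\bC\ell(G/K)$ are just the corresponding fibre operation applied at $g^{-1}x$. (Note that for $\hat\O(G/K)$ this is exactly the action identified in the text, via $\widehat{\g_x\om}=\a_x(\hat\om)$.)

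Next, the connections. For $V\in\cT(G/K)$ and $\Psi\in\cS(G/K)$ the formula $(\nabla^\cS_V\Psi)(x)=D_0^t(\Psi(x\exp(tV(x))))$ together with $(\a_gV)(x)=V(g^{-1}x)$ gives
\[
(\nabla^\cS_{\a_gV}(\a_g\Psi))(x)=D_0^t(\Psi(g^{-1}x\exp(tV(g^{-1}x))))=(\a_g(\nabla^\cS_V\Psi))(x),
\]
so $\nabla^\cS_{\a_gV}(\a_g\Psi)=\a_g(\nabla^\cS_V\Psi)$; the identical computation applies to $\nabla^c$ on $\hat\O(G/K)$ and on $\bC\ell(G/K)$. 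The substantive point is simply that these connections are built from the right regular action $x\mapsto x\exp(tV(x))$, which commutes with left translation. Assembling: since $\hat\O(G/K)\otimes_\ciA\cS(G/K)\cong\mathrm{Hom}_\ciA(\cT^\bC(G/K),\cS(G/K))$ via the $\a$-equivariant pairing, to see $\rd(\a_g\Psi)=\a_g(\rd\Psi)$ it is enough to contract both sides with an arbitrary $V$; writing $V=\a_g(\a_g^{-1}V)$ and using the previous displays, the contraction of $\a_g(\rd\Psi)$ with $V$ equals $\a_g(\nabla^\cS_{\a_g^{-1}V}\Psi)=\nabla^\cS_V(\a_g\Psi)=(\rd(\a_g\Psi))(V)$. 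Then $D(\a_g\Psi)=\k(\rd(\a_g\Psi))=\k(\a_g(\rd\Psi))=\a_g(\k(\rd\Psi))=\a_g(D\Psi)$, which is the claim.

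The only point needing any care — and it is mild — is the bookkeeping of which left-translation action sits on which induced module and the confirmation that the identification $\hat\O(G/K)\cong(\cT^\bC(G/K))'$ and the inclusion into $\bC\ell(G/K)$ are themselves $\a$-equivariant; once those are pinned down, the proof is the elementary observation that left and right translations on $G$ commute. Alternatively one could work with the explicit biframe formula $D\Psi=\sum\k_{\eta_j}(\nabla^\cS_{\mathsf{E}_j}\Psi)$, but since the biframe $(\{\mathsf{E}_j\},\{\eta_j\})$ is not $\a$-invariant this would require first noting that $D$ is independent of the biframe and then that $\a_g$ carries a biframe to a biframe; the intrinsic argument above avoids this.
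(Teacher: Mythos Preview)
Your proof is correct and takes essentially the same approach as the paper: the paper's ``proof'' is in fact just the one-sentence remark preceding the proposition that ``essentially all of the operations defined above commute with the actions of $G$ by left translation, it is easily checked that,'' and your proposal is precisely the careful unpacking of that remark---verifying equivariance of the canonical connections (via the commutativity of left and right translation), of the pointwise duality pairing and inclusion $\hat\O(G/K)\hookrightarrow\bC\ell(G/K)$, and of the Clifford action $\k$, and then assembling. Your observation that the biframe route would require an extra independence-of-biframe step is apt; the intrinsic argument you give is the cleaner way to do it.
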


For $f\in \ciA$ let $M_f$ denote the operator on $\cS(G/K)$ coming from
$\cS(G/K)$ being an $\ciA$-module. Notice that $\hat \rd f \in \bC\ell(G/K)$ 
via $\hat \O(G/K) \subset \bC\ell(G/K)$  so that
$\k_{\hat \rd f}$ is an operator on $\cS(G/K)$. Then
much as in proposition 8.3 of \cite{R22} we have
\begin{proposition}
\label{dirbra}
For any $f \in \ciA$ we have
\[
[D, M_f] = \k_{\hat \rd f}   .
\]
\end{proposition}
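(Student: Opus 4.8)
The plan is to compute $[D, M_f]$ directly on an arbitrary $\Psi \in \cS(G/K)$ using the biframe expression $D\Psi = \sum_j \k_{\eta_j}(\nabla^\cS_{\mathsf E_j}\Psi)$ derived just above, and to exploit the Leibniz rule \eqref{eq6L} for the connection $\nabla^\cS$. First I would write
\[
D(M_f\Psi) = D(f\Psi) = \sum_j \k_{\eta_j}\bigl(\nabla^\cS_{\mathsf E_j}(f\Psi)\bigr)
= \sum_j \k_{\eta_j}\bigl(\d_{\mathsf E_j}(f)\,\Psi + f\,\nabla^\cS_{\mathsf E_j}\Psi\bigr),
\]
using that $\nabla^\cS$ is $\ciA$-linear in the vector-field slot and obeys the Leibniz rule \eqref{eq6L}. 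Since each $\k_{\eta_j}$ is $\ciA$-linear (Clifford multiplication commutes with multiplication by central elements $\ciA \subset \bC\ell(G/K)$), the second group of terms is exactly $f\,D\Psi = M_f(D\Psi)$, so
\[
[D, M_f]\Psi = \sum_j \k_{\eta_j}\bigl(\d_{\mathsf E_j}(f)\,\Psi\bigr) = \k_{\sum_j \d_{\mathsf E_j}(f)\,\eta_j}\Psi.
\]

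The remaining step is to identify $\sum_j \d_{\mathsf E_j}(f)\,\eta_j$ with $\hat \rd f$ as an element of $\hat\O(G/K) \subset \bC\ell(G/K)$. Here I would recall that, by construction of $\hat \rd$, one has $(\hat\rd f)(V) = \d_V f$ for $V \in \cT^\bC(G/K)$, i.e. $\hat\rd f$ is the $\ciA$-linear functional on $\cT^\bC(G/K)$ given by $V \mapsto \d_V f$, viewed inside $\hat\O(G/K)$ by the identification of $\hat\O(G/K)$ with the $\ciA$-dual of $\cT^\bC(G/K)$. Applying the reproducing property of the biframe $(\{\mathsf E_j\},\{\eta_j\})$, namely $V = \sum_j \<V,\eta_j\>_\cA \mathsf E_j$, and the $\ciA$-linearity of $\d_{(\cdot)} f$ in the vector-field argument, I get $\d_V f = \sum_j \<V,\eta_j\>_\cA\,\d_{\mathsf E_j} f$ for all $V$, which says precisely that $\hat\rd f = \sum_j \d_{\mathsf E_j}(f)\,\eta_j$ in $\hat\O(G/K)$. (One small check: the roles of $\{\mathsf E_j\}$ and $\{\eta_j\}$ must be paired correctly so that the reproducing identity holds with $\eta_j$ in the inner-product slot; this is exactly the biframe relation stated above, so no reordering is needed.) Combining, $[D,M_f]\Psi = \k_{\hat\rd f}\Psi$ for all $\Psi$, which is the claim.

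I would also note, to match the phrasing ``much as in proposition 8.3 of \cite{R22}'', that the key structural input is that $D$ is built from a connection by post-composing with a (central-)module map $\k$, so that $[D,M_f]$ depends on $f$ only through the ``symbol'' $\hat\rd f$; the computation above is the concrete incarnation of that principle in the induced-module picture. Finally I would remark that, as a byproduct, $[D,M_f]$ is a bounded operator (it is pointwise Clifford multiplication by the smooth, hence bounded on the compact $G/K$, section $\hat\rd f$), which is what is needed to define the associated seminorm $L^D$, consistently with Proposition \ref{bigb}.

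The main obstacle I anticipate is purely bookkeeping: keeping straight the several identifications in play — $\hat\O(G/K)$ as the $\ciA$-dual of $\cT^\bC(G/K)$, the inclusion $\hat\O(G/K)\subset\clf G/K)$, and the realization of $\rd\Psi$ as an element of $\clf G/K)\otimes\cS(G/K)$ — and verifying that $\k_{\eta_j}$ really is $\ciA$-linear (which follows because $\ciA$ sits in the center of $\clf G/K)$ and $\k$ is an algebra representation, so $\k_{\eta_j}(f\Psi) = \k_{\eta_j}\k_f\Psi = \k_f\k_{\eta_j}\Psi = f\,\k_{\eta_j}\Psi$). None of this is deep, but the argument only reads cleanly once these compatibilities are explicitly invoked in the right order.
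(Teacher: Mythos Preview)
Your proof is correct and follows essentially the same idea as the paper's: both rely on the Leibniz rule \eqref{eq6L} for the spinor connection and the $\ciA$-linearity of Clifford multiplication. The paper's argument is slightly more compact because it works directly with the abstract definition $D\Psi = \k(\rd\Psi)$ and invokes the Leibniz rule in the form $\rd(f\Psi) = (\hat\rd f)\otimes\Psi + f\,\rd\Psi$, whereas you unpack this in biframe coordinates and then reassemble $\sum_j \d_{\mathsf E_j}(f)\,\eta_j$ into $\hat\rd f$ via the reproducing property; your extra step is exactly what verifies that abstract Leibniz identity, so the two arguments are the same computation viewed at different levels of abstraction.
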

\begin{proof}
For $f \in \ciA$ and $\Psi \in \cS(G/K)$ the Leibniz rule
\[
\rd (f\Psi) = (\hat \rd f) \otimes \Psi + f \rd \Psi
\]
follows from equation \ref{eq6L}.
Then, since $\k$ is $\ciA$-linear, we have
\begin{align*}
[D,M_f]\Psi &= D(f\Psi) - fD(\Psi) = \k(\rd(f\Psi)) - f\k(\rd \Psi) \\
&= \k(\hat \rd f \otimes \Psi + f\rd \Psi) - \k(f \rd \Psi) = \k_{\hat \rd f}\Psi .
\end{align*}  
\end{proof}

Recall that for any $f \in \ciA$  and $x \in G$ we have $\hat\rd f_x \in \fm'^\bC$,
so that $\|\hat \rd f_x\|_{\fm'}$ is defined in terms of the inner product on $\fm'^\bC$.

\begin{theorem}
\label{thnorm}
For any $f \in \ciA$ we have
\[
\|[D, M_f]\| = \sup \{\|\hat \rd f_x\|_{\fm'^\bC}: x \in G\}
=  \sup \{\| \rd f_x\|_{\fm'^\bC}: x \in G\}    .
\]
\end{theorem}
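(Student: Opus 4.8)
The plan is to use Proposition~\ref{dirbra}, which identifies $[D, M_f]$ with the operator $\k_{\hat\rd f}$ on $\cS(G/K)$, and then to analyze $\k_{\hat\rd f}$ as a ``multiplication'' operator. Recall that the Hilbert-space completion of $\cS(G/K)$ is formed from the $\ciA$-valued inner product together with integration against the left-invariant probability measure $dy$ on $G/K$, so that $\|\Psi\|^2 = \int_{G/K}\|\Psi(x)\|_\cS^2\,dy$ for $\Psi\in\cS(G/K)$ (the integrand descends to $G/K$ because $\rho$ is unitary). Since $\hat\rd f$ lies in $\bC\ell(G/K)$ via $\hat\O(G/K)\subset\bC\ell(G/K)$, the operator $\k_{\hat\rd f}$ acts pointwise: $(\k_{\hat\rd f}\Psi)(x)=\k_{\hat\rd f_x}(\Psi(x))$. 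The compatibility relation \eqref{comeq}, applied to $q=\hat\rd f_x$ together with $\hat\rd f_{xs}=\Cad_s^{-1}(\hat\rd f_x)$, gives $\k_{\hat\rd f_{xs}}=\rho_s^{-1}\k_{\hat\rd f_x}\rho_s$; hence the scalar $\|\k_{\hat\rd f_x}\|$ depends only on $xK$ and defines a continuous function on the compact space $G/K$, and the integrand $\|\k_{\hat\rd f_x}\Psi(x)\|_\cS$ also descends to $G/K$.

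First I would show $\|[D,M_f]\|=\sup_{x\in G}\|\k_{\hat\rd f_x}\|$. The inequality ``$\le$'' follows at once from
\[
\|\k_{\hat\rd f}\Psi\|^2=\int_{G/K}\|\k_{\hat\rd f_x}\Psi(x)\|_\cS^2\,dy
\ \le\ \Bigl(\sup_{x\in G}\|\k_{\hat\rd f_x}\|^2\Bigr)\int_{G/K}\|\Psi(x)\|_\cS^2\,dy .
\]
For ``$\ge$'' I would argue by localization: choose $x_0$ nearly attaining the supremum and a unit vector $\psi_0\in\cS$ with $\|\k_{\hat\rd f_{x_0}}\psi_0\|$ nearly equal to $\|\k_{\hat\rd f_{x_0}}\|$; pick $\Psi_1\in\cS(G/K)$ with $\Psi_1(x_0)=\psi_0$ (the evaluation map $\cS(G/K)\to\cS$ is onto, $\cS(G/K)$ being the section module of a homogeneous vector bundle over $G/K$); and cut down by a nonnegative $h\in\ciA$ with $h(x_0K)=1$ supported in a small neighbourhood of $x_0K$, normalizing to $\Psi=h\Psi_1/\|h\Psi_1\|$. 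By continuity of $x\mapsto\k_{\hat\rd f_x}$, shrinking the support of $h$ makes $\|\k_{\hat\rd f}\Psi\|$ as close as desired to $\|\k_{\hat\rd f_{x_0}}\|$. (Alternatively one can invoke the standard fact that the C*-algebra of continuous endomorphism-sections of the spinor bundle over the compact $G/K$ has norm equal to the fibrewise supremum and acts faithfully, hence isometrically, on the completion of $\cS(G/K)$.)

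Next I would compute the fibrewise norms. For each $x$ we have $\hat\rd f_x\in\fm'^\bC\subset\bC\ell(\fm')$, and the same computation used in the proof of Proposition~\ref{bigb} (with $\fm'$ in place of $\fg'$, namely $\|\k_\mu\|^2=\|\langle\mu,\mu\rangle 1\|=\|\mu\|^2$, extended to the complexification) gives $\|\k_{\hat\rd f_x}\|=\|\hat\rd f_x\|_{\fm'^\bC}$. Combined with the previous step this yields the first asserted equality. For the second equality, the chosen inner product on $\fg'$ is $\Cad$-invariant, hence so is its Hermitian extension to $\fg'^\bC$, so each $\Cad_x$ is an isometry; since $\hat\rd f_x=\Cad_x^{-1}(\rd f_x)$ and, as established just before the statement, $\rd f_x\in\Cad_x(\fm'^\bC)$, we get $\|\hat\rd f_x\|_{\fm'^\bC}=\|\rd f_x\|_{\fm'^\bC}$ for every $x$, and taking suprema completes the proof.

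The one step with genuine content is the passage from the operator $\k_{\hat\rd f}$ on the $L^2$-completion to the supremum of its fibrewise norms, and within that the lower bound: one must manufacture approximately extremal sections concentrated near a prescribed point of $G/K$ while respecting the $K$-equivariance built into $\cS(G/K)$. The remaining ingredients are a one-line $L^2$ estimate, the Clifford-norm identity already recorded in Proposition~\ref{bigb}, and the $\Cad$-invariance of the inner product.
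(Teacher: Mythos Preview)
Your proof is correct and follows essentially the same route as the paper's. The paper is terser: it observes that $\bC\ell(G/K)$ sits as a $*$-subalgebra of the C*-algebra $C(G,\bC\ell(\fm'))$, so $\|\hat\rd f\|=\sup_x\|\hat\rd f_x\|_{\bC\ell(\fm')}$, invokes the Clifford identity $\|\mu\|_{\bC\ell(\fm')}=\|\mu\|_{\fm'^\bC}$ for $\mu\in\fm'^\bC$, and then passes directly to the first equality---leaving implicit the fact that the action of $\bC\ell(G/K)$ on (the completion of) $\cS(G/K)$ is isometric on elements with values in $\fm'^\bC$. Your explicit localization argument for the lower bound spells out precisely this implicit step; the alternative you mention in parentheses (faithful action of the endomorphism C*-algebra on the section Hilbert space, hence isometric) is closer to what the paper has in mind. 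The second equality is handled identically in both, via $\Cad$-invariance of the inner product.
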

\begin{proof}
Since $\bC\ell(G/K)$ is a $*$-subalgebra of the C*-algebra 
$\cA\otimes \bC\ell(\fm') = C(G, \bC\ell(\fm'))$, and we are viewing $\hat \rd f $ 
as an element of $\bC\ell(G/K)$, we have
\[
\|\hat \rd f\| =  \sup \{\|\hat \rd f_x\|_{\bC\ell(\fm')}: x \in G\}.
\]
But, just as seen before Proposition \ref{bigb}, if $\mu \in \fm'^\bC$
then $\|\mu\|_{\bC\ell(\fm')} = \|\mu\|_{\fm'^\bC} $.
The first equality in the statement 
of the theorem follows directly from this.

Because $\Cad $ is isometric on $\fg'^\bC $, we have
$\|\hat \rd f_x\|_{\fm'^\bC} = \|\rd f_x\|_{\fm'^\bC} $ for all $x \in G$. 
This gives the second equality in the statement of the theorem.
\end{proof}

When we recall the notation of Proposition \ref{bigb} and earlier,
we see that we obtain:
\begin{corollary}
\label{cormet}
For any $f \in \ciA$ we have
\[
 \|[D, M_f]\| =  \|[D_o, M_f]\|
\]
\end{corollary}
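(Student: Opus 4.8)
The plan is to obtain the identity by placing side by side the two norm formulas already established: Proposition~\ref{bigb} for $\|[D_o,M_f]\|$ and Theorem~\ref{thnorm} for $\|[D,M_f]\|$. Both express the operator norm of a Clifford-multiplication commutator as a supremum, over base points, of the length of the differential $\rd f$ in the appropriate inner-product space, so the whole task is to check that those two suprema compute the same number.

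First I would invoke Proposition~\ref{bigb}, which gives
\[
\|[D_o, M_f]\| = \sup\{\|\rd f_y\|: y \in G/K\},
\]
the norm being that of the Hermitian extension to $\fg'^\bC$ of the chosen inner product on $\fg'$; recall that $\rd f_y$ is the element $\sum_j \a_{E_j}(f)(y)\e_j$, a well-defined function of $y \in G/K$ because (as recorded in Section~\ref{dhom}) $\rd f_{xs} = \rd f_x$ for $x \in G$, $s \in K$. Next I would invoke Theorem~\ref{thnorm}, whose proof shows that
\[
\|[D, M_f]\| = \sup\{\|\rd f_x\|_{\fg'^\bC}: x \in G\},
\]
for the same vectors $\rd f_x$: the passage from the $\fm'^\bC$-norm of $\hat\rd f_x$ in the statement of that theorem to the $\fg'^\bC$-norm of $\rd f_x$ is exactly the last step of its proof, using that $\Cad$ is isometric and that the inner product on $\fm'$ is the restriction of that on $\fg'$.

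Finally I would note that $x \mapsto \rd f_x$ factors through the quotient map $G \to G/K$, so the supremum over $x \in G$ and the supremum over $y \in G/K$ are taken over exactly the same set of vectors in $\fg'^\bC$, with the same norm. Hence $\|[D, M_f]\| = \|[D_o, M_f]\|$, as asserted.

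I do not expect a genuine obstacle here: all the substantive work is already contained in Proposition~\ref{bigb} and Theorem~\ref{thnorm}. The only point needing a little care is the bookkeeping — confirming that the ``$\fm'^\bC$'' norm attached to $D$ coincides, on the vectors $\rd f_x$ that actually occur, with the ``$\fg'^\bC$'' norm attached to $D_o$, and that the index set may be reduced from $G$ to $G/K$ by the right $K$-invariance of $\rd f$.
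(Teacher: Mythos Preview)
Your proposal is correct and follows exactly the route the paper takes: the corollary is stated immediately after Theorem~\ref{thnorm} with no separate proof, the paper simply noting that one ``recalls the notation of Proposition~\ref{bigb}'' to obtain it. Your bookkeeping about the $\fm'^\bC$ versus $\fg'^\bC$ norms and the passage from $G$ to $G/K$ makes explicit precisely the points the paper leaves implicit.
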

Thus the corresponding C*-metrics on $\cA$ are equal. This will be
important for the proof of our main theorem.

For the reasons given immediately after Proposition \ref{bigb}, 
we then obtain:

\begin{corollary}
\label{corgrad}
For any $f \in \ciA$ we have
\[
\|[D, M_f]\| = \sup \{\|\grad_x f\|_{\fm^\bC}: x \in G\}   .
\]
\end{corollary}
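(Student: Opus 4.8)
The plan is to combine Corollary~\ref{cormet} with the translation-invariant description of $D_o$ already obtained in Section~\ref{homg}. By Corollary~\ref{cormet} we have $\|[D,M_f]\| = \|[D_o,M_f]\|$, so it suffices to re-express $\|[D_o,M_f]\|$ in terms of the gradient with respect to the inner product on $\fm^\bC$ rather than on $\fm'^\bC$. The second equality in Theorem~\ref{thnorm} already gives $\|[D,M_f]\| = \sup\{\|\rd f_x\|_{\fm'^\bC}: x\in G\}$, so the only thing left is a pointwise duality argument converting $\|\rd f_x\|_{\fm'^\bC}$ into $\|\grad_x f\|_{\fm^\bC}$.

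First I would recall, exactly as in the paragraph following Proposition~\ref{bigb}, that for each $x\in G$ the linear functional $\rd f_x$ restricted to $\fm$ (equivalently, its image under $P'$, which is $\hat\rd f_x$ up to the isometry $\Cad_x$) is represented by a unique element $\grad_x f \in \fm^\bC$ via the pairing $\rd f_x(X) = \<X, \grad_x f\>$ for $X\in \fm$. Since the inner product on $\fm'^\bC$ is by definition dual to that on $\fm^\bC$, the Riesz representation is norm-preserving: $\|\rd f_x\|_{\fm'^\bC} = \|\grad_x f\|_{\fm^\bC}$ for every $x$. Taking the supremum over $x\in G$ and invoking Theorem~\ref{thnorm} (second equality) and Corollary~\ref{cormet} gives the claimed identity.

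A small point to be careful about is the complexification: $f$ is complex-valued, so $\rd f_x \in \fm'^\bC$ and $\grad_x f \in \fm^\bC$, and one must check that the real inner products extend to the stated norms on the complexifications in the way used in the text before Proposition~\ref{bigb} (namely $\|\mu\|_{\bC\ell(\fm')}=\|\mu\|_{\fm'^\bC}$ on $\fm'^\bC$), but this is routine and was already handled there. I do not expect any real obstacle here: essentially all the work has been done in Theorem~\ref{thnorm} and Corollary~\ref{cormet}, and the present corollary is just the translation of the answer from the cotangent side to the tangent side via finite-dimensional Riesz duality.
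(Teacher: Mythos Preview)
Your proposal is correct and follows essentially the same route as the paper: the paper simply says ``for the reasons given immediately after Proposition~\ref{bigb},'' which is precisely the Riesz duality argument you spell out, converting $\|\rd f_x\|_{\fm'^\bC}$ to $\|\grad_x f\|_{\fm^\bC}$ pointwise and then taking the supremum. One minor redundancy: you invoke Corollary~\ref{cormet} to pass from $D$ to $D_o$, but this is unnecessary since Theorem~\ref{thnorm} already states the result for $D$ itself; you can go straight from Theorem~\ref{thnorm} to the duality step.
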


Now a standard argument (e.g., see the text between definition 9.12 
and exercise 9.7 of \cite{GVF}) shows that if we denote 
by $d$ the ordinary metric on a Riemannian manifold $N$ 
coming from its Riemannian metric, then for any two 
points $p$ and $q$ of $N$ we have
\[
d(p,q) = \sup\{|f(p)-f(q)|: \|\grad_f\|_\infty \le 1\}.
\]
On applying this to $G/K$ , we obtain, 
for $d$ now the ordinary metric on $G/K$ from our Riemannian metric,
\[
d(p,q) = \sup\{|f(p)-f(q)|: \|[D, M_f]\| \le 1\}.
\]
This is the formula on which Connes focused for general Riemannian 
manifolds \cite{Cn7,Cn3}, as it shows that the Dirac operator contains 
all of the metric information (and much more) for the manifold.  
This is his motivation for advocating that metric data 
for ``non-commutative spaces'' be encoded by providing 
them with a ``Dirac operator''. 

We do not need to know the formal self-adjointness of $ D $ in this paper.
We refer the interested reader to theorem 6.1 of \cite{R27}, and to
theorem 8.4 of the most recent arXiv version of \cite{R22}. 
(The published version of \cite{R22} has a serious error in the 
proof of theorem 8.4.)


\section{Complex structure on coadjoint orbits}
\label{cplx}
We want to show that in the case of coadjoint orbits we can choose a 
spinor bundle whose fibers are irreducible. The path to showing this 
involves showing that coadjoint  orbits have complex structures. In fact,
coadjoint orbits can be equipped with K\"ahler  structures, 
as described in section 1 of \cite{R27} (and see also the theorem
at the end of section 3 of \cite{BFR}), but we do not 
need the full force of that fact. We will discuss in Section \ref{secdefit}
why our approach does not always fit perfectly 
with the  K\"ahler structures. 
So here we construct the complex structures directly,
along the lines discussed in \cite{AlC}. This requires 
the basic facts concerning weights and roots of compact
semisimple Lie groups. In the next few paragraphs we follow 
somewhat closely the notation and 
development in section 1 of \cite{R27}.

We assume now that $G $ is a connected compact semisimple 
Lie group, with Lie algebra $\fg $.  The coadjoint orbits are the 
orbits in $\fg'$ for the action $\Cad$.  Fix $\mu_{\dia} \in \fg'$, 
with $\mu_{\dia} \ne 0$.   
Let $K$ denote the $\Cad$-stability subgroup of $\mu_{\dia}$, so that $x \mapsto \Cad_x(\mu_{\dia})$ gives a $G$-equivariant diffeomorphism from $G/K$ onto the $\Cad$-orbit of $\mu_{\dia}$.  We will usually work with $G/K$ rather than the orbit itself. 

We choose an $\Ad $-invariant inner product on $\fg $, for example, the negative
of the Killing form of $\fg$ (since $G $ is compact). 
This actually is not much more general than choosing 
the negative of the Killing form itself, because if $\fg $ is simple, then every
$\Ad $-invariant inner product on $\fg $ is just a scaler multiple of
the negative of the Killing form, whereas if $\fg $ is just semisimple,
then every
$\Ad $-invariant inner product on $\fg $ just arises from taking various 
scaler multiples of
the negatives of the Killing forms on its simple ideals.

The 
action $\Ad$ of $G$ on $\fg$ is by orthogonal operators, 
and so the action $\ad$ of $\fg$ on $\fg$ is by skew-adjoint operators . 
There is a unique $Z_{\dia} \in \fg$ such that 
\begin{equation}
\label{eq1.1}
\mu_{\dia}(X) = \<X,Z_{\dia}\> \text{ for all } X \in \fg.
\end{equation}
It is easily seen that the $\Ad$-stability subgroup of $Z_{\dia}$ is again $K$.

Let $T_{\dia}$ be the closure in $G$ of the one-parameter group $r \mapsto \exp(rZ_{\dia})$, so that $T_{\dia}$ is a torus subgroup of $G$.  Then it is easily seen that $K$ consists exactly of all the elements of $G$ that commute with all the elements of $T_{\dia}$.  Note that $T_{\dia}$ is contained in the center of $K$ (but need not coincide with the center).  Since each element of $K$ will lie in a torus subgroup of $G$ that contains $T_{\dia}$, it follows that $K$ is the union of the tori that it contains, and so $K$ is connected (corollary 4.22 of  \cite{Knp}).  Thus for most purposes we can just work with the Lie algebra, $\fk$, of $K$ when convenient.  In particular, $\fk = \{X \in \fg: [X,Z_{\dia}]  = 0\}$, and $\fk$ contains the Lie algebra, $\ft_{\dia}$, of $T_{\dia}$.

Let $\fm = \fk^{\perp}$ with respect to chosen inner product.  
Since $\Ad$ preserves the inner product, we see that $\fm$ is carried into 
itself by the restriction of $\Ad$ to $K$.  Thus $[\fk,\fm] \subseteq \fm$.  
As we have seen in Section \ref{dhom}, 
$\fm$ can be conveniently identified with the tangent 
space to $G/K$ at the coset $K$ (which corresponds to the point $\mu_{\dia}$ 
of the coadjoint orbit). To define a complex structure on $ G/ K $ we need to 
define a complex structure on $\fm $ 
that commutes with the action of $ K $ via $\Ad $. 

Fix a choice of a maximal torus, $ T $, of $ G $ that contains $T_{\dia}$. 
Then $ T $ is contained in $ K $, and is a maximal torus in $ K $. (Thus we 
are in the setting to which the results of \cite{Kst} apply, since $ K $ has 
the same rank as $ G $. Our $\fk$ is the $\mathfrak{r}$ there, while our $\fm$
is the $\mathfrak{p}$ there, up to complexification. 
But we will not use results from that paper.) We denote the 
Lie algebra of $ T $ by the traditional $\fh $. Since $ K $ is compact, it is reductive,
and so $\fk $ splits as $\fk = \fh_m \oplus \fk_s $ where $\fk_s $ is the 
semisimple subalgebra $\fk_s = [\fk, \fk] $ of $\fk $
and $\fh_\fm $ is the center of $\fk $ (so $\fh_\fm \subseteq \fh$). 
Thus $\fg = \fm \oplus \fh_\fm \oplus \fk_s $.
Furthermore, $\fh $ splits as $\fh_\fm \oplus \fh_s $ 
where $\fh_s$ is a Cartan subalgebra of $\fk_s $.

 For any finite-dimensional unitary representation $(\cH, \pi)$
of $T$ we let $\pi$ also denote the corresponding representation of $\fh$. For each $H \in \fh$
the operator $\pi_H$ is skew-adjoint, and so its eigenvalues are purely imaginary. Since
the $\pi_H$'s all commute with each other, they are simultaneously diagonalizable. Because
we need to keep track of the structure over $\bR$, we will use a convention for the weights of
a representation that is slightly different from the usual convention. We used it previously 
in \cite{R33}. If $\xi \in \cH$ is a common
eigenvector for the $\pi_H$'s, there will be a linear 
functional $\a$ on $\fh$ (with values in $\bR$) 
such that 
\[
\pi_H(\xi) = i\a(H)\xi
\]
for all $H \in \fh$.
For each $\a \in \fh'$ (where $\fh'$ denotes the dual vector space to $\fh$) we set
\[
\cH_\a = \{ \xi \in \cH:\pi_H(\xi) = i\a(H) \xi \ \ \mathrm{for \ all} \  H \in \fh\}  .
\]
If there are non-zero vectors in $\cH_\a$ then we say that $\a$ is a \emph{weight} of the
representation $(\cH, \pi)$. We denote the set of all weights for this representation by
$\D_\pi$. Then
\[
\cH = \bigoplus \{\cH_\a : \a \in \D_\pi \}    .
\]

We let $\fg^\bC$ denote the complexification of $\fg$, with inner product
coming from that on $\fg$, and corresponding unitary representation
$\Ad$ of $G $ on $\fg^\bC$.
The non-zero weights for $\Ad$ and $\ad$
acting on $\fg^\bC$, 
are called the ``roots'' of $G$. We denote the set
of roots simply by $\D$. Because we are dealing with the 
complexification of a representation 
over $\bR$, if $\a \in \D$ then
$-\a \in \D$. 
For each root $\a$ we let $\fg^\bC_\a$ denote the corresponding root space. 
It is 
a standard fact that these root spaces are all of dimension 1, and 
that $[\fg^\bC_\a, \fg^\bC_{-\a}]$ is not of dimension 0 (so is of dimension 1). 
In the standard way \cite{Knp, Srr, Smn} we make a choice, $\D^+$, of  
positive roots. 
We want to choose usual elements $H_\a$, $E_\a$ and $F_\a$ 
for each $\a \in \D^+$. 
but we will need to choose them in a careful way 
so that they mesh well with representations.

Let $(\cH, \pi)$ be a finite-dimensional unitary 
representation of $G$. We extend the corresponding
representation of $\fg$  to a representation (still denoted by $\pi$) of $\cfg$.
Let $W \in \cfg$ with $W = X + iY$ for $X, Y \in \fg$. Since elements of $\fg $
act on $\cH $ as skew-adjoint operators,
\[
(\pi_W)^* = (\pi_X)^* + (i\pi_Y)^* = \pi_{(-X + iY)}    .
\]
Thus it is appropriate to define an involution on $\cfg$ by $(X+iY)^* = -X + iY$,
so that $(\pi_W)^* = \pi_{W^*}$ for all $W \in \cfg$ (as in \cite{Hll, Smn}). 
Notice that for all $W, Z \in \cfg$
we have $[W, Z]^* = [Z^*, W^*]$.

The following result is well-known. It occurs with proof as proposition 1.1
of \cite{R33}.

\begin{proposition}
\label{HEF}
With notation as above, for each $\a \in \D^+$ we can choose $H_\a \in i\fh$ and
$E_\a \in \fg^\bC_\a$  such that $[E_\a, E_\a^*] = H_\a$ and
$[H_\a, E_\a] = 2E_\a$. Setting
$F_\a = E_\a^*$, we then obtain $[H_\a, F_\a] = -2F_\a$.
\end{proposition}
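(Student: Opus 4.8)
The plan is to perform the familiar normalization of the $\mathfrak{sl}_2$-triple attached to a positive root, taking care to keep it compatible with the involution on $\cfg$. Fix $\a \in \D^+$. Since $\fg^\bC_\a$ is one-dimensional, choose any nonzero $E \in \fg^\bC_\a$. The first, routine, step is to locate $E^*$: using $[W,Z]^* = [Z^*,W^*]$ together with $H^* = -H$ for $H \in \fh$, one checks that for $H \in \fh$ one has $[H,E^*] = i(-\a)(H)E^*$, so that $E^* \in \fg^\bC_{-\a}$. Hence $[E,E^*]$ lies in $[\fg^\bC_\a,\fg^\bC_{-\a}]$, which by the standard fact recalled just before the proposition has dimension $1$; and $[E,E^*] \ne 0$, since otherwise every bracket of an element of $\fg^\bC_\a$ with one of $\fg^\bC_{-\a}$ would vanish and that space would be $\{0\}$. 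Set $H_0 = [E,E^*]$. It lies in $\fh^\bC$, and applying $[W,Z]^*=[Z^*,W^*]$ once more gives $H_0^* = H_0$, so in fact $H_0 \in i\fh$, and $H_0 \ne 0$.

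Next I would record the eigenvalue relations. Since $H_0 \in i\fh$ acts on the root space $\fg^\bC_\a$ by a real scalar (by the weight conventions fixed earlier), we have $[H_0,E] = tE$ with $t \in \bR$, and then taking adjoints (using $H_0^* = H_0$ and $\bar t = t$) gives $[H_0,E^*] = -tE^*$. The crux is to prove $t > 0$. For this I would pass to a faithful finite-dimensional unitary representation of $G$ — concretely the adjoint representation $\ad$ of $\fg$ on $\cfg$, which is faithful because $\fg$ is semisimple and which is unitary for the Hermitian extension of the chosen $\Ad$-invariant inner product, so that $\ad_{W^*} = (\ad_W)^*$. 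Writing $A = \ad_E$, so that $\ad_{E^*} = A^*$ and $\ad_{H_0} = AA^* - A^*A$, the relation $[H_0,E]=tE$ becomes $[\ad_{H_0},A] = tA$. Taking the trace of $[\ad_{H_0},A]A^* = tAA^*$ and using cyclicity of the trace yields $t\,\mathrm{tr}(AA^*) = \mathrm{tr}(\ad_{H_0}^2)$. Now $A \ne 0$ and $\ad_{H_0} \ne 0$ because $\ad$ is faithful and $E, H_0 \ne 0$, and $\ad_{H_0}$ is self-adjoint since $H_0 \in i\fh$; therefore $\mathrm{tr}(\ad_{H_0}^2) = \|\ad_{H_0}\|_{\mathrm{HS}}^2 > 0$ while $\mathrm{tr}(AA^*) = \|A\|_{\mathrm{HS}}^2 > 0$, and so $t>0$.

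Finally I would rescale: put $E_\a = \sqrt{2/t}\,E \in \fg^\bC_\a$, $F_\a = E_\a^* = \sqrt{2/t}\,E^*$, and $H_\a = [E_\a,E_\a^*] = (2/t)H_0 \in i\fh$. Then $[H_\a,E_\a] = (2/t)(tE_\a) = 2E_\a$ and $[H_\a,F_\a] = (2/t)(-tF_\a) = -2F_\a$, and $[E_\a,E_\a^*] = H_\a$ holds by the choice of $H_\a$, with $F_\a = E_\a^*$ by definition; this is exactly the assertion.

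I expect the positivity $t>0$ to be the only real obstacle: everything else is bookkeeping with the involution $(X+iY)^*=-X+iY$ on $\cfg$ and with the weight and root conventions fixed earlier, whereas the sign is the genuine input that singles out the ``correct'' triple (rather than one with the roles of $E_\a$ and $F_\a$ interchanged). The trace identity in a faithful unitary representation is the cleanest route I know; an alternative is to compute $B(H_0,H_0) = t\,B(E,E^*)$ for the invariant bilinear form $B$ on $\cfg$ extending the chosen inner product and invoke positivity of the associated Hermitian form, but the sign conventions there are more delicate, so I would take the representation-theoretic approach.
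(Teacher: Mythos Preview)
Your argument is correct and complete. The paper does not actually prove this proposition here; it simply cites proposition~1.1 of \cite{R33} for the proof, so there is no in-paper argument to compare against. Your approach---showing $E^*\in\fg^\bC_{-\a}$ via the involution identity, noting $H_0=[E,E^*]\in i\fh$ because $H_0^*=H_0$, and then extracting the positivity $t>0$ from the trace identity $t\,\mathrm{tr}(AA^*)=\mathrm{tr}(\ad_{H_0}^2)$ in the faithful unitary adjoint representation---is the standard normalization of the $\mathfrak{sl}_2$-triple adapted to the compact-form involution, and each step checks out against the conventions fixed in the paper (in particular the weight convention $\pi_H\xi=i\a(H)\xi$ and the involution $(X+iY)^*=-X+iY$).
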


Notice that $\cfk = \{X \in \cfg : [Z_{\dia} , X] = 0\} $.
If $ X\in \fg_\a^\bC $ then $[Z_{\dia} , X] = i\a(Z_{\dia} )X $.
It follows that 
\[
\cfm =\bigoplus \{ \fg_\a^\bC: \a(Z_{\dia}) \neq 0\}, 
\]
and that
\[
\cfk = \cfh \oplus \bigoplus \{ \fg_\a^\bC: \a(Z_{\dia}) = 0\}.
\]
It then follows that
\[
\fk_s^\bC = \fh_s^\bC  \oplus \bigoplus \{ \fg_\a^\bC: \a(Z_{\dia}) = 0\}  .
\]
Thus it is appropriate to set
\[
\D_s = \{\a \in \D:  \a(Z_{\dia}) = 0\} = 
\{\a \in \D:  \a(Z) = 0 \ \mathrm{for \ all} \  Z \in \fh_\fm^\bC\}  ,
\]
and to identify $\D_s $ with the root system for $\fk_s^\bC $ with respect
to its Cartan subalgebra $\fh_s^\bC $. It is also appropriate to set
\[
\D_\fm = \{\a \in \D:  \a(Z_{\dia}) \neq 0\} = 
\{\a \in \D:   \ \mathrm{there \  is \ 
a} \   Z \in \fh_\fm^\bC \ \mathrm{with}  \ \a(Z) \neq 0  \}  .
\]
Note that $\D_\fm $ is ``invariant" with respect to $\D_s $
in the sense that if $\a\in \D_\fm $ and $\b \in \D_s $ and if
$\a +\b\in\D $, then $\a +\b\in\D_\fm $.

We need a partial order on $\D_\fm $, given by specifying
a subset $\D_\fm^ + $ with the usual properties, 
but which in addition is $\D_s $-invariant in the sense that if
$\a\in\D_\fm^ + $ and $\b\in\D_s $ and if $\a +\b \in \D $, then $\a +\b \in \D_\fm^+ $.
In general there are many such partial orders, coming from 
Weyl-chamber-type considerations. (See \cite{AlC}.) 
But there is one such order canonically associated with
$Z_{\dia} $, namely specified by
\[
\D_\fm^+ =\{\a\in\D_\fm: \a(Z_{\dia})>0\}   .
\]
Notice that this makes sense because $\a(Z_{\dia}) \in \bR $ since 
$Z_{\dia} \in \fh $ and $\a \in \fh'$.
It is clear that $\D_\fm^+ $  is  $\D_s $-invariant.

It is then natural to set $\fn^+ =\bigoplus\{\fg_\a^\bC: \a\in \D_\fm^+\}  $, 
and similarly for $\fn^-$.
Then $\cfm = \fn^+ \oplus \fn^-$.
 
\begin{notation}
Define an operator, $J $, on $\cfm $ by 
\begin{equation*}
J(X) =\left\{
\begin{array}{rl}
iX & \text{if  } X \in \fn^+ ,  \\
-iX & \text{if  } X \in \fn^-  .
\end{array}
\right.
\end{equation*}
\end{notation} 
Notice that $J $ is isometric.

We show now that $J $ commutes with the $\ad $-action of $\fk^\bC$ on $\cfm $.
This uses the $\D_s $-invariance of $\D_\fm^+ $.
Let $\b\in \D_s $. Then for any $\a\in \D_\fm^+ $ we have
\[
J(\ad_{E_\b}(E_\a)) = J([E_\b, E_\a]) =i[E_\b, E_\a] =[E_\b, iE_\a]
= \ad_{E_\b}(J(E_\a))
\]
since $\a +\b\in\D_\fm^+ $ if $\a +\b\in\D $. If instead $ Z\in \cfh \subseteq \cfk$ then
\[
J(\ad_Z(E_\a))  = J(i\a(Z)E_\a) =  i\a(Z)J(E_\a)     = \ad_Z(J(E_\a))  .
\]
Thus $J $ commutes with the $\ad $-action of $\fk^\bC$ on $\fn^+ $. A similar
calculation shows that $J $ commutes with the $\ad $-action of $\fk^\bC$ on $\fn^- $,
and so on $\cfm$, as desired.

It is clear from the definition of $ J $ that 
$ J^2 = -I^\cfm $, for $I^\cfm $ the identity operator on $\cfm $.

Ordinary complex conjugation on $\cfg $ with respect to $\fg $ carries $\fg_\a^\bC $ onto
$\fg_{-\a}^\bC $ for each $ \a\in\D $. It follows that complex conjugation carries $\fn^+ $ onto
$\fn^- $, and consequently, complex conjugation commutes with $ J $. 
It follows that $J $ carries
$\fm $ onto itself, and so is a complex structure on $\fm $. Of course $J $ commutes 
with the actions of $ K $ and $\fk $ on $\fm $.

Let $\a \in\D_\fm^+$, and consider $ E_\a\in \fg^\bC_\a $. Then $\bar E_\a\in \fg^\bC_{-\a} $,
where the bar denotes the ordinary complex conjugation. Set
\[
X_\a = E_\a  +\bar E_\a \quad \quad \mathrm{and} \quad \quad Y_\a = i(E_\a  - \bar E_\a) .
\] 
They are invariant under complex conjugation, and so they are in $\fg $, and they span
$(\fg^\bC_\a  \oplus \fg^\bC_{-\a}) \cap \fg $ over $\bR $, which we denote by
$\fm_\a $. (So care must be taken not to confuse $\fm_\a $ with root spaces of $\cfg $.)
It is easily checked that 
\[
J(X_\a) = Y_\a   \quad \quad \mathrm{and} \quad \quad J(Y_\a) = -X_\a  ,
\] 
and that for any $Z\in \fh_\fm $ we have
\[
[Z,X_\a] = \a(Z) Y_\a =  \a(Z) J(X_\a)    \quad \mathrm{and} 
 \quad [Z,Y_\a] = -\a(Z) X_\a =   \a(Z)  J(Y_\a),
\] 
Thus if we view $J $ as ``multiplication by $i $", we see that
$\fm_\a $ is a weight space (of dimension 1) for the representation of
$\fh_\fm $ on $\fm $. Note that $ J $ is isometric on $\fm $, 
since earlier we saw that it is isometric on $\cfm $.

\begin{definition}
\label{complex}
The operator $J $ on $\fm $ defined above is the complex structure on
$\fm $ canonically associated to the element $\mu_\dia $.
 When $\fm $ is equipped with this complex
structure, we will denote it by $\fm_ J $.
\end{definition}

We remark that many elements of $\fg'$ can determine the same complex structure 
on $\fm $
--- all the ones in the same Weyl-type chamber.

\begin{definition}
\label{dualcom}
We will denote the adjoint of $ J $ on
$\fm' $ again by $ J $. It is the complex structure on
$\fm' $ canonically associated to the element $\mu_\dia $.
 When $\fm' $ is equipped with this complex
structure, we will often denote it by $\fm'_ J $.
\end{definition}

Then $ J $ on $\fm'$ is isometric, and will commute with the representation
$\Cad$ of $ K $ on $\fm'_ J$.


\section{Yet more about spinors}
\label{mspin}
We are now exactly in position to use some of the main results of section 4 of \cite{R27}.
We consider a general even-dimensional Hilbert space $\fm $ over $\bR $
(that will later be our $\fm'$),
that is equipped with an isometric complex structure $J $.
We form the complex Clifford algebra $\bC\ell(\fm)$ for the given inner product.
Let $O(\fm) $ be the group of orthogonal transformations of $\fm $.
By the universal property of Clifford algebras, each element of
$O(\fm) $ determines an automorphism of $\bC\ell(\fm)$ (a ``Bogoliubov
automorphism''). We denote the corresponding action of $O(\fm) $ on
$\bC\ell(\fm)$ by $\b $.

We seek to construct an irreducible representation, $\k $, of $\bC\ell(\fm) $
on a Hilbert space $\cS $ that is suitably compatible with the action $\b $.
To construct this, we use the complex structure $ J $.
When we view $\fm $ as a complex Hilbert space using $ J $, we will denote
it by $\fm_ J $. Let $ U(\fm_ J) $ denote the group of unitary operators on
$\fm_ J $. It is the subgroup of those elements of $ O(\fm) $ that commute with
$ J $. Notice that the complex dimension of $\fm_ J $ is half of the real dimension
of $\fm $.

Let $\cS $ be the exterior algebra over $\fm_ J $. 
By the universal property of exterior algebras, each element of
$U(\fm_ J) $ determines an automorphism of $\cS$ (a ``Bogoliubov
automorphism''). We denote the corresponding action of $U(\fm_ J) $ on
$\cS$ by $\rho $.
There is a standard way
of defining an irreducible representation, $\k $, of $\bC\ell(\fm) $ on $\cS $,
called the Fock representation, in terms of
annihilation and creation operators. This is described in section 4 of \cite{R27},
strongly influenced by the thorough exposition in \cite{GVF} beginning 
with definition 5.6. (which uses the opposite sign convention than we
use for the definition of Clifford algebras). See also
the the discussion after corollary 5.17 of \cite{LwM}. We will not
describe the construction here.
But by examining the explicit construction, as done in \cite{R27}, 
we are able to obtain 
the following crucial result, which is just a restatement of proposition 4.4 
of \cite{R27} with very minor changes of notation:

\begin{proposition} 
\label{sppat}
The  representation $\rho $ of
$ U(\fm_J) $ on $\cS $ is compatible with the action $\b $ of
$ U(\fm_J) $ on $\bC\ell(\fm) $ in the sense that
\[
\k_{\b_ R(q)} =\rho_R \k_q \rho^{-1}_ R 
\]
 for all
$ R\in U(\fm_ J) $ and $q\in \bC\ell(\fm) $. 
\end{proposition}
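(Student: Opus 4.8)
The plan is to unwind the explicit Fock construction of $\k$ that is recalled from section~4 of \cite{R27} and to verify the intertwining relation generator by generator. Recall that $\cS = \Lambda^\bullet(\fm_J)$, that the complexification splits as $\fm^\bC = \fn^+ \oplus \fn^-$ with $\fm_J$ identified with $\fn^+$, and that the Clifford action of a vector $w \in \fm \subset \bC\ell(\fm)$ is expressed, up to the normalization fixed in \cite{R27}, through a creation operator $c(\cdot)$ (exterior multiplication) applied to the $\fn^+$-component of $w$ together with an annihilation operator $a(\cdot)$ (contraction via the Hermitian inner product on $\fm_J$) applied to the $\fn^-$-component; in particular $\k_w$ is a fixed linear combination of $c$ and $a$ applied to the pieces $w^\pm \in \fn^\pm$ of $w$.

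First I would record how $\rho_R$ conjugates the creation and annihilation operators. Since $\rho_R$ is by construction the functorial extension $\Lambda^\bullet R$ of $R \in U(\fm_J)$ to the exterior algebra, functoriality gives at once $\rho_R\, c(v)\, \rho_R^{-1} = c(Rv)$ for all $v \in \fm_J$. Because $R$ is unitary for the Hermitian inner product that defines $a(\cdot)$, because $a(v)$ is the adjoint of $c(v)$ on $\cS$, and because $\rho_R$ is unitary on $\cS$ for the induced Hermitian inner product, taking adjoints of the previous identity yields $\rho_R\, a(v)\, \rho_R^{-1} = a(Rv)$.

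Next I would assemble these. The structural input is that $R \in U(\fm_J)$ commutes with $J$, hence its $\bC$-linear extension to $\fm^\bC$ preserves the eigenspace decomposition $\fm^\bC = \fn^+ \oplus \fn^-$; since the Bogoliubov automorphism $\b_R$ acts on $\fm \subset \bC\ell(\fm)$ simply as $R$, this means $(\b_R w)^\pm = R(w^\pm)$ for every $w \in \fm$. Substituting into the formula for $\k_w$ and using the two conjugation identities gives $\rho_R\, \k_w\, \rho_R^{-1} = \k_{\b_R(w)}$ for all $w \in \fm$.

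Finally, both maps $q \mapsto \k_{\b_R(q)}$ and $q \mapsto \rho_R\, \k_q\, \rho_R^{-1}$ are unital algebra homomorphisms from $\bC\ell(\fm)$ into $\End(\cS)$: the first because $\b_R$ and $\k$ are homomorphisms, the second because conjugation by the fixed invertible operator $\rho_R$ is a homomorphism. They agree on $1$ and, by the previous paragraph, on $\fm$; since $\fm$ generates $\bC\ell(\fm)$ as a unital algebra, they agree on all of $\bC\ell(\fm)$, which is the assertion. The only genuine obstacle is bookkeeping: getting the normalization and signs in the Fock formula for $\k_w$ exactly right so that the $c$- and $a$-conjugation identities reassemble correctly, together with the careful translation between the real orthogonal picture on $\fm$ and the complex Hermitian picture on $\fm_J$ — this is precisely the content carried over from \cite{R27} — after which the argument is purely formal.
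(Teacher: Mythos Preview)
Your proposal is correct and follows exactly the standard Fock-space argument that the paper has in mind: the paper itself gives no proof here but simply refers to proposition~4.4 of \cite{R27}, and what you have written is precisely a sketch of that argument---functoriality of $\Lambda^\bullet R$ gives the conjugation identity for creation operators, unitarity and adjointness give it for annihilation operators, commutation of $R$ with $J$ ensures the $\fn^\pm$-components transform correctly, and the identity then propagates from $\fm$ to all of $\bC\ell(\fm)$ because both sides are unital algebra homomorphisms. One small terminological point: in Section~\ref{mspin} the paper works with a general $\fm$ and does not introduce the notation $\fn^\pm$ (that notation belongs to the coadjoint-orbit Section~\ref{cplx}); in the abstract setting you should speak of the $\pm i$-eigenspaces of the $\bC$-linear extension of $J$ to $\fm^\bC$, or simply phrase everything directly in terms of $c(w)$ and $a(w)$ for $w\in\fm_J$ without passing through the complexification---but this is purely cosmetic and does not affect the argument.
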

We refer the reader to \cite{R27} for the proof.


\section{Dirac operators for coadjoint orbits}
\label{dorb}
In this section we combine the results of several previous sections to construct
Dirac operators for coadjoint orbits of compact semisimple Lie groups. 
We use the notation of Section \ref{cplx}.  
We apply the results of Section \ref{cplx}, but in the role of the
$\fm $ in that section we will use $\fm' \subset \fg'$, with its
complex structure $J $ defined at the end of Section \ref{cplx}. 
We form the complex Clifford algebra
$\bC\ell(\fm') $, the exterior algebra $\cS $ over $\fm'_ J $, 
and the irreducible representation
$\k $ of $\bC\ell(\fm') $ on $\cS $.
We saw that the representation $\Cad $ restricted to $K $ on $\fm'$ extends
to an action $\b $ of $K $ on $\bC\ell(\fm') $. We also saw that this representation
commutes with $ J $, and so is a unitary representation of $K $ on $\fm'_ J $,
that is, a homomorphism from $K $ into $ U(\fm_ J) $. 
Thus it extends to a unitary representation $\rho $ of $K $ on $\cS $.
Of crucial importance, from Proposition \ref{sppat} we obtain the compatibility 
relation
\[
\k_{\Cad_ s(q)} =\rho_s (\k_q)\rho^{-1}_s 
\]
 for all $ s\in K $ and $q\in \bC\ell(\fm) $.  
 For the present situation, this is exactly the compatibility
relation \eqref{comeq} that was assumed in Section \ref{dhom}.

As in Section \ref{dhom}, we form the Clifford bundle $\clf G/K)$.
Using the exterior algebra $\cS $, we construct the spinor
bundle $\cS(G/ K) $ as in Section \ref{dhom}, 
with the representation $\rho $ of $ K $ on it. 
We let $\k $ be the pointwise action
of $\clf G/ K) $ on $\cS(G/ K) $. As seen in Section \ref{dhom}, the
compatibility relation is needed in order to ensure that $\k $
carries $\cS(G/ K) $ into itself.

We now let $ D $ be the operator on  $\cS(G/ K) $ 
constructed exactly as done in 
Section  \ref{dhom} before Definition \ref{defdir2}. 

\begin{definition}
\label{defdir3}
The operator, $D $, defined above is the Dirac operator 
on $ G / K $ (i.e. on $\cA = C(G/K) $) for the given 
element $\mu_{\dia} \in \fg'$ and the given
$\Cad$-invariant inner product on $\fg'$. 
\end{definition}

As we will discuss in Section \ref{secdefit}, $D $ is not always the Dirac 
operator corresponding to the K\"ahler structure on $G/K$ determined by
$\mu_{\dia} $ and constructed in \cite{R27}.

From Proposition
\ref{dircom} we know that
$D$ commutes with the action of G on $\cS(G/K)$ by left translation.
From Proposition \ref{dirbra} we immediately obtain:
\begin{proposition}
\label{dirbra2}
For any $f \in \ciA$ we have
\[
[D, M_f] = \k_{\hat \rd f}   .
\]
\end{proposition}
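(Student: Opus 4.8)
The plan is to recognize that the operator $D$ constructed just above for the coadjoint orbit $G/K$ is nothing but the homogeneous-space Dirac operator of Section \ref{dhom}, instantiated with a particular choice of spinor data. Indeed, in the role of the spinor space $\cS$ and the representation $\rho$ of $K$ required in Section \ref{dhom}, we are here using the exterior algebra $\cS$ over $\fm'_J$ together with the unitary representation $\rho$ of $K$ produced by Proposition \ref{sppat} (applied with $\fm = \fm'$ and the complex structure $J$ of Definition \ref{dualcom}); and, as observed just before Definition \ref{defdir3}, the compatibility relation \eqref{comeq} demanded in Section \ref{dhom} is exactly the one that Proposition \ref{sppat} supplies. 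Consequently, every identity established in Section \ref{dhom} for the general $D$ holds verbatim in the present situation, and in particular Proposition \ref{dirbra} gives $[D, M_f] = \k_{\hat\rd f}$ for all $f \in \ciA$, which is the assertion.

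For the reader's convenience I would also reproduce the short direct computation, which is simply the proof of Proposition \ref{dirbra} transcribed. From equation \eqref{eq6L} one obtains the Leibniz rule $\rd(f\Psi) = (\hat\rd f)\otimes\Psi + f\,\rd\Psi$ for $f \in \ciA$ and $\Psi \in \cS(G/K)$, where $\hat\rd f$ is viewed as an element of $\bC\ell(G/K)$ via $\hat\O(G/K) \subset \bC\ell(G/K)$. Since $\k$ is $\ciA$-linear, it follows that $[D,M_f]\Psi = D(f\Psi) - fD(\Psi) = \k(\rd(f\Psi)) - f\,\k(\rd\Psi) = \k\bigl((\hat\rd f)\otimes\Psi + f\,\rd\Psi\bigr) - \k(f\,\rd\Psi) = \k\bigl((\hat\rd f)\otimes\Psi\bigr) = \k_{\hat\rd f}\Psi$, as claimed.

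There is essentially no obstacle here; this proposition is a genuine corollary rather than a new result. The only point that has to be secured — and it was secured in the preceding two sections — is that the spinor bundle $\cS(G/K)$ attached to the coadjoint orbit really does fall under the hypotheses of Section \ref{dhom}, that is, that a unitary representation $\rho$ of $K$ on $\cS$ intertwining $\Cad$ and $\k$ in the sense of \eqref{comeq} exists at all. For coadjoint orbits $K$ is typically not semisimple, so Proposition \ref{compat} is unavailable; it is precisely the Fock-space construction behind Proposition \ref{sppat}, together with the fact established at the end of Section \ref{cplx} that $\Cad|_K$ commutes with $J$, that provides such a $\rho$. Once this is in hand, the general machinery of Section \ref{dhom} transfers without change and the proposition follows.
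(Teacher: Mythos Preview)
Your proposal is correct and takes essentially the same approach as the paper: the paper simply states that the proposition follows immediately from Proposition~\ref{dirbra}, since the coadjoint-orbit Dirac operator is a specific instance of the homogeneous-space construction of Section~\ref{dhom} once the compatibility relation~\eqref{comeq} has been secured via Proposition~\ref{sppat}. Your additional transcription of the short computation and your remarks on why $\rho$ exists are accurate and helpful, but go beyond what the paper itself provides.
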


From Theorem \ref{thnorm} we immediately obtain:
\begin{theorem}
\label{thnorm2}
For any $f \in \ciA = C^\infty(G/K)$ we have
\[
\|[D, M_f]\| = \sup \{\|\hat \rd f_x\|_{\fm'^\bC}: x \in G\}
 = \sup \{\| \rd f_x\|_{\fm'^\bC}: x \in G\}  .
\]
\end{theorem}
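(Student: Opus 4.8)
The plan is to recognize that Theorem~\ref{thnorm2} is nothing more than the specialization of Theorem~\ref{thnorm} to the coadjoint-orbit setting, so that no genuinely new argument is required. First I would recall that in Section~\ref{dorb} the operator $D$ on $\cS(G/K)$ was built by following \emph{verbatim} the construction of Section~\ref{dhom}, taking the spinor space $\cS$ to be the exterior algebra over $\fm'_J$ and the representation $\rho$ of $K$ on $\cS$ to be the one coming from the restriction of $\Cad$ to $K$ (which lands in $U(\fm'_J)$ because $\Cad$ restricted to $K$ commutes with $J$). The single hypothesis that Section~\ref{dhom} imposed, namely the compatibility relation \eqref{comeq}, $\k_{\Cad_s(q)} = \rho_s\,\k_q\,\rho_s^{-1}$ for $q \in \bC\ell(\fm')$ and $s \in K$, was established in Section~\ref{dorb} as a direct consequence of Proposition~\ref{sppat}. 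Hence all the hypotheses of Theorem~\ref{thnorm} hold in our situation, and its conclusion is precisely the conclusion of Theorem~\ref{thnorm2}.

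If one prefers to re-run the short argument rather than quote it, the steps are: by Proposition~\ref{dirbra2} we have $[D, M_f] = \k_{\hat\rd f}$, where $\hat\rd f$ is regarded as an element of $\bC\ell(G/K)$; since $\bC\ell(G/K)$ is a $*$-subalgebra of the C*-algebra $\cA \otimes \bC\ell(\fm') = C(G,\bC\ell(\fm'))$, the C*-norm of $\hat\rd f$ equals $\sup\{\|\hat\rd f_x\|_{\bC\ell(\fm')} : x \in G\}$; and for any $\mu \in \fm'^\bC$ the identity $\|\mu\|_{\bC\ell(\fm')} = \|\mu\|_{\fm'^\bC}$ holds, by the same computation inside the (finite-dimensional) subalgebra generated by $\mu$ that was used just before Proposition~\ref{bigb}. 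This yields the first equality. For the second, since $\hat\rd f_x = \Cad_x^{-1}(\rd f_x)$ and $\Cad$ acts isometrically on $\fg'^\bC$, we get $\|\hat\rd f_x\|_{\fm'^\bC} = \|\rd f_x\|_{\fg'^\bC}$, and because $\rd f_x$ actually lies in $\Cad_x(\fm'^\bC)$ (as noted in Section~\ref{dhom}) this last quantity may be written $\|\rd f_x\|_{\fm'^\bC}$.

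I do not anticipate any real obstacle: the substantive content — producing an \emph{irreducible} spinor bundle via the canonical complex structure $J$, and verifying its $K$-equivariance in Proposition~\ref{sppat} — has already been dealt with in Sections~\ref{cplx}, \ref{mspin}, and \ref{dorb}. The only point needing care is the bookkeeping: confirming that the spinor data used here genuinely satisfies \eqref{comeq} so that Theorem~\ref{thnorm} is applicable, and keeping straight the three norms $\|\cdot\|_{\fm'^\bC}$, $\|\cdot\|_{\fg'^\bC}$, and $\|\cdot\|_{\bC\ell(\fm')}$ in the chain of equalities. Once that is in place the theorem follows immediately.
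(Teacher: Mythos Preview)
Your proposal is correct and matches the paper's approach exactly: the paper simply states that Theorem~\ref{thnorm2} is obtained immediately from Theorem~\ref{thnorm}, since the Dirac operator of Section~\ref{dorb} is an instance of the construction in Section~\ref{dhom} once the compatibility relation~\eqref{comeq} has been verified via Proposition~\ref{sppat}. Your optional re-running of the argument is more detailed than what the paper provides but follows the proof of Theorem~\ref{thnorm} faithfully.
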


Then from Corollary \ref{cormet} we immediately obtain
\begin{corollary}
\label{cormt2}
For any $f \in \ciA$ we have
\[
 \|[D, M_f]\| =  \|[D_o, M_f]\|   .
\]
\end{corollary}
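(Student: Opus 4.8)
The plan is to obtain this as an immediate consequence of Corollary~\ref{cormet}, which was already proved in the full generality of an arbitrary closed subgroup $K$ of a compact connected Lie group $G$ equipped with a spinor space $\cS$ carrying a unitary representation $\rho$ of $K$ satisfying the compatibility relation \eqref{comeq}. Everything special to the coadjoint-orbit situation has by now been verified: we have taken $\cS$ to be the exterior algebra over $\fm'_J$ and $\rho$ the representation of $K$ built from Proposition~\ref{sppat}, via the homomorphism $K \to U(\fm'_J)$ arising from the fact that $\Cad$ restricted to $K$ commutes with $J$, and Proposition~\ref{sppat} delivers exactly $\k_{\Cad_s(q)} = \rho_s \k_q \rho_s^{-1}$, that is, \eqref{comeq}. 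Consequently the constructions of Section~\ref{dhom} --- the Clifford bundle $\clf G/K)$, the spinor bundle $\cS(G/K)$, the connection $\nabla^\cS$, and the Dirac operator $D$ --- and in particular Corollary~\ref{cormet}, apply verbatim, and the asserted identity $\|[D, M_f]\| = \|[D_o, M_f]\|$ follows.

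For a self-contained verification one can instead argue directly. First apply Theorem~\ref{thnorm2} to write $\|[D, M_f]\| = \sup\{\|\rd f_x\|_{\fm'^\bC} : x \in G\}$. Next apply Proposition~\ref{bigb} to write $\|[D_o, M_f]\| = \sup\{\|\rd f_y\| : y \in G/K\}$, the norm on the right being the one coming from the inner product on $\fg'$, extended to $\fg'^\bC$. Since $\rd f$ is constant on $K$-cosets, the two suprema range over the same set of covectors; and by the opening paragraphs of Section~\ref{dhom} one has $\rd f_x \in \Cad_x(\fm'^\bC)$ for every $x \in G$, so that, $\Cad_x$ being isometric on $\fg'^\bC$, the $\fg'^\bC$-norm and the $\fm'^\bC$-norm of $\rd f_x$ coincide. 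Combining these observations yields the equality.

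There is no substantive obstacle here; the real content of the statement lies in the earlier Theorem~\ref{thnorm} (whose key ingredient is the identity $\|\mu\|_{\bC\ell(\fm')} = \|\mu\|_{\fm'^\bC}$ for $\mu$ in the copy of $\fm'^\bC$ inside the Clifford algebra) and in Proposition~\ref{bigb}. The only point needing any care --- and it is routine --- is the bookkeeping between the two norm conventions: $D_o$ is assembled from all of $\fg'$ whereas $D$ sees only the $\fm'$-part, and one must check that restricting to $\fm'$ loses nothing. It does not, precisely because the differential of a $K$-invariant function automatically annihilates $\fk$ and hence lands, after the $\Cad_x$-twist, in $\fm'$, a fact already recorded in Section~\ref{dhom}.
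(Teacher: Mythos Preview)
Your proposal is correct and takes essentially the same approach as the paper: the paper simply states that the result follows immediately from Corollary~\ref{cormet}, and your first paragraph does exactly that while spelling out why the hypotheses of Section~\ref{dhom} (in particular the compatibility relation~\eqref{comeq}) are satisfied in the coadjoint-orbit setting. Your additional self-contained verification merely unpacks the proof of Corollary~\ref{cormet} via Theorem~\ref{thnorm2} and Proposition~\ref{bigb}, which is consistent with the paper's logic.
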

Thus the corresponding C*-metrics on $\cA$ are equal.
This fact will be important in the next sections. 
The comments after Corollary \ref{corgrad} apply equally well here.


\section{Bridges with symbols}
\label{brsy}
The definition of quantum Gromov-Hausdorff distance between compact
C*-metric spaces has evolved over the years since the first definition
was proposed in \cite{R6}. At present the definition with the best properties is
\Lat's dual Gromov-Hausdorff propinquity \cite{Ltr4}, and it forms
the base for the spectral propinquity \cite{Ltr7}. 
But its definition can be  somewhat 
difficult to work with directly for some classes of examples, 
including the examples we are considering. \Lat \ had slightly 
earlier introduced a somewhat stronger definition that 
he called the Gromov-Hausdorff propinquity \cite{Ltr2}. 
This definition works well for our examples. So in this section we will recall the definition of the 
Gromov-Hausdorff propinquity and explain 
how it relates to our situation. With this as preparation, in the next section we will 
prove that a suitable sequence of matrix algebras, equipped with C*-metrics
coming from Dirac operators, 
converges for the Gromov-Hausdorff propinquity to the
coadjoint orbit for a given highest-weight vector.
(This will imply convergence also for the dual Gromov-Hausdorff propinquity). 

 For any two unital C*-algebras $\cA$ and $\cB$, a
bridge from  $\cA$ to $\cB$ in the sense of \Lat \ \cite{Ltr2} is a quadruple
$(\cD, \pi_\cA, \pi_\cB, \om )$ for which $\cD$ is a unital C*-algebra,
$\pi_\cA$ and $\pi_\cB$ are unital injective homomorphisms of
$\cA$ and $\cB$ into $\cD$, and $\om$ is a self-adjoint element
of $\cD$ such that 1 is an element of the spectrum
of $\om$ and  $\|\om \| = 1$. Actually, \Lat \ only requires a looser 
but more complicated condition on $\om$, but the
above condition will be appropriate for our examples. Following
\Lat, \ we will call $\om$ the ``pivot'' for the bridge.
We will often omit mentioning the injections $\pi_\cA$ and $\pi_\cB$
when it is clear what they are from the context, and accordingly we
will often write as though $\cA$ and $\cB$ are unital subalgebras of $\cD$.

For our applications, $\cA$ will be $ C(G/ K)$ for $G $ and $K $ as 
in previous sections, and $\cB =\cB(\cH)$ will be the 
matrix algebra corresponding 
to an irrep  $(\cH, U) $ of $G $. Let $\a $ be the action of $G$ 
on $\cB$ by conjugation by $U$.
Let $P $ be the rank-one projection on the highest-weight 
space of the irrep. We assume that $K$ is the
stabilizer subgroup of $P$ for $\a$.
For our bridge
we take $\cD$ to be the C*-algebra 
\[
\cD = \cA \otimes \cB = C(G/K, \cB)  .
\]
We take $\pi_\cA$ to be the injection of $\cA$ into $\cD$
defined by
\[
\pi_\cA(a) = a \otimes 1_\cB
\]
for all $a \in \cA$, where $1_\cB$ is the identity element of $\cB$. 
The injection $\pi_\cB$ is defined similarly.
We define the pivot $\om$ to be the coherent state associated
to the irrep, that is, $\om$ is the function in $C(G/K, \cB)$ defined by
\begin{equation}
\label{pivot}
\om(x) = \a_x(P)
\end{equation}
for all $x \in G/ K$.
We notice that $\om$ is actually 
a non-zero projection
in $\cD$, and so it satisfies the requirements for being a pivot.
We will denote the bridge $(\cD, \om)$ by $\Pi$. 

\begin{definition}
\label{mybrg}
We will call the bridge $\Pi $ constructed just above
\emph{the bridge from $ G/K $ (or from $\cA = C(G/K) $) to
the irrep $(\cH, U) $ (or to $\cB = \cB(\cH) $)}.
\end{definition}

Any choice of C*-metrics $L^\cA$ and $L^\cB$ on 
unital C*-algebras $\cA$ and $\cB$
can be used to measure any given
bridge $\Pi = (\cD, \om) $ between $\cA$ and $\cB$.
\Lat \  \cite{Ltr2} defines the ``length'' of the
bridge by first defining its ``reach''
and its ``height''.  

\begin{definition}
\label{reach}
Let $\cA$ and $\cB$ be unital C*-algebras, and let
$\Pi = (\cD, \om)$ be a bridge from $\cA$ to $\cB$ . 
Let $L^\cA$ and $L^\cB$ be C*-metrics on $\cA$ and $\cB$. Set 
\[
\cL_\cA^1 = \{a \in \cA:  a = a^* \ \mathrm{and} \ L^\cA(a) \leq 1\}   ,
\]
and similarly for $\cL_\cB^1$. (This is slightly different from 
equation \eqref{ball}.) We can view these as subsets of $\cD$. 
Then the \emph{reach} of $\Pi$ is given by:
\[
\mathrm{reach}(\Pi) = \mathrm{Haus}_\cD\{\cL_\cA^1  \om \ , \     \om  \cL_\cB^1\}   ,
\]
where $\mathrm{Haus}_\cD$ denotes the Hausdorff distance with respect
to the norm of $\cD$, and where the product defining $\cL_\cA^1  \om$ and
$ \om  \cL_\cB^1$ is that of $\cD$.  
\end{definition}

\Lat \ shows
just before definition 3.14 of \cite{Ltr2} that, 
under conditions that include the case in which 
$(\cA, L^\cA)$ and $(\cB, L^\cB)$ are 
C*-metric spaces, the reach of $\Pi$ is finite. 

To define the height of $\Pi$ we need to consider the state space, $S(\cA)$,
of $\cA$, and similarly for $\cB$ and $\cD$. Even more, we set
\[
S_1(\om) = \{\phi \in S(\cD): \phi(\om) = 1\}     ,   
\]
the ``level-1 set of $\om$''. The elements of $S_1(\om)$ are ``definite'' 
on $\om$ in the sense \cite{KR1} that 
for any $\phi \in S_1(\om)$ we have
\[
\phi(d\om) = \phi(d) = \phi(\om d)     .
\]
for all $d \in \cD$.
Since $L^\cA$ is a C*-metric, it determines by formula \eqref{stmet}
an ordinary metric, $\rho_\cA $, on $S(\cA)$, which metrizes the weak-$*$
topology, for which $S(\cA) $ is compact. 
Define $\rho_\cB$ on $S(\cB)$ similarly. 
\begin{notation}
We denote by $S_1^\cA(\om)$ the restriction of the
elements of $S_1(\om)$ to $\cA$. We define $S_1^\cB(\om)$ 
similarly. 
\end{notation}

\begin{definition}
\label{height}
Let $\cA$ and $\cB$ be unital C*-algebras and let
$\Pi = (\cD, \om)$ be a bridge from $\cA$ to $\cB$ . 
Let $L^\cA$ and $L^\cB$ be C*-metrics on $\cA$ and $\cB$. 
The \emph{height} of the bridge $\Pi$ is given by
\[
\mathrm{height}(\Pi) =
\max\{\mathrm{Haus}_{\rho_\cA}(S_1^\cA(\om), S(\cA)) , \ 
\mathrm{Haus}_{\rho_\cB}(S_1^\cB(\om) , S(\cB))\}  ,
\]
where the Hausdorff distances are with respect to the indicated
metrics. 
The length of $\Pi$ is then defined by
\[
\mathrm{length}(\Pi) = \max\{\mathrm{reach}(\Pi), \mathrm{height}(\Pi)\}  .
\]
\end{definition}

\Lat \ defines the length of a finite path ( a ``trek'') of bridges to be the sum of the lengths
of the individual bridges. He then defines the Gromov-Hausdorff propinquity
between two compact C*-metric spaces to be the infimum
of the lengths of all finite paths between them. (This gives the triangle inequality.) 
He proves the remarkable fact that if
the propinquity between two compact C*-metric spaces is 0, 
then they are isometric in the sense that there is an isomorphism 
between the C*-algebras 
that carries the C*-metric on one to the C*-metric on the other.
Thus the propinquity is a metric on the set of isometry classes of compact 
C*-metric spaces. But we will not need to deal directly with 
finite paths of bridges because we will prove that, for the sequences of 
bridges that we will 
construct, already their lengths will converge to 0.

For the main context of this paper there is extra structure 
available to help with measuring the lengths of bridges. 
The next paragraph is strongly motivated by the discussion of 
bridges with conditional expectations in sections 4 and 5 of \cite{R29}.

\begin{definition}
\label{symb}
Let $\cA$ and $\cB$ be unital C*-algebras, 
 and let
$\Pi = (\cD, \om)$ be a bridge from $\cA$ to $\cB$. By a \emph{pair of symbols} for
$\Pi$ we mean a pair of unital completely positive maps
$(\s^\cA, \s^\cB) $ such that $\s^\cA $ maps $\cD $ to $\cA $ and
$\s^\cA(\om) = 1_\cA $, while
$\s^\cB $ maps $\cD $ to $\cB $ and $\s^\cB(\om) = 1_\cB $. \\
(Thus $\s^\cA $ and $\s^\cB$ are ``definite" on $\om $. 
We do not require 
any relation between these two maps.)
\end{definition}

For our applications we define $\s^\cA $ by
\begin{equation}
\label{sya}
\s^\cA(F)(x) = \mathrm{tr}_\cB(F(x)\a_x(P)) 
\end{equation}
for any $ F\in \cD = C(G/K, \cB) $ , 
where $\mathrm{tr}_\cB $ is the un-normalized 
trace on $\cB $.
We use the term ``symbol" in definition \ref{symb} because 
the restriction of this particular $\s^\cA $ to $\cB $ is exactly
the Berezin contravariant symbol map, and the 
restriction to $\cA $ of the particular
 $\s^\cB $ that we will define just below
is exactly the Berezin covariant symbol map, 
that play an essential role in \cite{R7}.

For our applications we define $\s^\cB $ by
\begin{equation}
\label{syb}
\s^\cB(F) = d_\cH\int_{G/ K} F(x)\a_x(P)) \ dx,
\end{equation}
where $d_\cH $ is the dimension of the Hilbert 
space of the irrep, 
and $dx $ refers to the $G $-invariant probability measure 
on $ G/ K $. It is easily seen \cite{R29} that
$\s^\cA $ and $\s^\cB $ are unital and completely positive,
and furthermore that they intertwine the actions of
$G $ on $\cA$ and $\cB$ with the diagonal action of $G$ on 
$\cD =\cA \otimes \cB $.

\begin{definition}
\label{sycomp}
Let $\cA$ and $\cB $ be unital C*-algebras and 
and let
$\Pi = (\cD, \om)$ be a bridge from $\cA$ to $\cB$. 
Let
$L^\cA$ and $L^\cB$ be C*-metrics on $\cA$ and $\cB$. 
Let $(\s^\cA, \s^\cB) $ be a pair of symbols for $\Pi$. 
We say that this pair of symbols is \emph{compatible} with the
C*-metrics $L^\cA$ and $L^\cB$ if their restrictions to $\cB$
and $\cA$ satisfy
\[
L^\cA(\s^\cA(b)) \leq L^\cB(b) \quad \quad \mathrm{and} 
\quad \quad L^\cB(\s^\cB(a)) \leq L^\cA(a)
\]
for all $ a\in\cA $ and $b\in\cB $. (Here we are viewing
$ \cA $ and $\cB $ as subalgebras of $\cD$ in the evident way
discussed in the second paragraph of this section.)
\end{definition}

We now show how to use a compatible pair of symbols to 
obtain an upper bound for the reach of
a bridge $\Pi$.
Let $b \in \cL^1_\cB$ be given. As an approximation to $\om b$ by
an element of the form $a\om$ for some $a \in \cL^1_\cA$ we take
$a = \s^\cA(b)$. It is indeed in $\cL^1_\cA$ by the 
compatibility condition. This prompts us to set
\begin{equation}
\label{gamb1}
\g^\cB = 
\sup\{\|\s^\cA(b)\om - \om b\|_\cD :  b \in \cL^1_\cB \}  .
\end{equation}
Interchanging the roles of $\cA$ and $\cB$, we define $\g^\cA$
similarly. We then see that
\begin{equation}
\label{inreach}
\mathrm{reach}(\Pi) \leq \max\{\g^\cA, \g^\cB\}  .
\end{equation}
We will see shortly 
why this upper bound is useful for our applications.

We now consider the height of $\Pi$. For this we need to consider $S_1(\om)$
as defined above. Because 
$\s^\cA$ is positive and unital, its
composition with any $\mu \in S(\cA)$ is in $S(\cD)$. 
By definition $\s^\cA(\om) = 1_\cA$. Thus for every $\mu \in S(\cA)$
we obtain an element, $\phi_\mu$, of $S_1(\om)$, defined by
\[
\phi_\mu(d)  = \mu(\s^\cA(d))  
\]  
for all $d\in D $. This provides us with a substantial collection of elements
of $S_1(\om)$.
Since to estimate the height of $\Pi$ we need to
estimate the distance from each $\mu \in S(\cA)$ to
$S_1^\cA(\om)$, we can hope that $\phi_\mu$ restricted
to $\cA$ is relatively close to $\mu$. Accordingly, for any
$a \in \cA$ we
compute
\[
|\mu(a) - \phi_\mu(a)| = |\mu(a) - \mu(\s^\cA(a))|
\leq \|a - \s^\cA(a)\|  . 
\]
This prompts us to set 
\begin{equation}
\label{delD}
\d^\cA = \sup \{\|a - \s^\cA(a)\| : a \in \cL^1_\cA\}.
\end{equation}
Then we see that
\[
\rho_{L^\cA}(\mu, \ \phi_\mu |_\cA) \leq \d^\cA  
\]
for all $\mu \in S(\cA)$.
We define $\d^\cB$ in the same way, and obtain the
corresponding estimate for the distances from elements
of $S(\cB)$ to the restriction of $S_1(\om)$ to $\cB$. 
In this way we see that
\begin{equation}
\label{eqheight}
\mathrm{height}(\Pi) \leq \max\{\d^\cA, \d^\cB\}  .
\end{equation}
(Notice that $\d^\cA$ involves what $\s^\cA$ does on $\cA$,
whereas $\g^\cA$ involves what $\s^\cB$ does on $\cA$.)

While this bound is natural within this context, it turns out not to be
so useful for our main applications. The following steps might 
not initially seem to be natural, but in the next section
we will
see that for our main applications they are quite 
useful. Our
notation is as above. For any $\nu \in S(\cB)$ we easily see that 
$\nu \circ \s^\cB \circ \s^\cA \in S(D)$. 
But from the relation between the symbols and
$\om $ it is easily seen that 
$\nu \circ \s^\cB \circ \s^\cA(\om) = 1 $, so that 
$\nu \circ \s^\cB \circ \s^\cA $ is in  $S_1(\om) $.
Let us
denote its restriction to $\cB$ by $\psi_\nu$, so that
$\psi_\nu\in S_1^\cB(\om)$. Then $\psi_\nu$ can be used as an
approximation to $\nu$ by an element of $S_1^\cB(\om)$. Now
for any $b \in \cB$ we have
\[
|\nu(b) - \psi_\nu(b)| = 
|\nu(b) - (\nu \circ \s^\cB \circ \s^\cA)(b)| 
\leq \|b - \s^\cB(\s^\cA(b))\|  .
\]
\begin{notation}
\label{bert}
In terms of the above notation we set
\[
\hat\d^\cB = \sup \{\|b - \s^\cB(\s^\cA(b))\| : b \in \cL^1_\cB\}.
\]
\end{notation}
It follows that
$
\rho_{L^\cB}(\nu, \ \psi_\nu ) \leq \hat \d^\cB  
$
for all $\nu \in S(\cB)$, so that
\begin{equation}
\label{hausB}
\mathrm{Haus}_{\rho_\cB}(S_1^\cB(\om) , S(\cB))\}  \leq  \hat\d^\cB.
\end{equation}
We define $\hat \d^\cA$ in the same way, and obtain the
corresponding bound for the distances from elements 
of $S(\cA)$ to $S_1^\cA(\om)$. 
In this way we obtain:
\begin{proposition}
\label{alth}
For notation as above,
\[
\mathrm{height}(\Pi) \leq 
\max\{\min\{ \d^\cA, \hat \d^\cA\}, \min\{\d^\cB, \hat \d^\cB\}\}  .
\]
\end{proposition}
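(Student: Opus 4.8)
The plan is to combine the two one-sided estimates already obtained for each of the two Hausdorff distances appearing in Definition \ref{height}. First I would observe that, since $S_1^\cA(\om)$ is obtained by restricting states on $\cD$ to the unital subalgebra $\cA$, it is a subset of $S(\cA)$; hence one of the two halves of the Hausdorff maximum is zero and $\mathrm{Haus}_{\rho_\cA}(S_1^\cA(\om), S(\cA))$ equals $\sup\{\mathrm{dist}_{\rho_\cA}(\mu, S_1^\cA(\om)) : \mu \in S(\cA)\}$. The identical remark applies with $\cA$ replaced by $\cB$.

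Next, for a fixed $\mu \in S(\cA)$ I have exhibited two elements of $S_1^\cA(\om)$ that approximate $\mu$: the restriction $\phi_\mu|_\cA$, which gives the bound $\rho_{L^\cA}(\mu, \phi_\mu|_\cA) \leq \d^\cA$ leading to \eqref{eqheight}, and (by the argument producing \eqref{hausB}, with the roles of $\cA$ and $\cB$ interchanged) an element built from $\mu \circ \s^\cA \circ \s^\cB$, which gives a bound of $\hat\d^\cA$. Taking the infimum over $S_1^\cA(\om)$ and then the supremum over $\mu \in S(\cA)$ yields $\mathrm{Haus}_{\rho_\cA}(S_1^\cA(\om), S(\cA)) \leq \min\{\d^\cA, \hat\d^\cA\}$. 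The symmetric argument, using $\phi_\nu|_\cB$ on the one hand and $\psi_\nu$ exactly as in \eqref{hausB} on the other, gives $\mathrm{Haus}_{\rho_\cB}(S_1^\cB(\om), S(\cB)) \leq \min\{\d^\cB, \hat\d^\cB\}$.

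Finally I would feed these two inequalities into the defining formula $\mathrm{height}(\Pi) = \max\{\mathrm{Haus}_{\rho_\cA}(S_1^\cA(\om), S(\cA)), \mathrm{Haus}_{\rho_\cB}(S_1^\cB(\om), S(\cB))\}$ to conclude. There is no genuine obstacle here: the proposition is a bookkeeping consequence of the estimates \eqref{eqheight}, \eqref{hausB} and their $\cA$/$\cB$-swapped analogues. The only point needing a moment's care is the asymmetry of the Hausdorff distance, which is harmless precisely because each level-1 restriction set is contained in the corresponding full state space, so only one direction of the Hausdorff maximum contributes.
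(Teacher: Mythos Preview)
Your proposal is correct and follows essentially the same approach as the paper: the proposition is presented there as an immediate consequence of the preceding discussion, which establishes the separate bounds $\d^\cA$, $\hat\d^\cA$ (and their $\cB$-analogues) on the two Hausdorff distances, and you combine them in exactly the intended way. Your explicit remark that $S_1^\cA(\om)\subseteq S(\cA)$ forces one half of the Hausdorff maximum to vanish is a helpful clarification that the paper leaves implicit, but it does not constitute a different method.
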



\section{The proof that matrix algebras converge to coadjoint orbits}
\label{qgh}

We continue with the notation of the previous sections, so $G $ is a 
connected compact semisimple Lie group.
Not every $\mu\in\fg'$ is associated with an irrep of $G $.
To be associated with an irrep, the restriction of $\mu $ to any Cartan
subalgebra $\fh $ of $\fg $ must exponentiate to a one-dimensional representation of the corresponding maximal torus, and so in particular must be ``integral". 
But then it can correspond to a weight of many different irreps. To be 
viewed as corresponding to a unique (equivalence class of an) irrep, 
$\mu $ must be ``dominant"
with respect to some choice of Cartan subalgebra and positive root system.

So let us fix an irrep $(\cH, U) $ of $ G $. 
Fix a choice of a maximal torus in $G $ with Cartan subalgebra $\fh$,
and a choice of a positive root system $\D^+$ for it. Then choose elements
$H_\a, E_\a, F_\a$ in $\cfg$ as described in Proposition \ref{HEF}.
Let $\xi_\dia $ be a highest-weight vector of length 1 for the irrep.
As a weight vector it is an eigenvector of $U_H$  for all 
$H \in \fh$. The fact that it is a highest-weight vector means
exactly that $U_{E_\a}\xi_\dia = 0$ for all $\a \in \D^+$. Define $\md$ in
$\fg'$ by
\[
\md(X) = -i\<\xi_\dia, U_X\xi_\dia\>     .
\]
(We take the inner product on $\cH$ to be linear in the
second variable.)
Up to the sign, $\md$ is exactly the ``equivariant momentum map'' of
equation 23 of \cite{Ln2}.
Because $U_X$ is skew-symmetric for all $X \in \fg$, we
see that $\md$ is $\bR$-valued on $\fg$. 
Note that $\md$ does not depend on the phase of $\xi_\dia$.
 We extend $\md$ to $\cfg$ and $\cfh$ in
the usual way. Because $\xi_\dia$ is a highest-weight vector, 
we clearly have
$\md(E_\a) = 0$ for all $\a \in \D^+$, and then $\md(F_\a) = 0$ for 
all $\a \in \D^+$ because $F_\a = E_\a^*$. Furthermore, because
$[E_\a, E_a^*] = H_\a$ and
$[H_\a, E_\a] = 2E_\a$ and $[H_\a, F_\a] = -2F_\a$, the
triplet $(H_\a, E_\a, F_\a)$ generates via $U$ a representation 
of $sl(2,\bC)$, 
for which the spectrum of $U_{H_\a}$ must consist
of integers. In particular, $i\md(H_\a)$ is an integer, necessarily
non-negative, in fact equal to $\|F_\a \xi_\dia \|^2$. 
We see in this way that $\md$ is a quite special
element of $\fg'$.

Let $\mu$ denote the weight of $\xi_\dia$, so that 
$U_H(\xi_\dia) = i\mu(H)\xi_\dia$ for all $H \in \fh$. 
Comparison with the definition of $\md$ shows that $\mu$ is
simply the restriction of $\md$ to $\fh$.  It is clear
that $\md$ is determined by $\mu$ in the sense that $\md$
has value $0$ on the $\Kil$-orthogonal complement of $\cfh$.
Thus from now on
we will let $\md$ also denote the weight of $\xi_\dia$.
(Thus the special properties of $\md$ mean that,
as a weight, $\md$ is a ``dominant integral weight''.) 

We now construct the sequence of matrix algebras that we 
will show converges to 
the coadjoint orbit of $\md$.
For each positive integer $m$ 
let $(\cH^m, U^m)$ be an irreducible representation of $G$
with highest weight $m\mu_\dia$. 
All the $m\mu_\dia$'s will have
the same $\Cad$-stability group, $ K$.
Then let $\cB^m = \cL(\cH^m)$
with action $\a$ of $G$ using $U^m$, and let $P^m$ be the 
projection on the
highest-weight vector in $\cH^m$ (which is just the tensor product
of $m$ copies of $\xi_\dia$).  
As before, we let
$\cA = C(G/ K)$. Then for each $m$ we construct
as for Definition \ref{mybrg} the bridge 
$\Pi_m = (\cD^m, \om^m)$ from $\cA $ to $\cB^m $, using $ P^m $.

We assume that a $\Cad $-invariant inner product has been 
chosen for $\fg'$. Let $ D^\cA $ be the corresponding 
Dirac operator for $\cA $ constructed for Definition \ref{defdir3}.
For each integer $ m $ let $ D^m $ ($= D^{\cB^m} $)  be the corresponding 
Dirac operator for $\cB^m $ constructed as for Definition \ref{matdir}.
Let $ L^{D^\cA} $ be the C*-metric corresponding to $ D^\cA $, 
and for each $m $ let $ L^{D^m} $ be the C*-metric corresponding 
to $ D^m $.

For each $m $ we want to measure the bridge $\Pi_m $ using
the C*-metrics $ L^{D^\cA} $ and $ L^{D^m} $. 
For this purpose, we want to show that the pair of symbols 
$(\s_m^\cA, \s^{\cB^m}) $,
defined as in equations \eqref{sya} and \eqref{syb} using $ P^m $
(and restricted to $\cB^m$ and $\cA$ respectively, as before), 
is compatible with these C*-metrics, in the sense of 
Definition \ref{sycomp}.
But this is awkward
to do directly because $ D^\cA $ and $ D^m $ have been 
constructed in somewhat different ways. However, we were careful 
to show in Corollary \ref{cormt2} that  $ L^{D^\cA} =   L^{D_o^\cA} $,
and in Proposition  \ref{deq}   that  $ L^{D^m} =   L^{D_o^m} $,
where $D_o^\cA $ and $D_o^m $ are defined as in Definition
\ref{gendir}. Thus it suffices to show
that the pair of symbols 
$(\s_m^\cA, \s^{\cB^m}) $ is compatible with the C*-metrics
$ L^{D_o^\cA} $ and $ L^{D_o^m} $. But these C*-metrics
were both defined by the same construction, that of Section \ref{sec1},
so we can apply the results of that section. Because the restriction of
$\s_m^\cA $ to $\cB^m $ is unital completely positive and intertwines
the actions of $ G $ on $\cA $ and $\cB^m $, it follows from
Corollary \ref{ucplip} that for any $b\in \cB^m $ we have
\[
 L^{D_o^\cA}(\s_m^\cA(b)) \leq  L^{D_o^m}(b)   .
\]
In the same way but using $\s^{\cB^m} $, we find that  for 
any $a\in \cA $ we have
\[
 L^{D_o^m}(\s^{\cB^m}(a)) \leq L^{D_o^\cA}(a)   .
\]
Since $ L^{D^\cA} =   L^{D_o^\cA} $, and  
$ L^{D^m} =   L^{D_o^m} $ for each $m $, we see
that we have obtained:

\begin{proposition}
\label{compsym}

With notation as above, for each $m $ the pair of symbols 
$(\s_m^\cA, \s^{\cB^m}) $ is compatible with
the C*-metrics $ L^{D^\cA} $ and $ L^{D^m} $. 
\end{proposition}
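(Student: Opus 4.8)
The plan is to reduce the statement to a double application of Proposition~\ref{ucplip}, which is exactly an inequality of the required shape but is available only for the ``general Dirac-type operators'' $D_o$ of Section~\ref{sec1}. The point is that $D^\cA$ and $D^m$ were constructed by two genuinely different routes — $D^\cA$ via the canonical connection on induced modules over $C^\infty(G/K)$ in Section~\ref{dorb}, and $D^m$ via the elementary tensor formula for matrix algebras in Section~\ref{sec3} — so Proposition~\ref{ucplip} does not apply to them as they stand. What saves the argument is that both C*-metrics have already been identified with their $D_o$-counterparts: Corollary~\ref{cormt2} gives $L^{D^\cA} = L^{D_o^\cA}$, and Proposition~\ref{deq} gives $L^{D^m} = L^{D_o^m}$. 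Hence it suffices to verify compatibility of the pair of symbols with the C*-metrics $L^{D_o^\cA}$ and $L^{D_o^m}$, and these are both instances of the single construction of Section~\ref{sec1}, so Proposition~\ref{ucplip} is applicable.

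The next step is to record the relevant properties of the symbol maps. From Section~\ref{brsy} (see equations \eqref{sya} and \eqref{syb}, with $P = P^m$), the maps $\s_m^\cA : \cD_m \to \cA$ and $\s^{\cB^m} : \cD_m \to \cB^m$ are unital and completely positive, and they intertwine the action $\a$ of $G$ on $\cA$, respectively on $\cB^m$, with the diagonal action of $G$ on $\cD_m = \cA \otimes \cB^m$. Restricting $\s_m^\cA$ to the subalgebra $\cB^m \subset \cD_m$, on which the diagonal action is simply $\a$, yields a unital completely positive map $\cB^m \to \cA$ that intertwines the two $G$-actions; in particular it carries $\cB^m$ (all of whose elements are smooth) into $\ciA = C^\infty(G/K)$. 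Symmetrically, the restriction of $\s^{\cB^m}$ to $\cA \subset \cD_m$ is unital completely positive, intertwines the $G$-actions, and carries $\ciA$ into $\cB^m$. These are precisely the hypotheses under which Proposition~\ref{ucplip} can be invoked.

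I would then apply Proposition~\ref{ucplip} twice. Taking the roles of $\cA$ and $\cB$ there to be $\cB^m$ and $\cA$, with the ucp intertwining map $\s_m^\cA|_{\cB^m}$, gives
\[
L^{D_o^\cA}(\s_m^\cA(b)) \ \leq\ L^{D_o^m}(b)
\]
for all $b \in \cB^m$. Taking instead the roles of $\cA$ and $\cB$ to be $\cA$ and $\cB^m$, with the ucp intertwining map $\s^{\cB^m}|_\cA$, gives
\[
L^{D_o^m}(\s^{\cB^m}(a)) \ \leq\ L^{D_o^\cA}(a)
\]
for all $a \in \cA$. Substituting $L^{D^\cA} = L^{D_o^\cA}$ and $L^{D^m} = L^{D_o^m}$ converts these into the two inequalities of Definition~\ref{sycomp}, which is exactly the asserted compatibility. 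I do not expect any real obstacle here: the content is the chain of identifications that lets the one lemma of Section~\ref{sec1} handle both sides, and that chain has already been assembled in Corollary~\ref{cormt2} and Proposition~\ref{deq}; everything else is the routine verification that the Berezin symbol maps are unital, completely positive and equivariant, which was already carried out in Section~\ref{brsy}.
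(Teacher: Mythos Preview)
Your proposal is correct and follows essentially the same approach as the paper: reduce to the $D_o$ C*-metrics via Corollary~\ref{cormt2} and Proposition~\ref{deq}, then invoke Proposition~\ref{ucplip} twice using that the restricted Berezin symbol maps are unital completely positive and $G$-equivariant. The paper's argument, which appears in the paragraphs immediately preceding the statement of Proposition~\ref{compsym}, is identical in structure and in the ingredients used.
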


Thus we can use the pair of symbols 
$(\s_m^\cA, \s^{\cB^m}) $ in conjunction with
$ L^{D^\cA} $ and $ L^{D^m} $ to 
measure the bridge $\Pi_m $. So in terms of them, define the constants 
$\g^{D^\cA}_m$ and $\g^{D^m}$ 
by Equation \eqref{gamb1}, and
$\hat\d^{D^\cA}_m$ and $\hat\d^{D^m}$ by Notation \ref{bert}.
Then by Equation \ref{reach} and Proposition \ref{alth} we have
\[
\mathrm{reach_D}(\Pi_m) \leq \max\{\g^{D^\cA}_m, \g^{D^m}\} 
\quad \mathrm{and} \quad
\mathrm{height_D}(\Pi_m) \leq \max\{\hat\d^{D^\cA}_m, \hat\d^{D^m}\} 
\]
(We will not use $\d^{D^\cA}_m $, and $\d^{D^m} $.)
The subscript D here on ``$\mathrm{reach_ D} $'' and ``$\mathrm{height_ D} $'' 
is to indicate that here we use the Dirac operators,
in contrast to another bridge-length that we are about to introduce.
We thus obtain:

\begin{proposition}
\label{length}
For notation as above, for each $m $ we have
\[
\mathrm{length_ D}(\Pi_m) \leq 
\max\{\g^{D^\cA}_m, \g^{D^m}, \hat\d^{D^\cA}_m, \hat\d^{D^m}\} .
\]
\end{proposition}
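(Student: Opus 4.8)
The plan is to assemble pieces that are already in place. By \Lat's definition recalled in Definition \ref{height}, one has $\mathrm{length_D}(\Pi_m) = \max\{\mathrm{reach_D}(\Pi_m), \mathrm{height_D}(\Pi_m)\}$, so it suffices to bound the reach and the height of $\Pi_m$ separately and then take the maximum. Both bounds rest on the fact, recorded in Proposition \ref{compsym}, that the pair of symbols $(\s_m^\cA, \s^{\cB^m})$ is compatible with the C*-metrics $L^{D^\cA}$ and $L^{D^m}$ in the sense of Definition \ref{sycomp}.

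For the reach I would invoke the argument leading to Equation \eqref{inreach}: since the symbols are compatible, taking $a = \s_m^\cA(b)$ as the approximant to $\om^m b$ keeps $a$ in $\cL^1_\cA$, and symmetrically with the roles of $\cA$ and $\cB^m$ interchanged, so that $\mathrm{reach_D}(\Pi_m) \leq \max\{\g^{D^\cA}_m, \g^{D^m}\}$ with the $\g$'s defined by Equation \eqref{gamb1}. For the height I would apply Proposition \ref{alth}, which gives $\mathrm{height_D}(\Pi_m) \leq \max\{\min\{\d^{D^\cA}_m, \hat\d^{D^\cA}_m\}, \min\{\d^{D^m}, \hat\d^{D^m}\}\}$; discarding the $\d$-term from each minimum yields the cruder but sufficient bound $\mathrm{height_D}(\Pi_m) \leq \max\{\hat\d^{D^\cA}_m, \hat\d^{D^m}\}$, with the $\hat\d$'s defined by Notation \ref{bert}. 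Combining the two estimates gives exactly the inequality in the statement.

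There is no real obstacle in this proposition itself; it is a bookkeeping step. The substance lies entirely in what precedes it --- above all in Proposition \ref{compsym}, which depends on identifying the Dirac C*-metrics with the $D_o$-metrics (Corollary \ref{cormt2} and Proposition \ref{deq}) and then applying the completely-positive contraction estimate of Proposition \ref{ucplip} --- and in the verification, made when $\Pi_m$ was constructed, that the coherent-state projection $\om^m$ is a legitimate pivot. The genuine difficulty lies ahead: proving that each of the four quantities $\g^{D^\cA}_m$, $\g^{D^m}$, $\hat\d^{D^\cA}_m$, $\hat\d^{D^m}$ tends to $0$ as $m \to \infty$, for which the quantitative estimates on coherent states and Berezin symbols from \cite{R7, R29} will be essential.
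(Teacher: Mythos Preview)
Your proposal is correct and follows essentially the same route as the paper: the paper derives the proposition immediately from the reach bound of Equation~\eqref{inreach} and the height bound of Proposition~\ref{alth}, then combines them via $\mathrm{length_D}(\Pi_m) = \max\{\mathrm{reach_D}(\Pi_m), \mathrm{height_D}(\Pi_m)\}$. Your account is slightly more explicit about discarding the $\d$-terms from the minima in Proposition~\ref{alth}, but this is exactly what the paper does when it remarks that ``We will not use $\d^{D^\cA}_m$ and $\d^{D^m}$.''
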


Our objective now is to maneuver so as to be able to take 
advantage of the bounds
obtained in \cite{R6, R29} for the case in which the
C*-metrics are defined in terms of continuous length-functions
$\ell $ on $G $.
The proof of those bounds is fairly complicated. But appealing
to them permits
us to avoid needing to give a quite complicated proof here.

Recall the definition of the C*-metric $ L_ \ell  $ defined by 
Equation \eqref{defell}, where now our chosen $\ell $ is defined in terms of
the Riemannian metric on $ G $ corresponding to the chosen 
inner product on $\fg'$. When we apply it to $\cA = C(G/ K) $ and to
$\cB^m =\cB(\cH_m) $ we will denote it by $ L_ \ell^\cA $ and $ L_ \ell^m $
respectively.
For each $m $ the pair of symbols 
$(\s_m^\cA, \s^{\cB^m}) $ is also compatible with
the C*-metrics  $ L_ \ell^\cA $ and $ L_ \ell^m $. 
This is already proposition 1.1 of \cite{R6} together with an argument
in the paragraph before notation 2.1 of \cite{R6}, 
but it is also easily checked directly. Thus we can use 
$(\s_m^\cA, \s^{\cB^m}) $ in conjunction with
 $ L_ \ell^\cA $ and $ L_ \ell^m $ to 
measure the bridge $\Pi_m $. 
 
Recall the general definition 
\[
\cL_\cA^1 = \{a \in \cA:  a = a^* \quad \mathrm{and} \quad L^\cA(a) \leq 1\}   
\]
given in Definition \ref{reach}. When this definition of $\cL_\cA^1 $ is applied to the
C*-metrics $ L^{D^\cA} $ and $L_ \ell^\cA$ we will denote it by
$\cL_{D^\cA}^1 $ and $\cL_{\ell\cA} ^1$ respectively, while when it is applied 
to the C*-metrics $  L^{D^m} $ and $L_ \ell^m$
we will denote it by  $\cL_{D^m} ^1$ and
$\cL_{\ell m}^1 $ respectively. 
From the fact that $ L^{D^\cA} =   L^{D_o^\cA} $ and 
$ L^{D^m} =   L^{D_o^m} $, it follows from Corollary \ref{metcor} that 
\begin{equation}
\label{inell}
L_ \ell^\cA(a) \leq L^{D^\cA}(a) \quad \mathrm{  and}  \quad 
 L_ \ell^m(b) \leq  L^{D^m}(b) 
\end{equation} 
for all $a\in \cA $ and $b\in \cB^m $.
From the inequalities \eqref{inell} it follows immediately that
\begin{equation}
\label{inball}
\cL_{D^\cA}^1 \subseteq  \cL_{\ell\cA} ^1 \quad \mathrm{and}
\quad \cL_{D^m} ^1 \subseteq      \cL_{\ell m}^1   .
\end{equation}

Recall from Equation \eqref{gamb1} that the general definition of $\g^\cB $ 
is given by
\[
\g^\cB = 
\sup\{\|\s^\cA(b)\om - \om b\|_\cD :  b \in \cL^1_\cB \}  .
\]
When this definition is applied using $\s_m^\cA $, $\om^m $, and
$\cL_{D^m} ^1 $ or $   \cL_{\ell m}^1 $,
we will denote it by
$\g^{D^m} $ (as above) or $\g_\ell^m $ respectively; 
while when this definition is applied using
$\s^{\cB^m} $, $\om^m $, and
$\cL_{D^\cA}^1 $ or $ \cL_{\ell\cA} ^1 $
we will denote it by
$\g^{D^\cA}_m $  (as above) or $\g_{\ell m}^\cA  $ respectively. 
From the containments of display
\eqref{inball} it follows immediately that
\begin{equation}
\label{ingam}
\g^{D^\cA}_m \leq \g_ {\ell m}^\cA  \quad \mathrm{and} \quad
\g^{D^m} \leq \g_\ell^m   . 
\end{equation}
From this and inequality \eqref{inreach} we immediately obtain:

\begin{proposition}
\label{compreach}
For all $m $ we have $\mathrm{reach_ D}(\Pi_m)\leq 
\max\{ \g_ {\ell m}^\cA , \g_ \ell^m\}$.
\end{proposition}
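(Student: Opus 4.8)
The plan is to chain together the general upper bound on the reach of a bridge with the comparison, just recorded, between the Dirac-operator C*-metrics and the length-function C*-metric. First I would apply inequality \eqref{inreach} to the bridge $\Pi_m$, but now measured with the C*-metrics $L^{D^\cA}$ and $L^{D^m}$ in the roles of $L^\cA$ and $L^\cB$, and with the symbol pair $(\s_m^\cA, \s^{\cB^m})$ (which is legitimate by Proposition \ref{compsym}); since the constants $\g^{D^\cA}_m$ and $\g^{D^m}$ were defined via \eqref{gamb1} precisely as the $\g^\cA$ and $\g^\cB$ attached to these metrics and symbols, this yields at once
\[
\mathrm{reach_D}(\Pi_m) \leq \max\{\g^{D^\cA}_m, \g^{D^m}\}.
\]

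Next I would invoke inequality \eqref{ingam}, namely $\g^{D^\cA}_m \leq \g_{\ell m}^\cA$ and $\g^{D^m} \leq \g_\ell^m$. These follow from the containments \eqref{inball} of the unit balls, $\cL_{D^\cA}^1 \subseteq \cL_{\ell\cA}^1$ and $\cL_{D^m}^1 \subseteq \cL_{\ell m}^1$, because the quantity $\g^\cB$ of \eqref{gamb1} is visibly monotone (non-decreasing) in the ball $\cL^1_\cB$ over which the supremum is taken; the ball containments in turn come from the pointwise inequalities $L_\ell^\cA \leq L^{D^\cA}$ and $L_\ell^m \leq L^{D^m}$ of display \eqref{inell}, which rest on Corollary \ref{metcor} together with the identifications $L^{D^\cA} = L^{D_o^\cA}$ (Corollary \ref{cormt2}) and $L^{D^m} = L^{D_o^m}$ (Proposition \ref{deq}). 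Combining the two displayed bounds and taking the maximum on the right gives the assertion.

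There is essentially no real obstacle left at this point: all the substantive work has already been done in showing that the Dirac C*-metrics coincide with the general Dirac-type metrics $L^{D_o}$ and that the latter dominate $L_\ell$. Given that, the argument is pure bookkeeping about how the various constants depend monotonically on the C*-metrics through their unit balls. The only point needing a moment's care is to keep the bridge $\Pi_m$, its pivot $\om^m$, and the symbol maps $\s_m^\cA$ and $\s^{\cB^m}$ fixed throughout --- it is only the C*-metrics used to measure $\Pi_m$ that are being varied --- so that \eqref{inreach} and the monotonicity of $\g^\cB$ in $\cL^1_\cB$ apply verbatim.
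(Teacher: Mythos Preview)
Your proposal is correct and follows exactly the same approach as the paper: the paper simply states that the proposition follows immediately from inequality \eqref{inreach} applied with the Dirac C*-metrics together with the inequalities \eqref{ingam}, which is precisely the chain of bounds you spell out. You have merely made explicit the monotonicity reasoning behind \eqref{ingam} that the paper leaves implicit.
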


But the paragraph just before proposition 6.3 of \cite{R29} 
(where the $\g^\cA_m $
and $\g^{\cB^m} $ there are our $\g_{\ell m}^\cA $ and $ \g_ \ell^m $)
explains how the results in sections 10 and 12 of \cite{R21}
prove that as $m $ goes to $ \infty $ both sequences
$\{\g_{\ell m}^\cA \}$ and $\{ \g_ \ell^m \}$ converge to 0.
We thus obtain:

 \begin{proposition}
\label{reachlim}
The sequence $\{\mathrm{reach_ D}(\Pi_m)\}$ converges to 0
as $m $ goes to $ \infty $.
\end{proposition}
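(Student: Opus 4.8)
The plan is to deduce this immediately from Proposition \ref{compreach} together with the convergence results already cited from \cite{R21, R29}. First I would recall that Proposition \ref{compreach} gives, for every $m$, the bound
\[
\mathrm{reach_D}(\Pi_m)\leq \max\{\g_{\ell m}^\cA,\ \g_\ell^m\},
\]
so it suffices to show that both sequences $\{\g_{\ell m}^\cA\}$ and $\{\g_\ell^m\}$ tend to $0$ as $m\to\infty$. Since $\mathrm{reach_D}(\Pi_m)\geq 0$ by definition of Hausdorff distance, a squeeze argument then finishes the proof.

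Next I would invoke the external input: as noted in the paragraph preceding Proposition \ref{compreach}, the quantities $\g_{\ell m}^\cA$ and $\g_\ell^m$ here are exactly the $\g^\cA_m$ and $\g^{\cB^m}$ appearing in \cite{R29}, and the discussion before proposition 6.3 of \cite{R29} explains how the estimates in sections 10 and 12 of \cite{R21} force $\{\g_{\ell m}^\cA\}\to 0$ and $\{\g_\ell^m\}\to 0$. Concretely, these bounds control $\|\s_m^\cA(b)\om^m-\om^m b\|$ and $\|\s^{\cB^m}(a)\om^m-\om^m a\|$ in terms of the Lipschitz seminorms $L_\ell^m$, $L_\ell^\cA$ and a modulus that shrinks as the highest weight $m\mu_\dia$ grows, because the coherent states $\om^m(x)=\a_x(P^m)$ become increasingly sharply peaked. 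I would simply cite this rather than reproduce it, since the excerpt explicitly permits assuming results stated earlier and the cited proofs are described as ``fairly complicated.''

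Combining these two facts: given $\e>0$, choose $N$ so large that $\g_{\ell m}^\cA<\e$ and $\g_\ell^m<\e$ for all $m\geq N$; then $\mathrm{reach_D}(\Pi_m)\leq\max\{\g_{\ell m}^\cA,\g_\ell^m\}<\e$ for all $m\geq N$, which is the claim. The only ``obstacle'' is purely one of bookkeeping — making sure the identification of our $\g_{\ell m}^\cA$, $\g_\ell^m$ with the constants in \cite{R29} is clean, which is handled by the compatibility of the symbols $(\s_m^\cA,\s^{\cB^m})$ with the length-function C*-metrics $L_\ell^\cA$, $L_\ell^m$ (already observed just before display \eqref{ingam}) — so there is essentially no real difficulty remaining once Proposition \ref{compreach} and the cited limit are in hand.
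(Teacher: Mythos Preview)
Your proposal is correct and follows essentially the same approach as the paper: bound $\mathrm{reach_D}(\Pi_m)$ by $\max\{\g_{\ell m}^\cA,\g_\ell^m\}$ via Proposition~\ref{compreach}, then cite the convergence of these $\ell$-quantities from \cite{R21,R29}. The only quibble is that the relevant discussion in the paper appears in the paragraph \emph{following} Proposition~\ref{compreach}, not preceding it, but this does not affect the argument.
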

 
We now turn to considering the height of $ \Pi_m $.
For any $\mu \in S(\cA) $ set $\phi_ \mu =\mu\otimes\tau^m $ where
$\tau^m $ is the tracial state on $ \cB^m$. Then $\phi_\mu \in S(\cD^m) $, 
and simple computations show that $\phi_\mu(\om^m) = 1 $ so that
$\phi_\mu \in S_1(\om^m) $. It is easy to see that 
$\phi_\mu |\cA =\mu $. In this way we see that $ S_1^\cA(\om^m ) = S(\cA) $. 
Thus $\mathrm{Haus}_{\rho_\cA}(S_1^\cA(\om), S(\cA)) = 0 $,
regardless of what the metric $\rho_\cA $ is. It follows that
\[
\mathrm{height_ D}(\Pi_m) =
\mathrm{Haus}_{\rho_{D^m}}(S_1^{\cB^m}(\om) , S(\cB^m))
\]
where $\rho_{D^m} $ is the metric on $S(\cB^m) $ determined by
the Dirac operator $ D^m $.

Recall from Notation \ref{bert} that the general definition of $\hat\d^\cB$ 
is given by
\[
\hat\d^\cB = \sup \{\|b - \s^\cB(\s^\cA(b))\| : b \in \cL^1_\cB\}.
\] 
When this definition is applied using $\s^{\cB^m}  \circ\s_m^\cA $, and
$\cL_{D^m} ^1 $ or $   \cL_{\ell m}^1 $,
we will denote it by
$\hat\d^{D^m} $ (as above) or $\hat\d_\ell^m $ respectively.
From equation \eqref{hausB} we see that
\[
\mathrm{Haus}_{\rho_{D^m}}(S_1^{\cB^m}(\om) , S(\cB^m))\}  \leq  \hat\d^{D^m}.
 \] 
But from the containments of display
\eqref{inball} it follows immediately that
\begin{equation*}
\label{indel}
\hat\d^{D^m}  \leq \hat\d_\ell^m    . 
\end{equation*}
From this and inequality \eqref{alth} we immediately obtain:

\begin{proposition}
\label{compheight}
For all $m $ we have $\mathrm{height_ D}(\Pi_m)\leq  \hat\d_\ell^m   $ .
\end{proposition}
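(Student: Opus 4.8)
The plan is to assemble the final statement, Proposition \ref{compheight}, from pieces that have already been established in this section. By the remarks in the paragraph preceding Notation \ref{bert}, applied in the case $\cA = C(G/K)$, $\cB = \cB^m$, and $\om = \om^m$: since $S_1^\cA(\om^m) = S(\cA)$ (which was verified just above using the product state $\phi_\mu = \mu \otimes \tau^m$), the $\cA$-side contributes nothing to the height, so that $\mathrm{height_D}(\Pi_m) = \mathrm{Haus}_{\rho_{D^m}}(S_1^{\cB^m}(\om^m), S(\cB^m))$. This reduces the problem to bounding a single Hausdorff distance on the state space of $\cB^m$.

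Next I would invoke inequality \eqref{hausB}, which (with $\cB = \cB^m$, the bridge $\Pi_m$, the symbols $\s^{\cB^m} \circ \s_m^\cA$, and the ball $\cL^1_{D^m}$ defined by the C*-metric $L^{D^m}$) yields
\[
\mathrm{Haus}_{\rho_{D^m}}(S_1^{\cB^m}(\om^m), S(\cB^m)) \leq \hat\d^{D^m},
\]
where $\hat\d^{D^m}$ is the quantity from Notation \ref{bert} computed with the ball $\cL^1_{D^m}$. Combining the two displays gives $\mathrm{height_D}(\Pi_m) \leq \hat\d^{D^m}$.

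Finally, to pass from $\hat\d^{D^m}$ to $\hat\d_\ell^m$, I would use the containment $\cL^1_{D^m} \subseteq \cL^1_{\ell m}$ from display \eqref{inball} (which itself follows from the inequality $L_\ell^m(b) \leq L^{D^m}(b)$ of \eqref{inell}, a consequence of Corollary \ref{metcor} together with Propositions \ref{deq} and \ref{cormt2}). Since $\hat\d^\cB = \sup\{\|b - \s^\cB(\s^\cA(b))\| : b \in \cL^1_\cB\}$ is manifestly monotone in the set over which the supremum is taken, enlarging the ball from $\cL^1_{D^m}$ to $\cL^1_{\ell m}$ can only increase the supremum, so $\hat\d^{D^m} \leq \hat\d_\ell^m$. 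Chaining the inequalities gives $\mathrm{height_D}(\Pi_m) \leq \hat\d_\ell^m$, which is exactly the assertion of the proposition.

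Honestly, there is no real obstacle here: every ingredient — the triviality of the $\cA$-side of the height, the abstract height bound \eqref{hausB}, the ball containment \eqref{inball}, and the monotonicity of $\hat\d$ in its defining ball — is already in hand, so the proof is a short two- or three-line deduction. The only point requiring a moment's care is making sure the symbols $(\s_m^\cA, \s^{\cB^m})$ are legitimate symbols for $\Pi_m$ and that \eqref{hausB} applies verbatim with the C*-metric $L^{D^m}$ in place of a generic $L^\cB$; but this is guaranteed by Proposition \ref{compsym}, which states that this pair of symbols is compatible with $L^{D^\cA}$ and $L^{D^m}$, so all the constructions of Section \ref{brsy} go through unchanged.
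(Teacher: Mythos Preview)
Your proof is correct and follows essentially the same route as the paper: reduce the height to the $\cB^m$-side using $S_1^\cA(\om^m)=S(\cA)$, bound that Hausdorff distance by $\hat\d^{D^m}$ via \eqref{hausB}, and then use the ball containment \eqref{inball} to pass to $\hat\d_\ell^m$. The only difference is cosmetic: you spell out the monotonicity of the supremum and invoke Proposition \ref{compsym} explicitly, whereas the paper leaves these implicit.
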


But the paragraph just before proposition 6.7 of \cite{R29} 
(where the $\hat \d^{\cB^m} $ there is our $\hat \d_ \ell^m $)
explains how theorem 11.5 of \cite{R21} gives a
proof that as $m $ goes to $ \infty $ the sequence
 $\{\hat \d_ \ell^m \}$ converge to 0.
We thus obtain:

 \begin{proposition}
\label{heightlim}
The sequence $\{\mathrm{height_ D}(\Pi_m)\}$ converges to 0
as $m $ goes to $ \infty $.
\end{proposition}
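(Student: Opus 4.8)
The plan is to obtain the conclusion immediately from the upper bound already in hand. By Proposition \ref{compheight} we have $\mathrm{height_ D}(\Pi_m) \le \hat\d_\ell^m$ for every $m$, and since heights are nonnegative it suffices to show that $\hat\d_\ell^m \to 0$ as $m \to \infty$. So the entire content of the proposition is a statement about the sequence of numbers $\hat\d_\ell^m$, which live purely in the world of the length-function C*-metrics $L_\ell^\cA$ and $L_\ell^m$ --- precisely the world in which the hard estimates of \cite{R21} and \cite{R29} were proved.

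To make the connection, I would first unwind Notation \ref{bert}: $\hat\d_\ell^m = \sup\{\|b - \s^{\cB^m}(\s_m^\cA(b))\| : b \in \cL_{\ell m}^1\}$. As observed in Section \ref{brsy}, on $\cB^m$ the map $\s_m^\cA$ is the Berezin contravariant symbol map and on $\cA$ the map $\s^{\cB^m}$ is the Berezin covariant symbol map, so $\s^{\cB^m}\circ\s_m^\cA$ restricted to $\cB^m$ is the Berezin transform of $\cB^m$, and $\hat\d_\ell^m$ measures how far this transform is from the identity, uniformly over the $L_\ell^m$-unit ball. Convergence of this quantity to $0$ as the highest weight $m\mu_\dia$ grows is exactly the content of theorem 11.5 of \cite{R21}; the explicit deduction that $\hat\d_\ell^m \to 0$ from that theorem is carried out in the paragraph preceding proposition 6.7 of \cite{R29}, where the $\hat\d^{\cB^m}$ there is our $\hat\d_\ell^m$. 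I would simply cite these.

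The only point requiring care --- the \emph{main obstacle}, though it is really just bookkeeping --- is verifying that the data here matches the data there: that the pivot $\om^m(x) = \a_x(P^m)$ is the coherent-state projection used in \cite{R21, R29}, that $K$ really is the common $\Cad$-stabilizer of all the $m\mu_\dia$'s so that a single sequence of bridges is in play, and that the normalizations ($d_\cH$, the un-normalized $\mathrm{tr}_\cB$ versus the tracial state) agree with those references. Once this identification is in place, $\hat\d_\ell^m \to 0$ follows verbatim from \cite{R21}, and then Proposition \ref{compheight} together with the squeeze $0 \le \mathrm{height_ D}(\Pi_m) \le \hat\d_\ell^m$ gives $\mathrm{height_ D}(\Pi_m) \to 0$. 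No new analytic estimate is needed in this proposition; all of it has been pushed into \cite{R21}.
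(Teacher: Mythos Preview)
Your proposal is correct and follows essentially the same approach as the paper: the paper likewise invokes Proposition \ref{compheight} for the bound $\mathrm{height_D}(\Pi_m)\le\hat\d_\ell^m$ and then cites the paragraph before proposition 6.7 of \cite{R29} together with theorem 11.5 of \cite{R21} to obtain $\hat\d_\ell^m\to 0$. Your additional remarks about matching the data to those references are appropriate care but not something the paper spells out.
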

 
Combining this with Proposition \ref{reachlim}, we obtain 
the main theorem of this paper: 

\begin{theorem}
\label{lengthlim}
The sequence $\{\mathrm{length_ D}(\Pi_m)\}$ converges to 0
as $m $ goes to $ \infty $. Thus the sequence $\{(\cB^m, L^{D^m})\} $
of compact C*-metric spaces converges to the 
compact C*-metric space $\{(\cA, L^{D^\cA})\} $
for \Lat 's propinquity.
\end{theorem}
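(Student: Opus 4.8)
The plan is to assemble the main theorem directly from the pieces that have already been set up, so the ``proof'' is essentially a matter of invoking the right propinquity-length bound and the two convergence propositions. First I would recall that by Proposition \ref{length} we have, for each $m$,
\[
\mathrm{length_ D}(\Pi_m) \leq
\max\{\g^{D^\cA}_m, \g^{D^m}, \hat\d^{D^\cA}_m, \hat\d^{D^m}\},
\]
and that by Proposition \ref{compreach} the reach terms are controlled by the length-function versions $\g_{\ell m}^\cA$ and $\g_\ell^m$, while by Proposition \ref{compheight} the height term is controlled by $\hat\d_\ell^m$. So the key inequality to write down is
\[
\mathrm{length_ D}(\Pi_m) \leq \max\{\g_{\ell m}^\cA, \g_\ell^m, \hat\d_\ell^m\}.
\]

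Next I would invoke the two convergence results: Proposition \ref{reachlim}, which (via the analysis in \cite{R21, R29}) gives $\g_{\ell m}^\cA \to 0$ and $\g_\ell^m \to 0$, and Proposition \ref{heightlim}, which gives $\hat\d_\ell^m \to 0$. Since the right-hand side above is the maximum of three sequences each converging to $0$, it converges to $0$, and hence $\mathrm{length_ D}(\Pi_m) \to 0$. This is the entire numerical content of the statement; everything hard has been pushed into the cited bounds on the length-function seminorms and into Corollaries \ref{metcor}, \ref{cormt2} and Proposition \ref{deq}, which were used to pass between the Dirac C*-metrics and the $L_d$, $L_\ell$ seminorms.

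Finally I would translate the vanishing of the lengths into the convergence statement about C*-metric spaces. Since $\Pi_m$ is a single bridge from $(\cA, L^{D^\cA})$ to $(\cB^m, L^{D^m})$, the Gromov--Hausdorff propinquity between these two compact C*-metric spaces is bounded above by $\mathrm{length_ D}(\Pi_m)$ (the propinquity being the infimum over all finite paths of bridges, and a single bridge being such a path). Therefore the propinquity from $(\cB^m, L^{D^m})$ to $(\cA, L^{D^\cA})$ tends to $0$, which is exactly the assertion that the sequence $\{(\cB^m, L^{D^m})\}$ converges to $(\cA, L^{D^\cA})$ for \Lat's propinquity. One small point worth checking explicitly is that the $L^{D^\cA}$ and $L^{D^m}$ are genuinely C*-metrics (so that the propinquity is defined and the distance-zero rigidity applies): this is Proposition \ref{dirmet} together with Proposition \ref{deq} and Corollary \ref{cormt2}.

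The main obstacle, were one proving this from scratch rather than citing, would be establishing that $\g_{\ell m}^\cA$, $\g_\ell^m$ and $\hat\d_\ell^m$ go to zero --- this is the genuinely analytic heart of the matter, requiring the Berezin-symbol estimates of \cite{R7, R21, R29} and the fact that the coherent states become increasingly sharply peaked as the highest weight $m\mu_\dia$ grows. Within the present paper, however, that work is already done, so the remaining obstacle is purely bookkeeping: making sure that the Dirac-operator C*-metrics on both $\cA$ and the $\cB^m$ are dominated by the length-function C*-metric $L_\ell$ in the right direction (Corollary \ref{metcor}), since that domination is what lets the length-function bounds bound the Dirac-operator reach and height. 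Once the identifications $L^{D^\cA} = L^{D_o^\cA}$ and $L^{D^m} = L^{D_o^m}$ are in hand, the inclusions $\cL_{D^\cA}^1 \subseteq \cL_{\ell\cA}^1$ and $\cL_{D^m}^1 \subseteq \cL_{\ell m}^1$ follow and the argument closes.
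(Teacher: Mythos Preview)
Your proposal is correct and follows essentially the same approach as the paper: the paper's own proof is literally one sentence, combining Proposition~\ref{heightlim} with Proposition~\ref{reachlim} to conclude that $\mathrm{length}_D(\Pi_m)\to 0$, and you have simply made explicit the bookkeeping behind those two propositions and added the (definitional) passage from bridge length to propinquity convergence. One minor citation point: the convergences $\g_{\ell m}^\cA\to 0$, $\g_\ell^m\to 0$, and $\hat\d_\ell^m\to 0$ are stated in the text \emph{preceding} Propositions~\ref{reachlim} and~\ref{heightlim} rather than in the propositions themselves, which record the resulting reach and height convergence.
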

 

\section{The linking Dirac operator}
\label{liD}

In this section we offer some generalities that place bridges 
in a more Dirac-operator-like setting.

Let  $\cA$ and $\cB$ be unital C*-algebras, and let $\Pi =(\cD, \om) $ be a bridge from
 $\cA$ to $\cB$ (so we view $\cA$ and $\cB$ as unital subalgebras of $\cD $). 
 Let $\cE $ be a unital C*-algebra that contains $\cD $ as a unital C*-subalgebra. 
 Then $(\cE,\om) $ is equally well a bridge from  $\cA$ to $\cB$. 
 The corresponding bridge-norm 
 $ N_\Pi $ on  $\cA \oplus \cB$ defined by 
\begin{equation}
\label{bnorm}
 N_\Pi(a,b) =\|a\om - \om b\| 
 \end{equation}
is the same regardless of whether we use the bridge $(\cD, \om) $ 
or the bridge $(\cE,\om) $. And every state in $S_1(\om) $ for $ \cD $ 
has an extension (perhaps not unique) 
to a state on $\cE $,  which is then clearly in the  $S_1(\om) $ 
 for $ \cE $. From these observations, it is easily seen that for any given 
 C*-metrics $L^\cA$ and $L^\cB$ on $\cA$ and $\cB$, 
 the reach and the height of the bridges $(\cD,\om) $ and $(\cE,\om) $ 
 are the same. Thus these two bridges have the same length.
 
Suppose now that $(\cH,\pi) $ is a faithful (non-degenerate) 
representation of $\cD $. We can then view $\cD $ as a unital subalgebra 
of $\cB(\cH) $. The above comments then apply, and $(\cB(\cH), \om) $ 
is a bridge from $\cA$ to $\cB$ that has the same length as the bridge
$(\cD,\om) $.

\begin{definition}
\label{hilbd}
Let  $\cA$ and $\cB$ be unital C*-algebras. 
By a \emph{Hilbert bridge}  from $\cA$ to $\cB$ we mean a quadruple 
$(\cH, \pi^\cA, \pi^\cB, \om) $, where $\cH $ is a Hilbert space, 
$\pi^\cA $ and $\pi^\cB $ are faithful (non-degenerate) representations of  
$\cA$ and $\cB$ on $\cH $, and $\om $ is a self-adjoint operator on $\cH $ 
that has 1 in its spectrum. We will often view $\cA$ and $\cB$ as 
unital C*-subalgebras
of $\cB(\cH) $, and omit $\pi^\cA $ and $\pi^\cB $ in our notation.
\end{definition}

As mentioned after Definition \ref{height},
\Lat \ defines his propinquity in terms of finite paths (which he calls "treks") 
of bridges between C*-metric spaces (See \cite{Ltr2}.) 
He defines the length of a trek to be the sum of the lengths of the bridges in the trek,
and he defines the propinquity from $\cA$ to $\cB$ to be the infimum of the lengths
of all treks from $\cA$ to $\cB$. Actually, \Lat \ does not require that the pivot
$\om $ be self-adjoint. I am unaware of an example where this makes a difference, 
but since the propinquity is defined as an infimum, the propinquity requiring pivots
to be self-adjoint will be greater than or equal to the propinquity not requiring that.

From the comments made just before Definition \ref{hilbd}, 
it is clear that if one insists on defining the propinquity using only Hilbert bridges, 
it will never-the-less coincide with the usual propinquity (using self-adjoint pivots).

Suppose now that $\Pi = (\cH, \om) $ is a Hilbert bridge from $\cA$ to $\cB$.
Let $\cH_o = \cH \oplus \cH$. For $\cA$ and $\cB$ viewed as
subalgebras of $\cB(\cH)$, define a representation of $\cA \oplus \cB$
on $\cH_o$ by 
\begin{equation*}
(a,b) \to 
\begin{pmatrix}
a & 0    \\
0 & b
\end{pmatrix}   ,
\end{equation*}
and define a bounded operator, $D_\om$, on $\cH_o$ by
\begin{equation*}
D_\om =
\begin{pmatrix}
0 & \om    \\
\om & 0
\end{pmatrix}   .
\end{equation*}
Then
\[
[D_\om, (a,b)] = \begin{pmatrix}
0 & \om b - a\om   \\
\om a - b\om & 0
\end{pmatrix}    , 
\]
so that
\[
\| [D_\om, (a,b)] \| = \|a\om -\om b\| \vee \|a^*\om - \om b^*\|   ,
\]
where $\vee$ means ``maximum''. We have used here the fact that 
$\om $ is self-adjoint.
Notice that the bridge seminorm $N_\Pi$ (equation \eqref{bnorm})
of $\Pi $ is in general not a $*$-seminorm. 
Much as in theorem 6.2 of \cite{R21}, define a $*$-seminorm, $\hat N_\Pi$,
on $\cA \oplus \cB$ by
\[
\hat N_\Pi(a, b) = N_\Pi(a,b) \vee N_\Pi(a^*, b^*)   .
\]
Then we see that
\[
\| [D_\om, (a,b)] \| = \hat N_\Pi(a, b)    .
\]
Of course, $\hat N_\Pi$
agrees with $N_\Pi$ on self-adjoint elements.

Let $(\cH^\cA, D^\cA)$ and $(\cH^\cB, D^\cB)$ be Dirac
operators for $\cA$ and $\cB$, so that there are nice dense subalgebras  
$\cA^\infty$ and $\cB^\infty$ whose elements have bounded 
commutators with the Dirac operators. Let  $L^\cA$ and 
$L^\cB$ be the corresponding C*-metrics.
Then for any $r\in \bR^+ $
we construct a Dirac-like
operator, $D_r$,  for $\cA \oplus \cB$ on 
\[
\cH^\cA \oplus \cH_o \oplus \cH^\cB
\]
given by 
\[
D_r = D^\cA \oplus  r^{-1}D_\om \oplus D^\cB.
\]
We let $\cA \oplus \cB$ act on $\cH^\cA \oplus \cH_o \oplus \cH^\cB $
in the evident way. Then for $a \in\cA^\infty$ 
and $b \in \cB^\infty$ we have
\[
[D_r, (a,b)] = [D^\cA, a] \oplus 
r^{-1}[D_\om, (a,b)] 
\oplus  [D^\cB, b]    .
\]
Thus
\[
\| [D_r, (a,b)]  \| = L^\cA(a) \vee r^{-1}  \hat N_\Pi(a, b)  \vee   L^\cB(b)    .
\]
It is not difficult to check that $D_r$ determines a 
C*-metric, $L_r $, on $\cA \oplus \cB$, and that $L_r$ 
is compatible with $L^\cA $
and $L^\cB $ (in the sense 
that its quotients on $\cA^\infty$ and $\cB^\infty$ coincide with $L^\cA $
and $L^\cB $, as discussed around definition 4.7 of \cite{R32})
if and only if $r \geq r_\Pi$, where $r_\Pi$ is the reach
of the bridge. (This is proposition 4.8 of \cite{R32}.)
Thus $D_r$ can be used to bound 
the quantum Gromov-Hausdorff distance
(as defined in \cite{R6})
between the C*-metric spaces $(\cA, L^\cA)$ and $\cB, L^\cB)$.
In particular, $r_\Pi$ can be defined as the smallest $r $ such that
$ D_r $ is compatible with $L^\cA $ and $L^\cB $. Thus we can determine
the reach of the bridge $\Pi $ by considering the $ D_r $'s. But I do not see 
a way of determining the height of $\Pi $ directly in terms of the $ D_r $'s. 
For that purpose one seems to need to remember $\om$ and the
subspace $\cH_o$.


\section{Deficiencies}
\label{secdefit}

We have given above a somewhat unified construction of Dirac operators 
for matrix algebras and homogeneous spaces, in such a way that when 
the homogeneous space is the coadjoint  orbit of an integral weight vector, 
the corresponding sequence of matrix algebras converges to the coadjoint 
orbit for \Lat's \ propinquity. Heuristically, the matrix algebras, 
with metric shape given by their Dirac operators, converge for a suitable 
quantum Gromov-Hausdorff distance to the coadjoint orbit with metric 
shape given by its Dirac operator

There are several deficiencies with this picture, which we discuss here. 
The first deficiency is that Dirac operators for Riemannian metrics are usually 
defined by means of 
the Levi--Civita connection, which is the unique torsion-free connection 
compatible with the Riemannian metric. But our construction does not always 
yield that Dirac operator. 
In essence, we have been using the dual of the ``canonical connection" 
on the tangent bundle. As is well-known, and explained in section 6 of \cite{R22}, 
for homogeneous spaces
the Levi--Civita connection agrees with the canonical connection exactly for 
the symmetric spaces. Many coadjoint orbits are not symmetric spaces, 
and so for those our approach does not give the usual Dirac operator. 
But our approach does give the usual Dirac operator for those coadjoint 
orbits that are symmetric spaces, which include the 2-sphere and complex 
projective spaces. For non-commutative algebras such as our matrix algebras 
it is far from clear how one might define ``torsion-free" for connections. (But
see definition 8.8 of \cite{GVF}.)

However, we saw in Theorem \ref{thnorm2} that for the Dirac operators we constructed we have
\[
\|[D, M_f]\| 
 = \sup \{\| \rd f_x\|_{\fm'^\bC}: x \in G\}
\]
for any $f \in \ciA = C^\infty(G/K)$. Whereas on the last page of \cite{R22}
we saw that the same result was obtained when using the Levi--Civita connection
(though phrased in terms of the tangent bundle instead of the cotangent bundle, so
$\| \rd f_x\|_{\fm'^\bC} =  \|\grad_f(x)\|$).
In \cite{R22} only the Hodge-Dirac operator was considered, that is, the spinor 
bundle was assumed to be $\bC\ell(\fm) $ itself. But by arguments elaborating 
on the proof of Proposition \ref{prodnorm} one can see that $\|[D, M_f]\| $ is 
independent of the choice of spinor bundle. The same formula for $\|[D, M_f]\| $ is 
discussed in the comments following proposition 5.10 of \cite{R27} specifically 
for the case of homogeneous spaces that can have a $G$-invariant almost complex 
structure, which as we saw includes the case of coadjoint orbits.
Now a standard argument (e.g., following definition~9.13 of \cite{GVF}) shows that if 
we denote by $\rho$ the ordinary metric on a Riemannian manifold $N$ coming from 
its Riemannian metric, then for any two points $p$ and $q$ of $N$ we have
\[
\rho(p,q) = \sup\{|f(p)-f(q)|: \|\grad_f\|_\infty \le 1\}.
\]
On applying this to either the Dirac operator using the canonical connection, 
or the Dirac operator using the Levi--Civita connection
 we obtain
 \begin{equation}
 \label{conmet}
\rho(p,q) = \sup\{|f(p)-f(q)|: \|[D,M_f]\| \le 1\}
\end{equation}
for $\rho$ now the 
ordinary metric on $G/K$ from our Riemannian metric. Thus as far as the 
metric aspects are concerned, these two Dirac operators are equivalent, and 
we can recover the ordinary metric from either one. 
 Consequently, the matrix algebras equipped with their Dirac operators 
 from our construction do still converge to the coadjoint orbit equipped 
 with the Dirac operator for the  Levi--Civita connection.
Formula \eqref{conmet}
is the formula on which Connes focused for general Riemannian 
manifolds \cite{Cn7,Cn3, CnMr}, as it shows that the Dirac operator contains all of 
the metric information (and much more) for the manifold.  This is his motivation 
for advocating that metric data for ``non-commutative spaces'' be encoded by 
providing them with a ``Dirac operator''. 

A second deficiency of our somewhat unified approach is that coadjoint orbits 
actually carry a $G$-invariant K\"ahler structure, which includes not just a complex 
structure $J$, but also closely related symplectic and Riemannian structures. 
In the paragraph of \cite{BFR} that contains equations 3.53 through 3.59 a proof is 
given that the inner product on $\fm $ (or equivalently on $\fm'$) corresponding to 
the Riemannian metric of the K\"ahler structure can not 
be extended to a $G $-invariant 
inner product on $\fg $ unless
the coadjoint orbit is a symmetric space. Since the inner products on $\fm'$ that 
we have been using in our approach have always been restrictions of $ G $-invariant 
inner products on $ \fg'$, the Dirac operators we have constructed can 
not be the ones for the Riemannian metric of the
K\"ahler structure except if the coadjoint orbit is a symmetric space. 
(The results in \cite{Kst} do not apply to the  K\"ahler Riemannian metrics for the same
reason, as seen from condition (b) following equation 1.3 of \cite{Kst}.  
Dirac operators for all the K\"ahler Riemannian metrics are constructed in \cite{R27}). 
In particular, if the coadjoint orbit
corresponding to a highest-weight vector is not a symmetric space, 
then the sequence of matrix algebras equipped with the Dirac operators that we 
have constructed will not converge to the coadjoint orbit when that orbit is equipped 
with the Riemannian metric of the K\"ahler structure. This follows from \Lat's \ remarkable 
theorem \cite{Ltr2} that if the propinquity between two C*-metric spaces is 0 then 
they are isometrically isomorphic.

Thus our general approach works quite well for coadjoint orbits that are symmetric spaces, 
and somewhat less well otherwise.

At present it is far from clear to me how one might modify our general approach in 
a way that would correct either of these two deficiencies.


\section{Comparisons with Dirac operators in the literature}
\label{compare}

At the end of Section \ref{fuzzy} we already discussed for the case of 
the sphere the relations between our construction of Dirac
operators and various proposals in the literature. Here we briefly discuss
for other coadjoint orbits the relations between our construction and 
several proposals in the literature.

In \cite{DHMO} Dirac operators are constructed on matrix algebras
related to complex projective spaces, using a Schwinger-Fock construction. 
The methods are quite different
from those of the present paper, and our discussion for the case of
the sphere given at the end of Section \ref{fuzzy} applies here also.
(Since $\bC P^n$ is not a spin-manifold when $n$ is even, one
does not expect to have a charge-conjugation operator in those cases.)
One of the authors of \cite{DHMO} says in \cite{Hut}  that the ``formulation''
``seems ill suited to couple the fermions with gauge fields''. In the last
section of \cite{Hut} that author briefly introduces a Dirac operator
on the relevant matrix algebras that seems to be more closely related
to that in the present paper. But that Dirac operator is not explored 
there, though some clarification of the notation used there is given
in \cite{Hut2} (though that paper does not mention matrix algebras).

In \cite{AcD} the full flag-manifold for $G = SU(3)$ is considered, so the subgroup 
$K $ is the maximal torus of $G $. Then $ G/K$ is not a symmetric space, 
but it is a spin manifold. For the corresponding matrix algebras, twisted
Dirac operators for many projective modules are constructed. 
The dimension of $\fg'$ is 8, and so the irreducible representation
of $\bC\ell(\fg')$ has dimension 16. That is the dimension
of the spinor spaces used in constructing the Dirac operators,
as it would be for our construction
in the sections above.
The Ginsparg-Wilson method is used, and already in the
untwisted case the formula for the Dirac operator has extra terms
(that could be interpreted as related to curvature) in comparison
to the Dirac operators in our present paper. 
Since $G/K$ is not a
symmetric space, it is very unlikely
that the Dirac operator would relate well to 
the K\"ahler structure. There is no discussion of charge conjugation. 
It would be interesting
to find a natural framework that would lead to Dirac operators
of the kind given in this paper.

Constructing matrix-algebra approximations to spheres of
dimension at least 3 is more complicated since they do not
admit a symplectic structure. But one can use their close
relationship to coadjoint orbits, as shown, for example,
in \cite{MdHIO}


\section{Spectral Propinquity}
\label{spectral}

As in \cite{Ltr7}, we say that a spectral triple $(\cA, \cH, D)$ is 
metric if the seminorm $L^D$ on $\cA$ defined by 
$ L^D(a) = \| [D, a] \|$ is a C*-metric, the issue being whether the 
topology it determines on $ S(\cA) $ coincides with the 
weak-$*$ topology. In definition 4.2 of \cite{Ltr7} 
(and just before theorem 1.21 of \cite{Ltr9})
\Lat \ defines a metric on 
isomorphism classes of spectral triples, which he calls 
the \emph{spectral propinquity}. In this section we will examine 
the spectral propinquity for the spectral triples studied in the 
earlier sections of this paper. We will not give here a complete
explanation of the general spectral propinquity. We will
give some definitions and proofs, but we refer the reader
to \Lat's papers for more details.

\Lat's  spectral \ppq is based on his dual Gromov-Hausdorff 
propinquity \cite{Ltr4},
which is based on ``tunnels" instead of ``bridges". We recall 
its definition here, but in a restricted simpler form that is sufficient
for our needs. We first recall:

\begin{definition}
Let $(\cD, L^\cD)$ and $(\cA, L^\cA)$ be compact C*-metric spaces. 
By a \emph{quantum isometry} from  $(\cD, L^\cD)$ to $(\cA, L^\cA)$
we mean a unital $*$-homomorphism, $\pi$, from $\cD $ onto
$\cA $ such that $L^\cA$ is the quotient seminorm of $L^\cD$,
that is, that for all $a \in \cA$ we have
\[
L^\cA(a) = \inf \{ L^\cD(d): \pi(d) = a\}.
\]
If $\pi$ is actually a $*$-isomorphism such that $L^\cD = L^\cA \circ \pi $,
then we say that $\pi $ is a \emph{full isometry}. 
\end{definition}

The reason for using the term ``isometry'' here is that if
$\pi $ is a quantum isometry then 
the corresponding map $\pi^*$ from $S(\cA)$ into $S (\cD) $
is an isometry for the metrics $\rho^{L^\cA}$ 
and  $\rho^{L^\cD}$,
as already seen in proposition 3.1 of \cite{R6}. If $\pi$
is a full isometry, then $\pi^*$ is an isometry onto
$S (\cD) $ (and conversely).

\begin{definition}
\label{tunnel}
Let $(\cA, L^\cA)$ and $(\cB, L^\cB)$ be compact C*-metric spaces. 
By a \emph{tunnel} from  $(\cA, L^\cA)$ to $(\cB, L^\cB)$
we mean a compact C*-metric space $(\cD, L^\cD)$ together with
quantum isometries $\pi^\cA$ and $\pi^\cB$ from $\cD$
onto $\cA $ and $\cB $ respectively. 
\end{definition}

This definition is quite reminiscent of the set-up for the definition of ordinary
Gromov-Hausdorff distance. For our setting in which $\cA = C(G/K)$
and $\cB^m = \cB(\cH^m)$ the tunnel we use is $(\cD^m, L^{\cD^m})$
where
 \[
\cD^m = \cA \oplus \cB^m 
\]
with its evident projections onto $\cA $ and $\cB $, while $L^{\cD^m}$
is defined by
\begin{equation}
 \label{brm}
L^{\cD^m}(a,b) = L^\cA(a) \vee L^{\cB^m}(b) \vee r^{-1}N(a,b)
\end{equation}
where $N$ is the bridge-norm defined in equation \eqref{bnorm}
using the pivot as defined in equation \eqref{pivot}, and $r$
is large enough to ensure that the evident projections onto $\cA $ 
and $\cB $ are quantum isometries (and $\vee$ means ``max''). 
It is easy to see that this means 
exactly that $r \geq r_\Pi $ where $ r_\Pi $ is the reach of $\Pi$
as defined in Definition \ref{reach}. (We remark that the seminorm
defined by equation \eqref{brm} looks just like the
seminorm in the statement of theorem 5.2 of \cite{R6} for the
``bridges'' as defined in that paper, and used in definition 6.1
and theorem 6.2 of \cite{R21}. So the terminology of
``bridges'' and ``tunnels'' has gotten a bit scrambled.) 

\Lat \ then defines for any tunnel its ``extent'', which is a specific way of defining its length. Let $\cT= (\cD, L^\cD, \pi^\cA, \pi^\cB)$ be a tunnel from $(\cA, L^\cA)$ to 
$(\cB, L^\cB)$ as in Definition \ref{tunnel}. Let $\pi^\cA_*$ denote 
composition of elements of $S(\cA)$ with $\pi^\cA$. Then, as indicated above,
$\pi^\cA_*$, is an isometry from  $S(\cA)$ into $S(\cD)$. In the same way 
we define  $\pi^\cB_*$. The extent of $\cT$,
$\mathrm{ext}(\cT)$, is defined to be
\[
\mathrm{ext}(\cT) = \max\{ \mathrm{dist}_H(\pi^\cA_*(S(\cA)), S(\cD)), \ 
\mathrm{dist}_H(\pi^\cB_*(S(\cB)), S(\cD))\}  ,
\]  
where $\mathrm{dist}_H$ denotes Hausdorff distance with respect to the 
metric on $S(\cD)$ determined by $L^\cD$. \Lat \ then defines the ``dual propinquity'' between $(\cA, L^\cA)$ and $(\cB, L^\cB)$ to be the infimum of
the extents of all tunnels from $(\cA, L^\cA)$ to $(\cB, L^\cB)$ 
(with no need for ``treks'' etc.).
\Lat \ showed in \cite{Ltr4} the remarkable fact that the dual propinquity
is a metric on isometry classes of compact C*-metric spaces, and that
this metric is complete on the space of isometry classes. 
He also shows that the propinquity, which we used
in earlier parts of this paper, dominates the dual propinquity, so our earlier 
convergence results, such as Theorem \ref{lengthlim}, 
imply convergence for the dual propinquity too.
But the propinquity is probably not complete, though there is no 
known counter-example as far as I know.  

\Lat \ defines the spectral propinquity between metric spectral triples 
of the form $(\cA, \cH, D)$
in two steps. In the first step, 
the Hilbert space $\cH $ is viewed as a left module over $\cA $ that 
is equipped with a suitable seminorm, using $D$. 
\Lat \ had earlier \cite{Ltr6} defined 
a metric on isometry classes of such modules, which he 
called the \emph{modular propinquity}. (It uses ``modular tunnels''.)
In the second step 
one considers a covariance condition with respect to the 
one-parameter group of unitary operators generated by $D$.
Here we will just examine the modular propinquity for the spectral triples 
studied in the earlier sections of this paper. 

In \cite{Ltr8} \Lat \ defines the modular propinquity for correspondences
that are equipped with a suitable seminorm. 
We recall \cite{Blk2} that, 
given C*-algebras $\cA$ and $\cC$, 
an $\cA$-$\cC$-correspondence is a right Hilbert-$\cC$-module $\cM$ (i.e.
a right $\cC$-module $\cM$ with a $\cC$-valued inner product) together with 
a $*$-homomorphism of $\cA$ into the C*-algebra of adjointable
$\cC$-endomorphisms of $\cM$.

We will only need the special case in which $\cC = \bC$, for which
$\cM$ is a Hilbert space, and the action of $\cA$ is through a 
$*$-representation on $\cM$. 
In fact we will often use the letter $\cH$ for $\cM$. 
But to align with the terminology for general correspondences, 
we will refer to this special case as a ``special correspondence".
(These should not be confused with the ``metrized quantum vector bundles''
of definition 1.10 of \cite{Ltr9}, which are right Hilbert-$\cC$-modules,
but there is no algebra $\cA$ acting on the left.) We will tend to use
\Lat's notation from \cite{Ltr7}.

Here is the ``special'' version of definition 2.2 of \cite{Ltr7} and
definition 1.10 of \cite{Ltr9}.

\begin{definition} 
We say that $(\cA, L^\cA, \cM, \sfD)$ is 
a \emph{special metrical C*-correspondence} if  
$(\cA, L^\cA)$ is a compact C*-metric 
space, while $\cM$ is a Hilbert space on which there is a 
$*$-representation of $\cA $, and
$\sfD$ is a norm defined on a dense
 subspace $\dom(\sfD)$ of $\cM $, such that
\begin{enumerate}
\item For all $\xi \in \dom(\sfD)$ we have $\|\xi\| \leq \sfD(\xi)$.
\item The subset $\{\xi \in \dom(\sfD):\sfD(\xi) \leq 1\}$ 
of $\cM$ is totally bounded.
\item For all $a \in \cA$ and all $\xi \in \dom(\sfD)$ we have
\[
\sfD(a\xi) \leq (\|a\| + L^\cA(a) ) \sfD(\xi).
\]
(In particular, if $L^\cA(a) < \infty$, then $a\xi \in  \dom(\sfD)$.)
\end{enumerate}
\end{definition} 

For our purposes, the main general example is given by theorem 2.7
of \cite{Ltr7}, which we now state, with our notation and terminology.

\begin{theorem}
\label{spnorm}
Let $(\cA, \cH, D)$ be a metrical spectral triple, with
$L^D(a) = \|[D,a]\|$ the corresponding C*-metric on $\cA$.
Define the norm $\sfD$ on $\dom(D)$ to be the graph-norm of
$D$, that is,
\[
\sfD(\xi) = \|\xi\| + \|D\xi\|.
\]
Then $(\cA, \cH, L^D, \sfD)$ is a special metrical C*-correspondence.
\end{theorem}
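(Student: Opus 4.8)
The plan is to verify directly the three numbered conditions in the definition of a special metrical C*-correspondence for the data $(\cA, L^D, \cH, \sfD)$, taking $\cM = \cH$ with the given $*$-representation of $\cA$ and setting $\dom(\sfD) = \dom(D)$. Two pieces of the structure are already in hand: $\dom(D)$ is a dense subspace of $\cH$ because $D$ is self-adjoint, and the hypothesis that the spectral triple is metric says precisely that $(\cA, L^D)$ is a compact C*-metric space. Condition (1) is immediate, since $\sfD(\xi) = \|\xi\| + \|D\xi\| \geq \|\xi\|$ for every $\xi \in \dom(D)$.

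For condition (3) I would argue as follows. If $a \in \cA$ has $L^D(a) = \infty$ the asserted inequality holds vacuously, so assume $L^D(a) < \infty$; then, by the spectral-triple axioms, $a$ carries $\dom(D)$ into itself and on $\dom(D)$ one has $D(a\xi) = aD\xi + [D,a]\xi$ with $[D,a]$ bounded of norm $L^D(a)$. Hence $\|a\xi\| \leq \|a\|\,\|\xi\|$ and $\|D(a\xi)\| \leq \|a\|\,\|D\xi\| + L^D(a)\,\|\xi\|$, so that
\[
\sfD(a\xi) = \|a\xi\| + \|D(a\xi)\| \leq \|a\|\bigl(\|\xi\| + \|D\xi\|\bigr) + L^D(a)\,\|\xi\| \leq \bigl(\|a\| + L^D(a)\bigr)\sfD(\xi),
\]
using $\|\xi\| \leq \sfD(\xi)$ from (1); in particular $a\xi \in \dom(D) = \dom(\sfD)$.

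The one condition with genuine content is (2): the set $\{\xi \in \dom(D) : \sfD(\xi) \leq 1\}$ must be totally bounded in $\cH$. This is where the remaining spectral-triple axiom, that $D$ has compact resolvent, enters. The key observation is the standard identity $\|(D+i)\xi\|^2 = \|\xi\|^2 + \|D\xi\|^2$ for $\xi \in \dom(D)$, valid because $D$ is self-adjoint and so the cross term $2\,\Re\langle D\xi, i\xi\rangle$ vanishes. Thus if $\sfD(\xi) = \|\xi\| + \|D\xi\| \leq 1$ then $\|\xi\|^2 + \|D\xi\|^2 \leq 1$, hence $\|(D+i)\xi\| \leq 1$; writing $\eta = (D+i)\xi$ we get $\xi = (D+i)^{-1}\eta$ with $\|\eta\| \leq 1$. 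Therefore the $\sfD$-unit ball is contained in the image of the closed unit ball of $\cH$ under the compact operator $(D+i)^{-1}$, and so is totally bounded.

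The proof is thus essentially immediate once one recognizes that condition (2) is just a reformulation, via the graph norm, of the compact-resolvent hypothesis; I expect that recognition, together with recalling the identity $\|(D+i)\xi\|^2 = \|\xi\|^2 + \|D\xi\|^2$, to be the only non-routine step, everything else being direct norm estimates.
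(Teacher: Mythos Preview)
Your proof is correct. The paper does not actually give its own proof of this theorem: it simply quotes the result as theorem~2.7 of \cite{Ltr7} (Latr\'emoli\`ere), so there is no in-paper argument to compare against. Your direct verification of the three axioms---with condition~(2) handled via the identity $\|(D+i)\xi\|^2 = \|\xi\|^2 + \|D\xi\|^2$ and the compact-resolvent hypothesis---is exactly the natural argument, and is essentially what one finds in Latr\'emoli\`ere's paper as well.
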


Accordingly we 
let $ \sfD^\cA$ and $ \sfD^m$
be the norms on $\cS^\cA$ and $\cS^m$ defined in terms of $D_o^\cA$ and $D_o^m$ much as in Theorem \ref{spnorm}.
We want to 
show that the special metrical C*-correspondences
$(\cB^m, L^{D_o^m},  \cS^m, \sfD^m)$
converge to the special metrical C*-correspondence
$(\cA,  L^{D_o^\cA},\cS^\cA, \sfD^\cA)$
for the modular propinquity.

The definition of the modular propinquity involves a definition of what is meant by
a tunnel between two metrical C*-correspondences. (\Lat \ calls these
``modular tunnels''). We will not give the general definition. Instead we 
now begin constructing the modular tunnels we need. We start with the 
core of the construction. To some extent we follow the steps that \Lat \ 
used to construct 
the corresponding modular tunnels for the case of non-commutative tori 
in section 3.3 of \cite{Ltr9}. We use the notation of Section \ref{sec1} 
as well as that following Definition \ref{tunnel}. 

Recall that $\cS^\cA = \ciA \otimes \cS$ and $\cS^m = \cB^m \otimes \cS$, dense
subspaces of the corresponding Hilbert spaces.
We let 
$\th_m^\cA = \s^\cA_m \otimes I^\cS$ and
$\th^{\cB^m} = \s^{\cB^m}  \otimes I^\cS$, where $ \s^\cA_m$ and
$ \s^{\cB^m} $ are as defined in the paragraph preceding Proposition          \ref{compsym}. So $\th_m^\cA  $ maps $\cS^m$ to $\cS^\cA$, 
while $\th^{\cB^m} $
maps $\cS^\cA$ to $\cS^m$.
Since $ \s^\cA_m$ and
$ \s^{\cB^m} $ intertwine the actions $\a$ of $G $ on $\cA$ (and so on $\ciA$)
and $\cB^m$, as mentioned just before 
Definition \ref{sycomp},  $\th_m^\cA$ and $\th^{\cB^m}$ intertwine
the actions $\a \otimes I^\cS$ of $G$ on $\ciA \otimes \cS$ 
and $\cB^m \otimes \cS$. 
From formula \ref{eqdefdir} it follows that 
$D_o^\cA \th_m^\cA = \th_m^\cA D_o^m$ and
$D_o^m \th^{\cB^m} = \th^{\cB^m}D_o^\cA$.
It is easily seen that $ \s^\cA_m$ has operator norm 1
(because we are essentially using normalized traces on
$\cA$ and $\cB^m$). Since $ \s^{\cB^m} $ is easily seen to be the Hilbert-space
adjoint of $ \s^\cA_m$ (as shown early in section 2 of \cite{R6}),
$\th^{\cB^m}$ is the Hilbert-space
adjoint of $\th_m^\cA$, and they both have operator norm 1 (i.e.
are contractions of norm 1) for the Hilbert-space norms
on  $\cS^\cA$ and $\cS^m$.

The following proposition is the analog of much of theorem 3.25 of \cite{Ltr9}. Notice that it does not explicitly involve the C*-algebras
$\cA$ and $\cB^m$, just as theorem 3.25 does not involve the C*-algebras
$\cA_n$ and $\cA_\infty$ of \cite{Ltr9}. 

\begin{proposition}
\label{tuncor}
With notation as above, let $\cS_t = \cS^\cA \oplus\cS^m$, with its evident
pre-Hilbert-space structure. For any $\e >0$ define 
a norm $\tn$ (for ``tunnel norm'') 
on $\cS_t$ by 
\begin{equation}
\label{bigtun}
\tn(\xi, \eta) = \sfD^\cA(\xi) \vee \sfD^m(\eta) \vee
(1/\e)\|\xi - \theta^{\cA}_m(\eta)\| 
\end{equation}
for $(\xi, \eta) \in \cS_t$. For any $\e > 0$ 
there is a natural number $N$ such that if $m \geq N$ then
$ \sfD^\cA$ and $ \sfD^m$ coincide with the quotient norms of $\tn$ on
$\cS^\cA$ and $\cS^m$ for the evident projections from $\cS_t$
onto $\cS^\cA$ and $\cS^m$.
\end{proposition}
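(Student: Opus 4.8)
The plan is to verify the two quotient-norm identities separately, each requiring one easy inequality and one requiring the estimates from Section~\ref{qgh}. Fix $\e > 0$. For the quotient norm of $\tn$ on $\cS^\cA$, given $\xi \in \cS^\cA$, the first thing to observe is that taking the particular lift $(\xi, \theta^{\cB^m}(\xi)) \in \cS_t$ gives an upper bound: the third term in \eqref{bigtun} is $(1/\e)\|\xi - \theta^\cA_m(\theta^{\cB^m}(\xi))\|$, and since $D_o^\cA \theta^\cA_m = \theta^\cA_m D_o^m$ and $D_o^m \theta^{\cB^m} = \theta^{\cB^m} D_o^\cA$ we have $\sfD^\cA(\theta^\cA_m(\theta^{\cB^m}(\xi))) \le \sfD^\cA(\xi)$ (since $\theta^\cA_m, \theta^{\cB^m}$ are norm-one contractions intertwining the Dirac operators, they do not increase the graph norm). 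Thus the quotient norm of $(\xi, \theta^{\cB^m}(\xi))$ is at most $\sfD^\cA(\xi) \vee (1/\e)\|\xi - \theta^\cA_m\theta^{\cB^m}(\xi)\|$, and the reverse inequality (quotient norm $\ge \sfD^\cA(\xi)$) is immediate because projection onto the first coordinate cannot decrease the $\sfD^\cA$-term. So what remains for this half is to show that for $m$ large enough, $(1/\e)\|\xi - \theta^\cA_m\theta^{\cB^m}(\xi)\| \le \sfD^\cA(\xi)$ for all $\xi$, i.e. $\|\xi - \theta^\cA_m\theta^{\cB^m}(\xi)\| \le \e\,\sfD^\cA(\xi)$. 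This is the Berezin-transform estimate at the level of the spinor bundle: $\theta^\cA_m\theta^{\cB^m} = (\s^\cA_m \circ \s^{\cB^m})\otimes I^\cS$, and the operator $\s^\cA_m \circ \s^{\cB^m}$ on $\cA$ is exactly the Berezin transform whose approximation to the identity (on the unit ball of $L^{D^\cA}$, equivalently controlled by $L_\ell^\cA$) goes to zero by Theorem~11.5 of \cite{R21}, as invoked for $\hat\d_\ell^m$ in Proposition~\ref{heightlim}. One needs to upgrade this to a statement about the graph norm: since $D_o^\cA$ commutes with $\s^\cA_m\circ\s^{\cB^m}$, both $\xi$ and $D_o^\cA\xi$ are moved by at most the relevant $\hat\d_\ell$-type quantity times their norms, giving $\|\xi - \theta^\cA_m\theta^{\cB^m}(\xi)\| \le \hat\d_m(\|\xi\| + \|D_o^\cA\xi\|) = \hat\d_m\,\sfD^\cA(\xi)$ with $\hat\d_m \to 0$; pick $N$ so that $\hat\d_m \le \e$ for $m \ge N$.

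For the quotient norm of $\tn$ on $\cS^m$: given $\eta \in \cS^m$, take the lift $(\theta^\cA_m(\eta), \eta)$. The third term of \eqref{bigtun} then vanishes identically, and $\sfD^\cA(\theta^\cA_m(\eta)) \le \sfD^m(\eta)$ as above, so the quotient norm of this lift is $\le \sfD^m(\eta)$; the reverse inequality is again immediate by projecting to the second coordinate. So in fact the $\cS^m$-half holds for \emph{every} $m$ and $\e$, with no largeness needed — I would state this cleanly since it's cleaner than the general theorem 3.25 situation.

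The main obstacle, and the only place real work is needed, is the graph-norm version of the Berezin-transform convergence: showing $\|\xi - (\s^\cA_m\circ\s^{\cB^m})\otimes I^\cS(\xi)\| \le \hat\d_m \sfD^\cA(\xi)$ with $\hat\d_m \to 0$. The subtlety is that $\hat\d_\ell^m$ as defined in Notation~\ref{bert} controls $\|b - \s^\cB(\s^\cA(b))\|$ only for $b$ in the unit ball of $L_\ell$, i.e. it bounds $\|a - (\s^\cA_m\circ\s^{\cB^m})(a)\|$ by $\hat\d_\ell^m\,L_\ell^\cA(a)$ for self-adjoint $a$ — one must extend this to a bound valid on all of $\ciA \otimes \cS$ in terms of the graph norm $\sfD^\cA$, not just on self-adjoint elements of $\cA$ scaled by a seminorm. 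The route is: decompose $\xi = \sum a_i \otimes \psi_i$; since $D_o^\cA$ acts only through the $\a$-action and $\s^\cA_m\circ\s^{\cB^m}$ acts only on the $\cA$-factor and commutes with each $\a_X$, one reduces to bounding $\|a - (\s^\cA_m\circ\s^{\cB^m})(a)\|_{L^2(\tau)}$ for $a \in \ciA$ by $\hat\d_m(\|a\|_{L^2} + \|D^{\cA}_o\text{-part}\|)$; here $\|\cdot\|_{L^2}$ is used rather than the C*-norm, which is favorable, and the needed estimate follows from the spectral-gap / approximate-identity behavior of the Berezin transform established in \cite{R21} (sections 10--12) and used in Section~\ref{qgh}. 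I would cite that machinery rather than reproving it, noting explicitly that the graph-norm refinement is the analog of the corresponding step inside the proof of theorem~3.25 of \cite{Ltr9}, and that $\hat\d_m$ here can be taken to be the same $\hat\d_\ell^m$ (up to a harmless constant) that already appears in Proposition~\ref{heightlim}.
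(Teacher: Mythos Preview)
Your overall architecture matches the paper's: the lifts $(\theta_m^\cA\eta,\eta)$ and $(\xi,\theta^{\cB^m}\xi)$ are exactly the ones used there, and you correctly observe that the $\cS^m$-half holds for every $m$ and every $\e$. The gap is in the $\cS^\cA$-half, at the step where you must show $\|\xi - \theta^\cA_m\theta^{\cB^m}\xi\| \le \e\,\sfD^\cA(\xi)$ for all $\xi$ once $m$ is large. Your argument is the sentence ``since $D_o^\cA$ commutes with $\s^\cA_m\circ\s^{\cB^m}$, both $\xi$ and $D_o^\cA\xi$ are moved by at most the relevant $\hat\d_\ell$-type quantity times their norms, giving $\|\xi-\theta^\cA_m\theta^{\cB^m}\xi\|\le\hat\d_m(\|\xi\|+\|D_o^\cA\xi\|)$.'' That is false: the Berezin transform $T_m=(\s^\cA_m\circ\s^{\cB^m})\otimes I^\cS$ has finite rank on $L^2(\cA,\tau)\otimes\cS$, so $\|I-T_m\|$ cannot tend to $0$ and there is no inequality $\|(I-T_m)\zeta\|\le\hat\d_m\|\zeta\|$ with $\hat\d_m\to 0$. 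The $\hat\d$-type quantities bound $\|a-\s^\cA_m\s^{\cB^m}a\|$ by the \emph{Lipschitz seminorm} $L^\cA(a)$, not by $\|a\|$, and $L^\cA(a)=\|[D_o,M_a]\|$ has nothing to do with the graph norm $\sfD^\cA$. Your citation of Proposition~\ref{heightlim} and theorem~11.5 of \cite{R21} is moreover for the wrong composition: those control $\s^{\cB^m}\circ\s^\cA_m$ on $\cB^m$, not $\s^\cA_m\circ\s^{\cB^m}$ on $\cA$. The closing phrase ``spectral-gap / approximate-identity behavior'' points in the right direction but is not a proof.

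The paper fills this gap by an explicit spectral split that uses the compact resolvent of $D_o^\cA$. Let $\cK$ be the span of the eigenspaces of $D_o^\cA$ with eigenvalue $|\l|\le 2/\e$; this is finite-dimensional. For $\xi\in\cK^\perp$ in the domain of $D_o^\cA$ one has $\|\xi\|\le(\e/2)\|D_o^\cA\xi\|$, and combined with the trivial bound $\|(I-T_m)\xi\|\le 2\|\xi\|$ this already gives $\|(I-T_m)\xi\|\le\e\,\sfD^\cA(\xi)$ for \emph{every} $m$. For $\xi\in\cK$ the paper invokes Sain's result (proposition~4.13 of \cite{Sai}) that $\s^\cA_m\s^{\cB^m}(a)\to a$ in the C*-norm for each fixed $a\in\cA$, hence $T_m\xi\to\xi$ for each $\xi\in\cK\subset\ciA\otimes\cS$; since $\cK$ is finite-dimensional this convergence is uniform on its unit ball, and one chooses $N$ so that $\|(I-T_m)\xi\|\le\e\|\xi\|\le\e\,\sfD^\cA(\xi)$ on $\cK$ for all $m\ge N$. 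This finite-dimensional/high-frequency split is the missing idea.
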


\begin{proof}
Let $\eta \in \cS^m$ be given. Choose $\xi \in \cS^\cA$ to be 
$\xi = \th_m^\cA \eta$, so that the third term in the definition
of $\tn$ is 0. Thus to show that the quotient norm of $\tn$
on $ \cS^m$  is $ \sfD^m$ we only need to show that
$ \sfD^\cA(\xi) \leq \sfD^m(\eta)$. But this follows immediately 
from the facts that $\th_m^\cA$ is of norm 1 and
$D_o^\cA \th_m^\cA = \th_m^\cA D_o^m$ as seen above.
This part of the proof is independent of any choice of $\e$.

Let $\e > 0$ be given, and let $\xi \in \cS^\cA$ be given. 
Choose $\eta \in \cS^m$ to be 
$\eta = \th^{\cB^m} \xi$. Then $\sfD^m(\eta)  \leq  \sfD^\cA(\xi)$ because 
$\th^{\cB^m}$ is of norm 1 and
$D_o^{\cB^m} \th^{\cB^m} = \th^{\cB^m}D_o^\cA$ as seen above.
Thus we need to show that there is an $N$ such that if $m \geq N$
then $(1/\e)\|\xi - \theta^{\cA}_m(\eta)\|  \leq \sfD^\cA(\xi)$, that is,
\begin{equation}
\label{eqtun}
\|\xi - \theta^{\cA}_m( \th^{\cB^m} \xi)\| \leq \e(\|\xi\| + \|D_o^\cA\xi\|). 
\end{equation}
Notice that since $\th_m^\cA$ and $\th^{\cB^m}$ have norm 1, the
left-hand side of equation
\eqref{eqtun} 
is $\leq 2\|\xi\|$. Thus
equation 
\eqref{eqtun} 
will be satisfied if 
\[
2\|\xi\| \leq  \e(\|\xi\| + \|D_o^\cA\xi\|).
\] 
Suppose now that $\xi$ is an
eigenvector for $D_o^\cA$ with eigenvalue $\l$, so that the right-hand
side of equation \eqref{eqtun} is $ \e(1 + |\l|)\|\xi\|$. It is then clear that 
equation  \eqref{eqtun}  is satisfied if $ |\l|  \geq   2/\e $.

In Proposition \ref{selfad} we saw that there is a basis for $\cS^\cA$
consisting of eigenvectors of $D_o^\cA$. As mentioned there, the
eigenvalues of $D_o^\cA$, counted with multiplicity, converge in 
absolute value to $\infty$.  (A somewhat indirect 
proof of this fact is given in theorem 5.5 of \cite{GaG}, whose method
is to compare $D_o^\cA$ to the usual Dirac operator on $G$ itself, which
is handled by the usual methods for Dirac operators on compact 
manifolds, as presented for example in section 4.2 of \cite{Frd}.) Accordingly,
the span of the eigenspaces of $D_o^\cA$ for eigenvalues $\l$ for which
$|\l| \leq 2/\e$, is finite-dimensional. We denote this span by $\cK$.
 
Then for $\cK^\perp$ we can choose an orthonormal basis, $\{\xi_j\}$,
consisting of eigenvectors of $D_o^\cA$, and for each $j$ the eigenvalues $\l_j$
for $\xi_j$ will satisfy $|\l_j| > 2/\e$. If $\xi \in \cK^\perp$ is in the domain of
$D_o^\cA$ so that for its expansion $\xi = \sum z_j\xi_j$ (with $z_j \in \bC$)
the sequence $\{|z_j\l_j|\}$ is square-summable, we have
\[
\|D_o^\cA\xi\|^2 = \sum |z_j|^2|\l_j|^2 \geq (\sum |z_j|^2)(2/\e)^2 
= (2/\e)^2\|\xi\|^2.
\]
Thus $\|\xi\| \leq (\e/2)\|D_o^\cA\xi\|$, so that 
$\|\xi - \theta^{\cA}_m( \th^{\cB^m} \xi)\| \leq \e(\|\xi\| + \|D_o^\cA\xi\|)$,
as desired.

Finally, let $\xi$ be an element of $\cK$.
In proposition 4.13 of \cite{Sai} Sain proves, in the more general context
of compact quantum groups, that (for our notation above)      
$\s^\cA_m( \s^{\cB^m}(a)) $ converges in norm to $a$
for every $a\in \cA$ as $m$ goes to $\infty$. (The proof is not difficult.)
It follows that $\s^\cA_m( \s^{\cB^m}(a)) $ converges to $a$
with respect to the Hilbert-space norm on $L^2(\cA, \tau)$,
and from this it follows that 
$ \theta^{\cA}_m( \th^{\cB^m} \xi)$ converges to $\xi$
for each $\xi$ in $\cK$ (since $\cK$ is contained in
the algebraic tensor product $\ciA \otimes \cS$ as follows from
Proposition \ref{selfad}). Since $\cK$ is finite-dimensional, it
follows that we can find a natural number $N$ such that
for every $m \geq N$ and every $\xi \in \cK$ we have
\begin{equation}
\label{estim}
\|\xi - \theta^{\cA}_m( \th^{\cB^m} \xi)\|  \leq \e\|\xi\|
 \leq \e(\|\xi\| + \|D_o^\cA\xi\|).
 \end{equation}
 
Putting together the steps above, we find that if $m \geq N$ then
$ \sfD^\cA$ coincides with the quotient norm of $\tn$ on
$\cS^\cA$, as needed. 
\end{proof}

We now put the above result into the framework
that \Lat \ uses in \cite{Ltr9} to treat the spectral propinquity
for Dirac operators on quantum tori. We follow closely the
pattern around theorem 3.25 of \cite{Ltr9}. 

To begin with, according to the proof of the triangle 
inequality for the dual modular 
propinquity given in theorem 3.1 of \cite{Ltr8},
and as done for non-commutative tori in theorem 3.25 of \cite{Ltr9},
we must view $\cS_t = \cS^\cA \oplus\cS^m$ 
as a Hilbert module over the 2-dimensional C*-algebra
$\cC = \bC \oplus \bC$ (i.e. the algebra of functions on
a 2-point space). 
This means that we view $\cS_t$ as a module over $\cC$ in the
evident way, and that we define a $\cC$-valued inner product
on $\cS_t$ by
\[
\<(\xi, \eta), (\xi', \eta')\>_\cC 
= (\<\xi, \xi'\>_{\cS^\cA} , \<\eta, \eta'\>_{\cS^m})   .
\]
For each $\e>0$ as used in the above
proposition, we define a C*-metric, $Q_\e$, on $\cC$ 
by $Q_\e(z,w) = (1/\e)|z-w|$ (so the distance between the two points
is $\e$).  This makes $\cC $ into a compact quantum
metric space. Then we have the evident projections of $\cC$
onto the two copies of $\bC$ (one-point spaces with trivial
metric), so that $(\cC, Q_\e)$ is a tunnel between these two one-point
spaces.   

Altogether the above structures, with the properties we have
obtained, form a modular tunnel
between the special metrical C*-correspondences
$(\cB^m, L^{D_o^m},  \cS^m, \sfD^m)$ and
$(\cA,  L^{D_o^\cA},\cS^\cA, \sfD^\cA)$
for large enough $m$, 
except that there is one further
property that must be verified, namely what \Lat \ calls
the ``inner Leibniz property''. This requires that
\[
Q_\e((\xi, \eta), (\xi', \eta')) \leq 2\tn((\xi, \eta))\tn((\xi', \eta'))   .
\]
For this we may need to make the natural number $N$
of Proposition \ref{tuncor} somewhat larger.
Specifically, we replace $\e$ with $\e/2$ and
choose $N$ large enough that in
equation \eqref{bigtun} we have $1/\e$ replaced by $2/\e$. 
In particular, we have
$\|\xi -\theta^{\cA}_m\eta\| \leq (\e/2)TN((\xi, \eta))$
for all $(\xi,\eta) \in \cS_t$ , and
similarly for $(\xi', \eta')$. 
Then 
\begin{align*} 
 &|\<\xi, \xi'\>_{\cS^\cA} - \<\eta, \eta'\>_{\cS^m}|   \\
 &\leq  |\<\xi, \xi'\>_{\cS^\cA} -  
 \<\theta^{\cA}_m\eta, \theta^{\cA}_m\eta'\>_{\cS^\cA}| 
 + | \<\theta^{\cA}_m\eta, \theta^{\cA}_m\eta'\>_{\cS^\cA}
 -   \<\eta, \eta'\>_{\cS^m}|    ,
\end{align*}
and, for the first term,
\begin{align*} 
&|\<\xi, \xi'\>_{\cS^\cA} -  \<\theta^{\cA}_m\eta, \theta^{\cA}_m\eta'\>_{\cS^\cA}| \\ 
&\leq  |\<\xi -\theta^{\cA}_m\eta, \xi'\>_{\cS^\cA}| + 
| \<\theta^{\cA}_m\eta, \xi' -\theta^{\cA}_m\eta'\>_{\cS^\cA}|   \\
 & \leq  \|\xi -\theta^{\cA}_m\eta\| \|\xi'\| + 
\| \theta^{\cA}_m\eta\| \|\xi' -\theta^{\cA}_m\eta'\|   \\
 & \leq (\e/2)\tn(\xi, \eta)\sfD^\cA(\xi') + \sfD^m(\eta)(\e/2)\tn(\xi', \eta')   \\
 &\leq \e \tn(\xi, \eta)\tn(\xi', \eta')  ,
\end{align*}
while for the second term, since $\theta^{\cB^m}$ is the 
adjoint of $\theta^{\cA}_m$,
\begin{align*}
&|\<\theta^{\cA}_m\eta, \theta^{\cA}_m\eta'\>_{\cS^\cA}
 -   \<\eta, \eta'\>_{\cS^m}|  =
|\<\theta^{\cB^m}\theta^{\cA}_m\eta, \eta'\>_{\cS^\cA}
 -   \<\eta, \eta'\>_{\cS^m}|   \\
& = |\<\theta^{\cB^m}\theta^{\cA}_m\eta  - \eta , \eta'\>_{\cS^\cA}|   
\leq \|\theta^{\cB^m}\theta^{\cA}_m\eta  - \eta\| \|\eta'\|  \\
& \leq  \|\theta^{\cB^m}\theta^{\cA}_m\eta  - \eta\| \tn(\xi', \eta')  .
\end{align*}
To handle the term $\|\theta^{\cB^m}\theta^{\cA}_m\eta  - \eta\|$
we argue much as we did to obtain equation \eqref{eqtun}. We
seek to show that
\begin{equation}
\label{eqin}
\|\theta^{\cB^m}\theta^{\cA}_m\eta  - \eta\| \leq \e(\|\eta\| + \|D_o^m\eta\|).
\end{equation} 
(so that $\|\theta^{\cB^m}\theta^{\cA}_m\eta  - \eta\| \leq \e\tn(\xi, \eta)$ ).
The left-hand side of equation \eqref{eqin} is $\leq 2\|\eta\|$. Thus
equation \eqref{eqin} will be satisfied if 
\[
2\|\eta\| \leq  \e(\|\eta\| + \|D_o^m\eta\|).
\] 
Suppose now that $\eta$ is an
eigenvector for $D_o^m$ with eigenvalue $\l$, so that the right-hand
side of equation \eqref{eqin} is $ \e(1 + |\l|)\|\eta\|$. It is then clear that 
equation  \eqref{eqin}  is satisfied if $ |\l|  \geq   2/\e $.
Let $\cK_m$ be the span of the eigenspaces of $D_o^m$ for 
eigenvalues $\l$ for which $|\l| \leq 2/\e$. Then we can argue exactly 
as in the fourth paragraph of the proof of Proposition \ref{tuncor} 
to conclude that if $\eta \in \cK_m^\perp$ then
$\|\eta\| \leq (\e/2)\|D_o^m\eta\|$, so that 
$\|\eta -  \th^{\cB^m} (\theta^{\cA}_m(\eta))\| \leq \e(\|\eta\| + \|D_o^m\eta\|)$.
 
To treat $\cK_m$ we recall first that $\s_m^\cA$ is always injective. 
(This follows immediately from theorem 3.1 of \cite{R6}, for which it is 
crucial that the range of $P^m$ is spanned by a \emph{highest}-weight 
vector.) Thus $\theta_m^\cA$ is injective. Furthermore, from Proposition
\ref{ucp} we see that
$\theta_m^\cA$ carries eigenvectors of $D_o^m$ to eigenvectors of
$D_o^\cA$ with the same eigenvalue (but maybe of different norm), so that
$\theta_m^\cA$ carries $\cK_m$ into $\cK$.
We now use the arguments in section 4 of
\cite{Sai}. As long as $\e < 1/2$, which we now assume (increasing $N$
if necessary), equation \eqref{estim} implies that $\theta_m^\cA \theta^{\cB^m}$,
as an operator on $\cK$, satisfies
$\|I_\cK - \theta_m^\cA \theta^{\cB^m}\| < 1/2$, 
so that $\theta_m^\cA \theta^{\b^m}$ is invertible
and $\|(\theta_m^\cA \theta^{\cB^m})^{-1}\| < 2$. 
In particular, $\theta_m^\cA $ is onto $\cK$, and so is invertible
as an operator from $\cK_m$ onto $\cK$. Consequently,
$\theta^{\cB^m}$ is invertible as an operator from $\cK$
to $\cK_m$, and 
$(\theta^{\cB^m})^{-1} = (\theta_m^\cA \theta^{\b^m})^{-1}\theta_m^\cA$,
so that $\|(\theta^{\cB^m})^{-1}\| < 2$. 
Since
\[
I_{\cK_m} -  \th^{\cB^m} \theta^{\cA}_m
= \theta^{\cB^m}(I_{\cK_m} - \theta_m^\cA \theta^{\cB^m})(\theta^{\cB^m})^{-1},
\]
we see, using equation \eqref{estim} and our replacement of $\e$
by $\e/2$, that 
\[
\|I_{\cK_m} -  \th^{\cB^m} \theta^{\cA}_m\| 
\leq 2\|I_{\cK_m} - \theta_m^\cA \theta^{\cB^m}\| \leq \e
\]
for all $m \geq N$. Thus
\[
\|\eta - \th^{\cB^m} \theta^{\cA}_m \eta\| 
\leq \e\|\eta\| \leq \e\sfD^m(\eta)
\]
for all $\eta \in \cK_m$. Putting this together with the result of
the previous paragraph, we obtain inequality \eqref{eqin},
so that $\|\theta^{\cB^m}\theta^{\cA}_m\eta  - \eta\| \leq \e\tn(\xi, \eta)$.
Putting this together with the several inequalities obtained before
inequality \eqref{eqin}, we obtain
\[
|\<\xi, \xi'\>_{\cS^\cA} - \<\eta, \eta'\>_{\cS^m}| 
\leq 2\e \tn(\xi, \eta)\tn(\xi', \eta') ,
\]
so that
\[
Q_\e((\xi, \eta), (\xi', \eta')) 
= (1/\e)|\<\xi, \xi'\>_{\cS^\cA} - \<\eta, \eta'\>_{\cS^m}|  
\leq 2\tn((\xi, \eta))\tn((\xi', \eta'))   ,
\]
as needed.

When we apply the definition of the ``extent'' of a modular tunnel,
as given in definition 4.2 of \cite{Ltr8}, to the
modular tunnel sketched above, the extent of this
modular tunnel is the extent of the tunnel
$(\cC, Q_\e)$, which is $\e$ (just as in the case
for the non-commutative tori, for which see the last paragraph 
in the proof of theorem 3.25 of \cite{Ltr9}).

For metrical C*-correspondences the modular propinquity
between them is defined to be the infimum of the extents
of all modular tunnels between them.
For the special metrical C*-correspondences corresponding
to the spectral triples $(\cA, \cS^\cA, D_o^\cA)$ and
$(\cB^m, \cS^m, D_o^m)$, the tunnels constructed
in Proposition \ref{tuncor} and the following discussion 
show that the sequence
$\{(\cB^m, L^{D_o^m},  \cS^m, \sfD^m)\}$ of
special metrical C*-correspondences converges
to the special metrical C*-correspondence
$(\cA,  L^{D_o^\cA},\cS^\cA, \sfD^\cA)$
for the dual modular propinquity.

In theorem 4.8 of \cite{Ltr8} \Lat \ proves the remarkable
fact that if the dual modular propinquity between two
metrical C*-correspondences is 0
(using a change of terminology he made in later papers),
then these two metrical C*-correspondences are fully
isometric, meaning unitary equivalent in an appropriate sense.
The application of this to spectral triples is given in proposition
2.25 of \cite{Ltr7}.
For $\cA = C(G/K)$ as usual, and with $D$ being the Dirac operator
defined in Definition \ref{defdir3}, acting on $\cS(G/K)$,
it is easily seen that the special metrical C*-correspondence
corresponding to the spectral triple $(\cA, \cS(G/K), D)$,
as defined in Theorem \ref{spnorm},
is not unitarily equivalent to the special metrical C*-correspondence
corresponding to the spectral triple
$(\cA,\cS^\cA, D^\cA_o)$. Consequently the sequence of
spectral triples $\{(\cB^m, \cS^m, D_o^{\cB^m})\}$
can not converge to the spectral triple $(\cA, \cS(G/K), D)$
for the spectral propinquity. Recall from 
Definition \ref{matdir} that when the unitary
representation $(\cH ,U)$ is faithful on $\fg$ the Dirac operators
for the corresponding matrix algebras coincide 
with the $D_o^m$'s
(and when the unitary representation is not faithful, the corresponding
Dirac operators are close to that).

I expect that when the covariance condition in the definition of the spectral propinquity, as described in section 3 of \cite{Ltr7}, is taken into account,
it will be seen that the spectral triples $\{(\cB^m, \cS^m, D_o^{\cB^m})\}$ 
do converge to the spectral triple $(\cA,\cS^\cA, D^\cA_o)$ for the spectral
propinquity. But I have not checked that.


\providecommand{\bysame}{\leavevmode\hbox to3em{\hrulefill}\thinspace}
\providecommand{\MR}{\relax\ifhmode\unskip\space\fi MR }
\providecommand{\MRhref}[2]{%
  \href{http://www.ams.org/mathscinet-getitem?mr=#1}{#2}
}
\providecommand{\href}[2]{#2}



\begin{thebibliography}{10}

\bibitem{AcD}
Nirmalendu Acharyya and Ver{\'o}nica~Errasti Diez, \emph{Monopoles, {D}irac
  operator and index theory for fuzzy ${SU(3)}/({U(1)\times U(1)})$}, Phys.
  Rev. D \textbf{90} (2014), arXiv:1411.3538.

\bibitem{AlC}
D.~V. Alekseevsky and I.~Chrysikos, \emph{Spin structures on compact
  homogeneous pseudo-{R}iemannian manifolds}, Transform. Groups \textbf{24}
  (2019), no.~3, 659--689. \MR{3989685}

\bibitem{AoNT}
H~Aoki, J~Nishimura, and A~Tsuchiya, \emph{Realizing three generations of the
  standard model fermions in the type iib matrix model}, Journal of High Energy
  Physics (2014), no.~5, 131.

\bibitem{AIM}
Hajime Aoki, Satoshi Iso, and Toshiharu Maeda, \emph{Ginsparg-{W}ilson {D}irac
  operator in monopole backgrounds on the fuzzy 2-sphere}, Phys. Rev. D
  \textbf{75} (2007), no.~8, 085021, 14. \MR{2317738}

\bibitem{BIm}
A.~P. Balachandran and Giorgio Immirzi, \emph{Fuzzy {G}insparg-{W}ilson
  algebra: a solution of the fermion doubling problem}, Phys. Rev. D (3)
  \textbf{68} (2003), no.~6, 065023, 7. \MR{2037020}

\bibitem{BlIm}
\bysame, \emph{Fuzzy {G}insparg-{W}ilson algebra: a solution of the fermion
  doubling problem}, Phys. Rev. D (3) \textbf{68} (2003), no.~6, 065023, 7.
  \MR{2037020}

\bibitem{BKV}
A.~P. Balachandran, S.~Kurkcuoglu, and S.~Vaidya, \emph{Lectures on fuzzy and
  fuzzy susy physics}, World Scientific Publishing Co. Pte. Ltd., Hackensack,
  NJ, 2007, arXiv:hep-th/0511114. \MR{2304140}

\bibitem{BlPd}
A.~P. Balachandran and Pramod Padmanabhan, \emph{Spin {$j$} {D}irac operators
  on the fuzzy 2-sphere}, J. High Energy Phys. (2009), no.~9, 120, 20.
  \MR{2580688}

\bibitem{Brt}
John~W. Barrett, \emph{Matrix geometries and fuzzy spaces as finite spectral
  triples}, J. Math. Phys. \textbf{56} (2015), no.~8, 082301, 25. \MR{3455331}

\bibitem{BggM}
Edwin~J Beggs and Shahn Majid, \emph{Quantum riemannian geometry}, Springer,
  2020.

\bibitem{BMS}
Wolfgang Behr, Frank Meyer, and Harold Steinacker, \emph{Gauge theory on fuzzy
  $s^2\times s^2$ and regularization on noncommutative $\mathbb r^4$}, Journal
  of High Energy Physics \textbf{2005} (2005), no.~07, 040--040.

\bibitem{BGV}
Nicole Berline, Ezra Getzler, and Mich\`ele Vergne, \emph{Heat kernels and
  {D}irac operators}, Grundlehren Text Editions, Springer-Verlag, Berlin, 2004,
  Corrected reprint of the 1992 original. \MR{2273508}

\bibitem{Blk2}
Bruce Blackadar, \emph{Operator algebras}, Encyclopaedia of Mathematical
  Sciences, vol. 122, Springer-Verlag, Berlin, 2006, Theory of
  $C{^{*}}$-algebras and von Neumann algebras, Operator Algebras and
  Non-commutative Geometry, III. \MR{2188261 (2006k:46082)}

\bibitem{BFR}
M.~Bordemann, M.~Forger, and H.~R\"{o}mer, \emph{Homogeneous {K}\"{a}hler
  manifolds: paving the way towards new supersymmetric sigma models}, Comm.
  Math. Phys. \textbf{102} (1986), no.~4, 605--617. \MR{824094}


\bibitem{CWW97}
Ursula Carow-Watamura and Satoshi Watamura, \emph{Chirality and {D}irac
  operator on noncommutative sphere}, Comm. Math. Phys. \textbf{183} (1997),
  no.~2, 365--382. \MR{1461963}

\bibitem{CWW98}
\bysame, \emph{Differential calculus on fuzzy sphere and scalar field},
  Internat. J. Modern Phys. A \textbf{13} (1998), no.~19, 3235--3243.
  \MR{1633610}

\bibitem{CWW00}
\bysame, \emph{Noncommutative geometry and gauge theory on fuzzy sphere}, Comm.
  Math. Phys. \textbf{212} (2000), no.~2, 395--413. \MR{1772252}

\bibitem{Cn5}
A.~Connes, \emph{Noncommutative geometry and reality}, J. Math. Phys.
  \textbf{36} (1995), no.~11, 6194--6231. \MR{1355905}

\bibitem{Cn3}
\bysame, \emph{Gravity coupled with matter and the foundation of
  non-commutative geometry}, Comm. Math. Phys. \textbf{182} (1996), no.~1,
  155--176, arXiv:hep-th/9603053. \MR{1441908}

\bibitem{Cn7}
Alain Connes, \emph{Compact metric spaces, {F}redholm modules, and
  hyperfiniteness}, Ergodic Theory Dynamical Systems \textbf{9} (1989), no.~2,
  207--220. \MR{90i:46124}

\bibitem{CnMr}
Alain Connes and Matilde Marcolli, \emph{Noncommutative geometry, quantum
  fields and motives}, American Mathematical Society Colloquium Publications,
  vol.~55, American Mathematical Society, Providence, RI; Hindustan Book
  Agency, New Delhi, 2008. \MR{2371808}

\bibitem{CnR}
Alain Connes and Marc~A. Rieffel, \emph{Yang-{M}ills for noncommutative
  two-tori}, Operator algebras and mathematical physics ({I}owa {C}ity, {I}owa,
  1985), Contemp. Math., vol.~62, Amer. Math. Soc., Providence, RI, 1987,
  pp.~237--266. \MR{878383}

\bibitem{DbD}
Ludwik Dabrowski and Giacomo Dossena, \emph{Product of real spectral triples},
  Int. J. Geom. Methods Mod. Phys. \textbf{8} (2011), no.~8, 1833--1848.
  \MR{2876098}

\bibitem{DLV}
Francesco D'Andrea, Fedele Lizzi, and Joseph~C. V\'{a}rilly, \emph{Metric
  properties of the fuzzy sphere}, Lett. Math. Phys. \textbf{103} (2013),
  no.~2, 183--205. \MR{3010459}

\bibitem{DiPa}
Sanatan Digal and Pramod Padmanabhan, \emph{Unusual thermodynamics on the fuzzy
  2-sphere}, J. High Energy Phys. (2010), no.~10, 091, arXiv: hep-th/1006.4792.

\bibitem{DHMO7}
Brian~P. Dolan, Idrish Huet, Se\'{a}n Murray, and Denjoe O'Connor,
  \emph{Noncommutative vector bundles over fuzzy {$\mathbb C\mathbb P^N$} and
  their covariant derivatives}, J. High Energy Phys. (2007), no.~7, 007, 34.
  \MR{2326780}

\bibitem{DHMO}
Brian~P. Dolan, Idrish Huet, Se\'an Murray, and Denjoe O'Connor, \emph{A
  universal dirac operator and noncommutative spin bundles over fuzzy complex
  projective spaces}, J. High Energy Phys. (2008), no.~3, 029, 21. \MR{2391091}

\bibitem{Frd}
Thomas Friedrich, \emph{Dirac operators in {R}iemannian geometry}, Graduate
  Studies in Mathematics, vol.~25, American Mathematical Society, Providence,
  RI, 2000, Translated from the 1997 German original by Andreas Nestke.
  \MR{MR1777332 (2001c:58017)}

\bibitem{GaG}
Olivier Gabriel and Martin Grensing, \emph{Ergodic actions and spectral
  triples}, J. Operator Theory \textbf{76} (2016), no.~2, 307--334.
  \MR{3552380}

\bibitem{GVF}
Jos\'e~M. Gracia-Bondia, Joseph~C. V{\'a}rilly, and H\'ector Figueroa,
  \emph{Elements of noncommutative geometry}, Birkh{\"a}user Boston Inc.,
  Boston, MA, 2001. \MR{1 789 831}

\bibitem{GK2}
H.~Grosse, C.~Klim{\v{c}}{\'\i}k, and P.~Pre{\v{s}}najder, \emph{Field theory
  on a supersymmetric lattice}, Comm. Math. Phys. \textbf{185} (1997), no.~1,
  155--175, arXiv:hep-th/9507074.

\bibitem{GP2}
H.~Grosse and P.~Pre\v{s}najder, \emph{The {D}irac operator on the fuzzy
  sphere}, Lett. Math. Phys. \textbf{33} (1995), no.~2, 171--181. \MR{1316346}

\bibitem{GKP}
Harold Grosse, Ctirad Klimcik, and Peter. Pre{\v{s}}najder, \emph{Topologically
  nontrivial field configurations in noncommutative geometry}, Comm. Math.
  Phys. \textbf{178} (1996), no.~2, 507--526, hep-th/9510083. \MR{1389916}

\bibitem{Hll}
Brian~C. Hall, \emph{Lie groups, lie algebras, and representations}, second
  ed., Graduate Texts in Mathematics, vol. 222, Springer, Cham, 2015.
  \MR{3331229}

\bibitem{Hrk}
E.~Harikumar, \emph{``{S}chwinger model'' on the fuzzy sphere}, Modern Phys.
  Lett. A \textbf{25} (2010), no.~37, 3151--3167. \MR{2740425}

\bibitem{HQT2}
E.~Harikumar, Amilcar~R. Queiroz, and P.~Teotonio-Sobrinho, \emph{The index
  theorem for the {$q$}-deformed fuzzy sphere}, J. Phys. A \textbf{40} (2007),
  no.~13, 3671--3682. \MR{2325068}

\bibitem{HQT}
E.~Harikumar, Amilcar~R. Queiroz, and Paulo Teotonio-Sobrinho, \emph{Dirac
  operator on the {$q$}-deformed fuzzy sphere and its spectrum}, J. High Energy
  Phys. (2006), no.~9, 037, 20. \MR{2257494}

\bibitem{HLS}
R.~H{\o}egh-Krohn, M.~B. Landstad, and E.~St{\o}rmer, \emph{Compact ergodic
  groups of automorphisms}, Ann. of Math. (2) \textbf{114} (1981), no.~1,
  75--86. \MR{625345,}

\bibitem{HwT}
Roger Howe and Eng-Chye Tan, \emph{Nonabelian harmonic analysis}, Universitext,
  Springer-Verlag, New York, 1992, Applications of ${{\rm{S}}L}(2,{{\bf{R}}})$.
  \MR{1151617}

\bibitem{Hut}
I.~Huet, \emph{A projective {D}irac operator on $\mathbb{CP}^2$ within fuzzy
  geometry}, J. High Energy Phys. (2011), no.~2, 106, 28. \MR{2820780}

\bibitem{Hut2}
Idrish Huet, \emph{A projective dirac operator on cpn and extended susy},
  (2015).

\bibitem{KR1}
Richard~V. Kadison and John~R. Ringrose, \emph{Fundamentals of the theory of
  operator algebras. {V}ol. {I}}, American Mathematical Society, Providence,
  RI, 1997, Elementary theory, Reprint of the 1983 original. \MR{98f:46001a}

\bibitem{Klm}
C.~Klim\v{c}\'{\i}k, \emph{Gauge theories on the noncommutative sphere}, Comm.
  Math. Phys. \textbf{199} (1998), no.~2, 257--279. \MR{1666851}

\bibitem{Knp}
Anthony~W. Knapp, \emph{Representation theory of semisimple groups}, Princeton
  Mathematical Series, vol.~36, Princeton University Press, Princeton, NJ,
  1986. \MR{855239}

\bibitem{KbN}
Shoshichi Kobayashi and Katsumi Nomizu, \emph{Foundations of differential
  geometry. {V}ol. {II}}, Interscience Tracts in Pure and Applied Mathematics,
  No. 15 Vol. II, Interscience Publishers John Wiley \& Sons, Inc., New
  York-London-Sydney, 1969. \MR{MR0238225 (38 \#6501)}

\bibitem{Kst}
Bertram Kostant, \emph{A cubic {D}irac operator and the emergence of {E}uler
  number multiplets of representations for equal rank subgroups}, Duke Math. J.
  \textbf{100} (1999), no.~3, 447--501. \MR{1719734}

\bibitem{Lndi}
Giovanni Landi, \emph{{An Introduction to noncommutative spaces and their
  geometry}}, vol.~51, 1997.

\bibitem{Ln2}
N.~P. Landsman, \emph{Strict quantization of coadjoint orbits}, J. Math. Phys.
  \textbf{39} (1998), no.~12, 6372--6383. \MR{2000b:81080}

\bibitem{Ltr4}
Fr{\'e}d{\'e}ric Latr{\'e}moli{\`e}re, \emph{The dual {G}romov--{H}ausdorff
  propinquity}, J. Math. Pures Appl. (9) \textbf{103} (2015), no.~2, 303--351,
  arXiv:1311.0104. \MR{3298361}

\bibitem{Ltr2}
\bysame, \emph{The quantum {G}romov-{H}ausdorff
  propinquity}, Trans. Amer. Math. Soc. \textbf{368} (2016), no.~1, 365--411,
  arXiv:1302.4058. \MR{3413867}

\bibitem{Ltr6}
\bysame, \emph{The modular {G}romov-{H}ausdorff
  propinquity}, Dissertationes Math. \textbf{544} (2019), 70, arXiv:1608.04881.
  \MR{4036723}

\bibitem{Ltr8}
\bysame, \emph{The dual modular {G}romov-{H}ausdorff propinquity and
  completeness}, J. Noncommut. Geom. \textbf{15} (2021), no.~1, 347--398,
  arXiv:1811.04534. \MR{4248216}
  
\bibitem{Ltr9}
\bysame, \emph{Convergence of spectral triples on fuzzy tori to spectral
  triples on quantum tori}, Comm. Math. Phys. \textbf{388} (2021), no.~2,
  1049--1128, arXiv:2102.03729. \MR{4334254}

\bibitem{Ltr7}
\bysame, \emph{The {G}romov-{H}ausdorff propinquity for metric spectral
  triples}, Adv. Math. \textbf{404} (2022), Paper No. 108393, 56,
  arXiv:1811.10843. \MR{4411527}

\bibitem{LwM}
H.~Blaine Lawson, Jr. and Marie-Louise Michelsohn, \emph{Spin geometry},
  Princeton Mathematical Series, vol.~38, Princeton University Press,
  Princeton, NJ, 1989. \MR{1031992}

\bibitem{Lft}
M.~Lotfizadeh, \emph{Super quantum {D}irac operator on the q-deformed super
  fuzzy sphere in instanton sector using quantum super {G}insparg-{W}ilson
  algebra}, J. Math. Phys. \textbf{61} (2020), no.~10, 103503, 12. \MR{4157426}

\bibitem{Lft2}
\bysame, \emph{Dirac operator on the quantum fuzzy four-sphere {$S_{qF}^4$}},
  J. Math. Phys. \textbf{62} (2021), no.~1, 013507, 17. \MR{4204330}

\bibitem{Lt2}
John Lott, \emph{Collapsing and {D}irac-type operators}, Proceedings of the
  {E}uroconference on {P}artial {D}ifferential {E}quations and their
  {A}pplications to {G}eometry and {P}hysics ({C}astelvecchio {P}ascoli, 2000),
  vol.~91, 2002, arXiv:math.DG/0005009, pp.~175--196. \MR{1919899}

\bibitem{Lt1}
\bysame, \emph{Collapsing and the differential form {L}aplacian: the case of a
  smooth limit space}, Duke Math. J. \textbf{114} (2002), no.~2, 267--306,
  arXiv:math.DG/9902111. \MR{1920190}

\bibitem{MdHIO}
Julieta Medina, Idrish Huet, Denjoe O'Connor, and Brian~P. Dolan, \emph{Scalar
  and spinor field actions on fuzzy {$S^4$}: fuzzy {$\mathbb C\rm P^3$} as a
  {$S^2_F$} bundle over {$S^4_F$}}, J. High Energy Phys. (2012), no.~8, 070,
  front matter + 19. \MR{3006941}

\bibitem{OPT}
Dorte Olesen, Gert~K. Pedersen, and Masamichi Takesaki, \emph{Ergodic actions
  of compact abelian groups}, J. Operator Theory \textbf{3} (1980), no.~2,
  237--269. \MR{578942}

\bibitem{PSn}
Carlos~I. P{\'e}rez-S{\'a}nchez, \emph{Computing the spectral action for fuzzy
  geometries: from random noncommutative geometry to bi-tracial multimatrix
  models}, arXiv:1912.13288.

\bibitem{PSn2}
\bysame, \emph{On multimatrix models motivated by random noncommutative
  geometry {I}: the functional renormalization group as a flow in the free
  algebra}, Ann. Henri Poincar\'{e} \textbf{22} (2021), no.~9, 3095--3148.
  \MR{4297108}
  
\bibitem{PSn3}
\bysame,  \emph{On multimatrix models motivated by
  random noncommutative geometry {II}: A {Y}ang-{M}ills--{H}iggs matrix model},
  arXiv:2105.01025.

\bibitem{Ply}
R.~J. Plymen, \emph{Strong {M}orita equivalence, spinors and symplectic
  spinors}, J. Operator Theory \textbf{16} (1986), no.~2, 305--324. \MR{860349}

\bibitem{R34}
Marc~A. Rieffel, \emph{Critical points of {Y}ang-{M}ills for noncommutative
  two-tori}, J. Differential Geom. \textbf{31} (1990), no.~2, 535--546.
  \MR{1037414}

\bibitem{R4}
\bysame, \emph{Metrics on states from actions of compact groups}, Doc. Math.
  \textbf{3} (1998), 215--229, arXiv:math.OA/9807084. \MR{99k:46126}

\bibitem{R5}
\bysame, \emph{Metrics on state spaces}, Doc. Math. \textbf{4} (1999),
  559--600, arXiv:math.OA/9906151. \MR{1 727 499}

\bibitem{R6}
\bysame, \emph{Gromov-{H}ausdorff distance for quantum metric spaces}, Mem.
  Amer. Math. Soc. \textbf{168} (2004), no.~796, 1--65, arXiv:math.OA/0011063.
  \MR{MR2055927}

\bibitem{R7}
\bysame, \emph{Matrix algebras converge to the sphere for quantum
  {G}romov-{H}ausdorff distance}, Mem. Amer. Math. Soc. \textbf{168} (2004),
  no.~796, 67--91, arXiv:math.OA/0108005. \MR{2055928}

\bibitem{R22}
\bysame, \emph{A global view of equivariant vector bundles and {D}irac
  operators on some compact homogeneous spaces}, Group representations, ergodic
  theory, and mathematical physics: a tribute to {G}eorge {W}. {M}ackey,
  Contemp. Math., vol. 449, Amer. Math. Soc., Providence, RI, 2008,
  arXiv:math.DG/0703496, pp.~399--415. \MR{2391813}

\bibitem{R27}
\bysame, \emph{Dirac operators for coadjoint orbits of compact {L}ie
  groups}, M\"unster J. Math. \textbf{2} (2009), 265--297, arXiv:0812.2884.
  \MR{2545615 (2011f:22014)}

\bibitem{R21}
\bysame, \emph{Leibniz seminorms for ``{M}atrix algebras converge to the
  sphere''}, Quanta of Maths (Providence, R.I.), Clay Mathematics Proceedings,
  vol.~11, Amer. Math. Soc., 2011, arXiv:0707.3229, pp.~543--578. \MR{2732064
  (2011j:46125)}

\bibitem{R29}
\bysame, \emph{Matricial bridges for ``matrix algebras converge to the
  sphere''}, Operator algebras and their applications, Contemp. Math., vol.
  671, Amer. Math. Soc., Providence, RI, 2016, arXiv:1502.00329, pp.~209--233.
  \MR{3546687}

\bibitem{R32}
\bysame, \emph{Vector bundles for ``matrix algebras converge to the sphere''},
  J. Geom. Phys. \textbf{132} (2018), 181--204. \MR{3836776}

\bibitem{R33}
\bysame, \emph{Cotangent bundles for ``matrix algebras converge to the
  sphere''}, Expo. Math. \textbf{38} (2020), no.~2, 259--270. \MR{4132818}

\bibitem{Sai}
Jeremy Sain, \emph{Berezin quantization from ergodic actions of compact quantum
  groups, and quantum {G}romov-{H}ausdorff distance}, Doctoral dissertation.
  University of California, Berkeley 2009. arXiv:0906.1829.

\bibitem{Srr}
J.-P. Serre, \emph{Alg\`ebres de {L}ie semi-simples complexes}, W. A. Benjamin,
  inc., New York-Amsterdam, 1966. \MR{35 \#6721}

\bibitem{Smn}
Barry Simon, \emph{Representations of finite and compact groups}, Graduate
  Studies in Mathematics, vol.~10, American Mathematical Society, Providence,
  RI, 1996. \MR{1363490}
  
\bibitem{Stn21}
Harold~C. Steinacker, \emph{Quantum (matrix) geometry and quasi-coherent
states}, J. Phys. A: Math. Theor. \textbf{54} (2021), no.~5.

\bibitem{StnZ}
Harold~C. Steinacker and J. Zahn, \emph{Self-intersecting fuzzy extra 
dimensions from squashed coadjoint orbits in $n=4$ SYM and matrix models}, 
High Energy Phys. (2015), no.~2, 027.  

\bibitem{Tlm}
James~D. Talman, \emph{Special functions: {A} group theoretic approach}, W. A.
  Benjamin, Inc., New York-Amsterdam, 1968, Based on lectures by Eugene P.
  Wigner, With an introduction by Eugene P. Wigner. \MR{0239154}

\bibitem{vSj}
Walter~D. van Suijlekom, \emph{Noncommutative geometry and particle physics},
  Mathematical Physics Studies, Springer, Dordrecht, 2015. \MR{3237670}

\bibitem{Vrl}
Joseph~C. Varilly, \emph{Dirac operators and spectral geometry}, Lectures at
  Mathematical Institute of the Polish Academy of Sciences, impan.pl, 2006,
  Notes taken by Pawel Witkowski.

\bibitem{Wss}
A.~Wassermann, \emph{Ergodic actions of compact groups on operator algebras.
  {I}. {G}eneral theory}, Ann. of Math. (2) \textbf{130} (1989), no.~2,
  273--319. \MR{91e:46092}

\bibitem{Ydr}
Badis Ydri, \emph{Fuzzy physics}, arXiv:hepth/0110006.

\bibitem{Yd1}
\bysame, \emph{Noncommutative chiral anomaly and the
  {D}irac-{G}insparg-{W}ilson operator}, J. High Energy Phys. (2003), no.~8,
  046, 18. \MR{2012705}

\bibitem{Ydr17}
\bysame, \emph{Lectures on matrix field theory}, Lecture Notes in Physics, vol.
  929, Springer, Cham, 2017. \MR{3585360}

\end{thebibliography}
\end{document}